\documentclass[11pt]{article}
\usepackage{amssymb}
\usepackage{amsmath, amsthm, color}
\usepackage[colorlinks]{hyperref}
\usepackage{fullpage}


\usepackage{caption}
\usepackage{subcaption}

\usepackage{makecell}

\usepackage{longtable}



\usepackage{bbm}

\usepackage{graphicx}

\usepackage{chngcntr}
















\usepackage{mathtools}

\theoremstyle{definition}
\newtheorem*{notation*}{Notation}
\newtheorem{theorem}{Theorem}

\newtheorem{corollary}{Corollary}
\newtheorem{lemma}{Lemma}
\newtheorem{proposition}{Proposition}

\newtheorem{definition}{Definition}
\newtheorem{assumption}{Assumption}
\newtheorem{condition}{Condition}
\theoremstyle{definition}
\newtheorem{remark}{Remark}

\newlength{\widebarargwidth}
\newlength{\widebarargheight}
\newlength{\widebarargdepth}

\newcommand{\RNum}[1]{\uppercase\expandafter{\romannumeral #1\relax}}


\newcommand{\real}{\ensuremath{\mathbb{R}}}

\newcommand{\Ball}{\ensuremath{\mathbb{B}}}

\newcommand{\opnorm}[1]{\left|\!\left|\!\left|{#1}\right|\!\right|\!\right|}



\begin{document}

\title{{\bf Adaptive Estimation and Statistical Inference for High-Dimensional Graph-Based Linear Models}}

\makeatletter
\renewcommand\@date{{%
  \vspace{-\baselineskip}%
  \large\centering
  \begin{tabular}{@{}c@{}}
    Duzhe Wang\textsuperscript{$\ast$} \\
    \normalsize \texttt{dwang282@wisc.edu}
  \end{tabular}%
\quad \quad
  \begin{tabular}{@{}c@{}}
    Po-Ling Loh\textsuperscript{$\ast\ddagger$} \\
    \normalsize  \texttt{ploh@stat.wisc.edu}
     \end{tabular}

  \bigskip

  \textsuperscript{$\ast$}Department of Statistics, University of Wisconsin-Madison\par
  \textsuperscript{$\ddagger$}Department of Statistics, Columbia University
   \bigskip

  \today
}}
\makeatother

\maketitle

\begin{abstract}
We consider adaptive estimation and statistical inference for high-dimensional graph-based linear models. In our model, the coordinates of regression coefficients correspond to an underlying undirected graph. Furthermore, the given graph governs the piecewise polynomial structure of the regression vector. In the adaptive estimation part, we apply graph-based regularization techniques and propose a family of locally adaptive estimators called the Graph-Piecewise-Polynomial-Lasso. We further study a one-step update of the Graph-Piecewise-Polynomial-Lasso for the problem of statistical inference. We develop the corresponding theory, which includes the fixed design and the sub-Gaussian random design. Finally, we illustrate the superior performance of our approaches by extensive simulation studies and conclude with an application to an \emph{Arabidopsis thaliana} microarray dataset. 
\end{abstract}


\section{Introduction}
\label{intro}

Consider the high-dimensional linear model  
\begin{equation}
\label{linear}
    y=X\beta^*+\varepsilon, 
\end{equation}
where $X=(X_{1},...,X_{N})^T\in \real^{N\times n}$ is the design matrix with $X_{i}\in \mathbb{R}^{n}$ and $N\ll n$, $y=(y_{1},...,y_{N})^T\in \real^{N}$ is the response vector, $\beta^*\in \mathbb{R}^n$ is the unknown true regression parameter, and $\varepsilon\in \mathbb{R}^{N}$ is the additive noise. The linear model in $(\ref{linear})$ has been widely used for analyzing modern datasets collected from diverse scientific applications, such as climate science, medical imaging and biology \cite{chatterjee2012, kandel2013, buhlman2014}. One major area of research for high-dimensional linear models is developing novel methods to estimate the regression coefficients $\beta^*$ with certain desired structure. To estimate the sparse regression coefficients, Tibshirani \cite{tibshirani1996} proposed the celebrated Lasso which is defined as the minimizer of the following convex program
\begin{equation}
\label{lasso}
\begin{aligned}
& \underset{\beta\in \mathbb{R}^{n}}{\text{minimize}}
& & \frac{1}{2N}\|y-X\beta\|_{2}^2+\lambda\| \beta \|_{1},
\end{aligned}
\end{equation}
where $\lambda>0$ is a tuning parameter. Over the last two decades, there is a very substantial literature studying theory of the Lasso from various statistical perspectives. For example, Bickel, Ritov and Tsybakov \cite{bickel2009} introduced the restricted eigenvalue assumptions, and derived the non-asymptotic upper bounds of estimation error and prediction error of the Lasso in the linear model. Zhao and Yu \cite{zhao2006model} and Wainwright \cite{wainwright2009sharp} studied the model selection consistency and variable selection consistency of the Lasso, respectively. Furthermore, van de Geer et al.\ \cite{vandegeer2014}, Zhang and Zhang \cite{zhang2014confidence}, and Javanmard and Montanari \cite{javanmard14a} established de-biased methods to construct confidence intervals and perform statistical tests for low-dimensional components of the regression coefficients. A detailed overview on the Lasso can be found in \cite{buhlmann2011statistics} and \cite{wainwright2019}. 

Recent developments in this area also take into account other prior structures besides the sparsity. For  example, in the gene expression data measured from a microarray, the regression vector $\beta^*$ in $(\ref{linear})$ corresponds to a list of ordered genes, where correlated genes are placed consecutively. It is a common point of view that $\beta^*$ is both sparse and locally constant \cite{tibshirani2004}. Tibshirani et al.\ \cite{tibshirani2004} proposed the fused Lasso to estimate $\beta^*$ in the setting described above:
\begin{equation}
\label{fusedlassoprog}
\hat{\beta}^{\text{FL}}=
    \begin{aligned}
& \underset{\beta\in \real^{n}}{\text{argmin}}
& & \frac{1}{2N}\|y-X\beta\|_{2}^2+\lambda_{1}\| \beta \|_{1}+\lambda_{2} \|\Delta_{u}^{(1)}\beta\|,
\end{aligned}
\end{equation}
where $\lambda_{1}>0$ and $\lambda_{2}>0$ are tuning parameters, and $\Delta_{u}^{(1)}\in \mathbb{R}^{(n-1)\times n}$ is the first-order difference operator defined by  
\begin{equation}
\label{1storder}
\Delta^{(1)}_{u}=\left[\begin{array}{rrrrr}{-1} & {1} & {0} & {\dots} & {0} \\ {0} & {-1} & {1} & {\dots} & {0} \\ {\vdots} & {} & {\ddots} & {\ddots} \\ {0} & {0} & {\dots} & {-1} & {1}\end{array}\right].
\end{equation}
On the other hand, for applications where the nonzero components of $\beta^*$ might vary smoothly rather than being exactly locally constant, Hebiri and van de Geer \cite{hebiri2011} and Guo et al.\ \cite{guo2016} proposed the Smooth-Lasso and the Spline-Lasso, respectively, which are defined as follows: 
\begin{equation}
\label{smoothlasso}
\hat{\beta}^{\text{smooth}}=
    \begin{aligned}
& \underset{\beta\in \real^{n}}{\text{argmin}}
& & \frac{1}{2N}\|y-X\beta\|_{2}^2+\lambda_{1}\| \beta \|_{1}+\lambda_{2} \|\Delta_{u}^{(1)}\beta\|_{2}^{2},
\end{aligned}
\end{equation}
and 
\begin{equation}
\label{splinelasso}
\hat{\beta}^{\text{spline}}=
    \begin{aligned}
& \underset{\beta\in \real^{n}}{\text{argmin}}
& & \frac{1}{2N}\|y-X\beta\|_{2}^2+\lambda_{1}\| \beta \|_{1}+\lambda_{2} \|\Delta^{(2)}_{u}\beta\|_{2}^{2}.
\end{aligned}
\end{equation}
Here, the matrix $\Delta_{u}^{(2)}\in \mathbb{R}^{(n-2)\times n}$ is the second-order difference operator defined by   
\begin{equation*}
\Delta_{u}^{(2)}=\left[
\begin{array}{rrrrrrr}
{1} & {-2} & {1} & {0} & {\ldots} & {0} & {0} \\ 
{0} & {1} & {-2} & {1} & {\ldots} & {0} & {0} \\ 
{0} & {0} & {1} & {-2} & {\ldots} & {0} & {0} \\ 
{\vdots} & {} & {} & {\ddots} & {} & {\vdots} & {\vdots}  \\
{0} & {0} & {} & {\ldots}  & {1}  & {-2} & {1} 
\end{array}
\right]. 
\end{equation*}

The fused Lasso, the Smooth-Lasso, and the Spline-Lasso are popular examples of adaptive estimation, which is an active line of research in statistics. However, these existing methods can only deal with the univariate setting where $\beta^*$ is in a simple sequence form. In many real-world applications, the structure of $\beta^*$ might need to be modeled in the form of a complex graph. For instance, in the analysis of medical imaging, the regression vector $\beta^*$ may correspond to a two-dimensional grid graph which is smooth across adjacent nodes. Therefore, special care must be taken when considering such graph settings. In this paper, we are interested in studying scenarios where the coordinates of $\beta^*$ correspond to the nodes of  some \emph{known} underlying undirected graph. 

The primary focus of our paper is adaptive estimation and statistical inference for regression coefficients with graph-based piecewise polynomial structure, which has long been observed in multiple fields \cite{wang2016} and will be introduced in Section~\ref{background}. Our work significantly broadens the scope of existing methods. We summarize the main contributions of our paper as follows:  
\begin{itemize}
\item Introduce a family of adaptive estimators called the Graph-Piecewise-Polynomial-Lasso for the adaptive estimation problem. The central idea of our approach is to apply graph-based regularization techniques from the graph trend filtering by Wang et al.\ \cite{wang2016}, who generalized the idea of trend filtering \cite{kim2009, tibshirani2014} used for the univariate setting to graphs. Nevertheless, we emphasize two fundamental differences between the setting in our paper and that in \cite{wang2016}. Firstly, we consider the high-dimensional linear model given in $(\ref{linear})$, whereas Wang et al.\ focused on the Gaussian sequence model. Secondly, in addition to assuming the graph-based structure for the underlying signal as in \cite{wang2016}, we further assume the sparsity of $\beta^*$ based on the high-dimensional regime which will be studied in the paper. These two points make the theoretical properties and analysis of our proposed method very different from those in \cite{wang2016}. To the best of
our knowledge, this is the first work that studies piecewise polynomial structure for the linear model. 

\item Propose a new one-step estimator for valid statistical inference. Our proposed estimator shares the same asymptotic properties with other one-step estimators \cite{vandegeer2014, javanmard14a} in the statistics literature. However, our approach is computationally very attractive and requires much weaker conditions. We not only construct asymptotically valid confidence intervals for each component of $\beta^*$ based on the one-step estimator, but also discuss hypothesis testing in the graph setting.     

\item Provide theoretical guarantees and rigorous analysis for our approaches. We consider both the fixed design model and the random design model, and derive upper bounds for the $\ell_{2}$-estimation error, the $\ell_{1}$-estimation error, and the mean-squared prediction error of the Graph-Piecewise-Polynomial-Lasso. Remarkably, our theory does not require any assumptions on the underlying graph. Furthermore, we define the notion of weakly piecewise polynomial and sparse structure over graphs, and extend the corresponding theory to that case.   

\item Demonstrate that our approach outperforms other state-of-the-art methods in a wide variety of settings via extensive simulation studies and an application to an \emph{Arabidopsis thaliana} microarray dataset.
 
\end{itemize}

The remainder of the paper is organized as follows: In Section~\ref{background}, we describe our problem setup in detail and provide background on graphs and graph-based piecewise polynomial structure. In Section~\ref{estimationsection}, we consider the graph-based adaptive estimation problem and propose the Graph-Piecewise-Polynomial-Lasso. We also derive the theoretical properties of our proposed method in this section.  In Section~\ref{inferencesection}, we address the problem of statistical inference via the de-biased approach and state the corresponding theoretical guarantees. We apply our methods to synthetic and real data in Section~\ref{simulationsection} and Section~\ref{realdatasection}, respectively. Finally, we conclude with a discussion of open questions and further research directions in Section~\ref{discussion}. Proofs of all theoretical results and some supplementary simulations and real data analysis results are provided in the appendices. 

\begin{notation*}
For functions $f(n)$ and $g(n)$, we write $f(n)\lesssim g(n)$ to mean that $f(n)\le cg(n)$ for some universal constant $c\in (0, \infty)$. Similarly, we write $f(n)\gtrsim g(n)$ when $f(n)\ge c^{\prime}g(n)$ for some universal constant $c^{\prime}\in (0, \infty)$. We write $f(n)\asymp g(n)$ to mean that $f(n)\lesssim g(n)$  and $f(n)\gtrsim g(n)$ hold simultaneously. We write $f(n)=o(1)$ to mean that $f(n)\rightarrow 0$. We write $X_{n}=\mathcal{O}_{\mathbb{P}}(a_{n})$ to mean that $X_{n}/a_{n}$ is stochastically bounded. For an integer $r$, we write $[r]$ to denote the set $\{1,...,r\}$. We write $|S|$ to denote the cardinality of the set $S$ and $S^{c}$ to denote the complement set of $S$. We write $I_{n}$ to denote the identity matrix of size $n\times n$. For a matrix $M\in \real^{m\times n}$, we write $M_{S}\in \real^{s \times n}$ to denote the submatrix  of $M$ with rows restricted to $S$. We write $\text{null}(M)$ to denote the null space of $M$. We write $\opnorm{M}_{op}$ to denote the $\ell_{2}$-operator norm, $\opnorm{M}_{1}=\max_{j=1,...,n}\sum_{i=1}^{m}|M_{ij}|$ to denote the $\ell_{1}$-operator norm, and $\opnorm{M}_{\infty}=\max_{i=1,...,m}\sum_{j=1}^{n}|M_{ij}|$ to denote the $\ell_{\infty}$-operator norm. We write $\|M\|_{1,1}=\sum_{i=1}^{m}\sum_{j=1}^{n}|M_{ij}|$ to denote the element-wise $\ell_{1}$-norm. We use $\sigma_{\max}(M)$ and $\sigma_{\min}(M)$ to denote the maximum and minimum singular values, respectively. For a symmetric matrix $M$ of size $n\times n$, we write its eigenvalues $\lambda_{1}(M)\le \lambda_{2}(M)\le...\le \lambda_{n}(M)$. We write $\|\cdot\|_{\infty}$ to denote the element-wise infinity norm for both of vectors and matrices. We write $\|\cdot\|_{0}$ to denote the number of non-zero elements in a vector. For $q, r>0$, we write $\mathbb{B}_{q}(r)$ to denote the $\ell_{q}$-ball of radius $r$ centered around 0. We use $c, c^{\prime}, c^{\prime\prime}$, etc., to denote positive constants, where we may use the same notation to refer to different constants as we move between results.


\end{notation*}

\begin{definition}
\label{subgaussianrv}
For a constant $\sigma>0$, a random variable $X\in \mathbb{R}$ is said to be \emph{$\sigma$-sub-Gaussian} if $\mathbb{E}[X]=0$ and its moment generating function satisfies 
\begin{equation*}
\mathbb{E}\left[\exp{(tX)}\right]\le \exp{\left(\frac{\sigma^2t^2}{2}\right)},\quad\forall~t\in \mathbb{R}. 
\end{equation*}
Furthermore, a random vector $X\in \mathbb{R}^{n}$ is said to be \emph{$\sigma$-sub-Gaussian} if $\mathbb{E}[X]=0$ and $u^TX$ is $\sigma$-sub-Gaussian for any unit vector $u\in \mathbb{S}^{n-1}$.
\end{definition}

It can be shown that if $X$ is $\sigma$-sub-Gaussian, then for any $t>0$, we have
\begin{equation*}
\mathbb{P}\left(X>t\right)\le \exp{\left(-\frac{t^2}{2\sigma^2}\right)},\quad\mathbb{P}\left(X<-t\right)\le \exp{\left(-\frac{t^2}{2\sigma^2}\right)}. 
\end{equation*}

\begin{definition}
\label{randmat}
If a random matrix $X\in \mathbb{R}^{N\times n}$ is formed by drawing each row $X_{i}\in \mathbb{R}^{n}$ in an i.i.d.\ manner from a $\sigma$-sub-Gaussian distribution with covariance matrix $\Sigma$, then we say $X$ is a \emph{row-wise $(\sigma, \Sigma)$-sub-Gaussian random matrix}. 
\end{definition}


\section{Problem setup and background}
\label{background}

Throughout, we assume that the components of $\varepsilon$ in $(\ref{linear})$ are i.i.d.\ draws from a $\sigma_{\varepsilon}$-sub-Gaussian distribution defined in Definition~\ref{subgaussianrv}, unless otherwise stated. We are interested in both of the fixed design model, where $X$ is a deterministic matrix, and the random design model, where $X$ is a row-wise $(\sigma_{x}, \Sigma_x)$-sub-Gaussian matrix defined in Definition~\ref{randmat}. Furthermore, for the random design case, we also assume that $\varepsilon$ is independent of $X$. We denote the underlying graph by $\mathcal{G}=(\mathcal{V}, \mathcal{E})$, and assume that the number of nodes is $n$ and the number of edges  is $p$. We also denote the maximum degree of $\mathcal{G}$ by $d$. 

Next, we provide some preliminaries for defining the graph-based spatial structure which will be studied in the paper. We begin by introducing the oriented incidence matrix and the graph Laplacian matrix, which are common tools for studying graphs. 
\begin{definition}
\label{diffoperator}
The \emph{oriented incidence matrix}, denoted by $F\in \{-1, 0, 1\}^{p\times n}$, is defined as follows: if the $k$-th edge is $(i, j)\in \mathcal{E}$ with $i<j$, then the $k$-th row of $F$ is
\begin{equation*}
(0,...,-1,...,+1,...,0),
\end{equation*}
where $-1$ is in the $i$-th entry and $+1$ is in the $j$-th entry. Furthermore, we call $L=F^TF\in \mathbb{R}^{n\times n}$ the \emph{Laplacian matrix}. 
\end{definition}

In terms of spectral properties, it is well known that the smallest eigenvalue of the Laplacian matrix $L$ is 0. Furthermore, the largest eigenvalue of the Laplacian matrix can be bounded by $2d$, which is proved in Lemma~\ref{eigenvaluelem} in Appendix~\ref{supporting-appendix}.\footnote{The result in Lemma~\ref{eigenvaluelem} is quite rough, but it is still practically useful for several interesting graphs with bounded maximum degrees. For example, path graphs have maximum degree 2 and two-dimensional grid graphs have maximum degree 4. We refer the reader to sharper results on upper bounds of the largest eigenvalue of the Laplacian matrix in \cite{Wang2007} and \cite{ShuHongKai}.} The oriented incidence matrix and the Laplacian matrix are also called the first and second graph difference operators, respectively. Furthermore, the graph difference operator with order greater than 2 is defined in \cite{wang2016} by the following recursion:         

\begin{definition}
\label{diffoperator2}
For $k>1$, the \emph{graph difference operator} of order $k+1$, denoted by $\Delta^{(k+1)}$, is
\begin{equation*}
\Delta^{(k+1)}=
    \begin{cases}
      F^T\Delta^{(k)}=L^{\frac{k+1}{2}} & \text{for odd k} \\
      F\Delta^{(k)}=FL^{\frac{k}{2}}& \text{for even k}. 
   \end{cases}
\end{equation*}
\end{definition}

For ease of notation, we use $m$ to denote the number of rows for $\Delta^{(k+1)}$, i.e., $m=n$ for odd $k$ and $m=p$ for even $k$. The graph difference operator in Definition~\ref{diffoperator2} is closely related to the usual difference operator in the univariate setting, which is also defined recursively by 
\begin{equation*}
\Delta_{u}^{(k+1)}=\Delta^{(1)}_{u}\Delta_{u}^{(k)}\in \mathbb{R}^{(n-k-1)\times n}. 
\end{equation*}
 Here, the matrix $\Delta_{u}^{(1)}$ is the $(n-k-1)\times (n-k)$ version of $(\ref{1storder})$. It can be shown that if the underlying graph is a path graph, then when $k$ is even, removing the first $\frac{k}{2}$ rows and the last $\frac{k}{2}$ rows of $\Delta^{(k+1)}$ recovers $\Delta_{u}^{(k+1)}$, and when $k$ is odd, removing the first $\frac{k+1}{2}$ rows and the last $\frac{k+1}{2}$ rows of $\Delta^{(k+1)}$ recovers $\Delta_{u}^{(k+1)}$.  

With the graph difference operator at hand, we are now in a position to introduce the graph-based piecewise polynomial structure. Wang et al.\ \cite{wang2016} also defined a similar notion for studying the trend filtering problem of the Gaussian sequence model. 

\begin{definition}
\label{piecewisepoly}
For $k\ge 0$ and $s>0$, if $\|\Delta^{(k+1)}\beta^*\|_{0}\le s$, then $\beta^*$ is called \emph{$(k, s)$-piecewise polynomial} over the underlying graph $\mathcal{G}$. 
\end{definition}

It is obvious that $(0, s)$-piecewise polynomial structure implies 
\begin{equation}
\label{piecewiseconstant}
 |\{ (i, j)\in \mathcal{E}: \beta_{i}^*\ne \beta_{j}^*\}|\le s. 
\end{equation} 
When $\mathcal{G}$ is a path graph, condition $(\ref{piecewiseconstant})$ is equivalent to piecewise constant structure in the univariate setting, so we say $\beta^*$ which satisfies $(\ref{piecewiseconstant})$ has \emph{$s$-piecewise constant} structure. Similarly, we refer to $(1, s)$-and $(2, s)$-piecewise polynomial structures as \emph{$s$-piecewise linear} and \emph{$s$-piecewise quadratic}, respectively. We will further extend the graph-based piecewise polynomial structure in Definition~\ref{piecewisepoly} to the notion of weakly piecewise polynomial structure in Section~\ref{extension-section}.  
 
We will work within the high-dimensional framework, which allows the number of predictors $n$ to grow and exceed the sample size $N$. Hence, it is also very natural for us to assume sparsity of $\beta^*$. Therefore, in this paper, we focus on the type of regression coefficients which are simultaneously sparse and piecewise polynomial over the underlying graph $\mathcal{G}$. In other words, we are interested in the parameter space of $\beta^*$ defined by 
\begin{equation}
\label{betaspace}
\mathcal{S}(k, s_{1}, s_{2})=\left\{\beta\in \mathbb{R}^{n}: \|\Delta^{(k+1)}\beta\|_{0}\le s_{1}, \|\beta\|_{0}\le s_{2}\right\},
\end{equation}
where $s_{1}>0$ and $s_{2}>0$ are allowed to increase with the triple $(N, n, p)$, and $k$ is a \emph{fixed} and \emph{known} user-specified integer. See Section~\ref{discussion} for a discussion of the case where $k$ is unknown. Figure~\ref{piecewisepolypathgraph} and Figure~\ref{piecewisepoly2dgraph} show some instances of regression coefficients which are simultaneously $(k, s_{1})$-piecewise polynomial and $s_{2}$-sparse for specific $k$, $s_{1}$, and $s_{2}$ over the path graph and the 2d grid graph, respectively.  

\begin{figure}
\begin{subfigure}{.5\textwidth}
  \centering
  \includegraphics[width=.8\linewidth]{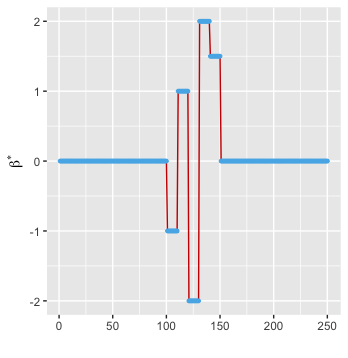}  
  \caption{$k=0, s_{1}=6, s_{2}=50, \Delta^{(1)}\in \mathbb{R}^{249\times 250}$}
\end{subfigure}
\begin{subfigure}{.5\textwidth}
 \centering
  \includegraphics[width=.8\linewidth]{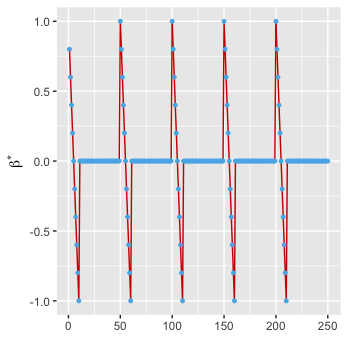}  
  \caption{$k=1, s_{1}=19, s_{2}=49, \Delta^{(2)}\in \mathbb{R}^{250\times 250}$}
\end{subfigure}
\begin{center}
\begin{subfigure}{.5\textwidth}
  \includegraphics[width=.8\linewidth]{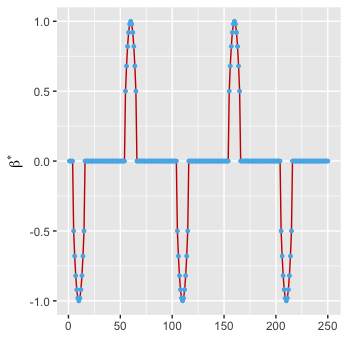}  
  \caption{$k=2, s_{1}=30, s_{2}=55, \Delta^{(3)}\in \mathbb{R}^{249\times 250}$}
\end{subfigure}
\end{center}
\caption{Three examples of simultaneously piecewise polynomial and sparse regression coefficients over a path graph. Coordinates of $\beta^*$ correspond to a path graph with 250 nodes ($n=250$ and $p=249$). See Section~\ref{simu1section} for details about the construction of $\beta^*$ in (a), (b) and (c).}
\label{piecewisepolypathgraph}
\end{figure}

\begin{figure}
\begin{center}
\begin{subfigure}{.6\textwidth}
  \includegraphics[width=0.475\linewidth]{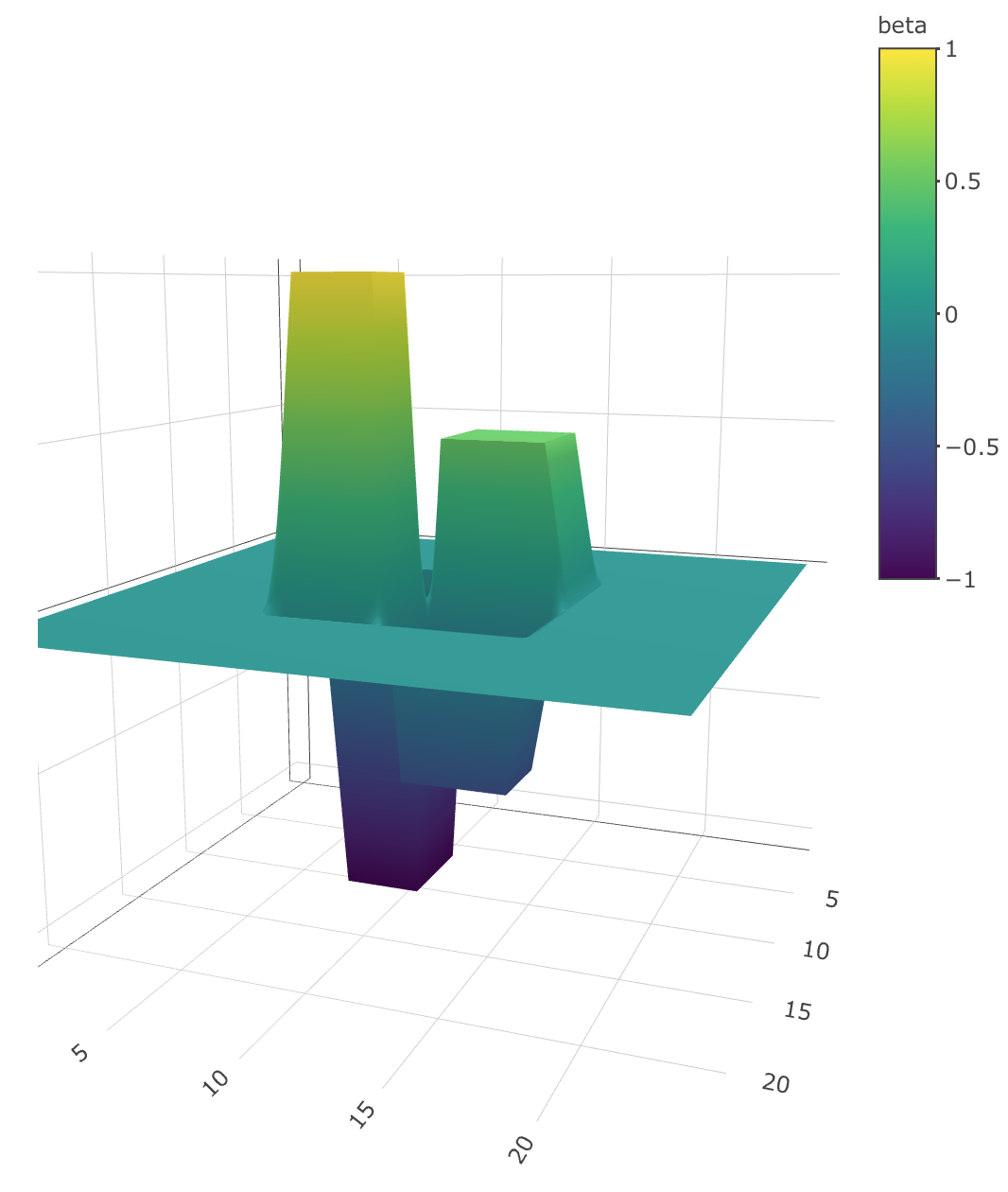}  
 \hfill
  \includegraphics[width=0.475\linewidth]{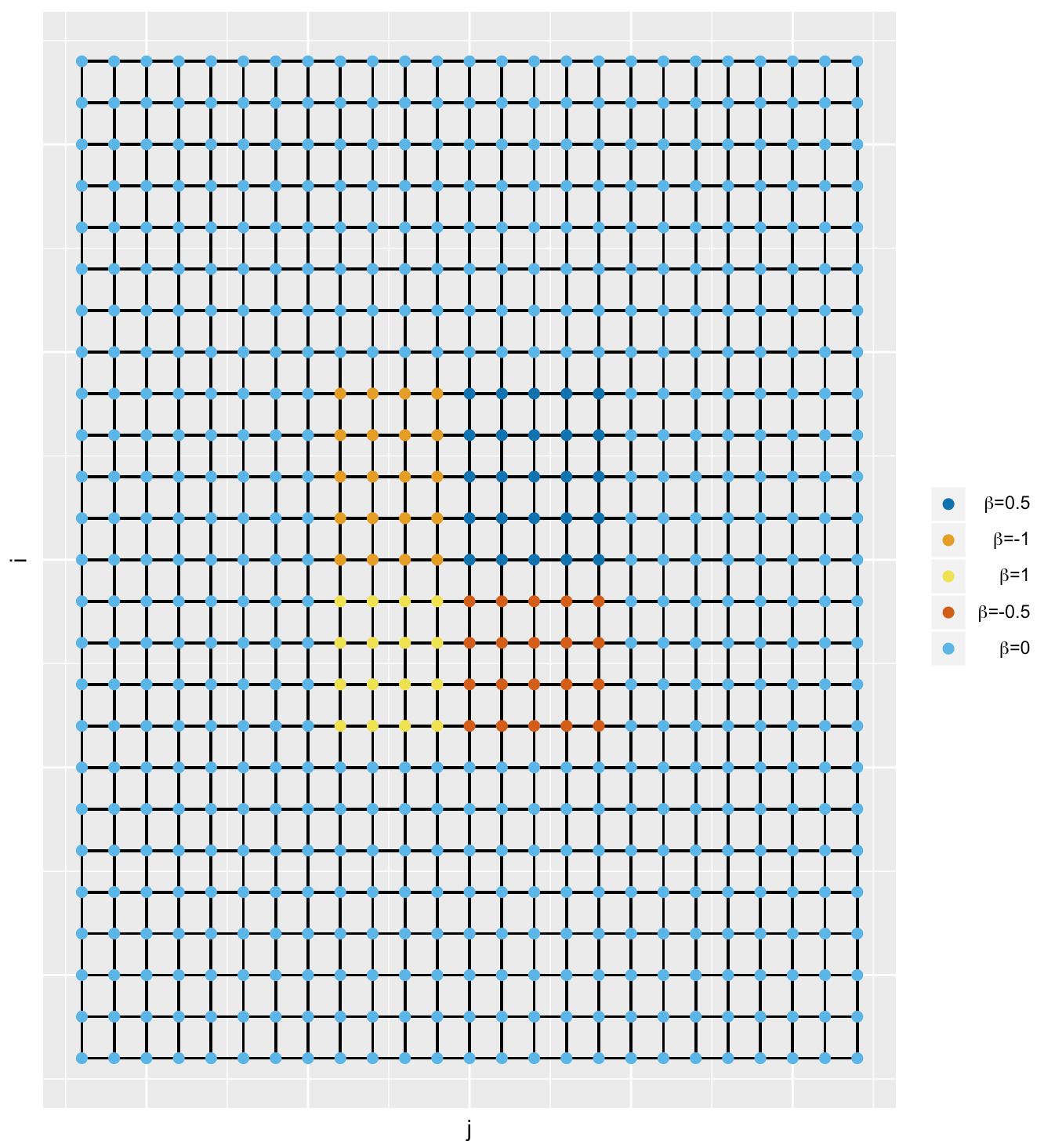} 
   \caption{$k=0, s_{1}=54, s_{2}=81, \Delta^{(1)}\in \mathbb{R}^{1200\times 625}$}   
\end{subfigure}
\end{center}
\begin{center}
\begin{subfigure}{.6\textwidth}
  \includegraphics[width=0.475\linewidth]{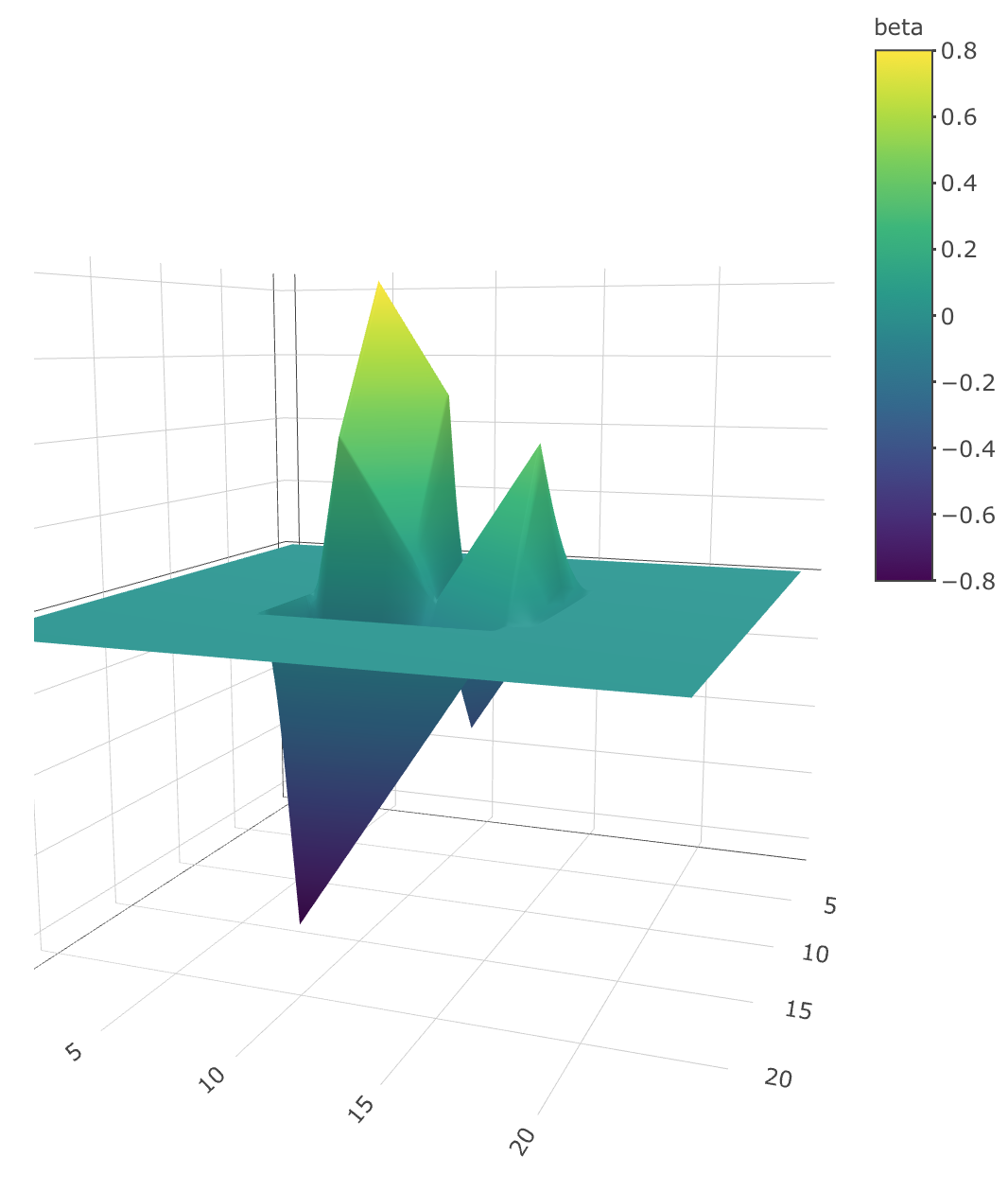}  
 \hfill
  \includegraphics[width=0.475\linewidth]{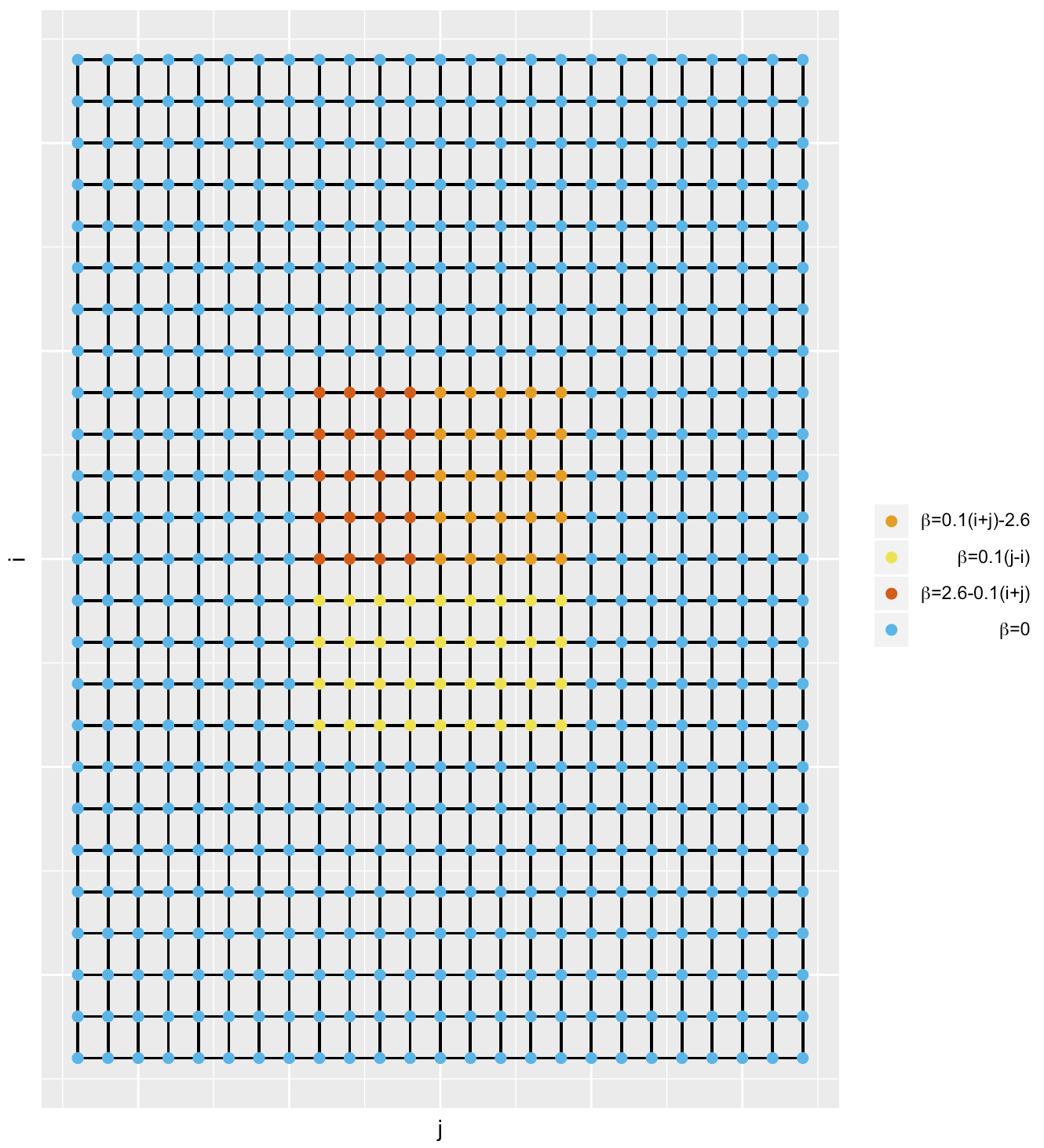}  
  \caption{$k=1, s_{1}=77, s_{2}=72, \Delta^{(2)}\in \mathbb{R}^{625\times 625}$}
\end{subfigure}
\end{center}
\begin{center}
\begin{subfigure}{.6\textwidth}
  \includegraphics[width=0.475\linewidth]{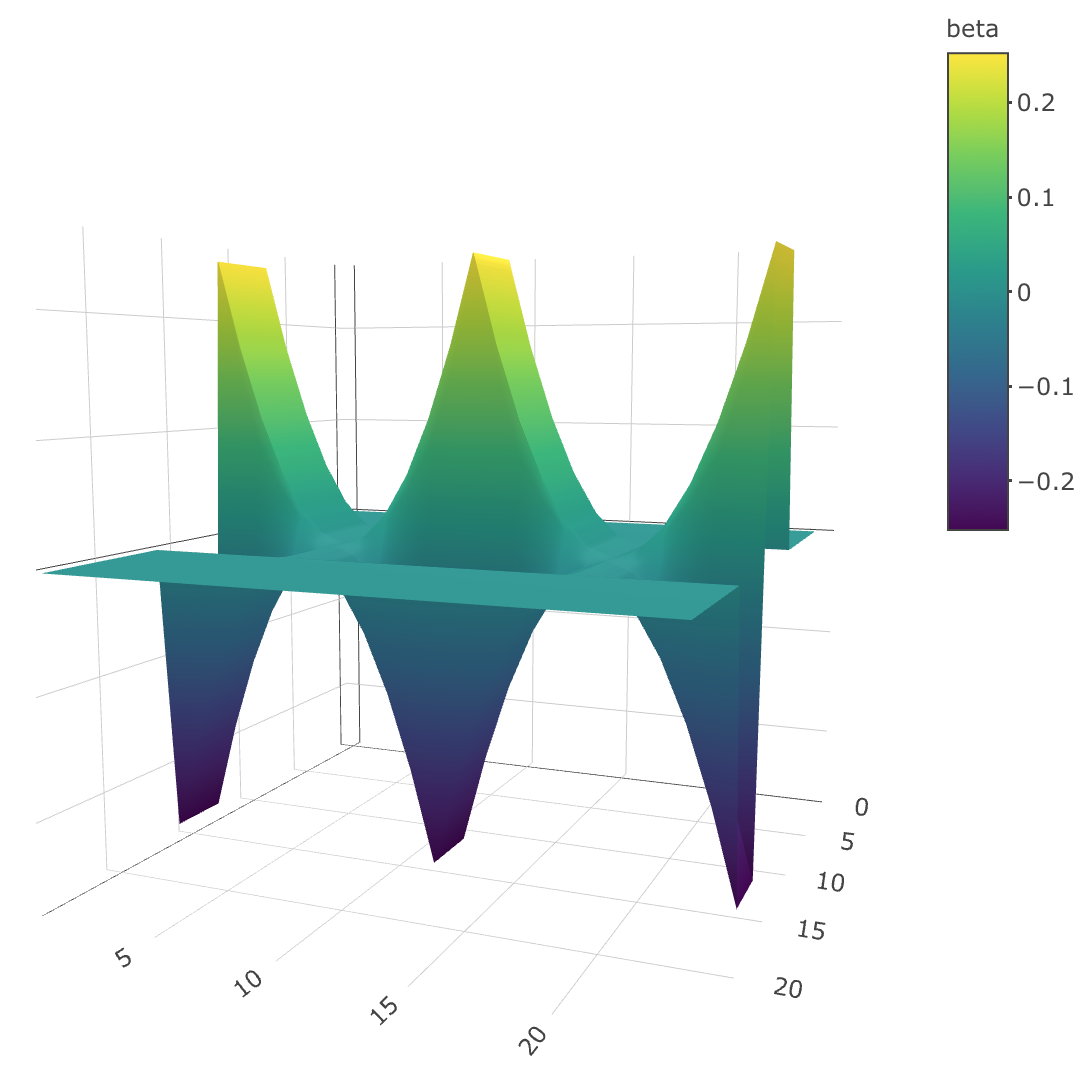}  
\hfill
  \includegraphics[width=0.475\linewidth]{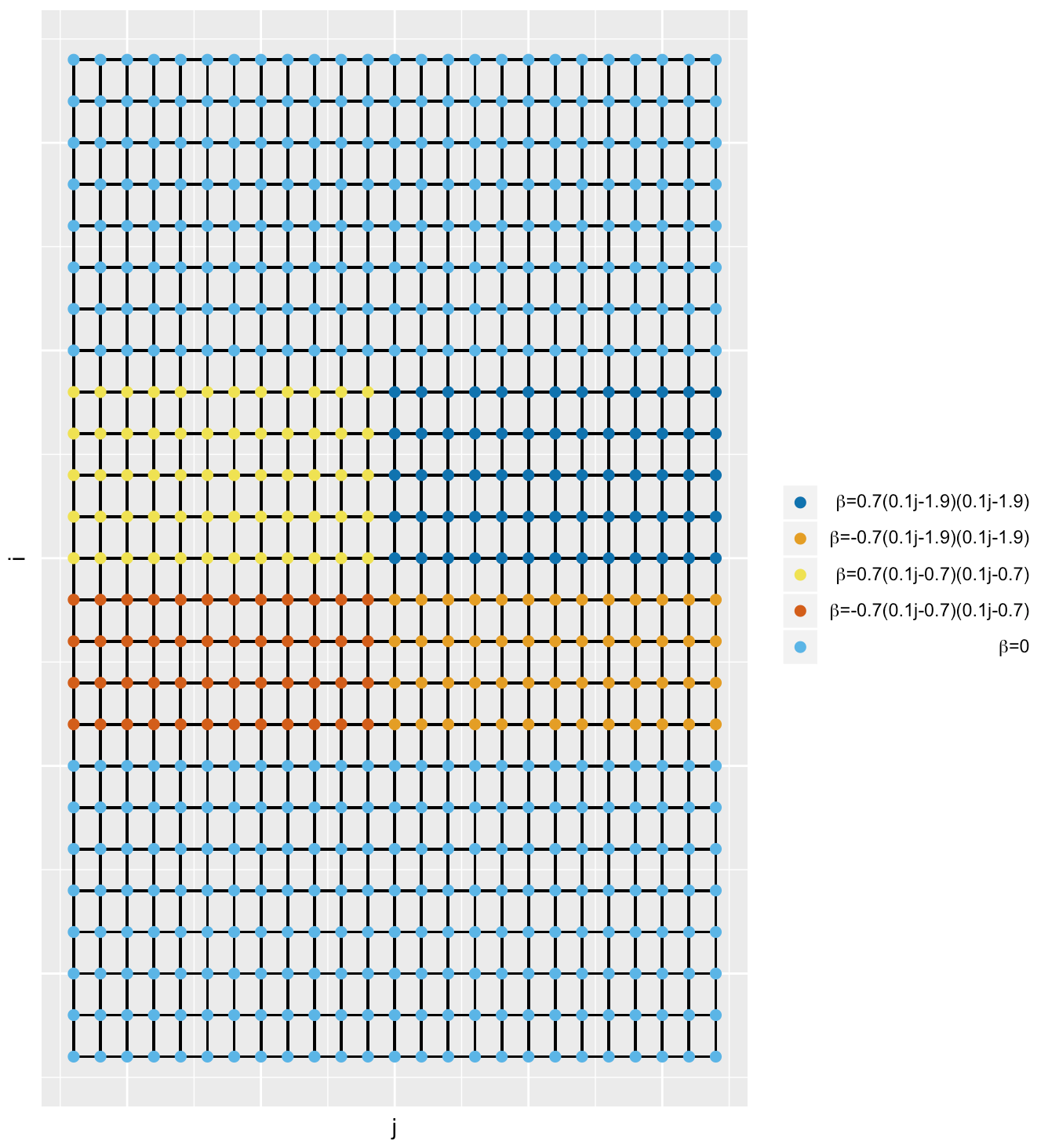}  
 \caption{$k=2, s_{1}=365, s_{2}=207, \Delta^{(3)}\in \mathbb{R}^{1200\times 625}$}
 \end{subfigure}
 \end{center}
\caption{Three examples of simultaneously piecewise polynomial and sparse regression coefficients over a 2d grid graph. Coordinates of $\beta^*$ correspond to a 2d grid graph with 25 rows and 25 columns ($n=625$ and $p=1200$). Figures in the right-hand side show the value of $\beta^*$ in each node. See more details in Section~\ref{simu2section}. }
\label{piecewisepoly2dgraph}
\end{figure}


\section{Graph-based adaptive estimation}
\label{estimationsection}

In this section, we propose an adaptive estimation procedure for regression coefficients which are simultaneously piecewise polynomial and sparse over the underlying graph, and then present the main theoretical results for deterministic and random designs. We also extend the theory to the case of weakly piecewise polynomial and sparse structure, which will be defined in Definition~\ref{weaklypiecewisepoly}. 

\subsection{The Graph-Piecewise-Polynomial-Lasso}

To estimate $\beta^*$ in the  parameter space defined by $(\ref{betaspace})$, we propose the following adaptive estimator: 
\begin{equation}
\label{tf-sl}
\hat{\beta}=
    \begin{aligned}
& \underset{\beta\in \real^{n}}{\text{argmin}}
& & \frac{1}{2N}\|y-X\beta\|_{2}^2+\lambda_{g} \|\Delta^{(k+1)}\beta\|_{1}+\lambda\| \beta \|_{1}, 
\end{aligned}
\end{equation}
where $\lambda_{g}>0$ and $\lambda>0$ are tuning parameters. Here, the $\ell_{1}$-regularizers $\|\Delta^{(k+1)}\beta\|_{1}$ and $\|\beta\|_{1}$ are used to encourage the sparsity of $\Delta^{(k+1)}\beta$ and $\beta$, respectively. We refer to $\hat{\beta}$ as the \emph{$k$-th order Graph-Piecewise-Polynomial-Lasso} in this paper. The optimization problem $(\ref{tf-sl})$ is equivalent to 
\begin{equation*}
\hat{\beta}=
    \begin{aligned}
& \underset{\beta\in \real^{n}}{\text{argmin}}
& & \frac{1}{2N}\|y-X\beta\|_{2}^2+\lambda\left\|D\beta\right\|_{1}, 
\end{aligned}
\end{equation*}
where
\begin{equation}
\label{Dmatrix}
D=\begin{bmatrix}\frac{\lambda_{g}}{\lambda}\Delta^{(k+1)}\\ I_{n} \end{bmatrix}\in \mathbb{R}^{(m+n)\times n}. 
\end{equation}
Note that $D$ has full column rank. We write $D^{+}=(D^TD)^{-1}D^T\in \mathbb{R}^{n\times (m+n)}$ to denote its Moore-Penrose inverse. It is clear that when the underlying graph $\mathcal{G}$ is a path graph and $k=0$, the problem $(\ref{tf-sl})$ is identical with the fused Lasso in $(\ref{fusedlassoprog})$. 

We applied our approach $(\ref{tf-sl})$, Lasso (\ref{lasso}), Smooth-Lasso (\ref{smoothlasso}), and Spline-Lasso (\ref{splinelasso}) to estimate three scenarios of $\beta^*$ shown in Figure~\ref{piecewisepolypathgraph}, respectively. The estimated regression coefficients are displayed in Figure~\ref{piececonstantbetagraph}, Figure~\ref{piecelinearbetagraph}, and Figure~\ref{piecequadraticbetagraph}, respectively. Overall, our approach performs better than other approaches for recovering the desired structures over the path graph.     
Furthermore, we can use a similar idea as $(\ref{tf-sl})$ to extend the Smooth-Lasso (\ref{smoothlasso}) and the Spline-Lasso (\ref{splinelasso}) to the graph setting as follows: 
\begin{equation}
\label{gsmoothlasso}
\hat{\beta}^{\text{gsmooth}}=
    \begin{aligned}
& \underset{\beta\in \real^{n}}{\text{argmin}}
& & \frac{1}{2N}\|y-X\beta\|_{2}^2+\lambda_{1}\| \beta \|_{1}+\lambda_{2} \|\Delta^{(1)}\beta\|_{2}^{2}, 
\end{aligned}
\end{equation}
and 
\begin{equation}
\label{gsplinelasso}
\hat{\beta}^{\text{gspline}}=
    \begin{aligned}
& \underset{\beta\in \real^{n}}{\text{argmin}}
& & \frac{1}{2N}\|y-X\beta\|_{2}^2+\lambda_{1}\| \beta \|_{1}+\lambda_{2} \|\Delta^{(2)}\beta\|_{2}^{2}.  
\end{aligned}
\end{equation}
We will refer to $\hat{\beta}^{\text{gsmooth}}$ and $\hat{\beta}^{\text{gspline}}$ as the \emph{Graph-Smooth-Lasso} and the \emph{Graph-Spline-Lasso}, respectively. In Appendix~\ref{simuappendix}, we compare the performance of our approach with the Lasso, Graph-Smooth-Lasso, and Graph-Spline-Lasso in terms of the structure recovery for the regression coefficients over a 2d grid graph. The simulation results strongly suggest using our approach in practice if $\beta^*$ has the spatial structure considered in the paper. 
  
\begin{figure}[!t]
\begin{subfigure}{.5\textwidth}
  \centering
  \includegraphics[width=.8\linewidth]{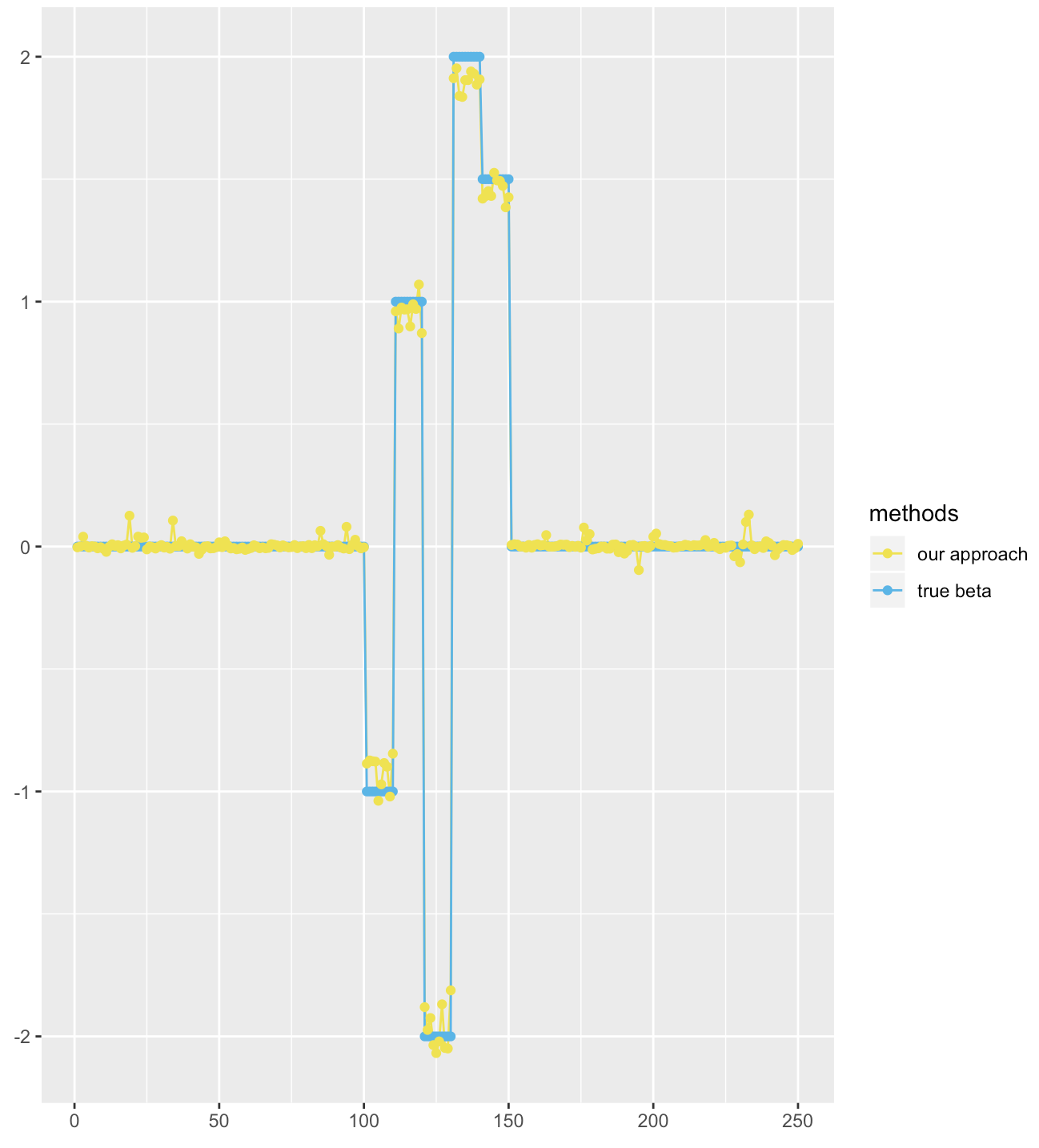}  
\end{subfigure}
\begin{subfigure}{.5\textwidth}
 \centering
  \includegraphics[width=.8\linewidth]{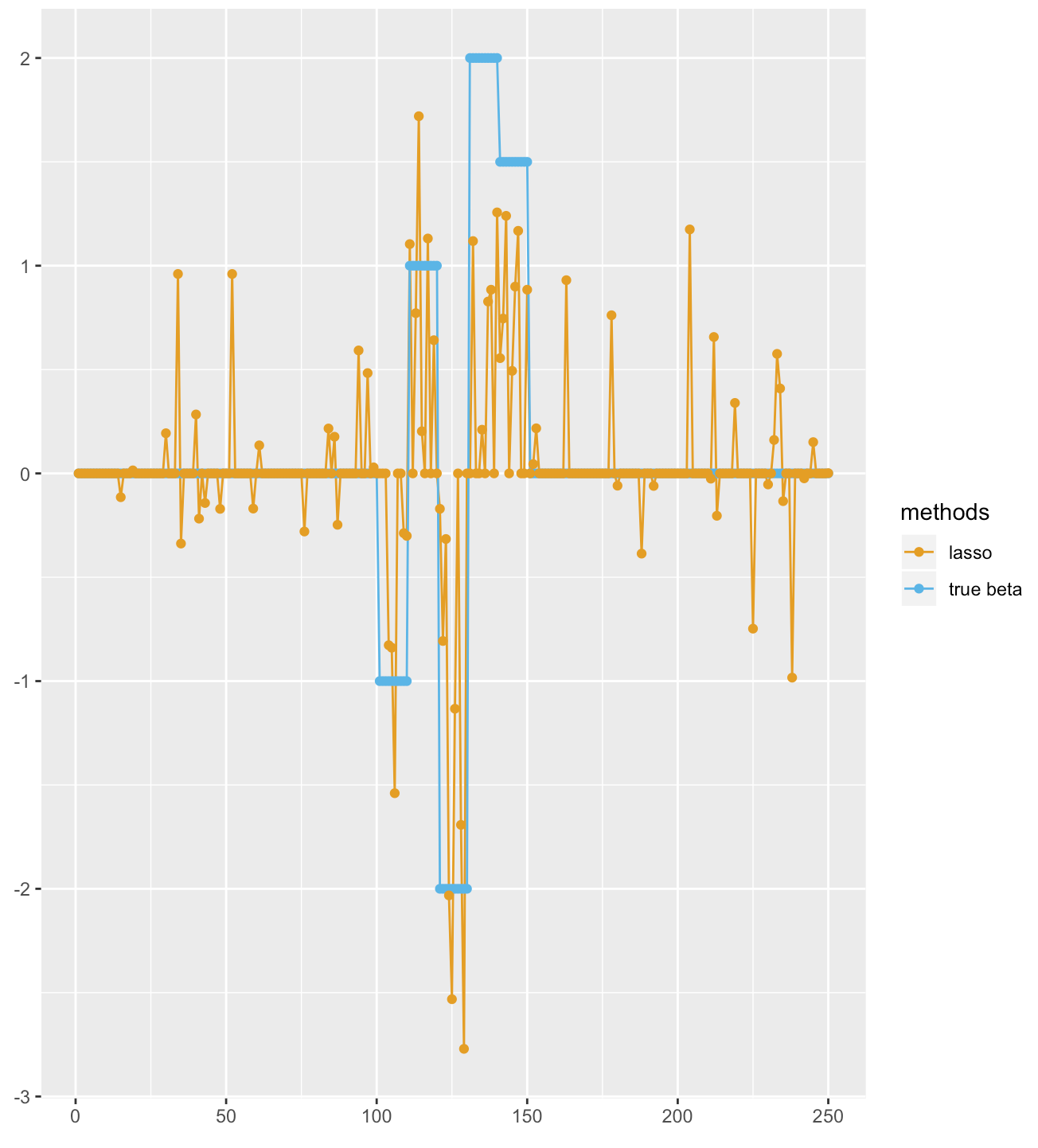}  
\end{subfigure}
\begin{subfigure}{.5\textwidth}
  \centering
  \includegraphics[width=.8\linewidth]{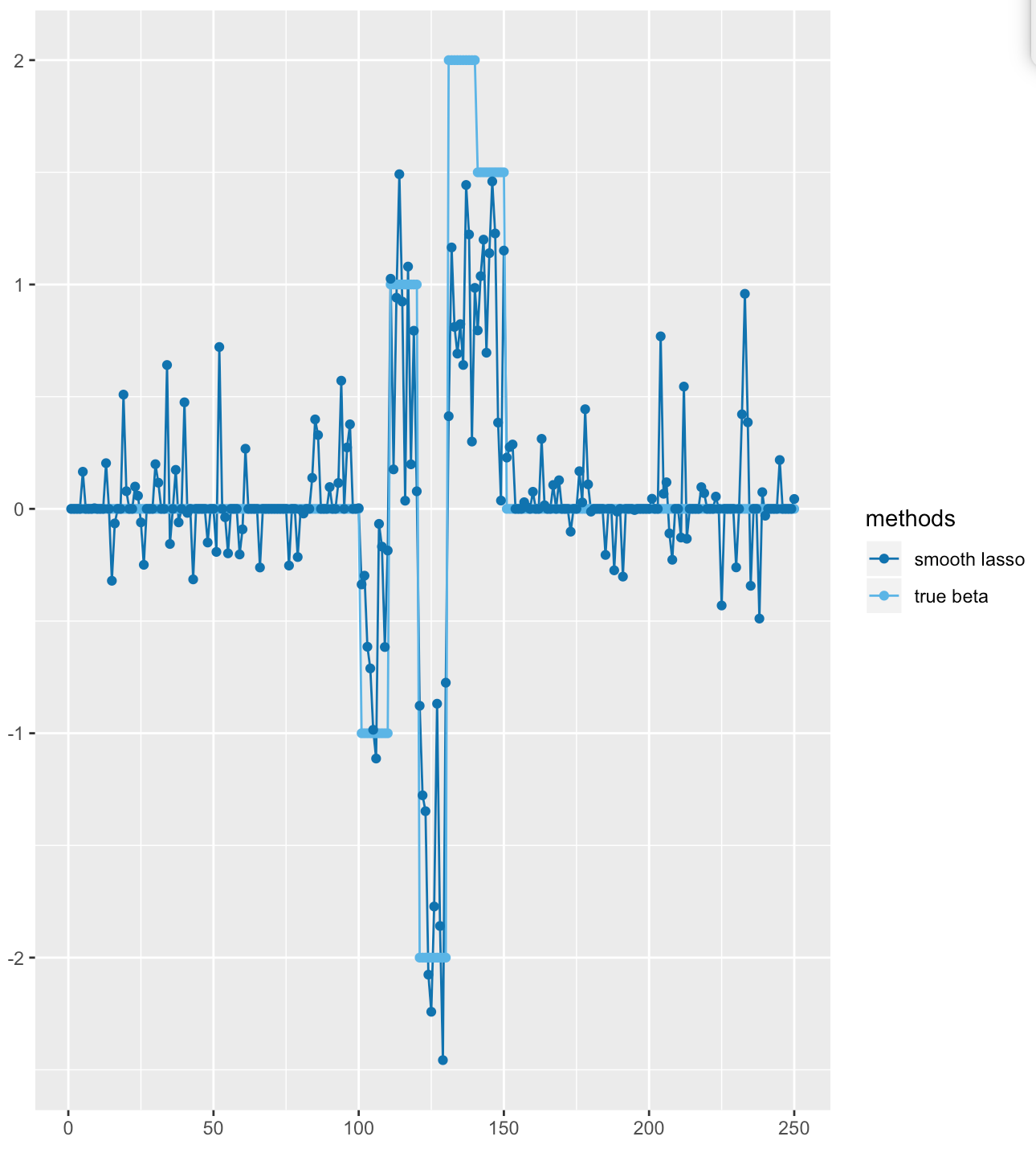}  
\end{subfigure}
\begin{subfigure}{.5\textwidth}
 \centering
  \includegraphics[width=.8\linewidth]{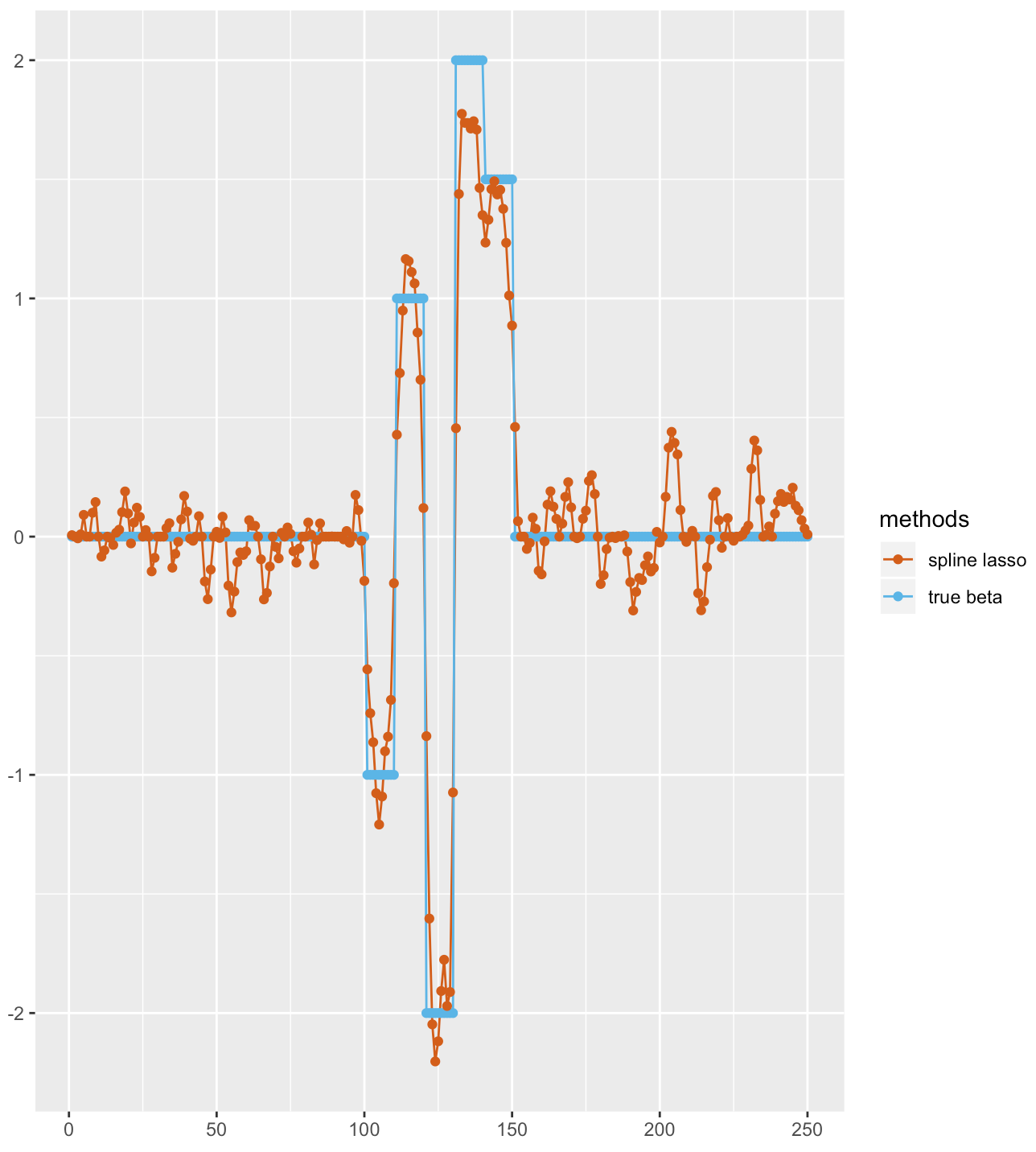}  
\end{subfigure}
\caption{We consider estimation of $\beta^*$ plotted in (a) of Figure~\ref{piecewisepolypathgraph}. We set $N=100$. The data generating process and the tuning parameter selection are the same with the simulation in Section~\ref{simu1section}. Each panel displays the true regression coefficients and the estimated values. }
\label{piececonstantbetagraph}
\end{figure}

\begin{figure}[!t]
\begin{subfigure}{.5\textwidth}
  \centering
  \includegraphics[width=.8\linewidth]{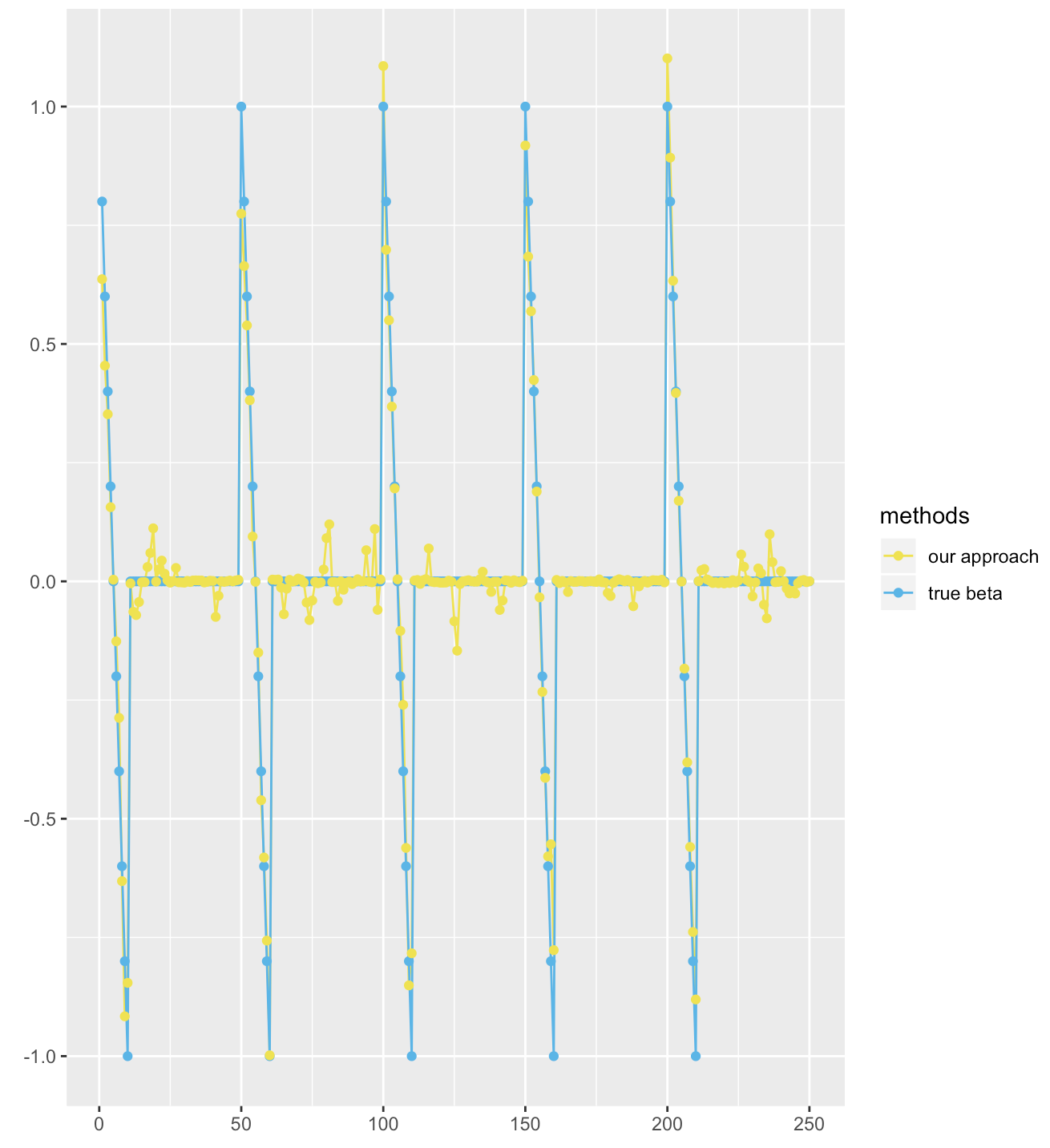}  
\end{subfigure}
\begin{subfigure}{.5\textwidth}
 \centering
  \includegraphics[width=.8\linewidth]{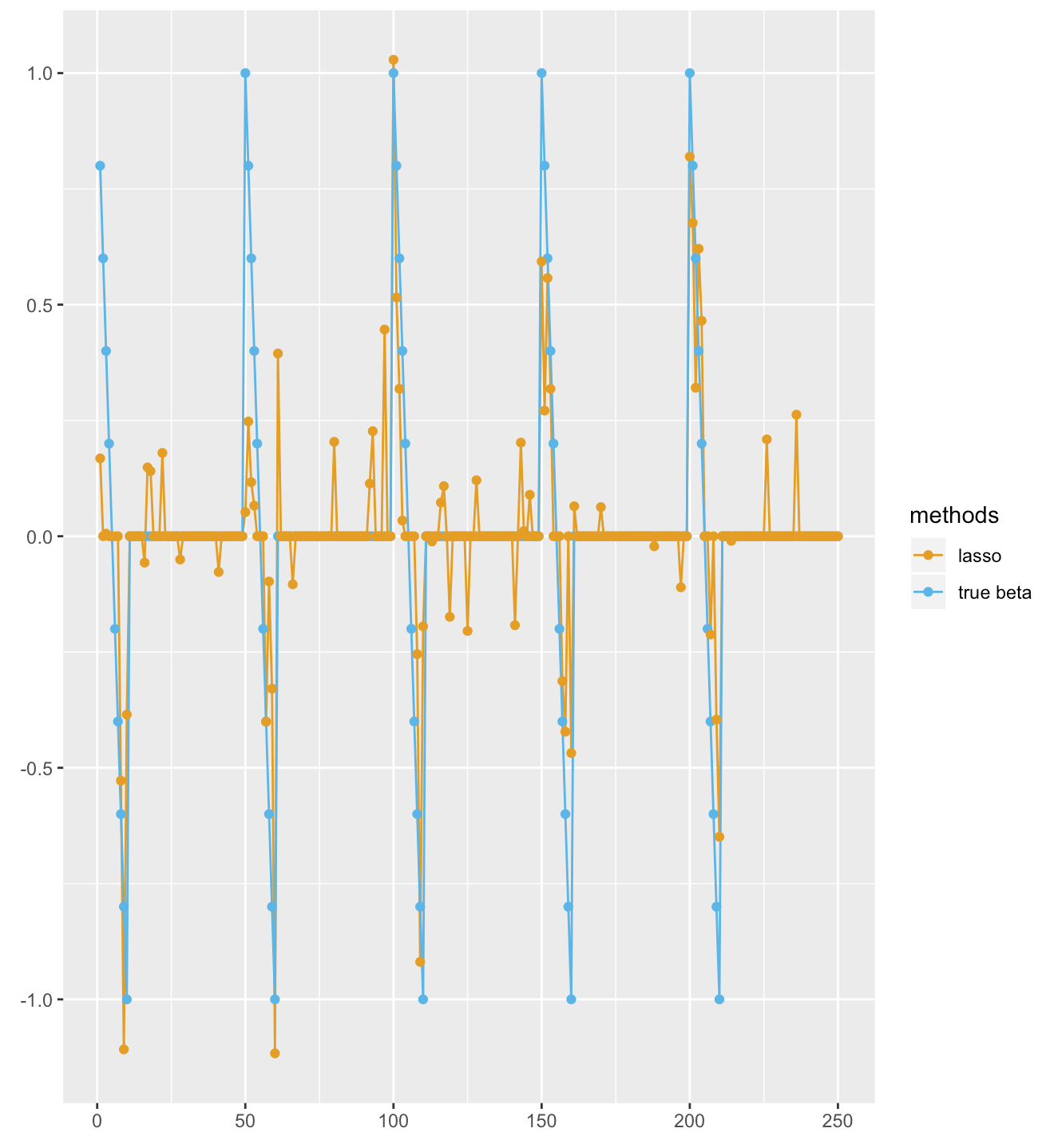}  
\end{subfigure}
\begin{subfigure}{.5\textwidth}
  \centering
  \includegraphics[width=.8\linewidth]{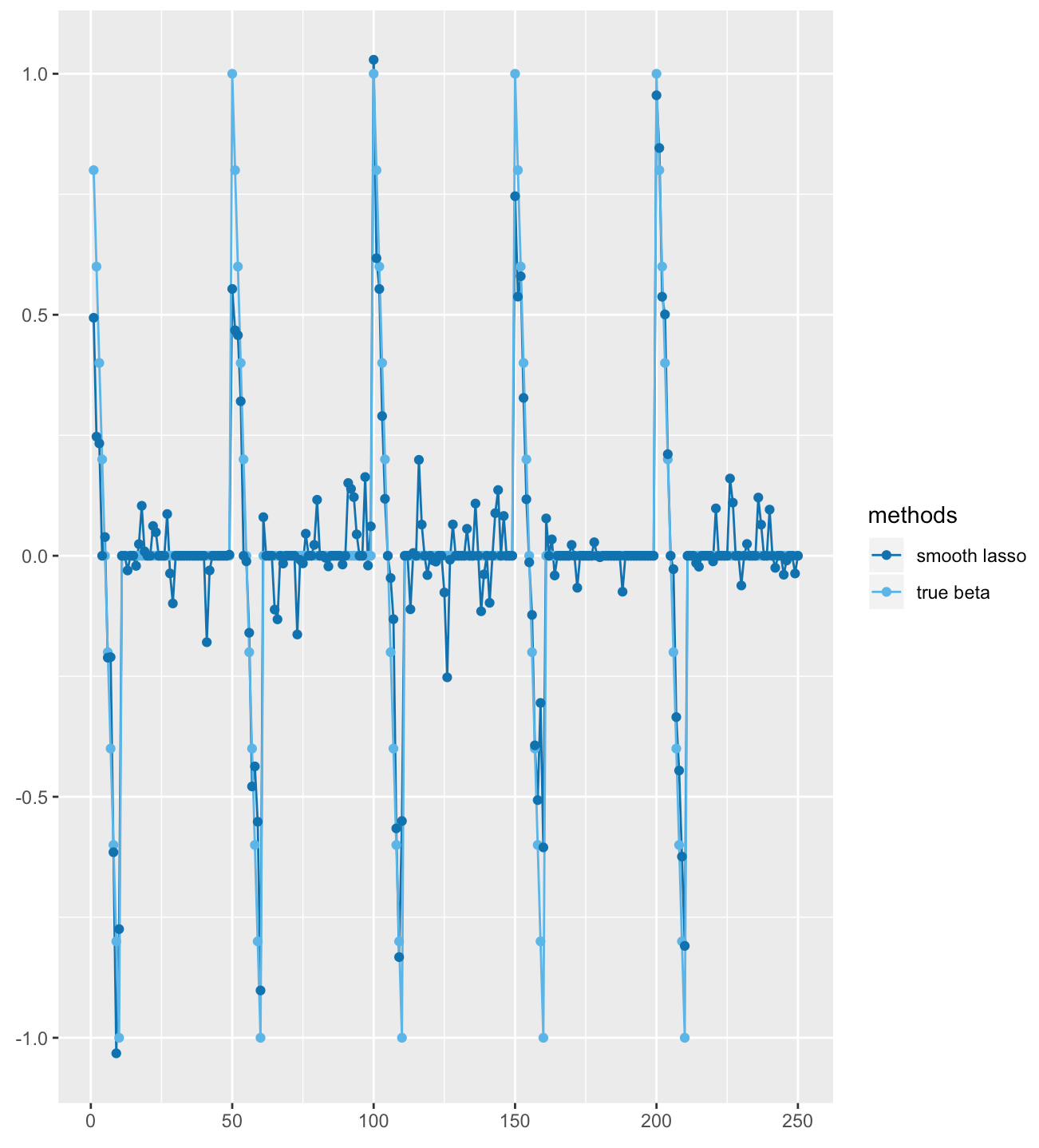}  
\end{subfigure}
\begin{subfigure}{.5\textwidth}
 \centering
  \includegraphics[width=.8\linewidth]{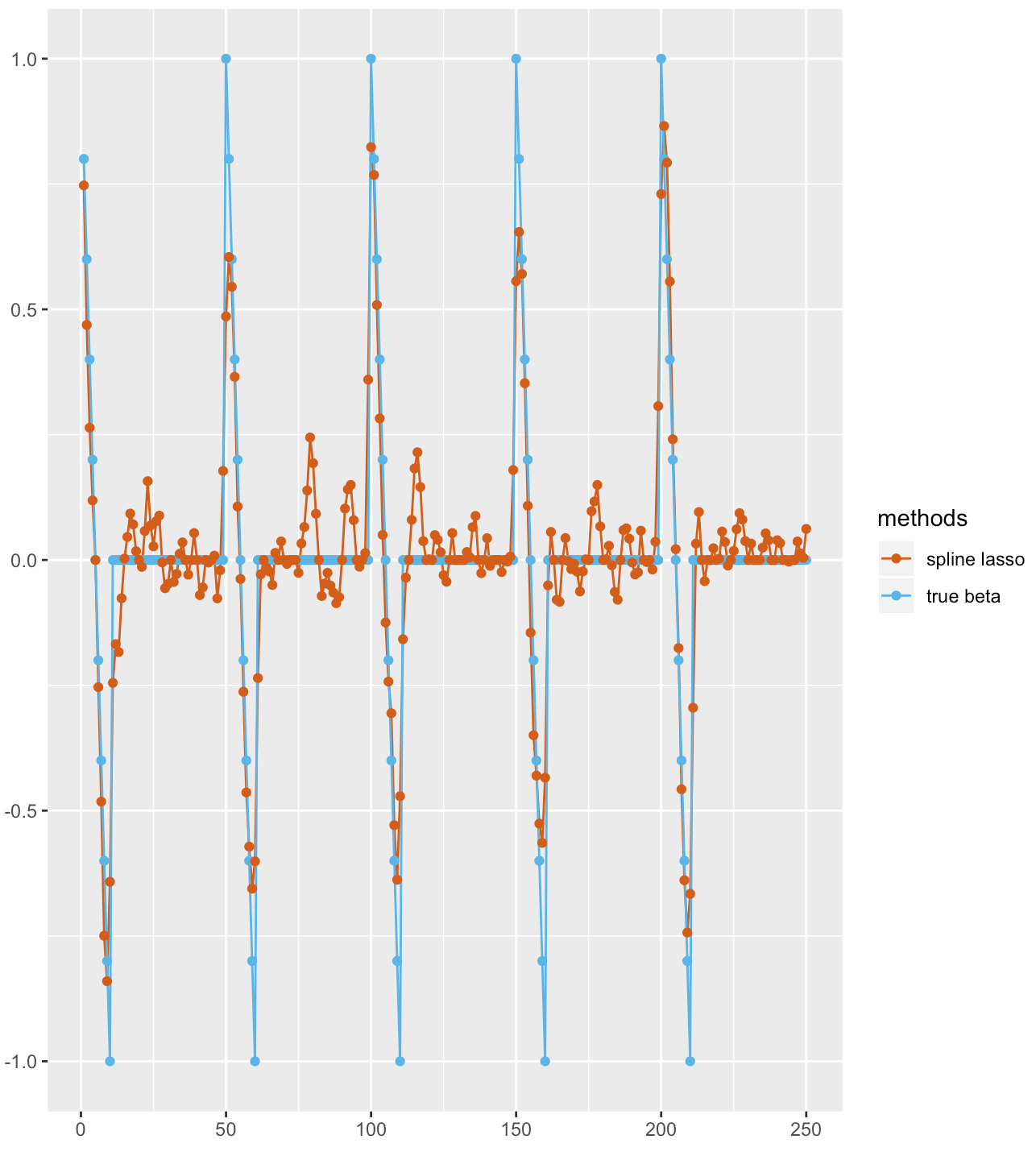}  
\end{subfigure}
\caption{We consider estimation of $\beta^*$ plotted in (b) of Figure~\ref{piecewisepolypathgraph}. We set $N=100$. The data generating process and the tuning parameter selection are the same with the simulation in Section~\ref{simu1section}. Each panel displays the true regression coefficients and the estimated values.}
\label{piecelinearbetagraph}
\end{figure}

\begin{figure}[!t]
\begin{subfigure}{.5\textwidth}
  \centering
  \includegraphics[width=.8\linewidth]{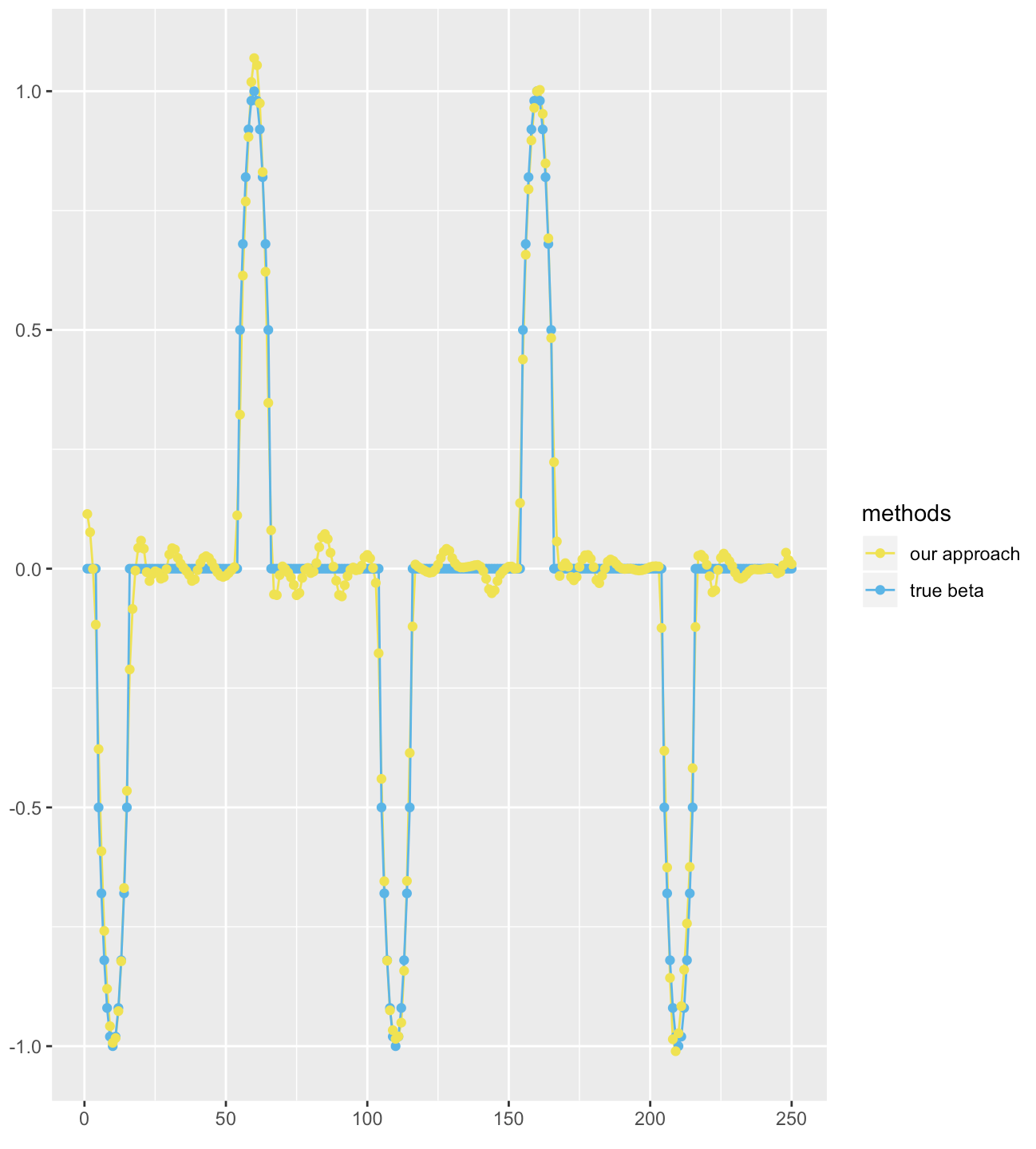}  
\end{subfigure}
\begin{subfigure}{.5\textwidth}
 \centering
  \includegraphics[width=.8\linewidth]{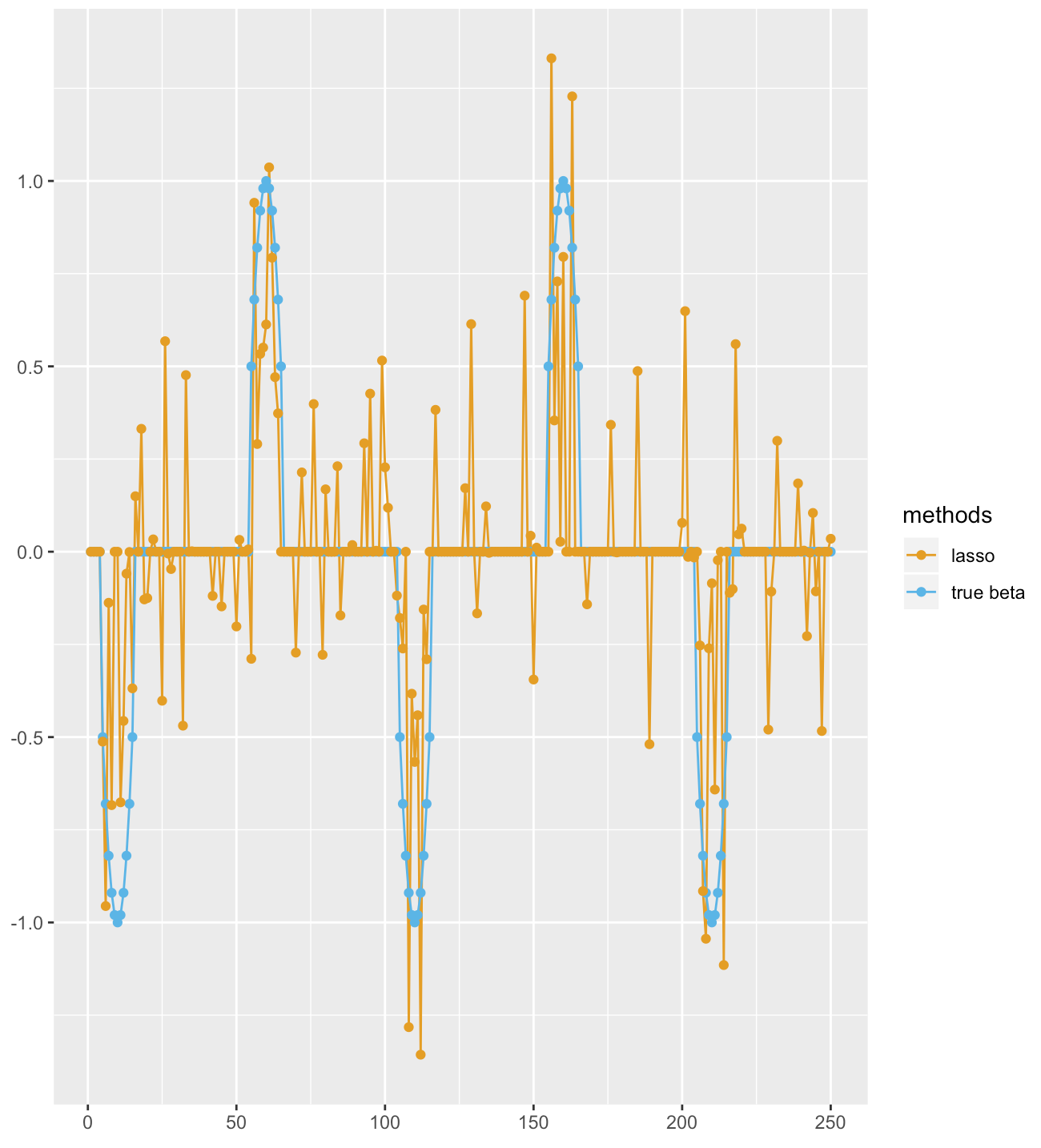}  
\end{subfigure}
\begin{subfigure}{.5\textwidth}
  \centering
  \includegraphics[width=.8\linewidth]{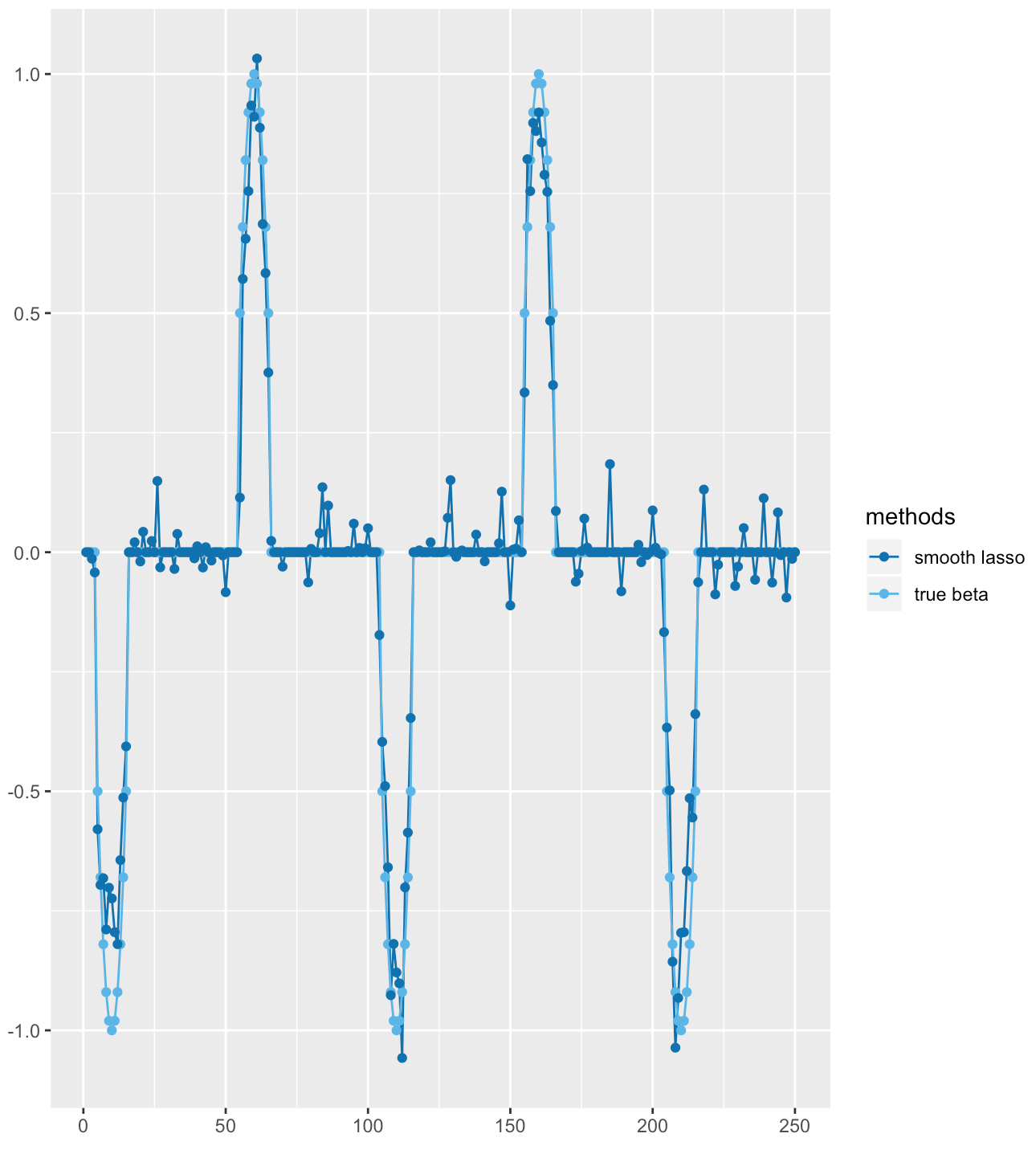}  
\end{subfigure}
\begin{subfigure}{.5\textwidth}
 \centering
  \includegraphics[width=.8\linewidth]{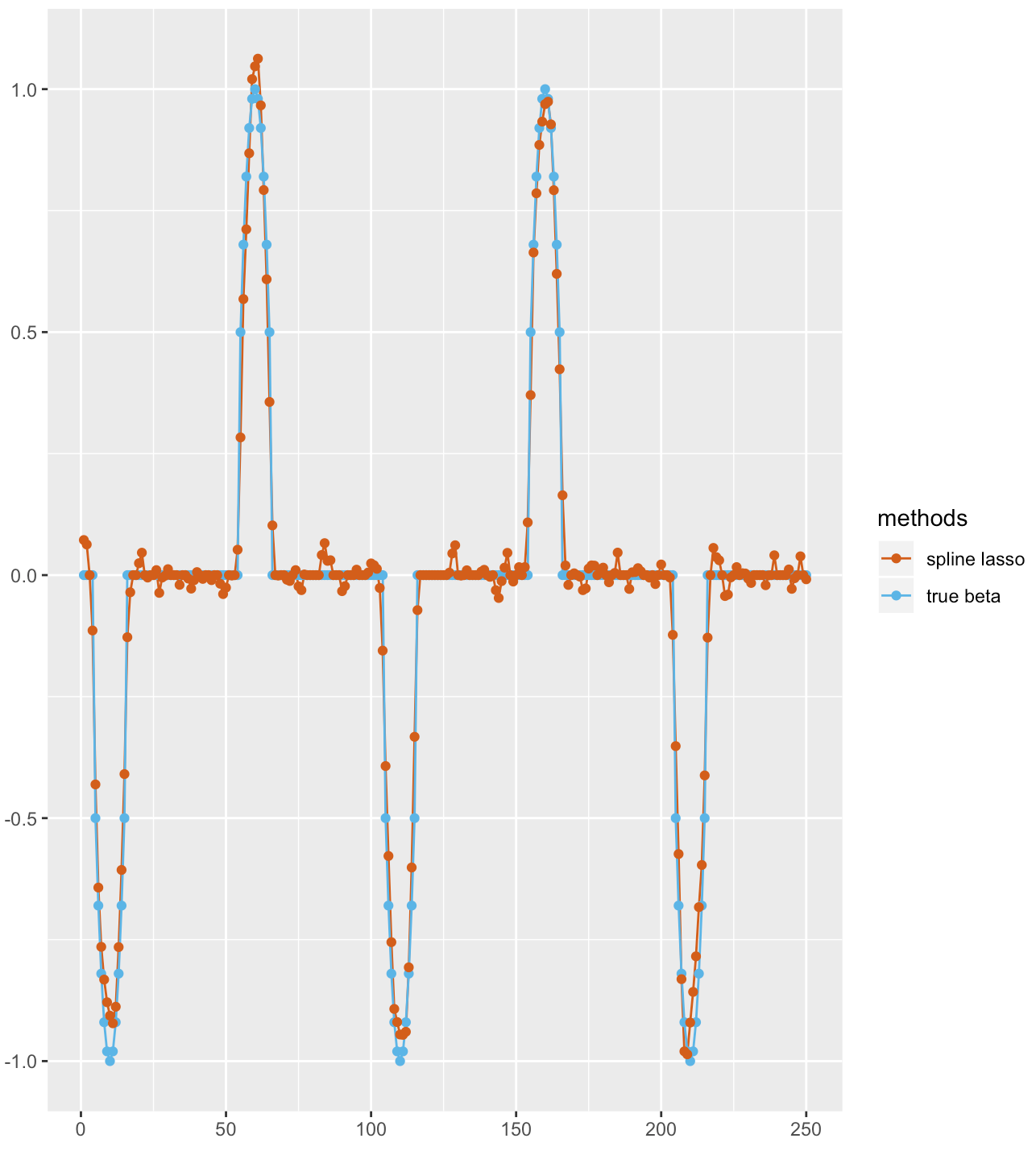}  
\end{subfigure}
\caption{We consider estimation of $\beta^*$ plotted in (c) of Figure~\ref{piecewisepolypathgraph}. We set $N=100$. The data generating process and the tuning parameter selection are the same with the simulation in Section~\ref{simu1section}. Each panel displays the true regression coefficients and the estimated values.}
\label{piecequadraticbetagraph}
\end{figure}

Next, we present two preliminary results, which justify adaptivity and validity of the Graph-Piecewise-Polynomial-Lasso, respectively. Let $\widehat{S}_{1}\in [m]$ be the support set of $\Delta^{(k+1)}\hat{\beta}$ and $\Delta^{(k+1)}_{-\widehat{S}_{1}}$ be the submatrix of $\Delta^{(k+1)}$ after removing the rows indexed by $\widehat{S}_{1}$. Our first result describes the basic structure of the Graph-Piecewise-Polynomial-Lasso via the null space of $\Delta^{(k+1)}_{-\widehat{S}_{1}}$: 

\begin{proposition}
\label{structure}

Assume, without loss of generality, that the graph $\mathcal{G}$ has a single connected component. For even $k$, let $\mathcal{G}_{-\widehat{S}_{1}}$ be the subgraph induced by removing the edges indexed by $\widehat{S}_{1}$. Let $C_{1},...,C_{j}$ be the connected components of the subgraph $\mathcal{G}_{-\widehat{S}_{1}}$. Then the null space of $\Delta_{-\widehat{S}_{1}}^{(k+1)}$ is 
\begin{equation*}
\label{evencase}
\text{span}(\mathbbm{1}_{n})+ \text{span}(\mathbbm{1}_{n})^{\perp}\cap\left(L^{\frac{k}{2}}+\mathbbm{1}_{n}\mathbbm{1}_{n}^T\right)^{-1}\text{span}(\mathbbm{1}_{C_{1}},...,\mathbbm{1}_{C_{j}} ), 
\end{equation*}
where $\mathbbm{1}_{n}=(1,...,1)^T\in \mathbb{R}^{n}$ and $\mathbbm{1}_{C_{i}}\in \mathbb{R}^{n}$ is the indicator vector over connected component $C_{i}$. 

Similarly, for odd $k$, the null space of $\Delta_{-\widehat{S}_{1}}^{(k+1)}$ is 
\begin{equation*}
\label{oddcase}
\text{span}(\mathbbm{1}_{n})+\text{span}(\mathbbm{1}_{n})^{\perp}\cap \left\{ u\in \mathbb{R}^{n}: u=(L^{\frac{k+1}{2}}+\mathbbm{1}_{n}\mathbbm{1}_{n}^T)^{-1}v,\quad v_{-\widehat{S}_{1}}=0  \right\}. 
\end{equation*}
\end{proposition}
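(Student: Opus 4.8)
The plan is to reduce, in both parities, the computation of $\text{null}(\Delta_{-\widehat{S}_1}^{(k+1)})$ to a linear system of the form $L^{r}\beta = v$ for the appropriate integer power $r$ (with $r = k/2$ when $k$ is even and $r=(k+1)/2$ when $k$ is odd), and then to solve it by exploiting the regularized operator $M := L^{r} + \mathbbm{1}_n\mathbbm{1}_n^T$. I will use two facts repeatedly. First, since $\mathcal{G}$ is connected, $L$ is positive semidefinite with $\text{null}(L)=\text{span}(\mathbbm{1}_n)$; hence for $r\ge 1$ we have $\text{null}(L^r)=\text{span}(\mathbbm{1}_n)$, the range of $L^r$ is $\mathbbm{1}_n^\perp$, and $L^r$ maps $\mathbbm{1}_n^\perp$ bijectively onto itself. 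Second, $M$ is positive definite, preserves the orthogonal decomposition $\real^n=\text{span}(\mathbbm{1}_n)\oplus\mathbbm{1}_n^\perp$ (because $M\mathbbm{1}_n=n\mathbbm{1}_n$ and $Mv=L^r v$ for $v\perp\mathbbm{1}_n$), and therefore $M^{-1}$ agrees with $(L^r|_{\mathbbm{1}_n^\perp})^{-1}$ on $\mathbbm{1}_n^\perp$. A short consequence I will record is the exchange identity $\mathbbm{1}_n^\perp\cap M^{-1}U = M^{-1}(U\cap\mathbbm{1}_n^\perp)$ for any subspace $U$, valid because $M^{-1}$ fixes both summands of the decomposition.

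For even $k$, since $\Delta^{(k+1)}=FL^{k/2}$, deleting the rows indexed by $\widehat{S}_1$ gives $\Delta_{-\widehat{S}_1}^{(k+1)}=F_{-\widehat{S}_1}L^{k/2}$, where $F_{-\widehat{S}_1}$ is the oriented incidence matrix of the subgraph $\mathcal{G}_{-\widehat{S}_1}$. Thus $\beta$ lies in the null space iff $L^{k/2}\beta\in\text{null}(F_{-\widehat{S}_1})$. I will invoke the standard graph-theoretic fact that the null space of an oriented incidence matrix is spanned by the indicator vectors of the connected components, so $\text{null}(F_{-\widehat{S}_1})=\text{span}(\mathbbm{1}_{C_1},\ldots,\mathbbm{1}_{C_j})=:W$, and note $\mathbbm{1}_n\in W$. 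Writing $\beta=a\mathbbm{1}_n+\beta_\perp$ with $\beta_\perp\perp\mathbbm{1}_n$, the first fact gives $L^{k/2}\beta=L^{k/2}\beta_\perp\in\mathbbm{1}_n^\perp$, so membership is equivalent to $L^{k/2}\beta_\perp\in W\cap\mathbbm{1}_n^\perp$; inverting with the second fact yields $\beta_\perp=M^{-1}(L^{k/2}\beta_\perp)\in M^{-1}(W\cap\mathbbm{1}_n^\perp)=\mathbbm{1}_n^\perp\cap M^{-1}W$, with $a$ free. Collecting the two pieces gives exactly the claimed $\text{span}(\mathbbm{1}_n)+\bigl(\mathbbm{1}_n^\perp\cap M^{-1}W\bigr)$ (the degenerate case $k=0$, where $L^{k/2}=I_n$ is nonsingular, is checked directly and is consistent with the formula).

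For odd $k$ the argument is parallel: now $\Delta^{(k+1)}=L^{(k+1)/2}$ has rows indexed by nodes, so $\beta$ is in the null space iff $v:=L^{(k+1)/2}\beta$ is supported on $\widehat{S}_1$, i.e.\ $v_{-\widehat{S}_1}=0$. Decomposing $\beta=a\mathbbm{1}_n+\beta_\perp$ as before and using that the range of $L^{(k+1)/2}$ lies inside $\mathbbm{1}_n^\perp$, I solve $L^{(k+1)/2}\beta_\perp=v$ by $\beta_\perp=M^{-1}v$ (legitimate precisely because $v\in\mathbbm{1}_n^\perp$), giving $\beta_\perp\in\mathbbm{1}_n^\perp\cap M^{-1}V$ with $V=\{v:v_{-\widehat{S}_1}=0\}$. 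The exchange identity then lets me present this as the stated set $\{u=M^{-1}v: v_{-\widehat{S}_1}=0\}$ intersected with $\mathbbm{1}_n^\perp$, since the orthogonality constraint on $u$ automatically forces $v\in\mathbbm{1}_n^\perp$.

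The main obstacle I anticipate is the bookkeeping forced by the singularity of $L^r$: one must verify carefully that $M^{-1}$ genuinely inverts $L^r$ on $\mathbbm{1}_n^\perp$ while leaving $\text{span}(\mathbbm{1}_n)$ invariant, and that intersecting with $\mathbbm{1}_n^\perp$ commutes with applying $M^{-1}$ — this is what reconciles the \emph{raw} description of the solution set (only $v_{-\widehat{S}_1}=0$) with the orthogonally split form appearing in the statement. The only genuinely graph-theoretic input, used in the even case, is the identification of $\text{null}(F_{-\widehat{S}_1})$ with the span of the connected-component indicators.
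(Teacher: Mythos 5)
Your proposal is correct and follows essentially the same route as the paper's proof: factor $\Delta^{(k+1)}_{-\widehat{S}_1}$ as $F_{-\widehat{S}_1}L^{k/2}$ (even case) or restrict rows of $L^{(k+1)/2}$ (odd case), split off $\text{span}(\mathbbm{1}_n)=\text{null}(L^{r})$ using connectivity, identify $\text{null}(F_{-\widehat{S}_1})$ with the span of component indicators, and invert on $\mathbbm{1}_n^\perp$ via the positive definite matrix $L^{r}+\mathbbm{1}_n\mathbbm{1}_n^T$. Your write-up is somewhat more explicit than the paper's about why $M^{-1}$ preserves the orthogonal decomposition and why intersection with $\mathbbm{1}_n^\perp$ commutes with applying $M^{-1}$, but these are refinements of the same argument, not a different one.
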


The proof of Proposition~\ref{structure} is given in Appendix~\ref{structureproof}. 
 
\begin{remark}
Proposition~\ref{structure} provides a justification for adaptivity of the Graph-Piecewise-Polynomial-Lasso. For $k=0$, Proposition~\ref{structure} shows that $\hat{\beta}\in \text{span}\left(\mathbbm{1}_{C_{1}},...,\mathbbm{1}_{C_{j}}\right)$. Therefore, $\hat{\beta}$ is piecewise constant over connected components $C_{1},...,C_{j}$. Furthermore, let $\widehat{S}_{2}\in [n]$ denote the support set of $\hat{\beta}$. Then for $i\in [j]$, the index set $C_{i}$ is either in $\widehat{S}_{2}$ or in $\widehat{S}_{2}^{c}$. In general, for even $k$, Proposition~\ref{structure} implies that the structure of $\hat{\beta}$ is smoothed by multiplying $\text{span}(\mathbbm{1}_{C_{1}},...,\mathbbm{1}_{C_{j}})$ by $(L^{k/2}+\mathbbm{1}_{n}\mathbbm{1}_{n}^T)^{-1}$. For odd $k$, Proposition~\ref{structure} implies that the structure is based on the support set $\widehat{S}_{1}$ and the smoother $(L^{(k+1)/2}+\mathbbm{1}_{n}\mathbbm{1}_{n}^T)^{-1}$. 
\end{remark}

Our second preliminary result interprets validity of the Graph-Piecewise-Polynomial Lasso from a Bayesian perspective. Following \cite{park2008}, for a given $\lambda>0$ and $\lambda_{g}>0$, we consider the hierarchical model described below: 
\begin{equation}
\label{bayes1}
y\mid X, \beta, \sigma^2_{\varepsilon}\sim N(X\beta, \sigma^{2}_{\varepsilon}I_{N}),  
\end{equation}
\begin{equation}
\label{bayes2}
\beta\mid \tau_{1}^{2},..., \tau_{n}^2, \omega_{1}^2,..., \omega_{m}^{2}, \sigma_{\varepsilon}^2\sim N(0, \sigma_{\varepsilon}^2\Sigma_{\beta}), 
\end{equation}
\begin{equation}
\label{bayes3}
\begin{split}
\pi\left(\tau_{1}^2,..., \omega_{m}^{2}\right) \propto \left|\Sigma_{\beta}\right|^{\frac{1}{2}}\prod_{j=1}^{n}\left(\frac{1}{\sqrt{\tau^2_{j}}}\exp{\left(-\frac{\lambda^{2}_{1}\tau_{j}^{2}}{2}\right)}\right)\prod_{i=1}^{m}\left(\frac{1}{\sqrt{\omega^2_{i}}}\exp{\left(-\frac{\lambda^{2}_{2}\omega_{i}^{2}}{2}\right)}\right), 
\end{split}
\end{equation}
where $\left|\Sigma_{\beta}\right|$ is the determinant of $\Sigma_{\beta}$, $\pi(\tau_{1}^2,..., \omega_{m}^2)$ is the joint prior distribution for $(\tau_{1}^2,..., \omega_{m}^2)$, and 
\begin{equation*}
\lambda_{1}=\frac{N\lambda}{\sigma_{\varepsilon}},\quad\lambda_{2}=\frac{N\lambda_{g}}{\sigma_{\varepsilon}},\quad\Sigma_{\beta}^{-1}=\text{diag}\left(\frac{1}{\tau_{1}^{2}},..., \frac{1}{\tau_{n}^2}\right)+(\Delta^{(k+1)})^T\text{diag}\left(\frac{1}{\omega_{1}^{2}},...,\frac{1}{\omega_{m}^2}\right)\Delta^{(k+1)}.  
\end{equation*}
Then we have the following result: 
\begin{proposition} 
\label{bayes}
The proposed estimator $\hat{\beta}$ in (\ref{tf-sl}) is a maximum a posteriori (MAP) estimator for the hierarchical model in $(\ref{bayes1})$, $(\ref{bayes2})$, and $(\ref{bayes3})$. 
\end{proposition}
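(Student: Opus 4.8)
The plan is to compute the marginal posterior density of $\beta$ by integrating out the latent scale parameters $\tau_1^2,\ldots,\tau_n^2,\omega_1^2,\ldots,\omega_m^2$, and then to show that its negative logarithm coincides, up to an additive constant and a positive multiplicative factor, with the objective in $(\ref{tf-sl})$. Since the MAP estimator is by definition the maximizer of the posterior, equivalently the minimizer of the negative log-posterior, this identification immediately yields the claim.

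First I would write down the joint density of $(\beta,\tau^2,\omega^2)$ given $(y,X)$ using $(\ref{bayes1})$--$(\ref{bayes3})$. By Bayes' rule it is proportional to the product of the Gaussian likelihood $(\ref{bayes1})$, the conditional Gaussian prior $(\ref{bayes2})$ whose density carries the factor $|\Sigma_\beta|^{-1/2}\exp(-\frac{1}{2\sigma_\varepsilon^2}\beta^T\Sigma_\beta^{-1}\beta)$, and the hyperprior $(\ref{bayes3})$. The crucial observation is that the factor $|\Sigma_\beta|^{1/2}$ deliberately placed in $(\ref{bayes3})$ cancels exactly the normalizing factor $|\Sigma_\beta|^{-1/2}$ coming from $(\ref{bayes2})$. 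Substituting the explicit form of $\Sigma_\beta^{-1}$ then splits the quadratic form as $\beta^T\Sigma_\beta^{-1}\beta=\sum_{j=1}^n \beta_j^2/\tau_j^2+\sum_{i=1}^m (\Delta^{(k+1)}\beta)_i^2/\omega_i^2$, so that the joint density factorizes into a product over the individual $\tau_j^2$ and $\omega_i^2$.

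Next I would marginalize each latent variable separately using the Gaussian scale-mixture representation of the Laplace density, namely the elementary integral $\int_0^\infty u^{-1/2}\exp(-A/u-Bu)\,du\propto\exp(-2\sqrt{AB})$ for $A,B>0$. Applying this with $A=\beta_j^2/(2\sigma_\varepsilon^2)$ and $B=\lambda_1^2/2$ produces a factor $\exp(-\lambda_1|\beta_j|/\sigma_\varepsilon)$, and applying it with $A=(\Delta^{(k+1)}\beta)_i^2/(2\sigma_\varepsilon^2)$ and $B=\lambda_2^2/2$ produces $\exp(-\lambda_2|(\Delta^{(k+1)}\beta)_i|/\sigma_\varepsilon)$; the remaining $\beta$-independent constants are absorbed into the normalization. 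Taking the product over $j$ and $i$ and substituting $\lambda_1=N\lambda/\sigma_\varepsilon$ and $\lambda_2=N\lambda_g/\sigma_\varepsilon$ shows that the marginal posterior of $\beta$ is proportional to $\exp(-\frac{1}{2\sigma_\varepsilon^2}\|y-X\beta\|_2^2-\frac{N\lambda}{\sigma_\varepsilon^2}\|\beta\|_1-\frac{N\lambda_g}{\sigma_\varepsilon^2}\|\Delta^{(k+1)}\beta\|_1)$.

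Finally, taking the negative logarithm and multiplying by the positive constant $\sigma_\varepsilon^2/N$ recovers exactly $\frac{1}{2N}\|y-X\beta\|_2^2+\lambda\|\beta\|_1+\lambda_g\|\Delta^{(k+1)}\beta\|_1$, the objective in $(\ref{tf-sl})$, so the MAP estimator coincides with $\hat{\beta}$. I expect the main obstacle to be the bookkeeping of the determinant cancellation: one must verify carefully that the $|\Sigma_\beta|^{1/2}$ factor in the hyperprior $(\ref{bayes3})$ is precisely what decouples the otherwise jointly coupled scale parameters, since without it the integrals over $\tau^2$ and $\omega^2$ would not factorize and the Laplace scale-mixture identity could not be applied term by term.
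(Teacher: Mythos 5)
Your proposal is correct and follows essentially the same route as the paper's proof: both use the Gaussian scale-mixture representation of the Laplace density to integrate out the latent variances $\tau_j^2,\omega_i^2$, recover the prior $\pi(\beta)\propto\exp\bigl(-\tfrac{\lambda_1}{\sigma_\varepsilon}\|\beta\|_1-\tfrac{\lambda_2}{\sigma_\varepsilon}\|\Delta^{(k+1)}\beta\|_1\bigr)$, and identify the negative log-posterior with the objective in $(\ref{tf-sl})$ up to the factor $N/\sigma_\varepsilon^2$. Your explicit treatment of the $|\Sigma_\beta|^{1/2}$ cancellation and the factorization of $\beta^T\Sigma_\beta^{-1}\beta$ is a point the paper's proof leaves implicit, but it is the same argument.
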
 

The proof of Proposition~\ref{bayes} is contained in Appendix~\ref{bayesproof}.  

\begin{remark}
The conditional prior of $\beta$ in $(\ref{bayes2})$ is a graph-based prior. In other words, we construct the inverse covariance matrix based on $\Delta^{(k+1)}$. For example, if the underlying graph is a path graph and $k=0$, then 
\begin{equation*}
\Sigma_{\beta}^{-1}=\left[\begin{array}{ccccccc}{\frac{1}{\tau_{1}^{2}}+\frac{1}{\omega_{1}^{2}}} & {-\frac{1}{\omega_{1}^{2}}} & {0} & {\cdots} & {0} & {0} \\ {-\frac{1}{\omega_{1}^{2}}} & {\frac{1}{\tau_{2}^{2}}+\frac{1}{\omega_{1}^{2}}+\frac{1}{\omega_{2}^{2}}} & {-\frac{1}{\omega_{2}^2}} & {\cdots} & {0} & {0} \\ {\vdots} & {\vdots} & {\vdots} &  & {\vdots} & {\vdots}  \\ 
{0} & {0} & {0} & {\cdots} & {-\frac{1}{\omega_{n-1}^{2}}} & {\frac{1}{\tau_{n}^2}+\frac{1}{\omega_{n-1}^{2}}}
\end{array}\right], 
\end{equation*}
which is a tridiagonal matrix. On the other hand, it is a well-known fact that zeros in the inverse covariance matrix of a multivariate Gaussian distribution indicate absent edges in
the corresponding undirected graph \cite{graphical1996}. Therefore, the tridiagonal structure of $\Sigma_{\beta}^{-1}$ implies that the underlying graph which governs $\beta$ is a path graph and only adjacent components of $\beta$ are correlated with each other.   
\end{remark}


\subsection{Main results}
We now state the main theoretical results on $\ell_{2}$-estimation error, $\ell_{1}$-estimation error, and mean-squared prediction error of the Graph-Piecewise-Polynomial-Lasso for the deterministic design (Theorem~\ref{fixedconthm}) and random design (Theorem~\ref{probconthm}), respectively. 

We start with the fixed design. To obtain estimation error bounds, it is necessary to impose some conditions on the design matrix. It has been shown that the restricted eigenvalue condition is sufficient to bound the estimation error of the standard Lasso for linear models \cite{bickel2009, negahban2012}. In this paper, we require a similar notion of restricted eigenvalue condition.  

\begin{condition}
\label{lowerRE}
Let $S_{1}\subset [m]$ and $S_{2}\subset [n]$ be the support sets of $\Delta^{(k+1)}\beta^*$ and $\beta^*$, respectively. Let $\gamma=\lambda_{g}/\lambda$. There exists $\eta_{\gamma}>0$, such that
\begin{equation*}
\frac{1}{N}\|XD^{+}v\|_{2}^2\ge \eta_{\gamma}\|v\|_{2}^{2}, 
\end{equation*}
for all $v\in\mathbb{C}=\left\{v\in \real^{m+n}: \|v_{S^c}\|_{1}\le 3\|v_{S}\|_{1} \right\}$, where $S=S_{1}\cup \left\{i+m, i\in S_{2}\right\}\subset [m+n]$. 
\end{condition}

\begin{remark}
When $\gamma=0$, Condition~\ref{lowerRE} reduces to the usual restricted eigenvalue condition for the standard Lasso problem. When $\gamma\ne 0$, then both of $D$ and $D^{+}$ depend on the value of $\gamma$, so we allow the restricted eigenvalue $\eta_{\gamma}$ to be related to $\gamma$. In the sequel, we use $\eta_{0}$ to denote $\eta_{\gamma}$ for $\gamma=0$. 
\end{remark}

With Condition~\ref{lowerRE} at hand, we have the following main results for the fixed design: 


\begin{theorem}[Fixed design]
\label{fixedconthm}
Consider the linear model $(\ref{linear})$ where $\beta^*\in \mathcal{S}(k, s_{1}, s_{2})$. Assume Condition~\ref{lowerRE} holds and let $\lambda$ in $(\ref{tf-sl})$ satisfy the condition that  
\begin{equation*}
\lambda\ge \frac{2}{N}\|\varepsilon^TXD^{+}\|_{\infty}. 
\end{equation*}

\begin{enumerate}
\item[(a)] We have 
\begin{equation}
\label{l2error}
\|\hat{\beta}-\beta^*\|_{2}\le \frac{3\lambda_{g}\sqrt{(2d)^{k+1}s_{1}}+3\lambda\sqrt{s_{2}}}{\eta_{\gamma}},
\end{equation}
and 
\begin{equation*}
\begin{split}
\frac{1}{N}\|X(\hat{\beta}-\beta^*)\|_{2}^{2}\le \frac{\left( 3\lambda_{g}\sqrt{(2d)^{k+1}s_{1}}+3\lambda\sqrt{s_{2}}  \right)^{2}}{\eta_{\gamma}}, 
\end{split}
\end{equation*}
where $d$ is the maximum degree of the underlying graph. 

\item[(b)] Furthermore, if $\gamma^2<1/(2d)^{k+1}$, where $\gamma$ is defined in Condition~\ref{lowerRE}, then we have
\begin{equation}
\label{l1error}
\begin{split}
\|\hat{\beta}-\beta^*\|_{1}\le \frac{12\lambda\left(  
\gamma\sqrt{(2d)^{k+1}s_{1}}+\sqrt{s_{2}}\right)^{2}}{\eta_{\gamma}\left(1-\gamma^2(2d)^{k+1}\right)}. 
\end{split}
\end{equation}



\end{enumerate}

\end{theorem}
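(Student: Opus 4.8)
The plan is to run the restricted-eigenvalue argument in the transformed coordinates $v=D\hat\delta$, where $\hat\delta=\hat\beta-\beta^*$, and then translate the resulting bounds on $v$ back to $\hat\delta$. First I would exploit optimality of $\hat\beta$ in the equivalent program $\min_\beta \frac{1}{2N}\|y-X\beta\|_2^2+\lambda\|D\beta\|_1$. Substituting $y=X\beta^*+\varepsilon$ and cancelling $\|\varepsilon\|_2^2$ yields the basic inequality $\frac{1}{2N}\|X\hat\delta\|_2^2\le \frac{1}{N}\inprod{\varepsilon}{X\hat\delta}+\lambda\big(\|D\beta^*\|_1-\|D\hat\beta\|_1\big)$. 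Since $D$ has full column rank, $X\hat\delta=XD^+v$, so by H\"older's inequality and the hypothesis $\lambda\ge \frac{2}{N}\|\varepsilon^TXD^+\|_\infty$ the noise term is at most $\frac{1}{N}\|\varepsilon^TXD^+\|_\infty\|v\|_1\le\frac{\lambda}{2}\|v\|_1$.

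Next I would use that $D\beta^*$ is supported on $S=S_1\cup\{i+m:i\in S_2\}$, the combined supports of $\Delta^{(k+1)}\beta^*$ and $\beta^*$. Decomposability of the $\ell_1$-norm across $S$ and $S^c$ gives $\|D\beta^*\|_1-\|D\hat\beta\|_1\le \|v_S\|_1-\|v_{S^c}\|_1$. Combining with the noise bound and using $\|v\|_1=\|v_S\|_1+\|v_{S^c}\|_1$ collapses the basic inequality to $\frac{1}{2N}\|X\hat\delta\|_2^2\le \frac{3\lambda}{2}\|v_S\|_1-\frac{\lambda}{2}\|v_{S^c}\|_1$. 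Nonnegativity of the left-hand side forces $\|v_{S^c}\|_1\le 3\|v_S\|_1$, i.e.\ $v\in\mathbb{C}$, so Condition~\ref{lowerRE} applies, while simultaneously leaving $\frac{1}{N}\|X\hat\delta\|_2^2\le 3\lambda\|v_S\|_1$.

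To finish part (a), I would invoke Condition~\ref{lowerRE} in the form $\frac{1}{N}\|X\hat\delta\|_2^2=\frac{1}{N}\|XD^+v\|_2^2\ge \eta_\gamma\|v\|_2^2$, and bound $\|v_S\|_1$ by Cauchy--Schwarz over its two blocks: $\|v_S\|_1\le \sqrt{s_1}\|v^{(1)}\|_2+\sqrt{s_2}\|v^{(2)}\|_2$, where $v^{(1)}=\gamma\Delta^{(k+1)}\hat\delta$ and $v^{(2)}=\hat\delta$. The key ingredient is the spectral bound $\lambda_{\max}(L)\le 2d$ from Lemma~\ref{eigenvaluelem}, which gives $\opnorm{\Delta^{(k+1)}}_{op}\le(2d)^{(k+1)/2}$ for both parities of $k$ (directly for odd $k$ via $L^{(k+1)/2}$, and for even $k$ via $\opnorm{FL^{k/2}}_{op}^2=\lambda_{\max}(L^{k+1})$). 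Hence $\|v^{(1)}\|_2\le \gamma(2d)^{(k+1)/2}\|\hat\delta\|_2$ and $\|v_S\|_1\le\big(\gamma\sqrt{(2d)^{k+1}s_1}+\sqrt{s_2}\big)\|\hat\delta\|_2\le\big(\gamma\sqrt{(2d)^{k+1}s_1}+\sqrt{s_2}\big)\|v\|_2$. Substituting into $\eta_\gamma\|v\|_2^2\le 3\lambda\|v_S\|_1$ and cancelling one power of $\|v\|_2$ gives the stated $\ell_2$ bound (using $\|\hat\delta\|_2=\|v^{(2)}\|_2\le\|v\|_2$ and $\lambda\gamma=\lambda_g$); the prediction bound follows by feeding the estimate for $\|v_S\|_1$ back into $\frac{1}{N}\|X\hat\delta\|_2^2\le 3\lambda\|v_S\|_1$.

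For part (b) I would start from the cone inequality $\|v\|_1\le 4\|v_S\|_1$ together with the block estimate above, and then pass from $\|v\|_1$ to $\|\hat\delta\|_1=\|v^{(2)}\|_1$. I expect the main obstacle to be precisely this last translation: the cone condition and restricted eigenvalue are naturally stated for the lifted vector $v=D\hat\delta$, but the target controls only its identity block $\hat\delta$, so one must separate the contribution of the $v^{(1)}=\gamma\Delta^{(k+1)}\hat\delta$ block and re-express everything through $\hat\delta$. Carrying this out, one is led to an inequality in which a term proportional to $\gamma^2(2d)^{k+1}\|\hat\delta\|_1$ appears on the same side as $\|\hat\delta\|_1$ itself; isolating $\|\hat\delta\|_1$ then requires the coefficient $1-\gamma^2(2d)^{k+1}$ to be positive, which is exactly the hypothesis $\gamma^2<1/(2d)^{k+1}$ and produces that factor in the denominator. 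As throughout, the spectral estimate $\lambda_{\max}(L)\le 2d$ is the device that turns the abstract operator $\Delta^{(k+1)}$ into the explicit $(2d)^{k+1}$ constants.
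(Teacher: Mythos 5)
Your part (a) is correct and follows the paper's own proof essentially line for line: the basic inequality plus H\"older against $\|\varepsilon^TXD^+\|_\infty$, the cone membership of $v=D(\hat\beta-\beta^*)$, the bound $\|v_S\|_1\le\bigl(\gamma\sqrt{(2d)^{k+1}s_1}+\sqrt{s_2}\bigr)\|\hat\beta-\beta^*\|_2$ via Cauchy--Schwarz and $\opnorm{\Delta^{(k+1)}}_{op}\le(2d)^{(k+1)/2}$, and finally Condition~\ref{lowerRE} together with $\sigma_{\min}(D)=1$.

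For part (b) you genuinely diverge from the paper, and your route is both simpler and sharper --- but your own explanation of it is off. The paper writes $\|\hat\beta-\beta^*\|_1=\|D^+D(\hat\beta-\beta^*)\|_1\le\opnorm{D^+}_1\|D(\hat\beta-\beta^*)\|_1$ and controls $\opnorm{D^+}_1\le\bigl(1-\gamma^2(2d)^{k+1}\bigr)^{-1}$ by a Neumann-series bound on $\opnorm{(I_n+\gamma^2L^{k+1})^{-1}}_1$ (Lemma~\ref{l1matrixnorm}); that inversion is the \emph{only} place the hypothesis $\gamma^2<(2d)^{-(k+1)}$ and the denominator $1-\gamma^2(2d)^{k+1}$ enter. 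Your route avoids $D^+$ entirely: since $\hat\beta-\beta^*$ is literally the identity block of $v$, you have $\|\hat\beta-\beta^*\|_1=\|v^{(2)}\|_1\le\|v\|_1\le4\|v_S\|_1\le4\bigl(\gamma\sqrt{(2d)^{k+1}s_1}+\sqrt{s_2}\bigr)\|\hat\beta-\beta^*\|_2$, and combining with the $\ell_2$ bound gives $\|\hat\beta-\beta^*\|_1\le 12\lambda\bigl(\gamma\sqrt{(2d)^{k+1}s_1}+\sqrt{s_2}\bigr)^2/\eta_\gamma$, which is the stated bound \emph{without} the factor $\bigl(1-\gamma^2(2d)^{k+1}\bigr)^{-1}\ge1$ and hence implies it. There is no ``obstacle'' in passing from $\|v\|_1$ to $\|v^{(2)}\|_1$, and no self-referential inequality producing a $\gamma^2(2d)^{k+1}\|\hat\beta-\beta^*\|_1$ term arises on your path; that phenomenon belongs to the paper's computation of $(D^TD)^{-1}=(I_n+\gamma^2L^{k+1})^{-1}$, not to yours. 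So the plan you actually state proves (in fact strengthens) the claim, but the final paragraph's account of where the denominator comes from should be discarded rather than ``carried out.''
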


The proof of Theorem~\ref{fixedconthm} is contained in Appendix~\ref{fixedconthmprf}. 

\begin{remark}
\label{fixedremark}
We provide some comments about the results in Theorem~\ref{fixedconthm} below: 
\begin{enumerate}
\item[(a)] Theorem~\ref{fixedconthm} suggests that the key ingredients for statistical consistency of the Graph-Piecewise-Polynomial-Lasso include the existence of $\eta_{\gamma}$ in Condition~\ref{lowerRE} and the appropriate choices of tuning parameters $\lambda$ and $\lambda_{g}$, both of which appear explicitly in the upper bounds in $(\ref{l2error})$ and $(\ref{l1error})$. In the sequel, we will discuss their choices for sub-Gaussian random designs. 

\item[(b)] If $\gamma=0$, i.e., $\lambda_{g}=0$, then Part (a) and Part (b) of the theorem recover the following standard results for the Lasso: 
\begin{equation*}
\label{lasso1}
\|\hat{\beta}-\beta^*\|_{2}\le \frac{3\lambda\sqrt{s_{2}}}{\eta_{0}},\quad\|\hat{\beta}-\beta^*\|_{1}\le \frac{12\lambda s_{2}}{\eta_{0}},\quad\frac{1}{N}\|X(\hat{\beta}-\beta^*)\|_{2}^{2}\le \frac{9\lambda^2 s_{2}}{\eta_{0}}. 
\end{equation*}
Therefore, in the random design, if we choose $\lambda\asymp \sigma_{\varepsilon}\sqrt{\frac{\log n}{N}}$, we obtain 
\begin{equation*}
\label{lasso1}
\|\hat{\beta}-\beta^*\|_{2}\le \mathcal{O}_{\mathbb{P}}\left(\sigma_{\varepsilon}\sqrt{\frac{s_{2}\log n}{N}}\right),\quad\|\hat{\beta}-\beta^*\|_{1}\le \mathcal{O}_{\mathbb{P}}\left(\sigma_{\varepsilon}s_{2}\sqrt{\frac{\log n}{N}}\right),  
\end{equation*}
and 
\begin{equation*}
\label{lasso2}
\begin{split}
\frac{1}{N}\|X(\hat{\beta}-\beta^*)\|_{2}^{2}\le \mathcal{O}_{\mathbb{P}}\left(\sigma_{\varepsilon}^2\frac{s_{2}\log n}{N}\right).  
\end{split}
\end{equation*}
In Theorem~\ref{probconthm} to follow, we will show that when $\gamma\ne 0$, the Graph-Piecewise-Polynomial-Lasso is able to achieve the same rate. 


\end{enumerate}
\end{remark}

Next, we turn to the random design setting and consider the situation where $N$, $n$, $p$, $s_{1}$, and $s_{2}$ are able to increase to $\infty$. We make use of the following assumption on the design matrix $X$: 

\begin{assumption}
\label{subgaussianmat}
The design matrix $X\in \mathbb{R}^{N\times n}$ in the linear model $(\ref{linear})$ is a row-wise $(\sigma_{x}, \Sigma_{x})$-sub-Gaussian random matrix defined in Definition~\ref{randmat}. Furthermore, eigenvalues of $\Sigma_{x}$ are bounded by dimension-free constants.\footnote{For ease of presentation, we only consider designs which satisfy the bounded eigenvalue condition in this paper. The proposed adaptive estimator and our theory could also be easily adapted to the setting of highly-correlated designs, but such derivations are beyond the scope of our present work.}  That is, 
\begin{equation*}
c\le \lambda_{1}(\Sigma_{x})\le \lambda_{n}(\Sigma_{x})\le c^{\prime},
\end{equation*}
where $c>0$ and $c^{\prime}>0$ are constants. 
\end{assumption}

We begin by verifying the restricted eigenvalue condition stated in Condition~\ref{lowerRE}. 

\begin{lemma}
\label{eta1rand}
Assume that Assumption~\ref{subgaussianmat} holds. Let $\gamma^2=\nu/(2d)^{k+1}$, where $\gamma$ is defined in Condition~\ref{lowerRE} and $0\le \nu<1$. Then when 
$\eta_{\gamma}=\frac{1}{2}(\nu+1)^{-1}\lambda_{1}(\Sigma_{x})$, for any $v\in \mathbb{C}$, 
\begin{equation*}
\frac{1}{N}\|XD^{+}v\|_{2}^2\ge \eta_{\gamma}\|v\|_{2}^2, 
\end{equation*}
with probability at least $1-2\exp{\left(c\left(s_{1}+s_{2}\right)\log n-c^{\prime}N\right)}$, where $c>0$ and $c^{\prime}>0$ are constants. 

\end{lemma}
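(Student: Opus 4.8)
The plan is to reduce the restricted eigenvalue statement for the transformed design $XD^+$ to the corresponding statement for the original sub-Gaussian design $X$, exploiting the fact that $D$ is well-conditioned once $\gamma$ is chosen as prescribed. The directions $v$ of interest are transformed parameters of the form $v = D\beta$ (the error $D\hat\beta-D\beta^*$ lives there, and $D^+v$ depends only on the projection of $v$ onto $\mathrm{col}(D)$); since $D$ has full column rank, $D^+v = D^+D\beta = \beta$. Hence $\tfrac1N\|XD^+v\|_2^2 = \tfrac1N\|X\beta\|_2^2$, and the task reduces to lower bounding this in terms of $\|v\|_2 = \|D\beta\|_2$.

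First I would record the conditioning of $D$. Since $\|D\beta\|_2^2 = \gamma^2\|\Delta^{(k+1)}\beta\|_2^2 + \|\beta\|_2^2$ and $\opnorm{\Delta^{(k+1)}}_{op}^2 = \lambda_n(L)^{k+1}\le (2d)^{k+1}$ by Definition~\ref{diffoperator2} together with Lemma~\ref{eigenvaluelem}, the choice $\gamma^2 = \nu/(2d)^{k+1}$ yields
\begin{equation*}
\|\beta\|_2^2 \le \|D\beta\|_2^2 \le (1+\nu)\|\beta\|_2^2 .
\end{equation*}
In particular $\|\beta\|_2^2 \ge (1+\nu)^{-1}\|v\|_2^2$, which already accounts for the factor $(1+\nu)^{-1}$ in $\eta_\gamma$. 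Next I would convert the cone membership $v\in\mathbb{C}$ into effective sparsity of $\beta$: since $|S| = |S_1|+|S_2|\le s_1+s_2$, the inequality $\|v_{S^c}\|_1\le 3\|v_S\|_1$ gives $\|v\|_1 \le 4\|v_S\|_1 \le 4\sqrt{s_1+s_2}\,\|v\|_2$. Because $\beta$ occupies the last $n$ coordinates of $v = D\beta$, we have $\|\beta\|_1 \le \|v\|_1$, and combining with the conditioning bound gives $\|\beta\|_1 \le 4\sqrt{(1+\nu)(s_1+s_2)}\,\|\beta\|_2$, so that $\beta$ behaves like an $O(s_1+s_2)$-sparse vector.

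Finally I would invoke a standard one-sided restricted eigenvalue bound for row-wise sub-Gaussian matrices under Assumption~\ref{subgaussianmat}: with high probability, for all $u\in\real^n$,
\begin{equation*}
\frac1N\|Xu\|_2^2 \ge \tfrac12\, u^T\Sigma_x u - c\,\frac{\log n}{N}\|u\|_1^2 \ge \tfrac12\,\lambda_1(\Sigma_x)\|u\|_2^2 - c\,\frac{\log n}{N}\|u\|_1^2 .
\end{equation*}
Applying this with $u=\beta$ and absorbing the residual term via $\|\beta\|_1^2 \le 16(1+\nu)(s_1+s_2)\|\beta\|_2^2$ (valid once $N\gtrsim (s_1+s_2)\log n$) yields $\tfrac1N\|X\beta\|_2^2 \ge \tfrac12\lambda_1(\Sigma_x)\|\beta\|_2^2$. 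Chaining with $\|\beta\|_2^2 \ge (1+\nu)^{-1}\|v\|_2^2$ then gives $\tfrac1N\|XD^+v\|_2^2 \ge \tfrac12(1+\nu)^{-1}\lambda_1(\Sigma_x)\|v\|_2^2 = \eta_\gamma\|v\|_2^2$, as claimed. The stated probability $1-2\exp\!\left(c(s_1+s_2)\log n - c'N\right)$ emerges from a covering/peeling argument over the effectively sparse cone: a union bound over the $\binom{n}{s_1+s_2}$ candidate supports, each contributing an $\exp(-c'N)$ deviation.

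The hard part will be the uniform concentration in the last step — bounding $\tfrac1N\|Xu\|_2^2$ from below \emph{simultaneously} over the whole cone of effectively sparse directions, rather than at a fixed point. This is where sub-Gaussianity, the bounded-eigenvalue hypothesis of Assumption~\ref{subgaussianmat}, and the discretization machinery are essential, and where the $(s_1+s_2)\log n$ term in the exponent is generated; the conditioning of $D$ and the cone-to-sparsity reduction are comparatively routine once this uniform bound is in hand.
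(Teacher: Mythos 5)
Your argument is correct and arrives at the same bound, but it is organized differently from the paper's proof. The paper stays entirely in the $(m+n)$-dimensional $v$-space: it lower-bounds the population form via $v^T(D^{+})^T\Sigma_{x}D^{+}v\ge \lambda_{1}(\Sigma_{x})\sigma_{\max}^{-2}(D)\|v\|_{2}^{2}\ge (\nu+1)^{-1}\lambda_{1}(\Sigma_{x})\|v\|_{2}^{2}$, proves the uniform deviation bound $|v^T(D^{+})^T(\tfrac{X^TX}{N}-\Sigma_{x})D^{+}v|\le \delta/150$ over $2(s_{1}+s_{2})$-sparse unit vectors by a support-by-support $\tfrac13$-net argument with Lemma~\ref{LemBern}, and then passes from exactly sparse vectors to the cone via the convex-hull machinery of Lemmas~\ref{LemBall}--\ref{REgeneral}. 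You instead pull everything back to $\beta=D^{+}v\in\real^{n}$, trading $\|v\|_{2}$ and $\|v\|_{1}$ for $\|\beta\|_{2}$ and $\|\beta\|_{1}$ through the conditioning of $D$, and then invoke a standard $\ell_{1}$-tolerance restricted eigenvalue bound for the raw design $X$. The ingredients (conditioning of $D$, cone-to-effective-sparsity, sub-Gaussian discretization) are the same, so the difference is where the concentration is performed; your version lets you cite an off-the-shelf result for $X$ itself rather than redoing the net argument for the composite matrix, and makes the role of $\nu$ transparent. Two small caveats: absorbing the tolerance term $c\tfrac{\log n}{N}\|\beta\|_{1}^{2}$ while still landing exactly on the constant $\tfrac12$ in $\eta_{\gamma}$ requires starting the RE inequality from something larger than $\tfrac12 u^T\Sigma_{x}u$ (the paper's deviation level $\delta/150$ fed into Lemma~\ref{REgeneral} is what produces the clean $\tfrac12$); and your reduction only covers $v$ in the column space of $D$, though that is precisely where the lemma is applied ($v=D(\hat{\beta}-\beta^{*})$), and the paper's own opening inequality tacitly makes the same restriction since $\min_{\|v\|_{2}=1}\|D^{+}v\|_{2}=0$ over all of $\real^{m+n}$.
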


The proof of Lemma~\ref{eta1rand} is contained in Appendix~\ref{eta1randprf}. Our next lemma concerns the choice of the tuning parameter $\lambda$. We have the following result: 

\begin{lemma}
\label{tuninglem}
Assume that Assumption~\ref{subgaussianmat} holds and let $\lambda\asymp\sigma_{\varepsilon}\sqrt{\frac{\log n}{N}}$. Then we have
\begin{equation*}
\mathbb{P}\left(\lambda\ge \frac{2}{N}\|\varepsilon^TXD^{+}\|_{\infty}\right)\ge 1-2\exp{(-\log n)}-\exp{\left(c\log n-c^{\prime}N\right)},
\end{equation*}
where $c>0$ and $c^{\prime}>0$ are constants. 
\end{lemma}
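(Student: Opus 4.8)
The plan is to bound $\frac{2}{N}\|\varepsilon^TXD^{+}\|_{\infty}=\frac{2}{N}\max_{j\in[m+n]}\left|\varepsilon^TXD^{+}_{\cdot j}\right|$ by peeling off the randomness in $\varepsilon$ and in $X$ separately, exploiting that $\varepsilon$ is independent of $X$. Writing $a_{j}=D^{+}_{\cdot j}\in\real^{n}$ for the $j$-th column of $D^{+}$ (a deterministic vector depending only on $\Delta^{(k+1)}$ and $\gamma$), the first observation is that, conditioned on $X$, the scalar $\varepsilon^T(Xa_{j})$ is a weighted sum of the independent $\sigma_{\varepsilon}$-sub-Gaussian coordinates of $\varepsilon$, hence $\sigma_{\varepsilon}\|Xa_{j}\|_{2}$-sub-Gaussian. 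Thus the whole argument reduces to controlling the column norms $\|Xa_{j}\|_{2}$ uniformly in $j$ and then taking a union bound.

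First I would establish a deterministic spectral bound on the $a_{j}$. Since $(\Delta^{(k+1)})^T\Delta^{(k+1)}=L^{k+1}$ for both parities of $k$, we get $D^TD=\gamma^{2}L^{k+1}+I_{n}$, whose eigenvalues lie in $[1,\,1+\gamma^{2}(2d)^{k+1}]$ because $\lambda_{\max}(L)\le 2d$ (Lemma~\ref{eigenvaluelem}); consequently $\opnorm{(D^TD)^{-1}}_{op}\le 1$. Each column of $D^{+}=(D^TD)^{-1}D^T$ is $(D^TD)^{-1}$ applied to a row of $D$: for the $I_{n}$-block this is a standard basis vector, giving $\|a_{j}\|_{2}\le 1$, while for the $\gamma\Delta^{(k+1)}$-block it is $\gamma$ times a row of $\Delta^{(k+1)}$, whose $\ell_{2}$-norm is at most $\opnorm{\Delta^{(k+1)}}_{op}=\sqrt{\lambda_{\max}(L^{k+1})}\le(2d)^{(k+1)/2}$. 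Under the regime of Lemma~\ref{eta1rand}, where $\gamma^{2}=\nu/(2d)^{k+1}$ with $\nu<1$, this yields $\max_{j}\|a_{j}\|_{2}\le 1$ (and in any case a fixed constant).

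The crucial point, and the main obstacle, is that in the high-dimensional regime $N\ll n$ one cannot simply bound $\opnorm{\frac{1}{N}X^TX}_{op}$, which is of order $n/N\gg 1$. Instead I would control each quadratic form $\|Xa_{j}\|_{2}^{2}=\sum_{i=1}^{N}(X_{i}^Ta_{j})^{2}$ in the fixed direction $a_{j}$ separately. Each summand is sub-exponential with mean $a_{j}^T\Sigma_{x}a_{j}\le\lambda_{n}(\Sigma_{x})\le c'$ by Assumption~\ref{subgaussianmat}, so a Bernstein-type bound gives $\|Xa_{j}\|_{2}^{2}\le 2c'N$ except with probability $\exp(-c''N)$; a union bound over the $m+n$ columns, together with $\log(m+n)\lesssim\log n$, produces the event $\mathcal{A}=\{\max_{j}\|Xa_{j}\|_{2}^{2}\le 2c'N\}$ with $\mprob(\mathcal{A}^{c})\le\exp(c\log n-c''N)$, matching the second term in the claimed bound.

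Finally I would condition on $X$. On $\mathcal{A}$ each $\varepsilon^TXa_{j}$ is $\sigma_{\varepsilon}\sqrt{2c'N}$-sub-Gaussian, so by the sub-Gaussian tail bound and a union bound over $j\in[m+n]$,
\begin{equation*}
\mprob\left(\|\varepsilon^TXD^{+}\|_{\infty}>\tfrac{N\lambda}{2}\,\Big|\,X\right)\le 2(m+n)\exp\left(-\frac{N\lambda^{2}}{16c'\sigma_{\varepsilon}^{2}}\right)\quad\text{on }\mathcal{A}.
\end{equation*}
Plugging in $\lambda\asymp\sigma_{\varepsilon}\sqrt{\log n/N}$ makes the exponent a constant multiple of $\log n$, and choosing the implicit constant in $\lambda$ large enough dominates the $\log(m+n)\lesssim\log n$ factor, leaving $2\exp(-\log n)$. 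Splitting $\mprob(\cdot)=\E_{X}[\mprob(\cdot\mid X)]$ over $\{X\in\mathcal{A}\}$ and $\{X\in\mathcal{A}^{c}\}$ and combining the two displays then yields the probability lower bound $1-2\exp(-\log n)-\exp(c\log n-c''N)$, as stated; note that the independence $\varepsilon\perp X$ is what legitimizes treating the $a_{j}$ and the event $\mathcal{A}$ as fixed when bounding the conditional sub-Gaussian tails.
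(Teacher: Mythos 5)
Your proposal is correct and follows essentially the same route as the paper's proof: condition on the high-probability event that each column norm $\|XD^{+}e_{i}\|_{2}^{2}\lesssim N$ (obtained via a Bernstein-type bound and a union bound over the $m+n$ columns, contributing the $\exp(c\log n-c'N)$ term), then use the conditional sub-Gaussianity of $\varepsilon^TXD^{+}e_{i}$ given $X$ together with a union bound and the choice $t\asymp\sigma_{\varepsilon}\sqrt{N\log n}$. The only cosmetic difference is that you bound $\max_{j}\|D^{+}e_{j}\|_{2}$ block by block, whereas the paper gets the same control directly from $\sigma_{\max}(D^{+})=1/\sigma_{\min}(D)=1$ (Lemma~\ref{eigenvaluelem}), which also avoids any restriction on $\gamma$.
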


The proof of Lemma~\ref{tuninglem} is contained in Appendix~\ref{tuninglemprf}. Altogether, we arrive at the main result for sub-Gaussian random designs: 

\begin{theorem}[Random design]
\label{probconthm}
Consider the linear model $(\ref{linear})$ where $\beta^*\in \mathcal{S}(k, s_{1}, s_{2})$. Assume that Assumption~\ref{subgaussianmat} holds. Let $\lambda\asymp \sigma_{\varepsilon}\sqrt{\frac{\log n}{N}}$ and $\lambda_{g}=\lambda\sqrt{\nu/(2d)^{k+1}}$, where $0\le \nu<1$ is a constant and $d$ is the maximum degree of the underlying graph. Assume $s_{2}/s_{1}\ge\nu$. Then we have 
\begin{equation*}
\|\hat{\beta}-\beta^*\|_{2}\le \mathcal{O}_{\mathbb{P}}\left(\sigma_{\varepsilon}\sqrt{\frac{s_{2}\log n}{N}}\right),\quad\|\hat{\beta}-\beta^*\|_{1}\le \mathcal{O}_{\mathbb{P}}\left(\sigma_{\varepsilon}s_{2}\sqrt{\frac{\log n}{N}}\right), 
\end{equation*}
and 
\begin{equation*}
\frac{1}{N}\|X(\hat{\beta}-\beta^*)\|^{2}_{2}\le \mathcal{O}_{\mathbb{P}}\left(\sigma^{2}_{\varepsilon}\frac{s_{2}\log n}{N}\right). 
\end{equation*}





\end{theorem}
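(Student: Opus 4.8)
The plan is to assemble the random-design rates by feeding the two probabilistic lemmas into the purely deterministic bounds of Theorem~\ref{fixedconthm}: that theorem is deterministic once Condition~\ref{lowerRE} and the tuning bound $\lambda\ge\frac{2}{N}\|\varepsilon^TXD^{+}\|_{\infty}$ are in force, and Lemma~\ref{eta1rand} and Lemma~\ref{tuninglem} supply exactly these two ingredients with high probability. First I would fix the parameter choices. With $\lambda_{g}=\lambda\sqrt{\nu/(2d)^{k+1}}$, the ratio $\gamma=\lambda_{g}/\lambda$ of Condition~\ref{lowerRE} satisfies $\gamma^{2}=\nu/(2d)^{k+1}$, and since $0\le\nu<1$ this gives $\gamma^{2}<1/(2d)^{k+1}$, so the hypothesis of Part~(b) of Theorem~\ref{fixedconthm} is met. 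The two identities that make the $s_{1}$-contribution collapse are
\[
\lambda_{g}\sqrt{(2d)^{k+1}}=\lambda\sqrt{\nu},\qquad \gamma\sqrt{(2d)^{k+1}}=\sqrt{\nu},
\]
so the $s_{1}$-terms appearing in the numerators of $(\ref{l2error})$ and $(\ref{l1error})$ reduce to $\lambda\sqrt{\nu s_{1}}$ and no longer carry the factor $(2d)^{(k+1)/2}$.

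Next I would invoke the lemmas under Assumption~\ref{subgaussianmat}. Lemma~\ref{eta1rand} certifies Condition~\ref{lowerRE} with $\eta_{\gamma}=\tfrac{1}{2}(\nu+1)^{-1}\lambda_{1}(\Sigma_{x})$ on an event of probability at least $1-2\exp(c(s_{1}+s_{2})\log n-c^{\prime}N)$, while Lemma~\ref{tuninglem} supplies the tuning-parameter bound on an event of probability at least $1-2/n-\exp(c\log n-c^{\prime}N)$. Because $\nu$ is a constant and $\lambda_{1}(\Sigma_{x})$ is bounded below by a dimension-free constant, $\eta_{\gamma}$ is a positive constant bounded away from zero. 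A union bound shows that both events hold simultaneously with probability tending to $1$ in the high-dimensional regime where $(s_{1}+s_{2})\log n=o(N)$.

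On this intersection event I would substitute and simplify. Theorem~\ref{fixedconthm}(a) gives $\|\hat{\beta}-\beta^*\|_{2}\le 3(\lambda\sqrt{\nu s_{1}}+\lambda\sqrt{s_{2}})/\eta_{\gamma}$, and the hypothesis $s_{2}/s_{1}\ge\nu$ yields $\sqrt{\nu s_{1}}\le\sqrt{s_{2}}$, so the numerator is at most $6\lambda\sqrt{s_{2}}$; with $\lambda\asymp\sigma_{\varepsilon}\sqrt{\log n/N}$ and $\eta_{\gamma}$ constant this is $\mathcal{O}_{\mathbb{P}}(\sigma_{\varepsilon}\sqrt{s_{2}\log n/N})$, and squaring gives the prediction rate. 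For the $\ell_{1}$ bound the denominator factor $1-\gamma^{2}(2d)^{k+1}=1-\nu$ is a positive constant, and the same reduction turns the numerator into $(\sqrt{\nu s_{1}}+\sqrt{s_{2}})^{2}\le 4s_{2}$, so $\|\hat{\beta}-\beta^*\|_{1}\le 48\lambda s_{2}/(\eta_{\gamma}(1-\nu))=\mathcal{O}_{\mathbb{P}}(\sigma_{\varepsilon}s_{2}\sqrt{\log n/N})$.

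I do not expect a genuinely hard step, since the heavy lifting already resides in the three cited results; the only point demanding care is verifying that the prescribed $\lambda_{g}$ rebalances the piecewise-polynomial sparsity $s_{1}$ against the ordinary sparsity $s_{2}$, so that together with $s_{2}/s_{1}\ge\nu$ the term in $s_{1}$ never dominates and the Graph-Piecewise-Polynomial-Lasso matches the standard Lasso rate. I would also keep the implicit scaling $(s_{1}+s_{2})\log n=o(N)$ in view, as it is what drives the probabilities in Lemma~\ref{eta1rand} and Lemma~\ref{tuninglem} to one and hence makes the $\mathcal{O}_{\mathbb{P}}$ statements meaningful.
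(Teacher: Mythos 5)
Your proposal is correct and follows essentially the same route as the paper: plug the prescribed $\lambda$, $\lambda_{g}$, and the value $\eta_{\gamma}=\tfrac{1}{2}(\nu+1)^{-1}\lambda_{1}(\Sigma_{x})$ into the deterministic bounds of Theorem~\ref{fixedconthm}, use $s_{2}/s_{1}\ge\nu$ to absorb the $s_{1}$-term, and invoke Lemma~\ref{eta1rand} and Lemma~\ref{tuninglem} to make both hypotheses hold with high probability. Your write-up actually spells out the cancellations (e.g., $\gamma^{2}(2d)^{k+1}=\nu$) more explicitly than the paper does.
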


The proof of Theorem~\ref{probconthm} is contained in Appendix~\ref{probconthmprf}. 
\begin{remark}
\begin{enumerate}
\item[(a)] Theorem~\ref{probconthm} implies that if $s_{2}\log n/N=o(1)$, then the Graph-Piecewise-Polynomial-Lasso is $\ell_{2}$-consistent. 

\item[(b)] The convergence rates in Theorem~\ref{probconthm} are the same as those of the Lasso in Remark~\ref{fixedremark}. However, as empirically demonstrated in Figure~\ref{piececonstantbetagraph}, Figure~\ref{piecelinearbetagraph} and Figure~\ref{piecequadraticbetagraph}, our approach is adaptive in the sense that it is able to promote the desired graph-based structure while the Lasso is not.  
\end{enumerate}
\end{remark}

\subsection{Some extensions}
\label{extension-section}

We now extend our theory to the case where the true regression parameter $\beta^*$ is weakly piecewise polynomial and sparse, which is defined as follows:   

\begin{definition}
\label{weaklypiecewisepoly}
For $k\ge 0$, $0<q_{1}<1$, $0<q_{2}<1$, $R_{1}>0$, and $R_{2}>0$, let 
\begin{equation*}
\mathcal{S}\left(k, q_{1}, q_{2}, R_{1}, R_{2}\right)=\left\{ \beta\in \mathbb{R}^{n}: \sum_{i=1}^{m}\left|(\Delta_{i}^{(k+1)})^T\beta\right|^{q_{1}}\le R_{1},\quad\sum_{i=1}^{n}\left|\beta_{i}\right|^{q_{2}}\le R_{2}\right\}, 
\end{equation*}
where $(\Delta_{i}^{(k+1)})^T$ is the $i$-th row of $\Delta^{(k+1)}$. Then $\beta^*$ is called \emph{simultaneously $(k, q_{1}, R_{1})$-weakly piecewise polynomial and $(q_{2}, R_{2})$-weakly sparse} over the underlying graph $\mathcal{G}$ if $\beta^*\in \mathcal{S}(k, q_{1}, q_{2}, R_{1}, R_{2})$. 
\end{definition}

Obviously, the notion of weakly piecewise polynomial and sparse structure is a generalization of our previously defined piecewise polynomial and sparse structure. For a given $q\in (0, 1)$, recall that the $\ell_{q}$-ball is defined as 
\begin{equation*}
\mathbb{B}_{q}(R_{q})=\left\{ \theta\in \mathbb{R}^{m}: \sum_{i=1}^{m}\left|\theta_{i}\right|^{q}\le R_{q}  \right\}. 
\end{equation*}
Therefore, $\Delta^{(k+1)}\beta^*\in\mathbb{B}_{q_{1}}(R_{1})$ and $\beta^*\in\mathbb{B}_{q_{2}}(R_{2})$ if $\beta^*$ is simultaneously $(k, q_{1}, R_{1})$-weakly piecewise polynomial and $(q_{2}, R_{2})$-weakly sparse. Furthermore, it can be shown that if 
\begin{equation*}
\left|\Delta^{(k+1)}\beta^*\right|_{(i)}\le ci^{-\alpha},\quad\left|\beta^*\right|_{(j)}\le cj^{-\alpha},\quad\forall~i\in[m], j\in [n],  
\end{equation*}
where $c>0$ and $\alpha>1$ are constants, and $\left|\Delta^{(k+1)}\beta^*\right|_{(i)}$ and $\left|\beta^*\right|_{(j)}$ are the order statistics of $\Delta^{(k+1)}\beta^*$ and $\beta^*$ in absolute value ordered from largest to smallest, then 
\begin{equation*}
\Delta^{(k+1)}\beta^*\in \mathbb{B}_{q_{1}}(R_1),\quad\beta^*\in \mathbb{B}_{q_{2}}(R_{2}),
\end{equation*}
where
\begin{equation*}
\frac{1}{\alpha}<q_{1}<1,\quad\frac{1}{\alpha}<q_{2}<1,\quad R_{1}=\frac{c^{q_{1}}\alpha q_{1}}{\alpha q_{1}-1},\quad R_{2}=\frac{c^{q_{2}}\alpha q_{2}}{\alpha q_{2}-1}. 
\end{equation*}

Again we start with the deterministic design, and make use of the following condition.  
\begin{condition}
\label{lowerRE2}
There exist a curvature $\eta^{\prime}_{\gamma}>0$ and tolerance $\tau(N, n)>0$ such that
\begin{equation*}
\frac{1}{N}\|XD^{+}v\|_{2}^{2}\ge \eta^{\prime}_{\gamma}\|v\|_{2}^2-\tau(N, n)\|v\|_{1}^{2}, 
\end{equation*}
for all $v\in \mathbb{R}^{m+n}$. 
\end{condition}

Condition~\ref{lowerRE2} is a generalization of our previous Condition~\ref{lowerRE} to any vector in $\mathbb{R}^{m+n}$. It is similar to the lower restricted eigenvalue condition defined in \cite{LohWai12}. We now state an extended result of Theorem~\ref{fixedconthm}.  

\begin{lemma}
\label{modelmisspecified-fixed} 

Consider the linear model $(\ref{linear})$ where $\beta^*$ is any general vector in $\mathbb{R}^{n}$. Assume that Condition~\ref{lowerRE2} holds. Let $\lambda$ in $(\ref{tf-sl})$ satisfy the condition $\lambda\ge \frac{2}{N}\|\varepsilon^TXD^{+}\|_{\infty}$ and let $S\subset [m+n]$ be any subset with cardinality $|S|\le \frac{\eta^{\prime}_{\gamma}}{64\tau(N, n)}$. 

\begin{enumerate}
\item[(a)] We have 
\begin{equation*}
\|\hat{\beta}-\beta^*\|_{2}^{2}\le \frac{144\lambda^2|S|}{(\eta^{\prime}_{\gamma})^2}+\frac{32\lambda}{\eta^{\prime}_{\gamma}}\|D_{S^c}\beta^*\|_{1}+\frac{128\tau(N, n)}{\eta^{\prime}_{\gamma}}\|D_{S^c}\beta^*\|_{1}^{2}, 
\end{equation*}
and 
\begin{equation*}
\begin{split}
\frac{1}{N}\|X(\hat{\beta}-\beta^*)\|_{2}^{2}
\le 4\lambda\|D_{S^c}\beta^*\|_{1}+3\lambda\sqrt{|S| \left(\frac{144\lambda^2|S|}{(\eta^{\prime}_{\gamma})^2}+\frac{32\lambda}{\eta^{\prime}_{\gamma}}\|D_{S^c}\beta^*\|_{1}+\frac{128\tau(N, n)}{\eta^{\prime}_{\gamma}}\|D_{S^c}\beta^*\|_{1}^{2}\right)}. 
\end{split}
\end{equation*}

\item[(b)] If $\lambda_{g}=2^{-(1+k/2)}d^{-(k+1)/2}\lambda$, then we have 
\begin{equation*}
\|\hat{\beta}-\beta^*\|^{2}_{1}\le128\|D_{S^c}\beta^*\|_{1}^{2}+128|S|\left(\frac{144\lambda^2|S|}{(\eta^{\prime}_{\gamma})^2}+\frac{32\lambda}{\eta^{\prime}_{\gamma}}\|D_{S^c}\beta^*\|_{1}+\frac{128\tau(N, n)}{\eta^{\prime}_{\gamma}}\|D_{S^c}\beta^*\|_{1}^{2}\right). 
\end{equation*}

\end{enumerate}

%
%
%

\end{lemma}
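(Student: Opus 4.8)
The plan is to lift the problem into the coordinates $w := D(\hat{\beta}-\beta^*)\in\real^{m+n}$ and run an oracle-type argument there, exactly as one does for the generalized Lasso under a lower restricted eigenvalue condition. Write $\hat{\Delta}=\hat{\beta}-\beta^*$. Since $D$ has full column rank we have $D^{+}D=I_n$, hence $X\hat{\Delta}=XD^{+}w$, which is precisely what lets Condition~\ref{lowerRE2} bite. Two bookkeeping facts will be used repeatedly: the lower block of $D$ is $I_n$, so the last $n$ entries of $w$ coincide with $\hat{\Delta}$, giving for free that $\|\hat{\Delta}\|_2\le\|w\|_2$ and $\|\hat{\Delta}\|_1\le\|w\|_1$. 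All three conclusions in (a) and the bound in (b) will be extracted from a single quadratic inequality in $\|w\|_2$.

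First I would establish the basic inequality and the associated cone bound. Starting from optimality of $\hat{\beta}$ in the $D$-penalized form and substituting $y=X\beta^*+\varepsilon$, the $\tfrac1{2N}\|\varepsilon\|_2^2$ terms cancel and I obtain $\tfrac{1}{2N}\|XD^{+}w\|_2^2\le \tfrac1N\inprod{\varepsilon}{XD^{+}w}+\lambda\|D\beta^*\|_1-\lambda\|D\hat{\beta}\|_1$. Hölder's inequality together with the hypothesis $\lambda\ge\tfrac2N\|\varepsilon^TXD^{+}\|_\infty$ controls the noise term by $\tfrac\lambda2\|w\|_1$, while decomposability of the $\ell_1$ norm over $S$ and $S^c$ (using $(D\beta^*)_{S^c}=D_{S^c}\beta^*$) gives $\|D\beta^*\|_1-\|D\hat{\beta}\|_1\le 2\|D_{S^c}\beta^*\|_1+\|w_S\|_1-\|w_{S^c}\|_1$. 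Combining and splitting $\|w\|_1=\|w_S\|_1+\|w_{S^c}\|_1$ yields, since the left-hand side $\tfrac1{2N}\|XD^{+}w\|_2^2$ is nonnegative, the cone inequality $\|w_{S^c}\|_1\le 3\|w_S\|_1+4\|D_{S^c}\beta^*\|_1$, and also the prediction-type bound $\tfrac1N\|XD^{+}w\|_2^2\le 3\lambda\|w_S\|_1+4\lambda\|D_{S^c}\beta^*\|_1\le 3\lambda\sqrt{|S|}\,\|w\|_2+4\lambda\|D_{S^c}\beta^*\|_1$.

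Next I would invoke Condition~\ref{lowerRE2}. The cone inequality gives $\|w\|_1\le 4\sqrt{|S|}\,\|w\|_2+4\|D_{S^c}\beta^*\|_1$, so $\|w\|_1^2\le 32|S|\,\|w\|_2^2+32\|D_{S^c}\beta^*\|_1^2$. Feeding this into the tolerance term of Condition~\ref{lowerRE2} and using the cardinality restriction $|S|\le \eta^{\prime}_{\gamma}/(64\tau(N,n))$ to absorb $32\tau(N,n)|S|\,\|w\|_2^2$ into half the curvature, I get $\tfrac1N\|XD^{+}w\|_2^2\ge \tfrac{\eta^{\prime}_{\gamma}}{2}\|w\|_2^2-32\tau(N,n)\|D_{S^c}\beta^*\|_1^2$. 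Chaining this lower bound with the prediction upper bound from the previous step produces a quadratic inequality $\tfrac{\eta^{\prime}_{\gamma}}{2}\|w\|_2^2\le 3\lambda\sqrt{|S|}\,\|w\|_2 + 4\lambda\|D_{S^c}\beta^*\|_1+32\tau(N,n)\|D_{S^c}\beta^*\|_1^2$. Solving it (by the standard ``$at^2\le bt+c$ implies $t^2\le 4b^2/a^2+2c/a$'' dichotomy) gives exactly the stated $\ell_2$ bound on $\|w\|_2^2$, hence on $\|\hat{\Delta}\|_2^2\le\|w\|_2^2$; substituting the resulting value of $\|w\|_2$ back into the prediction upper bound gives the mean-squared prediction bound, completing (a).

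For (b), I would use $\|\hat{\Delta}\|_1\le\|w\|_1\le 4\sqrt{|S|}\,\|w\|_2+4\|D_{S^c}\beta^*\|_1$ and square, so $\|\hat{\Delta}\|_1^2\le 32|S|\,\|w\|_2^2+32\|D_{S^c}\beta^*\|_1^2$; inserting the $\ell_2$ bound for $\|w\|_2^2$ yields the claim. The role of the calibration $\lambda_g=2^{-(1+k/2)}d^{-(k+1)/2}\lambda$ is exactly to phrase the right-hand side through the $\ell_2$ bound of part (a): this choice makes $\gamma^2(2d)^{k+1}=\tfrac12$, so combining $\opnorm{\Delta^{(k+1)}}_{op}^2=\lambda_{\max}(L^{k+1})\le(2d)^{k+1}$ (Lemma~\ref{eigenvaluelem}, since $(\Delta^{(k+1)})^T\Delta^{(k+1)}=L^{k+1}$) with $\|w\|_2^2=\gamma^2\|\Delta^{(k+1)}\hat{\Delta}\|_2^2+\|\hat{\Delta}\|_2^2$ gives $\|w\|_2^2\le \tfrac32\|\hat{\Delta}\|_2^2$, and the leftover factors are absorbed into the stated constant $128$. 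The main obstacle is not any single estimate but the coordinated bookkeeping in the lifted space: one must run the entire cone/curvature argument on $w$ while taming the tolerance term $\tau(N,n)\|w\|_1^2$ through the cardinality budget on $S$, and then translate the $\|w\|$-bounds back to the $\|\hat{\Delta}\|$-quantities of interest — harmless for the $\ell_2$ and $\ell_1$ \emph{upper} bounds because $\|\hat{\Delta}\|\le\|w\|$, but requiring the $\lambda_g$ calibration together with the operator-norm bound on $\Delta^{(k+1)}$ precisely when part (b) is to be expressed through the part-(a) $\ell_2$ bound.
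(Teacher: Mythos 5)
Your proposal is correct and follows essentially the same route as the paper's proof: lift to $w=D(\hat\beta-\beta^*)$, derive the basic/cone inequalities, feed $\|w\|_1^2\le 32|S|\|w\|_2^2+32\|D_{S^c}\beta^*\|_1^2$ into Condition~\ref{lowerRE2}, absorb the tolerance via the cardinality budget, and resolve the resulting quadratic in $\|w\|_2$ (your generic $at^2\le bt+c$ dichotomy replaces the paper's explicit case split plus Young's inequality and in fact yields a slightly sharper middle term, $16\lambda/\eta'_\gamma$ versus $32\lambda/\eta'_\gamma$). The only genuine divergence is in part (b): your direct bound $\|\hat\beta-\beta^*\|_1\le\|w\|_1$ already gives the claim with constants $32$ and makes the $\lambda_g$ calibration unnecessary, whereas the paper routes through $\|\hat\beta-\beta^*\|_1\le\opnorm{D^{+}}_1\|w\|_1\le 2\|w\|_1$, which is where the calibration and the factor $128$ actually come from — so your closing discussion of the calibration via $\|w\|_2^2\le\tfrac32\|\hat\beta-\beta^*\|_2^2$ is a harmless but unneeded detour.
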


The proof of Lemma~\ref{modelmisspecified-fixed} is contained in Appendix~\ref{modelmisspecified-fixed-proof}. 

\begin{remark}
The results in Lemma~\ref{modelmisspecified-fixed} are oracle inequalities, which hold without any assumptions on the true regression vector $\beta^{*}$. Furthermore, Lemma~\ref{modelmisspecified-fixed} yields a family of upper bounds with a tunable subset $S$ to be optimized. 
\end{remark}

Applying Lemma~\ref{modelmisspecified-fixed} to weakly piecewise polynomial and sparse regression coefficients, we obtain the following theorem in the fixed design case:  

\begin{theorem}[Fixed design]
\label{weaklypiecewisepolycorollary}

Consider the linear model $(\ref{linear})$ where $\beta^*\in \mathcal{S}(k, q_{1}, q_{2}, R_{1}, R_{2})$. Assume that Condition~\ref{lowerRE2} holds and let $\lambda$ in $(\ref{tf-sl})$ satisfy the condition $\lambda\ge \frac{2}{N}\| \varepsilon^TXD^{+}\|_{\infty}$. Furthermore, assume there exist  constants $\eta_{1}>0$ and $\eta_{2}>0$ such that 
\begin{equation*}
R_{1}\eta_{1}^{-q_{1}}+R_{2}\eta_{2}^{-q_{2}}\le \frac{\eta^{\prime}_{\gamma}}{64\tau(N, n)}. 
\end{equation*}

\begin{enumerate}
\item[(a)] We have 
\begin{equation*}
\|\hat{\beta}-\beta^*\|_{2}^{2}\le \RNum{1}+\RNum{2},
\end{equation*}
and 
\begin{equation*}
\frac{1}{N}\|X(\hat{\beta}-\beta^*)\|_{2}^{2}\le 4\left(\lambda_{g}R_{1}\eta_{1}^{1-q_{1}}+\lambda R_{2}\eta_{2}^{1-q_{2}}\right)+3\lambda\sqrt{\left(R_{1}\eta_{1}^{-q_{1}}+R_{2}\eta_{2}^{-q_{2}}\right)\left(\RNum{1}+\RNum{2}\right)},  
\end{equation*}
where
\begin{equation*}
\begin{split}
\RNum{1}=\frac{144\lambda^2\left(R_{1}\eta_{1}^{-q_{1}}+R_{2}\eta_{2}^{-q_{2}}\right)}{(\eta^{\prime}_{\gamma})^2},
\end{split}
\end{equation*}
and 
\begin{equation*}
\begin{split}
\RNum{2}=\frac{32}{\eta^{\prime}_{\gamma}}\left[\lambda_{g}R_{1}\eta_{1}^{1-q_{1}}+\lambda R_{2}\eta_{2}^{1-q_{2}}+4\tau(N, n)\left(\frac{\lambda_{g}}{\lambda}R_{1}\eta_{1}^{1-q_{1}}
+R_{2}\eta_{2}^{1-q_{2}} \right)^2\right].
\end{split}
\end{equation*}

\item[(b)] If $\lambda_{g}=2^{-(1+k/2)}d^{-(k+1)/2}\lambda$, then we have
\begin{equation*}
\begin{split}
\|\hat{\beta}-\beta^*\|^{2}_{1}\le 128\left(\frac{\lambda_{g}}{\lambda}R_{1}\eta_{1}^{1-q_{1}}+R_{2}\eta_{2}^{1-q_{2}}\right)^2+128\left(R_{1}\eta_{1}^{-q_{1}}+R_{2}\eta_{2}^{-q_{2}}\right)\left(\RNum{1}+\RNum{2}\right),
\end{split}
\end{equation*}
where $\RNum{1}$ and $\RNum{2}$ are defined in Part (a).

\end{enumerate}
\end{theorem}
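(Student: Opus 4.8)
The plan is to deduce Theorem~\ref{weaklypiecewisepolycorollary} directly from the oracle inequalities of Lemma~\ref{modelmisspecified-fixed} by instantiating its free subset $S\subset[m+n]$ through a hard-thresholding argument adapted to the two weak-sparsity constraints. First I would fix the thresholds $\eta_1,\eta_2>0$ supplied in the hypothesis and define $S_1=\{i\in[m]:|(\Delta_i^{(k+1)})^T\beta^*|>\eta_1\}$ and $S_2=\{i\in[n]:|\beta_i^*|>\eta_2\}$, then set $S=S_1\cup\{m+i:i\in S_2\}\subset[m+n]$. This choice mirrors the block structure of $D$ in $(\ref{Dmatrix})$, whose first $m$ coordinates carry $\frac{\lambda_g}{\lambda}\Delta^{(k+1)}$ and whose last $n$ coordinates carry the identity, so the two thresholds can be tuned independently to the two separate $\ell_q$-balls.

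The next step is the standard $\ell_q$-ball bookkeeping. Since $\Delta^{(k+1)}\beta^*\in\Ball_{q_1}(R_1)$ and $\beta^*\in\Ball_{q_2}(R_2)$, a Markov-type counting argument gives $|S_1|\le R_1\eta_1^{-q_1}$ and $|S_2|\le R_2\eta_2^{-q_2}$, hence $|S|\le R_1\eta_1^{-q_1}+R_2\eta_2^{-q_2}$; the hypothesis $R_1\eta_1^{-q_1}+R_2\eta_2^{-q_2}\le\frac{\eta^{\prime}_{\gamma}}{64\tau(N,n)}$ is exactly what is needed to meet the cardinality constraint $|S|\le\frac{\eta^{\prime}_{\gamma}}{64\tau(N,n)}$ of Lemma~\ref{modelmisspecified-fixed}. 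The companion tail estimate, using $|\theta_i|\le\eta^{1-q}|\theta_i|^q$ on the small coordinates, yields $\sum_{i\in S_1^c}|(\Delta_i^{(k+1)})^T\beta^*|\le R_1\eta_1^{1-q_1}$ and $\sum_{i\in S_2^c}|\beta_i^*|\le R_2\eta_2^{1-q_2}$.

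With these in hand I would translate the tail into the $D$-weighted norm appearing in the lemma: reading off $(\ref{Dmatrix})$, $\|D_{S^c}\beta^*\|_1=\frac{\lambda_g}{\lambda}\sum_{i\in S_1^c}|(\Delta_i^{(k+1)})^T\beta^*|+\sum_{i\in S_2^c}|\beta_i^*|\le\frac{\lambda_g}{\lambda}R_1\eta_1^{1-q_1}+R_2\eta_2^{1-q_2}$. Substituting the cardinality bound and this tail bound into Lemma~\ref{modelmisspecified-fixed}(a) and grouping the $\lambda\|D_{S^c}\beta^*\|_1$ and $\tau(N,n)\|D_{S^c}\beta^*\|_1^2$ contributions reproduces precisely the quantities $\RNum{1}$ and $\RNum{2}$, giving the $\ell_2$-error bound; the prediction bound inherits the leading term $4\lambda\|D_{S^c}\beta^*\|_1=4(\lambda_g R_1\eta_1^{1-q_1}+\lambda R_2\eta_2^{1-q_2})$ and the residual $3\lambda\sqrt{|S|(\RNum{1}+\RNum{2})}$ after bounding $|S|\le R_1\eta_1^{-q_1}+R_2\eta_2^{-q_2}$. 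For Part (b), the assumption $\lambda_g=2^{-(1+k/2)}d^{-(k+1)/2}\lambda$ matches the structural requirement of Lemma~\ref{modelmisspecified-fixed}(b), so the identical substitution into its $\ell_1$ oracle inequality delivers the stated bound.

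The computation is essentially routine; the only point requiring genuine care is the bookkeeping of the factor $\frac{\lambda_g}{\lambda}$, which weights the difference-operator block of $D$ and must therefore be carried consistently through both $\|D_{S^c}\beta^*\|_1$ and its square. This is exactly what produces the asymmetric appearance of $\lambda_g R_1\eta_1^{1-q_1}$ against $\lambda R_2\eta_2^{1-q_2}$ inside $\RNum{2}$ and the $\frac{\lambda_g}{\lambda}R_1\eta_1^{1-q_1}$ term inside the squared quantity in Part (b). Using two matched thresholds $\eta_1,\eta_2$ rather than a single one is what lets the final bounds separate cleanly into a $(q_1,R_1)$-piece and a $(q_2,R_2)$-piece.
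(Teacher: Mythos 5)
Your proposal follows essentially the same route as the paper's own proof: the paper also thresholds at $\eta_{1},\eta_{2}$ to form $S_{\eta_{1}}$, $S_{\eta_{2}}$, takes $S=S_{\eta_{1}}\cup\{m+i,\,i\in S_{\eta_{2}}\}$, bounds $|S|\le R_{1}\eta_{1}^{-q_{1}}+R_{2}\eta_{2}^{-q_{2}}$ and $\|D_{S^c}\beta^*\|_{1}\le \frac{\lambda_{g}}{\lambda}R_{1}\eta_{1}^{1-q_{1}}+R_{2}\eta_{2}^{1-q_{2}}$, and substitutes into Lemma~\ref{modelmisspecified-fixed}. Your bookkeeping of the $\frac{\lambda_{g}}{\lambda}$ weight on the difference-operator block matches the paper's, so the argument is correct and complete.
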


The proof of Theorem~\ref{weaklypiecewisepolycorollary} is contained in Appendix~\ref{weaklypiecewisepolycorollary-proof}. Next, we consider the random design setting. In the following lemma, we confirm that  Condition~\ref{lowerRE2} holds with appropriate choices of $\eta^{\prime}_{\gamma}$ and $\tau(N, n)$, with high probability. 

\begin{lemma}
\label{lowerRE2random}
Assume that Assumption~\ref{subgaussianmat} holds and let $\lambda_{g}=2^{-(1+k/2)}d^{-(k+1)/2}\lambda$. Then we have 
\begin{equation*}
\frac{1}{N}\left\|XD^{+}v\right\|_{2}^{2}\ge \frac{1}{3}\lambda_{1}(\Sigma_{x})\left\|v\right\|_{2}^{2}-\frac{c\log n}{N}\left\|v\right\|_{1}^{2}\quad\forall~v\in \mathbb{R}^{m+n}, 
\end{equation*}
with probability at least $1-2\exp{(-c^{\prime}N)}$, where $c>0$ and $c^{\prime}>0$ are constants. 


\end{lemma}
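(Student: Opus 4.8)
The plan is to derive the stated lower restricted-eigenvalue inequality from a generic sub-Gaussian lower-RE bound applied to the ``pushed-forward'' design $XD^{+}$, and then to transfer the resulting $\ell_2$ and $\ell_1$ factors through the deterministic matrix $D$ using its spectral structure together with the prescribed value of $\lambda_g$. The only randomness enters through the first step; everything after is deterministic linear algebra.

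First I would isolate the probabilistic ingredient. Set $u := D^{+}v \in \real^{n}$, so that $XD^{+}v = Xu$ and the left-hand side equals $\frac1N\|Xu\|_2^2$. Because $X$ is row-wise $(\sigma_x,\Sigma_x)$-sub-Gaussian and the eigenvalues of $\Sigma_x$ are bounded above and below (Assumption~\ref{subgaussianmat}), a standard lower-RE bound for sub-Gaussian matrices (of the type in \cite{LohWai12}) yields, with probability at least $1-2\exp(-c'N)$,
\[
\frac1N\|Xu\|_2^2 \;\ge\; \tfrac12\lambda_{1}(\Sigma_x)\|u\|_2^2 \;-\; c_0\,\frac{\log n}{N}\|u\|_1^2 \qquad \text{for all } u\in\real^{n},
\]
where $c_0$ absorbs $\sigma_x^2$ and $\lambda_{n}(\Sigma_x)$, both dimension-free constants. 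This bound holds simultaneously over all $u$, so no cone restriction is needed here, in contrast with Lemma~\ref{eta1rand}.

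Second, and this is where the work specific to our operator lies, I would transfer the norms of $u=D^{+}v$ back to norms of $v$. The key quantity is $\sigma_{\max}(D)$: since $D^{T}D = I_n + \gamma^{2}(\Delta^{(k+1)})^{T}\Delta^{(k+1)} = I_n + \gamma^{2}L^{k+1}$ (both parities of $k$ give $(\Delta^{(k+1)})^{T}\Delta^{(k+1)}=L^{k+1}$, using $F^{T}F=L$ in the even case), we obtain $\sigma_{\max}(D)^{2}=1+\gamma^{2}\lambda_{n}(L)^{k+1}\le 1+\gamma^{2}(2d)^{k+1}$ by Lemma~\ref{eigenvaluelem}. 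The prescribed ratio $\gamma=\lambda_g/\lambda=2^{-(1+k/2)}d^{-(k+1)/2}$ is engineered precisely so that $\gamma^{2}(2d)^{k+1}=\tfrac12$, hence $\sigma_{\max}(D)^{2}\le \tfrac32$. Writing the relevant $v$ as $v=D\delta$ (equivalently $\delta=D^{+}v$, the case arising when the condition is invoked at $v=D(\hat\beta-\beta^*)$), the block structure $D=[\gamma\Delta^{(k+1)};\,I_n]$ supplies the two transfers $\|v\|_2^2=\|D\delta\|_2^2\le \sigma_{\max}(D)^2\|\delta\|_2^2\le \tfrac32\|\delta\|_2^2$ and $\|\delta\|_1\le \gamma\|\Delta^{(k+1)}\delta\|_1+\|\delta\|_1=\|v\|_1$, the latter because the identity block of $D$ reproduces $\delta$ inside $v$. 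Substituting these into the probabilistic bound converts $\tfrac12\lambda_1(\Sigma_x)\|\delta\|_2^2$ into $\tfrac13\lambda_1(\Sigma_x)\|v\|_2^2$ and $c_0\frac{\log n}{N}\|\delta\|_1^2$ into $c_0\frac{\log n}{N}\|v\|_1^2$, which is exactly the claimed inequality with $c=c_0$.

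I expect the main obstacle to be the first step rather than the bookkeeping: one must cite or reprove the sub-Gaussian lower-RE bound with tolerance, whose proof is a discretization/covering-number argument that controls $\sup_{u}|\frac1N\|Xu\|_2^2-\|\Sigma_x^{1/2}u\|_2^2|$ uniformly and converts the supremum into the $\|u\|_1^2$ penalty; the $1-2\exp(-c'N)$ probability is inherited from that step. The remaining care is purely in tracking constants so that $\lambda_{n}(L)\le 2d$ and the specific $\lambda_g$ combine to give exactly $\tfrac32$, and hence the factor $\tfrac13$; the graph enters only through this single eigenvalue bound, consistent with the paper's claim that no structural assumptions on $\mathcal{G}$ are required.
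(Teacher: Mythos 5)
Your proof is correct on the set of vectors where the condition is actually used, but it reaches the bound by a genuinely different route than the paper. The paper works entirely in $\mathbb{R}^{m+n}$: it treats $XD^{+}$ as an $N\times(m+n)$ design, bounds the population quadratic form by $\lambda_{1}(\Sigma_{x})/\sigma_{\max}^{2}(D)\ge\tfrac{2}{3}\lambda_{1}(\Sigma_{x})$, controls the deviation $\sup_{v\in\mathbb{B}_{0}(2s)\cap\mathbb{B}_{2}(1)}|v^{T}(D^{+})^{T}(\tfrac{1}{N}X^{T}X-\Sigma_{x})D^{+}v|$ by the same covering argument as in Lemma~\ref{eta1rand}, invokes Part (b) of Lemma~\ref{REgeneral} to get curvature $\tfrac{1}{3}\lambda_{1}(\Sigma_{x})$ with tolerance $\lambda_{1}(\Sigma_{x})/(48s)$ on the $(m+n)$-dimensional $\ell_{1}$-norm, and finally sets $s\asymp N/\log n$. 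You instead invoke the standard $n$-dimensional lower-RE bound for $X$ alone and push the norms through $D$ deterministically, using $D^{T}D=I_{n}+\gamma^{2}L^{k+1}$, $\gamma^{2}(2d)^{k+1}=\tfrac12$, and $\|D^{+}v\|_{1}\le\|v\|_{1}$ via the identity block; your constant-tracking ($\tfrac12\cdot\tfrac23=\tfrac13$) is exactly right and your verification that both parities of $k$ give $(\Delta^{(k+1)})^{T}\Delta^{(k+1)}=L^{k+1}$ is correct. Your version is cleaner in that the probabilistic input is the off-the-shelf result for $X$ rather than a bespoke discretization for $XD^{+}$, and it makes the role of the specific $\lambda_{g}$ transparent. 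The one caveat is scope: your norm transfers $\|v\|_{2}^{2}\le\tfrac32\|D^{+}v\|_{2}^{2}$ and $\|D^{+}v\|_{1}\le\|v\|_{1}$ require $v\in\mathrm{range}(D)$ (for $v$ orthogonal to $\mathrm{range}(D)$ one has $D^{+}v=0$ and the first transfer fails), so you prove the inequality only on $\mathrm{range}(D)$ rather than for all $v\in\mathbb{R}^{m+n}$ as stated. This is not a real defect: the paper's own chain (the step $v^{T}(D^{+})^{T}\Sigma_{x}D^{+}v\ge\lambda_{1}(\Sigma_{x})\sigma_{\min}^{2}(D^{+})\|v\|_{2}^{2}$) silently carries the same restriction, and downstream the condition is only ever applied at $v=D(\hat{\beta}-\beta^{*})\in\mathrm{range}(D)$; you were right to flag it explicitly.
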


The proof of Lemma~\ref{lowerRE2random} is contained in Appendix~\ref{lowerRE2random-proof}. Altogether, we obtain the following probabilistic consequence of Theorem~\ref{weaklypiecewisepolycorollary} in the random design: 

\begin{theorem}[Random design]
\label{weaklypiecewisepolyrandomthm}

Consider the linear model $(\ref{linear})$ where $\beta^*\in \mathcal{S}(k, q_{1}, q_{2}, R_{1}, R_{2})$. Assume that Assumption~\ref{subgaussianmat} holds. Let $\lambda\asymp \sigma_{\varepsilon}\sqrt{\frac{\log n}{N}}$ and $\lambda_{g}=2^{-(1+k/2)}d^{-(k+1)/2}\lambda$. Furthermore, assume that   
\begin{equation*}
R_{1}\left( \frac{\log n}{N}  \right)^{1-\frac{q_{1}}{2}}+R_{2}\left( \frac{\log n}{N} \right)^{1-\frac{q_{2}}{2}}\lesssim 1. 
\end{equation*}

\begin{enumerate}
\item[(a)] We have 
\begin{equation*}
\|\hat{\beta}-\beta^*\|_{2}^{2}\le \mathcal{O}_{\mathbb{P}}\left( \sigma_{\varepsilon}^{2}\left( R_{1}\left(\frac{\log n}{N}  \right)^{1-\frac{q_1}{2}}+R_{2}\left( \frac{\log n}{N} \right)^{1-\frac{q_{2}}{2}}  \right)  \right),
\end{equation*}
and 
\begin{equation*}
\frac{1}{N}\|X(\hat{\beta}-\beta^*)\|_{2}^{2}\le  \mathcal{O}_{\mathbb{P}}\left( \sigma_{\varepsilon}^{2}\left( R_{1}\left(\frac{\log n}{N}  \right)^{1-\frac{q_1}{2}}+R_{2}\left( \frac{\log n}{N} \right)^{1-\frac{q_{2}}{2}}  \right)  \right). 
\end{equation*}

\item[(b)] We have  
\begin{equation*}
\begin{split}
\|\hat{\beta}-\beta^*\|_{1}^{2}\le \mathcal{O}_{\mathbb{P}}\left(\sigma^2_{\varepsilon}\left(R_{1}^{2}\left( \frac{\log n}{N} \right)^{1-q_{1}}+R_{2}^2\left(\frac{\log n}{N}\right)^{1-q_{2}}\right)\right).  
\end{split}
\end{equation*}

\end{enumerate}


%
%

\end{theorem}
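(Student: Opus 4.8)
The plan is to obtain Theorem~\ref{weaklypiecewisepolyrandomthm} as the probabilistic specialization of the deterministic oracle inequality in Theorem~\ref{weaklypiecewisepolycorollary}, exactly in parallel to how Theorem~\ref{probconthm} was derived from Theorem~\ref{fixedconthm}. First I would isolate the two deterministic hypotheses of Theorem~\ref{weaklypiecewisepolycorollary} and verify that each holds on a high-probability event. The curvature/tolerance pair is supplied directly by Lemma~\ref{lowerRE2random}: on an event $E_{1}$ of probability at least $1-2\exp(-c^{\prime}N)$, Condition~\ref{lowerRE2} holds with $\eta^{\prime}_{\gamma}=\frac{1}{3}\lambda_{1}(\Sigma_{x})$ and $\tau(N,n)=c\log n/N$, and by the bounded-eigenvalue part of Assumption~\ref{subgaussianmat} we have $\eta^{\prime}_{\gamma}\asymp 1$. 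The tuning requirement $\lambda\ge \frac{2}{N}\|\varepsilon^{T}XD^{+}\|_{\infty}$ is exactly Lemma~\ref{tuninglem}, which under $\lambda\asymp\sigma_{\varepsilon}\sqrt{\log n/N}$ holds on an event $E_{2}$ of probability tending to $1$. A union bound gives $\mprob(E_{1}\cap E_{2})=1-o(1)$, and on this intersection all hypotheses of Theorem~\ref{weaklypiecewisepolycorollary} are in force with $\lambda_{g}=2^{-(1+k/2)}d^{-(k+1)/2}\lambda$, so that $\lambda_{g}/\lambda$ is a fixed constant depending only on $k$ and $d$.

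The crux is the choice of the free thresholds $\eta_{1},\eta_{2}>0$ in Theorem~\ref{weaklypiecewisepolycorollary}, which I would calibrate to balance the variance term $\RNum{1}$ against the bias term $\RNum{2}$. Since $\RNum{1}$ scales like $\lambda^{2}(R_{1}\eta_{1}^{-q_{1}}+R_{2}\eta_{2}^{-q_{2}})$ while the linear part of $\RNum{2}$ scales like $\lambda(R_{1}\eta_{1}^{1-q_{1}}+R_{2}\eta_{2}^{1-q_{2}})$ (using $\lambda_{g}\asymp\lambda$ and $\eta^{\prime}_{\gamma}\asymp 1$), equating the $R_{1}$ and $R_{2}$ contributions forces the standard $\ell_{q}$-ball choice $\eta_{1}\asymp\eta_{2}\asymp\lambda\asymp\sigma_{\varepsilon}\sqrt{\log n/N}$. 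Before substituting, I would check the admissibility constraint $R_{1}\eta_{1}^{-q_{1}}+R_{2}\eta_{2}^{-q_{2}}\le \eta^{\prime}_{\gamma}/(64\tau(N,n))$, which with these thresholds reduces (treating $\sigma_{\varepsilon}$ as constant-order) to $R_{1}(\log n/N)^{-q_{1}/2}+R_{2}(\log n/N)^{-q_{2}/2}\lesssim N/\log n$; this is precisely the standing assumption $R_{1}(\log n/N)^{1-q_{1}/2}+R_{2}(\log n/N)^{1-q_{2}/2}\lesssim 1$ after multiplying through by $N/\log n$.

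With the thresholds fixed I would substitute into the bounds of Theorem~\ref{weaklypiecewisepolycorollary} and simplify. For part (a), both $\RNum{1}$ and the linear part of $\RNum{2}$ collapse to order $\sigma_{\varepsilon}^{2}\big(R_{1}(\log n/N)^{1-q_{1}/2}+R_{2}(\log n/N)^{1-q_{2}/2}\big)$; the remaining checks are that the quadratic tolerance term $\tau(N,n)\big(\frac{\lambda_{g}}{\lambda}R_{1}\eta_{1}^{1-q_{1}}+R_{2}\eta_{2}^{1-q_{2}}\big)^{2}$ inside $\RNum{2}$, and the cross term $3\lambda\sqrt{|S|(\RNum{1}+\RNum{2})}$ in the prediction bound, are of the same or smaller order, the former because admissibility gives $\tau\cdot|S|\lesssim 1$ and the latter because $\lambda^{2}|S|$ is already of target order. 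For part (b) I would feed the same substitutions into the $\ell_{1}$ bound: its first summand expands, via AM--GM on the cross term, to $R_{1}^{2}(\log n/N)^{1-q_{1}}+R_{2}^{2}(\log n/N)^{1-q_{2}}$, and the product $(R_{1}\eta_{1}^{-q_{1}}+R_{2}\eta_{2}^{-q_{2}})(\RNum{1}+\RNum{2})$ yields the same order after again discarding cross terms by AM--GM, producing the stated rate (up to the $\sigma^{2}_{\varepsilon}$ prefactor).

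The main obstacle I anticipate is the bookkeeping of the quadratic tolerance contribution peculiar to the weakly-sparse regime, which has no analogue in the exactly-sparse proof of Theorem~\ref{probconthm}: one must confirm it does not dominate the linear part of $\RNum{2}$, and this is exactly where the admissibility budget $R_{1}\eta_{1}^{-q_{1}}+R_{2}\eta_{2}^{-q_{2}}\le \eta^{\prime}_{\gamma}/(64\tau)$ is consumed to force $\tau\cdot|S|\lesssim 1$. Beyond that, the argument is the mechanical substitution of the random-design values $\eta^{\prime}_{\gamma}\asymp 1$, $\tau(N,n)\asymp\log n/N$, $\lambda\asymp\sigma_{\varepsilon}\sqrt{\log n/N}$, and $\lambda_{g}/\lambda\asymp 1$, together with the calibrated thresholds, into the closed-form bounds of Theorem~\ref{weaklypiecewisepolycorollary}, collecting the resulting constants into $\mathcal{O}_{\mathbb{P}}(\cdot)$.
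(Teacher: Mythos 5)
Your proposal is correct and follows essentially the same route as the paper: specialize the deterministic oracle inequality of Theorem~\ref{weaklypiecewisepolycorollary} using Lemma~\ref{lowerRE2random} (giving $\eta^{\prime}_{\gamma}=\frac{1}{3}\lambda_{1}(\Sigma_{x})$ and $\tau(N,n)\asymp \log n/N$) and Lemma~\ref{tuninglem}, with thresholds $\eta_{1}=\eta_{2}\asymp\sqrt{\log n/N}$ and the standing assumption used to absorb the quadratic tolerance term. Your explicit verification that the admissibility budget $R_{1}\eta_{1}^{-q_{1}}+R_{2}\eta_{2}^{-q_{2}}\le \eta^{\prime}_{\gamma}/(64\tau(N,n))$ is exactly the standing assumption rescaled by $N/\log n$ is a detail the paper leaves implicit, but the argument is otherwise the same.
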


The proof of Theorem~\ref{weaklypiecewisepolyrandomthm} is contained in Appendix~\ref{weaklypiecewisepolyrandomthmprf}. It is clear that if $q_{1}=q_{2}=0$ and $R_{1}<R_{2}$, then Theorem~\ref{weaklypiecewisepolyrandomthm} recovers the previous results in Theorem~\ref{probconthm}. 


\section{Statistical inference}
\label{inferencesection}

As we can see from the optimization problem $(\ref{tf-sl})$, the Graph-Piecewise-Polynomial-Lasso is non-linear and non-explicit given the finite sample size. Hence, it is generally difficult to derive its exact distribution. Furthermore, from an asymptotic viewpoint, it is a well-known fact that estimators with $\ell_{1}$-type regularization do not have a uniform tractable limiting distribution \cite{knight2000}. Therefore, it is challenging to directly use the Graph-Piecewise-Polynomial-Lasso for the task of statistical inference. To tackle these issues, recent work in \cite{vandegeer2014, javanmard14a, zhang2014confidence, loh2018scale} recommends one-step modifications of Lasso-type estimators via the de-biasing procedure. In this section, we propose a one-step update of the Graph-Piecewise-Polynomial-Lasso.

\subsection{One-step estimators}
\label{methodsec}

We begin by briefly introducing the so-called one-step maximum likelihood estimator (MLE) in classical low-dimensional statistics. For a more detailed overview, we refer the reader to the textbooks by Bickel et al.\ \cite{bickel1993} or Shao \cite{shao2003}. The one-step MLE, which is used to approximate the MLE, is the first Newton iteration with a certain type of consistent estimator as the initial value. More specifically, let $S_{N}(\beta)$ be the score function and $\hat{\beta}_{0}$ be an estimator of $\beta^*$, and define the one-step MLE by 
\begin{equation}
\label{onestep1}
\hat{\beta}_{1}=\hat{\beta}_{0}-[\nabla S_{N}(\hat{\beta}_{0})]^{-1}S_{N}(\hat{\beta}_{0}). 
\end{equation}
It has been shown that $\hat{\beta}_{1}$ is asymptotically efficient under some regularity conditions. In this section, for ease of presentation, we focus on the Gaussian error in $(\ref{linear})$. That is, we assume $\varepsilon\sim N(0, \sigma_{\varepsilon}^2I_{N})$. If we consider the fixed design in the low-dimensional regime (i.e.\ $n<N$), then $(\ref{onestep1})$ becomes 
\begin{equation}
\label{onestep2}
\hat{\beta}_{1}=\hat{\beta}_{0}+(\Sigma_{N})^{-1}\frac{1}{N}X^T(y-X\hat{\beta}_{0}),
\end{equation}
where $\Sigma_{N}=\frac{1}{N}X^TX\in \mathbb{R}^{n\times n}$. However, $\Sigma_{N}$ is singular in the high-dimensional regime where $N\ll n$. Hence, we replace $\Sigma_{N}^{-1}$ in $(\ref{onestep2})$ by $\widehat{\Theta}$, a ``sparse approximate inverse" of $\Sigma_{N}$ via the CLIME estimator proposed in \cite{clime}, to be described in the sequel. We choose the Graph-Piecewise-Polynomial-Lasso as the initial value, leading to the one-step estimator
\begin{equation}
\label{onestep3}
\tilde{\beta}=\hat{\beta}+\frac{1}{N}\widehat{\Theta}X^T(y-X\hat{\beta}). 
\end{equation}

Next, we introduce the CLIME approach to obtain $\widehat{\Theta}$. Cai et al.\ \cite{clime} originally designed this method to estimate a row-wise weakly sparse precision matrix with constrained $\ell_{1}$-minimization. More specifically, we define the CLIME estimator as the solution of the following optimization problem: 
\begin{equation}
\label{CLIME}
\begin{aligned}
&  \widehat{\Theta}=\underset{}{\text{argmin} }
& & \|\Theta\|_{1,1} \\
& \text{subject to}
& &  \|\Theta\Sigma_{N}-I_{n}\|_{\infty}\le \mu,\quad\Theta\in \mathbb{R}^{n\times n}, 
\end{aligned}
\end{equation}
where $\mu>0$ is a tuning parameter. Note that $(\ref{CLIME})$ can be further decomposed into $n$ row-wise vector minimization problems. That is, if $\widehat{\Theta}=(\hat{\theta}_{1},...,\hat{\theta}_{n})^T$, we can obtain $\hat{\theta}_{i}$ via the following optimization:  
\begin{equation}
\label{row-wise-opt}
\begin{aligned}
&\hat{\theta}_{i}=\underset{}{\text{argmin} }
& & \|\theta\|_{1} \\
& \text{subject to}
& &  \|\Sigma_{N}\theta-e_{i}\|_{\infty}\le \mu,\quad\theta\in \mathbb{R}^{n}, 
\end{aligned}
\end{equation}
where $e_{i}$ is the i-th column of the identity matrix $I_{n}$. 

\begin{remark}
\begin{enumerate}
\item[(a)] We need to select an appropriate choice of tuning parameter $\mu$, which will be discussed in Lemma~\ref{tuningmulemma} of the next section. 

\item[(b)] Recent work has developed various alternatives to construct $\widehat{\Theta}$ in one-step estimators. For example, van de Geer et al.\ \cite{vandegeer2014} used the Lasso for nodewise regression and Loh \cite{loh2018scale} used the graphical Lasso. The key idea in these methods is to view $\widehat{\Theta}$ as an estimator of the inverse covariance matrix of the covariates in the random design. We will make a comparison of requirements on the inverse covariance matrix for these different methods in the next section. 

\item[(c)] Our approach to obtain $\widehat{\Theta}$ is close but not identical to the one proposed in \cite{javanmard14a}. Both approaches share the same constraint, but have different objectives. Instead, \cite{javanmard14a} minimizes $\theta^T\Sigma_{N}\theta$. These two different approaches lead to the same asymptotic properties of the one-step estimator. However, using our objective function proposed in $(\ref{CLIME})$ is beneficial to derive the non-asymptotic rate of convergence for $\widehat{\Theta}\Sigma_{N}(\widehat{\Theta})^T$, which can be seen in Theorem~\ref{onesteprandomthm}.     


\end{enumerate}
\end{remark}


\subsection{Main results}
\label{onestepresults}

We now present theoretical properties of the one-step estimator obtained from $(\ref{onestep3})$. Our first result in the following theorem concerns the fixed design and provides a useful decomposition of $\sqrt{N}(\tilde{\beta}-\beta^*)$, which is similar to the results in \cite{javanmard14a}.  

\begin{theorem}[Fixed design]
\label{onestepfixthm}
Consider the linear model $(\ref{linear})$ where $\beta^*\in \mathcal{S}(k, s_{1}, s_{2})$. Then we have $\sqrt{N}(\tilde{\beta}-\beta^*)=\Psi-e$,
where
\begin{equation*}
\Psi=\frac{1}{\sqrt{N}}\widehat{\Theta}X^T\varepsilon\sim N(0, \sigma^{2}_{\varepsilon}\widehat{\Theta}\Sigma_{N}(\widehat{\Theta})^T), \quad e=\sqrt{N}(\widehat{\Theta}\Sigma_{N}-I_{n})(\hat{\beta}-\beta^*).
\end{equation*}
Furthermore, we have $\|e\|_{\infty}\le \sqrt{N}\mu\|\hat{\beta}-\beta^*\|_{1}$.



\end{theorem}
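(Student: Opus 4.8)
The plan is almost entirely algebraic, so I would begin by writing out the one-step update $(\ref{onestep3})$ and substituting the model. Plugging $y = X\beta^* + \varepsilon$ into $(\ref{onestep3})$ gives $y - X\hat{\beta} = X(\beta^* - \hat{\beta}) + \varepsilon$, whence
\begin{equation*}
\tilde{\beta} - \beta^* = (\hat{\beta} - \beta^*) + \frac{1}{N}\widehat{\Theta}X^T\big(X(\beta^*-\hat{\beta}) + \varepsilon\big) = (I_{n} - \widehat{\Theta}\Sigma_{N})(\hat{\beta} - \beta^*) + \frac{1}{N}\widehat{\Theta}X^T\varepsilon,
\end{equation*}
where I have used $\frac{1}{N}X^TX = \Sigma_{N}$ and collected the two $\Sigma_{N}$ terms. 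Multiplying through by $\sqrt{N}$ and recognizing $\Psi = \frac{1}{\sqrt{N}}\widehat{\Theta}X^T\varepsilon$ together with $e = \sqrt{N}(\widehat{\Theta}\Sigma_{N} - I_{n})(\hat{\beta} - \beta^*) = -\sqrt{N}(I_{n}-\widehat{\Theta}\Sigma_{N})(\hat{\beta}-\beta^*)$ immediately yields the claimed decomposition $\sqrt{N}(\tilde{\beta} - \beta^*) = \Psi - e$. The only care required here is sign bookkeeping when isolating the $(\widehat{\Theta}\Sigma_{N} - I_{n})$ factor.

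Next I would establish the law of $\Psi$. In the fixed design $X$ is deterministic, and since $\widehat{\Theta}$ is a function of $\Sigma_{N} = \frac{1}{N}X^TX$ alone (via the CLIME program $(\ref{CLIME})$), it too is deterministic and in particular independent of $\varepsilon$. Therefore $\Psi = \frac{1}{\sqrt{N}}\widehat{\Theta}X^T\varepsilon$ is a fixed linear image of the Gaussian vector $\varepsilon \sim N(0, \sigma^{2}_{\varepsilon}I_{N})$, hence itself Gaussian with mean $\frac{1}{\sqrt{N}}\widehat{\Theta}X^T\E[\varepsilon] = 0$ and covariance $\frac{1}{N}\widehat{\Theta}X^T(\sigma^{2}_{\varepsilon}I_{N})X(\widehat{\Theta})^T = \sigma^{2}_{\varepsilon}\widehat{\Theta}\Sigma_{N}(\widehat{\Theta})^T$, exactly as stated.

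Finally, for the $\ell_{\infty}$ control of the bias term I would apply the elementary bound $\|Mv\|_{\infty} \le \|M\|_{\infty}\|v\|_{1}$, where $\|\cdot\|_{\infty}$ is the element-wise infinity norm, to $M = \widehat{\Theta}\Sigma_{N} - I_{n}$ and $v = \hat{\beta} - \beta^*$. The key point is that $\widehat{\Theta}$ is feasible for $(\ref{CLIME})$, so its defining constraint gives $\|\widehat{\Theta}\Sigma_{N} - I_{n}\|_{\infty} \le \mu$ directly. Combining these,
\begin{equation*}
\|e\|_{\infty} = \sqrt{N}\,\|(\widehat{\Theta}\Sigma_{N} - I_{n})(\hat{\beta} - \beta^*)\|_{\infty} \le \sqrt{N}\,\|\widehat{\Theta}\Sigma_{N} - I_{n}\|_{\infty}\,\|\hat{\beta} - \beta^*\|_{1} \le \sqrt{N}\,\mu\,\|\hat{\beta} - \beta^*\|_{1}.
\end{equation*}
This step is the one that ties the CLIME constraint to the residual bias, and it is really the only place any property of $\widehat{\Theta}$ beyond determinism is invoked. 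I do not anticipate a genuine obstacle here: the whole argument is a substitution followed by two standard inequalities, and no concentration or restricted-eigenvalue machinery is needed for this particular statement. Those tools enter only later, when one converts this decomposition into an inference guarantee by bounding $\|\hat{\beta} - \beta^*\|_{1}$ through Theorem~\ref{fixedconthm} and controlling $\mu$.
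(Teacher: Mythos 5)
Your proposal is correct and follows essentially the same route as the paper: substitute the model into the one-step update to get the $\Psi - e$ decomposition, observe that $\Psi$ is a deterministic linear image of the Gaussian noise, and bound $\|e\|_{\infty}$ via $\|Mv\|_{\infty}\le\|M\|_{\infty}\|v\|_{1}$ together with the CLIME feasibility constraint $\|\widehat{\Theta}\Sigma_{N}-I_{n}\|_{\infty}\le\mu$. No gaps.
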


The proof of Theorem~\ref{onestepfixthm} is contained in Appendix~\ref{onestepfixthmprf}. 

\begin{remark}
As shown in Theorem~\ref{onestepfixthm}, if $e$ is negligible, then $\sqrt{N}(\tilde{\beta}-\beta^*)$ is asymptotically normal. Furthermore, under the same conditions as Theorem~\ref{fixedconthm}, Theorem~\ref{onestepfixthm} implies    
\begin{equation*}
\|e\|_{\infty}\le \frac{12\sqrt{N}\lambda\mu\left(  
\gamma\sqrt{(2d)^{k+1}s_{1}}+\sqrt{s_{2}}\right)^{2}}{\eta_{\gamma}\left(1-\gamma^2(2d)^{k+1}\right)}. 
\end{equation*}
\end{remark}

In order to derive the limiting distribution of the one-step estimator, we now turn to the asymptotic framework with the sub-Gaussian random design, and assume the following: 
\begin{assumption}
\label{assumption2}
For the $(\sigma_{x}, \Sigma_{x})$-sub-Gaussian design in Assumption~\ref{subgaussianmat}, let $\Theta_{x}\in \mathbb{R}^{n\times n}$ denote the inverse of $\Sigma_{x}$. We assume $\opnorm{\Theta_{x}}_{\infty} \leq M_{n}$, where $M_{n}$ is allowed to grow as $n$ grows.  
\end{assumption}
Similar assumptions are often used in the literature of covariance matrix and precision matrix estimation \cite{bickel2008, clime, yuan2010}. We do not require sparsity of $\Theta_{x}$, but both \cite{vandegeer2014} and \cite{loh2018scale} assume row-wise sparsity of $\Theta_{x}$. In addition to the sparsity condition, \cite{loh2018scale} also needs to assume the $\alpha$-incoherence condition. Next, we consider the proper choice of the tuning parameter $\mu$ in $(\ref{CLIME})$. We have the following result: 

\begin{lemma}
\label{tuningmulemma}
Assume that Assumption~\ref{subgaussianmat} holds. For $\Sigma_{N}=\frac{1}{N}X^TX$ and $\Theta_{x}$ defined in Assumption~\ref{assumption2}, when $N\gtrsim \log n$, we have
\begin{equation*}
\left\|\Theta_{x}\Sigma_{N}-I_{n}\right\|_{\infty}\le c\sqrt{\frac{\log n}{N}}, 
\end{equation*}
with probability at least $1-2\exp{(-c^{\prime}\log n)}$, where $c>0$ and $c^{\prime}>0$ are constants.

\end{lemma}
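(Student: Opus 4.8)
The plan is to reduce the element-wise $\ell_\infty$ bound to a collection of scalar concentration statements, one for each of the $n^2$ entries of $\Theta_x\Sigma_N - I_n$, and then combine them by a union bound. The starting point is the identity $\Theta_x\Sigma_N - I_n = \Theta_x(\Sigma_N - \Sigma_x)$, which holds because $\Theta_x\Sigma_x = I_n$ (recall $\Theta_x = \Sigma_x^{-1}$ from Assumption~\ref{assumption2}).

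First I would expand each entry. Writing $\theta_{x,i}$ for the $i$-th row of $\Theta_x$ and using $\Sigma_N = \frac{1}{N}\sum_{l=1}^N X_l X_l^T$, one obtains
\[
(\Theta_x\Sigma_N - I_n)_{ij} = \frac{1}{N}\sum_{l=1}^N\left( (\theta_{x,i}^T X_l)X_{lj} - \delta_{ij} \right),
\]
since $\mathbb{E}[(\theta_{x,i}^T X_l)X_{lj}] = \theta_{x,i}^T\Sigma_x e_j = (\Theta_x\Sigma_x)_{ij} = \delta_{ij}$. Thus each entry is a centered average of $N$ i.i.d.\ terms.

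The key step is to bound the sub-exponential parameter of the summand $(\theta_{x,i}^T X_l)X_{lj}$ uniformly over $(i,j)$, crucially \emph{without} picking up the factor $M_n$. By Definition~\ref{subgaussianrv}, $\theta_{x,i}^T X_l$ is $\sigma_x\|\theta_{x,i}\|_2$-sub-Gaussian and $X_{lj}$ is $\sigma_x$-sub-Gaussian, so their product is sub-exponential with parameter of order $\sigma_x^2\|\theta_{x,i}\|_2$. The crucial observation is that $\|\theta_{x,i}\|_2 = \|\Theta_x e_i\|_2 \le \opnorm{\Theta_x}_{op} = 1/\lambda_1(\Sigma_x) \le 1/c$, by the bounded-eigenvalue condition of Assumption~\ref{subgaussianmat}. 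Hence the sub-exponential parameter is a dimension-free constant $K \asymp \sigma_x^2/c$, and in particular does not involve $M_n$; this is why $M_n$ is absent from the stated bound.

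Finally I would apply a Bernstein inequality for the average of $N$ i.i.d.\ centered sub-exponential variables of parameter $K$. Taking $t \asymp \sqrt{\log n/N}$, the hypothesis $N \gtrsim \log n$ forces $t \lesssim K$, so the quadratic (Gaussian) branch of the Bernstein bound is active and gives, for each fixed $(i,j)$,
\[
\mathbb{P}\left( \left| (\Theta_x\Sigma_N - I_n)_{ij} \right| > c\sqrt{\tfrac{\log n}{N}} \right) \le 2\exp(-c''\log n).
\]
A union bound over the $n^2$ pairs $(i,j)$, with $c$ (hence $c''$) chosen large enough to dominate the $n^2 = \exp(2\log n)$ factor, yields the claim with probability at least $1 - 2\exp(-c'\log n)$. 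The main obstacle is the third step: establishing that the per-entry sub-exponential parameter is dimension-free, which rests entirely on the operator-norm bound $\|\theta_{x,i}\|_2 \le 1/\lambda_1(\Sigma_x)$ and keeps $M_n$ out of the tail estimate. A minor technical point is pinning down the sub-exponential norm of a product of two sub-Gaussians, but only its order is needed here.
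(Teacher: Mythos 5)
Your proposal is correct and follows essentially the same route as the paper's proof: entrywise centering of $\Theta_x\Sigma_N-I_n$, a sub-exponential bound on each summand via the product of two sub-Gaussians with the parameter controlled by $\lambda_n(\Theta_x)=1/\lambda_1(\Sigma_x)$ (dimension-free, so $M_n$ never enters), then Bernstein's inequality and a union bound over the $n^2$ entries. No substantive differences to report.
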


The proof of Lemma~\ref{tuningmulemma} is contained in Appendix~\ref{tuningmulemmaprf}. Lemma~\ref{tuningmulemma} shows if $\mu\asymp\sqrt{\frac{\log n}{N}}$, then $\Theta_{x}$ is feasible for the constraint in $(\ref{CLIME})$ with high probability. Altogether, we arrive at the following main results, which present the limiting distribution of the one-step estimator in the sub-Gaussian random design:  

\begin{theorem}[Random design]
\label{onesteprandomthm}
Consider the linear model (\ref{linear}) where $\beta^*\in \mathcal{S}(k, s_{1}, s_{2})$. Assume that Assumption~\ref{subgaussianmat} and Assumption~\ref{assumption2} hold.  Let $\lambda\asymp\sigma_{\varepsilon}\sqrt{\frac{\log n}{N}}$, $\lambda_{g}=\lambda\sqrt{\nu/(2d)^{k+1}}$ where $0\le \nu<1$, and 
$\mu\asymp\sqrt{\frac{\log n}{N}}$. Assume $s_{2}/s_{1}\ge \nu$. Then we have $\sqrt{N}(\tilde{\beta}-\beta^*)=\Psi-e$, 
where 
\begin{equation*}
\Psi|X\sim N(0, \sigma_{\varepsilon}^{2}\widehat{\Theta}\Sigma_{N}(\widehat{\Theta})^T),\quad\|e\|_{\infty}\le \mathcal{O}_{\mathbb{P}}\left(\frac{\sigma_{\varepsilon}s_{2}\log n}{\sqrt{N}}\right).  
\end{equation*}
Furthermore, we have 
\begin{equation*}
\| \widehat{\Theta}\Sigma_{N}(\widehat{\Theta})^T-\Theta_{x}\|_{\infty}\le \mathcal{O}_{\mathbb{P}}\left(M_{n}\sqrt{\frac{\log n}{N}}\right). 
\end{equation*}

\end{theorem}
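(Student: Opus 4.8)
The plan is to assemble the result from three ingredients already in hand: the fixed-design decomposition of Theorem~\ref{onestepfixthm}, the $\ell_1$-consistency of the Graph-Piecewise-Polynomial-Lasso from Theorem~\ref{probconthm}, and the feasibility of $\Theta_x$ for the CLIME program from Lemma~\ref{tuningmulemma}. Throughout I would condition on $X$ and work on the event $\mathcal{A}=\{\|\Theta_{x}\Sigma_{N}-I_{n}\|_{\infty}\le\mu\}$, which by Lemma~\ref{tuningmulemma} has probability at least $1-2\exp(-c^{\prime}\log n)\to 1$ once $\mu\asymp\sqrt{\log n/N}$ is taken with a large enough constant. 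On $\mathcal{A}$ the matrix $\Theta_x$ is feasible for $(\ref{CLIME})$, so the CLIME solution $\widehat{\Theta}$ exists and its constraint forces $\|\widehat{\Theta}\Sigma_{N}-I_{n}\|_{\infty}\le\mu$; I would also use the row-wise optimality in $(\ref{row-wise-opt})$.

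For the decomposition and the Gaussian part, Theorem~\ref{onestepfixthm} already gives $\sqrt{N}(\tilde{\beta}-\beta^*)=\Psi-e$ with $\Psi=\frac{1}{\sqrt N}\widehat{\Theta}X^{T}\varepsilon$, $e=\sqrt N(\widehat{\Theta}\Sigma_{N}-I_{n})(\hat{\beta}-\beta^*)$, and the deterministic bound $\|e\|_{\infty}\le\sqrt N\mu\|\hat{\beta}-\beta^*\|_{1}$. Conditioning on $X$ fixes $\widehat{\Theta}$ and $X$, so $\Psi$ is a linear image of $\varepsilon\sim N(0,\sigma_{\varepsilon}^{2}I_{N})$, hence conditionally Gaussian with mean $0$ and covariance $\sigma_{\varepsilon}^{2}\widehat{\Theta}\Sigma_{N}\widehat{\Theta}^{T}$, giving the stated law of $\Psi\mid X$. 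To control $\|e\|_{\infty}$, the hypotheses here (Assumption~\ref{subgaussianmat}, the prescribed $\lambda,\lambda_{g}$, and $s_{2}/s_{1}\ge\nu$) are exactly those of Theorem~\ref{probconthm}, so $\|\hat{\beta}-\beta^*\|_{1}\le\mathcal{O}_{\mathbb{P}}(\sigma_{\varepsilon}s_{2}\sqrt{\log n/N})$; plugging $\mu\asymp\sqrt{\log n/N}$ into the deterministic bound and using $\sqrt N\cdot\sqrt{\log n/N}\cdot\sqrt{\log n/N}=\log n/\sqrt N$ yields $\|e\|_{\infty}\le\mathcal{O}_{\mathbb{P}}(\sigma_{\varepsilon}s_{2}\log n/\sqrt N)$.

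For the covariance statement, set $R=\widehat{\Theta}\Sigma_{N}-I_{n}$, so $\|R\|_{\infty}\le\mu$ on $\mathcal{A}$. Since $\Sigma_{N}$ and $\Theta_{x}$ are symmetric, $\Sigma_{N}\widehat{\Theta}^{T}=(\widehat{\Theta}\Sigma_{N})^{T}=I_{n}+R^{T}$, giving the clean decomposition $\widehat{\Theta}\Sigma_{N}\widehat{\Theta}^{T}-\Theta_{x}=(\widehat{\Theta}-\Theta_{x})+\widehat{\Theta}R^{T}$. The cross term satisfies $\|\widehat{\Theta}R^{T}\|_{\infty}\le\opnorm{\widehat{\Theta}}_{\infty}\|R\|_{\infty}$, and row-wise $\ell_{1}$-optimality against the feasible competitor (the $i$-th column of $\Theta_{x}$) gives $\opnorm{\widehat{\Theta}}_{\infty}=\max_{i}\|\hat{\theta}_{i}\|_{1}\le\max_{i}\|(\Theta_{x})_{\cdot i}\|_{1}=\opnorm{\Theta_{x}}_{1}=\opnorm{\Theta_{x}}_{\infty}\le M_{n}$, the last equalities using symmetry of $\Theta_x$ and Assumption~\ref{assumption2}; hence $\|\widehat{\Theta}R^{T}\|_{\infty}\le M_{n}\mu$. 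For the remaining term $\|\widehat{\Theta}-\Theta_{x}\|_{\infty}$, let $\theta_{i}^{0}$ be the $i$-th column of $\Theta_{x}$; on $\mathcal{A}$ it is feasible, so $\|\Sigma_{N}(\hat{\theta}_{i}-\theta_{i}^{0})\|_{\infty}\le 2\mu$ and, by optimality, $\|\hat{\theta}_{i}-\theta_{i}^{0}\|_{1}\le 2\|\theta_{i}^{0}\|_{1}\le 2M_{n}$. Writing $E=\Sigma_{x}-\Sigma_{N}$ and using $\Theta_{x}\Sigma_{x}=I_{n}$, I would expand $\hat{\theta}_{i}-\theta_{i}^{0}=\Theta_{x}\Sigma_{N}(\hat{\theta}_{i}-\theta_{i}^{0})+\Theta_{x}E(\hat{\theta}_{i}-\theta_{i}^{0})$; the first piece is at most $\opnorm{\Theta_{x}}_{\infty}\cdot 2\mu\le 2M_{n}\mu$, and since $\Theta_{x}\Sigma_{N}-I_{n}=-\Theta_{x}E$, Lemma~\ref{tuningmulemma} gives $\|\Theta_{x}E\|_{\infty}\le\mu$, so the second piece is at most $\|\Theta_{x}E\|_{\infty}\|\hat{\theta}_{i}-\theta_{i}^{0}\|_{1}\le 2M_{n}\mu$. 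This yields $\|\widehat{\Theta}-\Theta_{x}\|_{\infty}\le 4M_{n}\mu$, and combining with the cross term and $\mu\asymp\sqrt{\log n/N}$ gives $\|\widehat{\Theta}\Sigma_{N}\widehat{\Theta}^{T}-\Theta_{x}\|_{\infty}\le\mathcal{O}_{\mathbb{P}}(M_{n}\sqrt{\log n/N})$.

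The main obstacle is exactly this last estimate. A naive CLIME argument would bound the error term by $\opnorm{\Theta_{x}}_{\infty}\|\Sigma_{x}-\Sigma_{N}\|_{\infty}\|\hat{\theta}_{i}-\theta_{i}^{0}\|_{1}$ and produce the wrong rate $M_{n}^{2}\sqrt{\log n/N}$. Securing the advertised linear-in-$M_{n}$ rate hinges on recognizing that $\Theta_{x}(\Sigma_{x}-\Sigma_{N})$ is precisely the residual controlled at level $\mu$ by Lemma~\ref{tuningmulemma}, so the product $\Theta_{x}E$ must be treated as a single object rather than factored. Everything else is bookkeeping with the elementwise norm $\|\cdot\|_{\infty}$ and the operator norms $\opnorm{\cdot}_{1},\opnorm{\cdot}_{\infty}$ from the Notation section, together with the two high-probability events supplied by Lemmas~\ref{tuningmulemma} and Theorem~\ref{probconthm}.
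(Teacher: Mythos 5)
Your proposal is correct and follows essentially the same route as the paper: it combines the decomposition and deterministic bound $\|e\|_{\infty}\le\sqrt{N}\mu\|\hat{\beta}-\beta^*\|_{1}$ from Theorem~\ref{onestepfixthm} with the $\ell_{1}$-rate of Theorem~\ref{probconthm}, and controls $\widehat{\Theta}\Sigma_{N}\widehat{\Theta}^{T}-\Theta_{x}$ by splitting off a cross term bounded by $\opnorm{\widehat{\Theta}}_{\infty}\mu\le M_{n}\mu$ and bounding $\|\widehat{\Theta}-\Theta_{x}\|_{\infty}$ via feasibility of $\Theta_{x}$ and row-wise optimality on the event supplied by Lemma~\ref{tuningmulemma}. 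Your column-wise expansion through $E=\Sigma_{x}-\Sigma_{N}$ is a minor repackaging of the paper's Lemma~\ref{consistencyTheta} (yielding $4M_{n}\mu$ rather than $2M_{n}\mu$), and your observation that $\Theta_{x}(\Sigma_{x}-\Sigma_{N})$ must be treated as a single object is exactly the mechanism by which the paper keeps the rate linear in $M_{n}$.
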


The proof of Theorem~\ref{onesteprandomthm} is contained in Appendix~\ref{onesteprandomthmprf}. We have a direct consequence of Theorem~\ref{onestepfixthm} and Theorem~\ref{onesteprandomthm}, stated in the following: 
\begin{corollary}
\label{deltaknormal}
Under the same conditions as Theorem~\ref{onesteprandomthm}, for $k\ge 0$ and $d\ge 2$, we have 
\begin{equation*}
\sqrt{N}(\Delta^{(k+1)}\tilde{\beta}-\Delta^{(k+1)}\beta^*)=\Psi^{(k+1)}-e^{(k+1)},
\end{equation*}
where 
\begin{equation*}
\Psi^{(k+1)}|X\sim N(0, \sigma^2_{\varepsilon}\Delta^{(k+1)}\widehat{\Theta}\Sigma_{N}(\widehat{\Theta})^T(\Delta^{(k+1)})^T),
\end{equation*}
and 
\begin{equation*}
\|e^{(k+1)}\|_{\infty}\le \mathcal{O}_{\mathbb{P}}\left( \sigma_{\varepsilon}(2d)^{\frac{k+1}{2}}\frac{s_{2}\log n}{\sqrt{N}}\right).  
\end{equation*}
Furthermore, we have   
\begin{equation*}
\begin{split}
\| \Delta^{(k+1)}\widehat{\Theta}\Sigma_{N}(\widehat{\Theta})^T(\Delta^{(k+1)})^T-\Delta^{(k+1)}\Theta_{x}(\Delta^{(k+1)})^T\|_{\infty} \le \mathcal{O}_{\mathbb{P}}\left( (2d)^{k+1}M_{n}\sqrt{\frac{\log n}{N}}  \right).  
\end{split}
\end{equation*}

\end{corollary}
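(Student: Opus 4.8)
The plan is to obtain the claimed decomposition simply by left-multiplying the identity $\sqrt{N}(\tilde\beta - \beta^*) = \Psi - e$ from Theorem~\ref{onesteprandomthm} by the deterministic matrix $\Delta^{(k+1)}$, and then to set $\Psi^{(k+1)} := \Delta^{(k+1)}\Psi$ and $e^{(k+1)} := \Delta^{(k+1)}e$. Since $\Delta^{(k+1)}$ depends only on the known graph and $\Psi \mid X \sim N(0, \sigma_{\varepsilon}^2 \widehat{\Theta}\Sigma_{N}(\widehat{\Theta})^T)$, the vector $\Psi^{(k+1)}$ is, conditionally on $X$, a fixed linear image of a Gaussian, hence Gaussian with mean zero and covariance $\sigma_{\varepsilon}^2\Delta^{(k+1)}\widehat{\Theta}\Sigma_{N}(\widehat{\Theta})^T(\Delta^{(k+1)})^T$, exactly the stated conditional law. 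So the distributional part is immediate; all the remaining work is in propagating the two $\ell_{\infty}$-type bounds through the operator $\Delta^{(k+1)}$.

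The key auxiliary estimate I would establish first is the $\ell_{\infty}$-operator-norm bound
\[
\opnorm{\Delta^{(k+1)}}_{\infty} \le (2d)^{(k+1)/2}, \qquad d \ge 2,
\]
and this is where the hypothesis $d\ge 2$ enters. For the oriented incidence matrix $F$ each row contains exactly one $+1$ and one $-1$, so its maximum absolute row sum is $\opnorm{F}_{\infty} = 2$; for the Laplacian $L = F^TF$ the $i$-th row has absolute sum $2\deg(i) \le 2d$, so $\opnorm{L}_{\infty} \le 2d$. Using submultiplicativity of the induced $\ell_{\infty}$-operator norm together with Definition~\ref{diffoperator2}, for odd $k$ we get $\opnorm{\Delta^{(k+1)}}_{\infty} = \opnorm{L^{(k+1)/2}}_{\infty} \le (2d)^{(k+1)/2}$, and for even $k$ we get $\opnorm{\Delta^{(k+1)}}_{\infty} = \opnorm{FL^{k/2}}_{\infty} \le 2\,(2d)^{k/2}$. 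The even case collapses to the clean bound $(2d)^{(k+1)/2} = (2d)^{k/2}\sqrt{2d}$ precisely because $d\ge 2$ forces $2 \le \sqrt{2d}$, absorbing the factor of $2$ contributed by $F$.

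With this bound in hand the remaining estimates follow mechanically. For the bias term I would write $\|e^{(k+1)}\|_{\infty} = \|\Delta^{(k+1)}e\|_{\infty} \le \opnorm{\Delta^{(k+1)}}_{\infty}\,\|e\|_{\infty}$ and substitute the bound $\|e\|_{\infty} \le \mathcal{O}_{\mathbb{P}}(\sigma_{\varepsilon} s_{2}\log n/\sqrt{N})$ from Theorem~\ref{onesteprandomthm}, yielding the stated $(2d)^{(k+1)/2}$ factor. For the covariance-approximation term, set $A := \widehat{\Theta}\Sigma_{N}(\widehat{\Theta})^T - \Theta_{x}$ and use the elementary inequality $\|MB\|_{\infty} \le \opnorm{M}_{\infty}\|B\|_{\infty}$ for the element-wise norm, applied twice together with $\|B^T\|_{\infty} = \|B\|_{\infty}$, to obtain $\|\Delta^{(k+1)}A(\Delta^{(k+1)})^T\|_{\infty} \le \opnorm{\Delta^{(k+1)}}_{\infty}^2\,\|A\|_{\infty} \le (2d)^{k+1}\|A\|_{\infty}$; plugging in $\|A\|_{\infty} \le \mathcal{O}_{\mathbb{P}}(M_{n}\sqrt{\log n/N})$ from Theorem~\ref{onesteprandomthm} finishes the proof.

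The only genuinely non-routine point is the operator-norm estimate of the second paragraph — specifically, recognizing that the even-order operator $FL^{k/2}$ carries an extra factor of $2$ and that the assumption $d\ge 2$ is exactly what is needed to subsume it into the uniform form $(2d)^{(k+1)/2}$. Every other step reduces to linearity of the Gaussian family and repeated application of the submultiplicative $\ell_{\infty}$-norm inequality, so I expect no further obstacles.
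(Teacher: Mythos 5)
Your proposal is correct and follows essentially the same route as the paper: left-multiply the decomposition from Theorem~\ref{onesteprandomthm} by $\Delta^{(k+1)}$, use linearity of the conditional Gaussian, and propagate the two $\ell_{\infty}$ bounds via $\opnorm{\Delta^{(k+1)}}_{\infty}$ and $\opnorm{\Delta^{(k+1)}}_{\infty}^{2}$. In fact you supply a detail the paper leaves implicit, namely the explicit verification that $\opnorm{\Delta^{(k+1)}}_{\infty}\le (2d)^{(k+1)/2}$ in both parities of $k$ and that the hypothesis $d\ge 2$ is precisely what absorbs the factor of $2$ from $\opnorm{F}_{\infty}$ in the even case.
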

The proof of Corollary~\ref{deltaknormal} is contained in Appendix~\ref{deltaknormalproof}. Corollary~\ref{deltaknormal} is of particular interest for statistical inference of $\Delta^{(k+1)}\beta^*$ in some applications, which will be discussed in more detail in Section~\ref{statinfer}.


\subsection{Some consequences}
\label{statinfer}

The results in Section~\ref{onestepresults} allow us to build asymptotically valid confidence intervals and perform hypothesis tests. In this section, we briefly discuss these consequences. We start with the simpler case that the standard deviation of the error in the linear model is known. We have the following result: 
 
\begin{corollary}
\label{CI}
Consider the linear model $(\ref{linear})$ where $\beta^*\in \mathcal{S}(k, s_{1}, s_{2})$ and $\sigma_{\varepsilon}$ is known. Under the same conditions as Theorem~\ref{onesteprandomthm}, if $\frac{s_{2}\log n}{\sqrt{N}}\rightarrow 0$ and $M_{n}\sqrt{\frac{\log n}{N}}\rightarrow 0$, then for $j\in [n]$, we have  
\begin{equation*}
\frac{\sqrt{N}(\tilde{\beta}_{j}-\beta^*_{j})}{\sigma_{\varepsilon}\sqrt{e_{j}^T\widehat{\Theta}\Sigma_{N}\widehat{\Theta}^Te_{j}}}\rightarrow N(0, 1). 
\end{equation*}

%
\end{corollary}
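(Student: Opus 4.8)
The plan is to start from the decomposition furnished by Theorem~\ref{onesteprandomthm} and reduce the claim to an application of Slutsky's theorem. Write $(e)_j=e_j^Te$ for the $j$-th coordinate of the error vector $e$ (not to be confused with the standard basis vector $e_j$ appearing in the normalization). The $j$-th coordinate of $\sqrt{N}(\tilde\beta-\beta^*)=\Psi-e$, after dividing through by $\sigma_\varepsilon\sqrt{e_j^T\widehat\Theta\Sigma_N\widehat\Theta^Te_j}$, reads
\begin{equation*}
\frac{\sqrt{N}(\tilde\beta_j-\beta^*_j)}{\sigma_\varepsilon\sqrt{e_j^T\widehat\Theta\Sigma_N\widehat\Theta^Te_j}}=\underbrace{\frac{\Psi_j}{\sigma_\varepsilon\sqrt{e_j^T\widehat\Theta\Sigma_N\widehat\Theta^Te_j}}}_{=:\,G_j}-\underbrace{\frac{(e)_j}{\sigma_\varepsilon\sqrt{e_j^T\widehat\Theta\Sigma_N\widehat\Theta^Te_j}}}_{=:\,B_j}.
\end{equation*}
I would establish that $G_j$ is \emph{exactly} standard normal for every $N$ and that the bias term $B_j$ tends to $0$ in probability; the result then follows from Slutsky's theorem.

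For $G_j$ I would condition on $X$. Since $\widehat\Theta$ is a deterministic function of $X$ through $\Sigma_N=\frac{1}{N}X^TX$, the scalar $\sigma_\varepsilon^2e_j^T\widehat\Theta\Sigma_N\widehat\Theta^Te_j$ is non-random given $X$, and Theorem~\ref{onesteprandomthm} gives $\Psi_j\mid X\sim N\!\left(0,\sigma_\varepsilon^2e_j^T\widehat\Theta\Sigma_N\widehat\Theta^Te_j\right)$. Hence $G_j\mid X\sim N(0,1)$ for every realization of $X$, so unconditionally $G_j\sim N(0,1)$ exactly.

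The crux, and the main obstacle, is controlling $B_j$, which requires a positive lower bound on the random denominator. The numerator is easy: $|(e)_j|\le\|e\|_\infty\le\mathcal{O}_{\mathbb{P}}\!\left(\sigma_\varepsilon s_2\log n/\sqrt{N}\right)$ by Theorem~\ref{onesteprandomthm}. For the denominator I would invoke the second bound of that theorem, namely $\|\widehat\Theta\Sigma_N\widehat\Theta^T-\Theta_x\|_\infty\le\mathcal{O}_{\mathbb{P}}\!\left(M_n\sqrt{\log n/N}\right)$, to compare the diagonal entry $e_j^T\widehat\Theta\Sigma_N\widehat\Theta^Te_j=(\widehat\Theta\Sigma_N\widehat\Theta^T)_{jj}$ with $(\Theta_x)_{jj}$. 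Since $\Theta_x=\Sigma_x^{-1}$ and Assumption~\ref{subgaussianmat} bounds the eigenvalues of $\Sigma_x$ away from $0$ and $\infty$, the Rayleigh quotient gives $(\Theta_x)_{jj}\ge\lambda_1(\Theta_x)=1/\lambda_n(\Sigma_x)\ge 1/c'$, a fixed positive constant. Because $M_n\sqrt{\log n/N}\rightarrow 0$, the $\infty$-norm perturbation bound forces $(\widehat\Theta\Sigma_N\widehat\Theta^T)_{jj}\ge 1/(2c')$ with probability tending to one.

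Combining the two estimates yields
\begin{equation*}
|B_j|\le\frac{\mathcal{O}_{\mathbb{P}}\!\left(\sigma_\varepsilon s_2\log n/\sqrt{N}\right)}{\sigma_\varepsilon\sqrt{1/(2c')}}=\mathcal{O}_{\mathbb{P}}\!\left(\frac{s_2\log n}{\sqrt{N}}\right),
\end{equation*}
which converges to $0$ in probability under the hypothesis $s_2\log n/\sqrt{N}\rightarrow 0$. Finally, since $G_j\sim N(0,1)$ exactly for every $N$ and $B_j\rightarrow 0$ in probability, Slutsky's theorem (valid regardless of any dependence between $G_j$ and $B_j$) gives $G_j-B_j\rightarrow N(0,1)$ in distribution, establishing the claim.
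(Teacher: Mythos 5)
Your proposal is correct and follows essentially the same route as the paper's proof: both establish that the leading term is exactly standard normal by conditioning on $X$, and both control the bias term by combining the bound $\|e\|_{\infty}\le\mathcal{O}_{\mathbb{P}}\left(\sigma_{\varepsilon}s_{2}\log n/\sqrt{N}\right)$ with a high-probability lower bound on the denominator $e_{j}^T\widehat{\Theta}\Sigma_{N}\widehat{\Theta}^Te_{j}$ deduced from the convergence of $\widehat{\Theta}\Sigma_{N}\widehat{\Theta}^T$ to $\Theta_{x}$ and the eigenvalue bounds on $\Sigma_{x}$. The only difference is presentational: you invoke Slutsky's theorem at the end, whereas the paper writes out the equivalent argument by hand as a CDF sandwich $\Phi(x-\delta)\le\mathbb{P}(\cdot\le x)\le\Phi(x+\delta)$ plus explicitly vanishing tail probabilities.
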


The proof of Corollary~\ref{CI} is contained in Appendix~\ref{CIprf}. Therefore, in view of Corollary~\ref{CI}, for $j\in [n]$ and the significance level $\alpha\in (0, 1)$,
\begin{equation}
\label{confidenceinterval1}
\left[ \tilde{\beta}_{j}-\Phi^{-1}(1-\frac{\alpha}{2})\sigma_{\varepsilon}\sqrt{\frac{e_{j}^T\widehat{\Theta}\Sigma_{N}\widehat{\Theta}^Te_{j}}{N}},~\tilde{\beta}_{j}+\Phi^{-1}(1-\frac{\alpha}{2})\sigma_{\varepsilon}\sqrt{\frac{e_{j}^T\widehat{\Theta}\Sigma_{N}\widehat{\Theta}^Te_{j}}{N}}\right]
\end{equation}
is an asymptotically valid $(1-\alpha)$-confidence interval for $\beta_{j}^{*}$. Here, $\Phi(x)$ is the cumulative distribution function of the standard normal distribution. 

Next, we consider the case when the standard deviation of the error is unknown. In this situation, we need an estimate of $\sigma_{\varepsilon}$. In particular, we obtain the estimate $\hat{\sigma}_{\varepsilon}$ from the consistent estimate of the regression coefficients via 
\begin{equation}
\label{sigmahat}
\hat{\sigma}_{\varepsilon}=\sqrt{\frac{1}{N}\sum_{i=1}^{N}\left(y_{i}-X_{i}^T\hat{\beta}\right)^2}. 
\end{equation}
We then have the following result:  
\begin{corollary}
\label{CI2}
Consider the linear model $(\ref{linear})$ where $\beta^*\in \mathcal{S}(k, s_{1}, s_{2})$ and $\sigma_{\varepsilon}$ is unknown. Under the same conditions as Corollary~\ref{CI}, for $j\in [n]$, we have  
\begin{equation*}
\frac{\sqrt{N}(\tilde{\beta}_{j}-\beta^*_{j})}{\hat{\sigma}_{\varepsilon}\sqrt{e_{j}^T\widehat{\Theta}\Sigma_{N}\widehat{\Theta}^Te_{j}}}\rightarrow N(0, 1). 
\end{equation*}
\end{corollary}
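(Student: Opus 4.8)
The plan is to deduce Corollary~\ref{CI2} from Corollary~\ref{CI} by showing that $\hat{\sigma}_{\varepsilon}$ is a consistent estimator of $\sigma_{\varepsilon}$, so that substituting $\hat{\sigma}_{\varepsilon}$ for $\sigma_{\varepsilon}$ in the denominator leaves the limiting distribution unchanged. The key identity is
\begin{equation*}
\frac{\sqrt{N}(\tilde{\beta}_{j}-\beta^*_{j})}{\hat{\sigma}_{\varepsilon}\sqrt{e_{j}^T\widehat{\Theta}\Sigma_{N}\widehat{\Theta}^Te_{j}}}=\frac{\sqrt{N}(\tilde{\beta}_{j}-\beta^*_{j})}{\sigma_{\varepsilon}\sqrt{e_{j}^T\widehat{\Theta}\Sigma_{N}\widehat{\Theta}^Te_{j}}}\cdot\frac{\sigma_{\varepsilon}}{\hat{\sigma}_{\varepsilon}},
\end{equation*}
in which the first factor converges in distribution to $N(0,1)$ by Corollary~\ref{CI}. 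Once I verify that $\hat{\sigma}_{\varepsilon}/\sigma_{\varepsilon}\to 1$ in probability, Slutsky's theorem finishes the argument.

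To prove consistency of $\hat{\sigma}_{\varepsilon}$, I would substitute $y=X\beta^*+\varepsilon$ into the definition $(\ref{sigmahat})$ and expand:
\begin{equation*}
\hat{\sigma}_{\varepsilon}^{2}=\frac{1}{N}\|\varepsilon-X(\hat{\beta}-\beta^*)\|_{2}^{2}=\frac{1}{N}\|\varepsilon\|_{2}^{2}-\frac{2}{N}\varepsilon^{T}X(\hat{\beta}-\beta^*)+\frac{1}{N}\|X(\hat{\beta}-\beta^*)\|_{2}^{2}.
\end{equation*}
The first term converges to $\sigma_{\varepsilon}^{2}$ in probability by the law of large numbers (equivalently, chi-squared concentration for the Gaussian error assumed throughout this section). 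The third term is exactly the mean-squared prediction error, which Theorem~\ref{probconthm} bounds by $\mathcal{O}_{\mathbb{P}}(\sigma_{\varepsilon}^{2}s_{2}\log n/N)$; under the hypothesis $s_{2}\log n/\sqrt{N}\to 0$ inherited from Corollary~\ref{CI} (which forces $s_{2}\log n/N\to 0$), this term is negligible. For the cross term, Cauchy--Schwarz gives
\begin{equation*}
\frac{1}{N}\left|\varepsilon^{T}X(\hat{\beta}-\beta^*)\right|\le\left(\frac{1}{N}\|\varepsilon\|_{2}^{2}\right)^{1/2}\left(\frac{1}{N}\|X(\hat{\beta}-\beta^*)\|_{2}^{2}\right)^{1/2},
\end{equation*}
which is a product of an $\mathcal{O}_{\mathbb{P}}(1)$ factor and a vanishing factor, hence also negligible. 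Combining the three terms yields $\hat{\sigma}_{\varepsilon}^{2}\to\sigma_{\varepsilon}^{2}$ in probability, and since $\sigma_{\varepsilon}>0$, the continuous mapping theorem gives $\hat{\sigma}_{\varepsilon}\to\sigma_{\varepsilon}$ and therefore $\sigma_{\varepsilon}/\hat{\sigma}_{\varepsilon}\to 1$ in probability.

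The argument is in essence a Slutsky reduction, so there is no serious obstacle; the only real content is the consistency of $\hat{\sigma}_{\varepsilon}$, whose proof hinges on showing that both remainder terms in the expansion above vanish. The main quantitative input is the prediction-error bound of Theorem~\ref{probconthm}, which controls $\frac{1}{N}\|X(\hat{\beta}-\beta^*)\|_{2}^{2}$ directly and, through Cauchy--Schwarz, also the cross term; the concentration of $\frac{1}{N}\|\varepsilon\|_{2}^{2}$ around $\sigma_{\varepsilon}^{2}$ is routine. The one point I would make sure of is that the rate condition of Corollary~\ref{CI} is strong enough to drive the prediction error to zero, which it is since $s_{2}\log n/\sqrt{N}\to 0$ implies $s_{2}\log n/N\to 0$.
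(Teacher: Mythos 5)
Your proposal is correct, and it reaches the conclusion by a cleaner, purely asymptotic route than the paper does. The paper also reduces the problem to consistency of $\hat{\sigma}_{\varepsilon}$, but it packages that consistency as a separate quantitative statement (Lemma~\ref{sigmaconsis}), which gives the explicit rate $\left|\hat{\sigma}_{\varepsilon}/\sigma_{\varepsilon}-1\right|\le \mathcal{O}_{\mathbb{P}}\bigl(s_{2}\log n/N+\sqrt{\log N/N}\bigr)$; its proof controls the cross term via H\"older's inequality, $\|X^{T}\varepsilon/N\|_{\infty}\|\hat{\beta}-\beta^{*}\|_{1}$, and the quadratic term via $(\hat{\beta}-\beta^{*})^{T}\Sigma_{N}(\hat{\beta}-\beta^{*})$, so it needs the $\ell_{1}$-error bound and a deviation bound for $\|X^{T}\varepsilon/N\|_{\infty}$ in addition to Theorem~\ref{probconthm}. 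You instead bound the cross term by Cauchy--Schwarz against the prediction error, which requires only the prediction-error bound of Theorem~\ref{probconthm} and the law of large numbers for $\frac{1}{N}\|\varepsilon\|_{2}^{2}$ --- fewer ingredients, same conclusion. The second difference is in how the limit theorem is assembled: the paper repeats the sandwiching argument of Corollary~\ref{CI}, bounding $\mathbb{P}\bigl(\cdot\le x/(1+\zeta)\bigr)$ above and below by $\Phi(x\pm\delta)$ plus explicit exponentially small error probabilities, whereas you invoke Slutsky's theorem on the factorization
\begin{equation*}
\frac{\sqrt{N}(\tilde{\beta}_{j}-\beta^{*}_{j})}{\hat{\sigma}_{\varepsilon}\sqrt{e_{j}^{T}\widehat{\Theta}\Sigma_{N}\widehat{\Theta}^{T}e_{j}}}
=\frac{\sqrt{N}(\tilde{\beta}_{j}-\beta^{*}_{j})}{\sigma_{\varepsilon}\sqrt{e_{j}^{T}\widehat{\Theta}\Sigma_{N}\widehat{\Theta}^{T}e_{j}}}\cdot\frac{\sigma_{\varepsilon}}{\hat{\sigma}_{\varepsilon}},
\end{equation*}
which is entirely legitimate given Corollary~\ref{CI}. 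What your version gives up is the explicit nonasymptotic control on the approximation error (useful if one wants uniform or finite-sample statements); what it buys is brevity and fewer auxiliary lemmas. Your check that $s_{2}\log n/\sqrt{N}\to 0$ implies $s_{2}\log n/N\to 0$, so that the prediction error vanishes under the hypotheses inherited from Corollary~\ref{CI}, is the right thing to verify and is correct.
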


The proof of Corollary~\ref{CI2} is contained in Appendix~\ref{CI2prf}. Therefore, Corollary~\ref{CI2} implies  
\begin{equation}
\label{confidenceinterval2}
\left[ \tilde{\beta}_{j}-\Phi^{-1}(1-\frac{\alpha}{2})\hat{\sigma}_{\varepsilon}\sqrt{\frac{e_{j}^T\widehat{\Theta}\Sigma_{N}\widehat{\Theta}^Te_{j}}{N}},~\tilde{\beta}_{j}+\Phi^{-1}(1-\frac{\alpha}{2})\hat{\sigma}_{\varepsilon}\sqrt{\frac{e_{j}^T\widehat{\Theta}\Sigma_{N}\widehat{\Theta}^Te_{j}}{N}}\right]
\end{equation}
is an asymptotically valid $(1-\alpha)$-confidence interval for $\beta_{j}^{*}$. 

We have focused on the problem of confidence interval construction. In other applications, we might be interested in hypothesis testing. In the sequel, we discuss two types of hypothesis tests which can be solved by the proposed one-step estimator. First, we consider the following two-sided test for $\beta^*_{j}$: 
\begin{equation}
\label{test1}
H_{0, j}: \beta_{j}^{*}=0,\quad\text{vs.}\quad H_{A,j}: \beta_{j}^*\ne 0. 
\end{equation}
Corollary~\ref{CI} and Corollary~\ref{CI2} have immediate consequences for the problem $(\ref{test1})$. Let 
\begin{equation*}
  Z_{j}=
  \begin{cases}
    \frac{\sqrt{N}\tilde{\beta}_{j}}{\sigma_{\varepsilon}\sqrt{e_{j}^T\widehat{\Theta}\Sigma_{N}\widehat{\Theta}^Te_{j}}} & \text{if}~\sigma_{\varepsilon}~\text{is known} \\
       \frac{\sqrt{N}\tilde{\beta}_{j}}{\hat{\sigma}_{\varepsilon}\sqrt{e_{j}^T\widehat{\Theta}\Sigma_{N}\widehat{\Theta}^Te_{j}}} & \text{if}~\sigma_{\varepsilon}~\text{is unknown}.          
  \end{cases}
\end{equation*}
Then we define the following decision rule of the Z-test with significance level $\alpha$ for $(\ref{test1})$: 
\begin{equation*}
  T_{j}=
  \begin{cases}
                                   0 & \text{if}~|Z_{j}|\le \Phi^{-1}(1-\frac{\alpha}{2}) \\
                                   1 & \text{if}~|Z_{j}|> \Phi^{-1}(1-\frac{\alpha}{2}) 
  \end{cases}
\end{equation*}
That is, given the value of $T_{j}$, we reject the null hypothesis if and only if $T_{j}= 1$. Corollary~\ref{CI} and Corollary~\ref{CI2} imply that the type I error of $T_{j}$, i.e., the probability of rejecting $H_{0, j}$ when $H_{0, j}$ is true, can be controlled by $\alpha$ asymptotically. 

Next, we consider another type of test which might be of primary interest in our graph-based setting. Let $(u, v)$ be the $j$-th edge of the underlying graph $\mathcal{G}=(\mathcal{V}, \mathcal{E})$. We are interested in the following test: 
\begin{equation}
\label{test2}
H_{0, j}: \beta_{u}^{*}=\beta^*_{v},\quad\text{vs.}\quad H_{A,j}: \beta_{u}^*\ne \beta_{v}^*. 
\end{equation}
In order to propose an appropriate test statistic for $(\ref{test2})$, we present a useful result based on Corollary~\ref{deltaknormal} below:   
\begin{corollary}
\label{CI3}
Consider the linear model $(\ref{linear})$ where $\beta^*\in \mathcal{S}(k, s_{1}, s_{2})$. Under the same conditions as Corollary~\ref{deltaknormal}, if $dM_{n}\sqrt{\frac{\log n}{N}}\rightarrow 0$ and $s_{2}\log n\sqrt{\frac{d}{N}}\rightarrow 0$, then for $j\in [p]$, we have 
\begin{equation*}
\frac{\sqrt{N}(F_{j}^T\tilde{\beta}-F_{j}^T\beta^*)}{\sigma_{\varepsilon}\sqrt{F_{j}^T\widehat{\Theta}\Sigma_{N}\widehat{\Theta}^TF_{j}}}\rightarrow N(0, 1),
\end{equation*}
and 
\begin{equation*}
\frac{\sqrt{N}(F_{j}^T\tilde{\beta}-F_{j}^T\beta^*)}{\hat{\sigma}_{\varepsilon}\sqrt{F_{j}^T\widehat{\Theta}\Sigma_{N}\widehat{\Theta}^TF_{j}}}\rightarrow N(0, 1),
\end{equation*}
where $F\in \mathbb{R}^{p\times n}$ is the oriented incidence matrix. 
\end{corollary}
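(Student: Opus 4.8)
The plan is to obtain Corollary~\ref{CI3} by applying the linear map $F$ to the decomposition of $\sqrt{N}(\tilde{\beta}-\beta^*)$ furnished by Theorem~\ref{onesteprandomthm}, which is exactly Corollary~\ref{deltaknormal} read at the first-order operator $\Delta^{(1)}=F$. Left-multiplying $\sqrt{N}(\tilde{\beta}-\beta^*)=\Psi-e$ by $F$ yields $\sqrt{N}(F\tilde{\beta}-F\beta^*)=F\Psi-Fe$, where $F\Psi\mid X\sim N(0,\sigma_{\varepsilon}^2 F\widehat{\Theta}\Sigma_{N}\widehat{\Theta}^TF^T)$, the remainder obeys $\|Fe\|_{\infty}\le \opnorm{F}_{\infty}\|e\|_{\infty}\le (2d)^{1/2}\,\mathcal{O}_{\mathbb{P}}(\sigma_{\varepsilon}s_{2}\log n/\sqrt{N})$ using $\opnorm{F}_{\infty}=2\le (2d)^{1/2}$ for $d\ge 2$, and the plug-in covariance satisfies $\|F\widehat{\Theta}\Sigma_{N}\widehat{\Theta}^TF^T-F\Theta_{x}F^T\|_{\infty}\le (2d)\,\mathcal{O}_{\mathbb{P}}(M_{n}\sqrt{\log n/N})$ by two applications of the same operator-norm bound. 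Reading off the $j$-th coordinate then gives $\sqrt{N}(F_{j}^T\tilde{\beta}-F_{j}^T\beta^*)=(F\Psi)_{j}-(Fe)_{j}$, with $(F\Psi)_{j}\mid X\sim N(0,\sigma_{\varepsilon}^2 F_{j}^T\widehat{\Theta}\Sigma_{N}\widehat{\Theta}^TF_{j})$ since the variance is precisely the $(j,j)$ entry of the covariance matrix.

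I would then normalize by the $X$-measurable denominator $\sigma_{\varepsilon}\sqrt{F_{j}^T\widehat{\Theta}\Sigma_{N}\widehat{\Theta}^TF_{j}}$ and argue via Slutsky's theorem. The Gaussian piece $(F\Psi)_{j}/(\sigma_{\varepsilon}\sqrt{F_{j}^T\widehat{\Theta}\Sigma_{N}\widehat{\Theta}^TF_{j}})$ is, conditionally on $X$, exactly $N(0,1)$; because this conditional law is free of $X$, it is also the unconditional law. It therefore remains to show that the normalized remainder $(Fe)_{j}/(\sigma_{\varepsilon}\sqrt{F_{j}^T\widehat{\Theta}\Sigma_{N}\widehat{\Theta}^TF_{j}})$ is $o_{\mathbb{P}}(1)$, which reduces to bounding the denominator below.

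The crux is this lower bound. Since the $j$-th row of the incidence matrix has exactly two nonzero entries $\pm 1$, one has $\|F_{j}\|_{2}^{2}=2$ and $\|F_{j}\|_{1}=2$, so with $\Theta_{x}=\Sigma_{x}^{-1}$ and Assumption~\ref{subgaussianmat} we get $F_{j}^T\Theta_{x}F_{j}\ge \lambda_{\min}(\Theta_{x})\|F_{j}\|_{2}^{2}\ge 2/c^{\prime}>0$ and likewise $F_{j}^T\Theta_{x}F_{j}\le 2/c$. Combining this with the plug-in covariance bound above and the hypothesis $dM_{n}\sqrt{\log n/N}\to 0$ gives $|F_{j}^T\widehat{\Theta}\Sigma_{N}\widehat{\Theta}^TF_{j}-F_{j}^T\Theta_{x}F_{j}|=o_{\mathbb{P}}(1)$, so the denominator is bounded away from $0$ and $\infty$ with probability tending to one. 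On that event the denominator is $\gtrsim \sigma_{\varepsilon}$, whence the normalized remainder is $\lesssim (2d)^{1/2}s_{2}\log n/\sqrt{N}=\sqrt{2}\,s_{2}\log n\sqrt{d/N}=o_{\mathbb{P}}(1)$, exactly under the stated assumption $s_{2}\log n\sqrt{d/N}\to 0$. Slutsky's theorem then yields the first displayed convergence.

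Finally, for the studentized version with $\hat{\sigma}_{\varepsilon}$ in place of $\sigma_{\varepsilon}$, I would reuse the argument from Corollary~\ref{CI2}: expanding $\hat{\sigma}_{\varepsilon}^2=\frac{1}{N}\|\varepsilon\|_{2}^2-\frac{2}{N}\varepsilon^TX(\hat{\beta}-\beta^*)+\frac{1}{N}\|X(\hat{\beta}-\beta^*)\|_{2}^2$, the law of large numbers handles the first term and the prediction-error bound of Theorem~\ref{probconthm} (with Cauchy--Schwarz for the cross term) kills the other two, giving $\hat{\sigma}_{\varepsilon}/\sigma_{\varepsilon}\to 1$ in probability. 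A further application of Slutsky's theorem then converts the first convergence into the second. I expect the only genuinely delicate point to be the uniform lower bound on $F_{j}^T\widehat{\Theta}\Sigma_{N}\widehat{\Theta}^TF_{j}$, where the exact value $\|F_{j}\|_{2}^{2}=2$ together with the bounded-eigenvalue assumption is what prevents the error term from surviving normalization; everything else is bookkeeping with the bounds already established in Corollary~\ref{deltaknormal}.
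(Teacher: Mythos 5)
Your proposal is correct and follows essentially the same route as the paper: it specializes Corollary~\ref{deltaknormal} to $k=0$ (so $\Delta^{(1)}=F$), controls the remainder and the plug-in variance exactly as in Corollaries~\ref{CI} and~\ref{CI2}, and uses $\|F_{j}\|_{2}^{2}=2$ with the bounded-eigenvalue assumption to keep the denominator away from zero. The only cosmetic difference is that the paper concludes by sandwiching the distribution function between $\Phi(x\pm\delta)$ plus explicit tail probabilities rather than invoking Slutsky's theorem, which is an equivalent packaging of the same estimates.
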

The proof of Corollary~\ref{CI3} is contained in Appendix~\ref{CI3proof}. Corollary~\ref{CI3} suggests selecting  
\begin{equation*}
  Z_{j}^{\prime}=
  \begin{cases}
   \frac{\sqrt{N}F_{j}^T\tilde{\beta}}{\sigma_{\varepsilon}\sqrt{F_{j}^T\widehat{\Theta}\Sigma_{N}\widehat{\Theta}^TF_{j}}} & \text{if}~\sigma_{\varepsilon}~\text{is known} \\
     \frac{\sqrt{N}F_{j}^T\tilde{\beta}}{\hat{\sigma}_{\varepsilon}\sqrt{F_{j}^T\widehat{\Theta}\Sigma_{N}\widehat{\Theta}^TF_{j}}} & \text{if}~\sigma_{\varepsilon}~\text{is unknown}          
  \end{cases}
\end{equation*}
as the test statistic for $(\ref{test2})$. Therefore, let  
\begin{equation*}
  T_{j}^{\prime}=
  \begin{cases}
                                   0 & \text{if}~|Z_{j}^{\prime}|\le \Phi^{-1}(1-\frac{\alpha}{2}) \\
                                   1 & \text{if}~|Z_{j}^{\prime}|> \Phi^{-1}(1-\frac{\alpha}{2}). 
  \end{cases}
\end{equation*}
We reject the null hypothesis of $(\ref{test2})$ if and only if $T_{j}^{\prime}=1$. 


\section{Simulations}
\label{simulationsection}

We now describe a variety of simulation results to assess the performance of our proposed methods. In all simulation studies, we solved both the optimization problems $(\ref{tf-sl})$ and $(\ref{CLIME})$ via the ADMM algorithms \cite{boyd2011admm}, which were implemented in the ADMM R package \cite{admmpackage} and the flare R package \cite{flarepackage}, respectively.   


\subsection{Simulation 1}
\label{simu1section}

In the first simulation study, our main interest was to compare the $\ell_{2}$-estimation error of our Graph-Piecewise-Polynomial-Lasso with other methods mentioned in the paper, including the Lasso, Smooth-Lasso and Spline-Lasso. We considered the situation where the underlying graph was a path graph with 250 nodes, i.e., $n=250$. Then we generated four different scenarios of $\beta^*$ described in the following:  
\begin{enumerate}
\item[(a)] Scenario 1: for $1\le j\le 250$,  
\begin{equation*}
\beta^*_{j}=
     \begin{cases}
       -1 & j\in [101, 110]\\
       1 & j \in [111, 120] \\
       -2 & j \in [121, 130]\\
       2  &  j \in [131, 140] \\ 
       1.5 &  j \in [141, 150]  \\
       0  &  \text{otherwise}.  
     \end{cases}
\end{equation*}
\item[(b)] Scenario 2: for $1\le j\le 250$, 
\begin{equation*}
\beta^*_{j}=
\begin{cases}
\frac{1}{5}\left|(j \bmod 25)-10\right|-1   & j \in [1, 10]\cup [50, 60]\cup [100, 110]\cup [150, 160]\cup [200, 210] \\
0   & \text{otherwise}. 
\end{cases}
\end{equation*}
\item[(c)] Scenario 3: for $1\le j\le 250$,  
\begin{equation*}
\beta_{j}^*=
\begin{cases} 
\frac{1}{50}\left( (x\bmod 50)-10  \right)^2-1   &  j \in [5, 15]\cup [105, 115]\cup [205, 215]  \\
-\frac{1}{50}\left( (x\bmod 50)-10  \right)^2+1   &  j \in [55, 65]\cup [155, 165] \\
0  &  \text{otherwise}. 
\end{cases}
\end{equation*}
\item[(d)] Scenario 4: for $1\le j\le 250$, 
\begin{equation*}
\beta_{j}^*=
\begin{cases} 
\sin(\frac{j}{10})+\cos(\frac{j}{3})   &  j \in [1, 10]\cup [50, 60]\cup [100, 110]\cup [150, 160]\cup [200, 210]  \\
0  &  \text{otherwise}. 
\end{cases}
\end{equation*}
\end{enumerate}
Scenarios 1, 2, and 3 correspond to subfigures (a), (b) and (c) in Figure~\ref{piecewisepolypathgraph}, and $\beta^*$ in Scenario 4 is a general smooth and sparse vector, which can be see in the left panel of Figure~\ref{scenario4truebeta}. Next, we generated each row of the design matrix $X$ from $N(0, I_{n\times n})$ and each $\varepsilon_{i}$ from $N(0, 0.1)$. Finally, the response vector $y$ was generated via the linear model in $(\ref{linear})$. 

In Scenarios 1, 2, and 3, we set $k=0, 1$, and $2$, respectively. In Scenario 4, we chose the value of $k$ by cross-validation. The tuning parameters of each method were also chosen via the 5-fold cross-validation procedure, which minimized the cross-validated prediction error. For each scenario, we considered three sample sizes for training data: $N=100$, $N=150$, and $N=200$. We repeated the simulation 50 times. Table~\ref{simu1results} shows the simulation results.  Our approach outperformed the other three methods in all scenarios across all sampling schemes except for $(N, n)=(100, 250)$ in Scenario 3, where the Spline-Lasso was the best.    

\begin{figure}[t]
\begin{subfigure}{.5\textwidth}
  \centering
  \includegraphics[width=.8\linewidth]{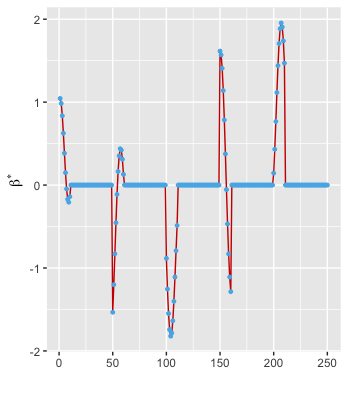}  
  \caption{Path graph with 250 nodes}
\end{subfigure}
\begin{subfigure}{.5\textwidth}
 \centering
  \includegraphics[width=.88\linewidth]{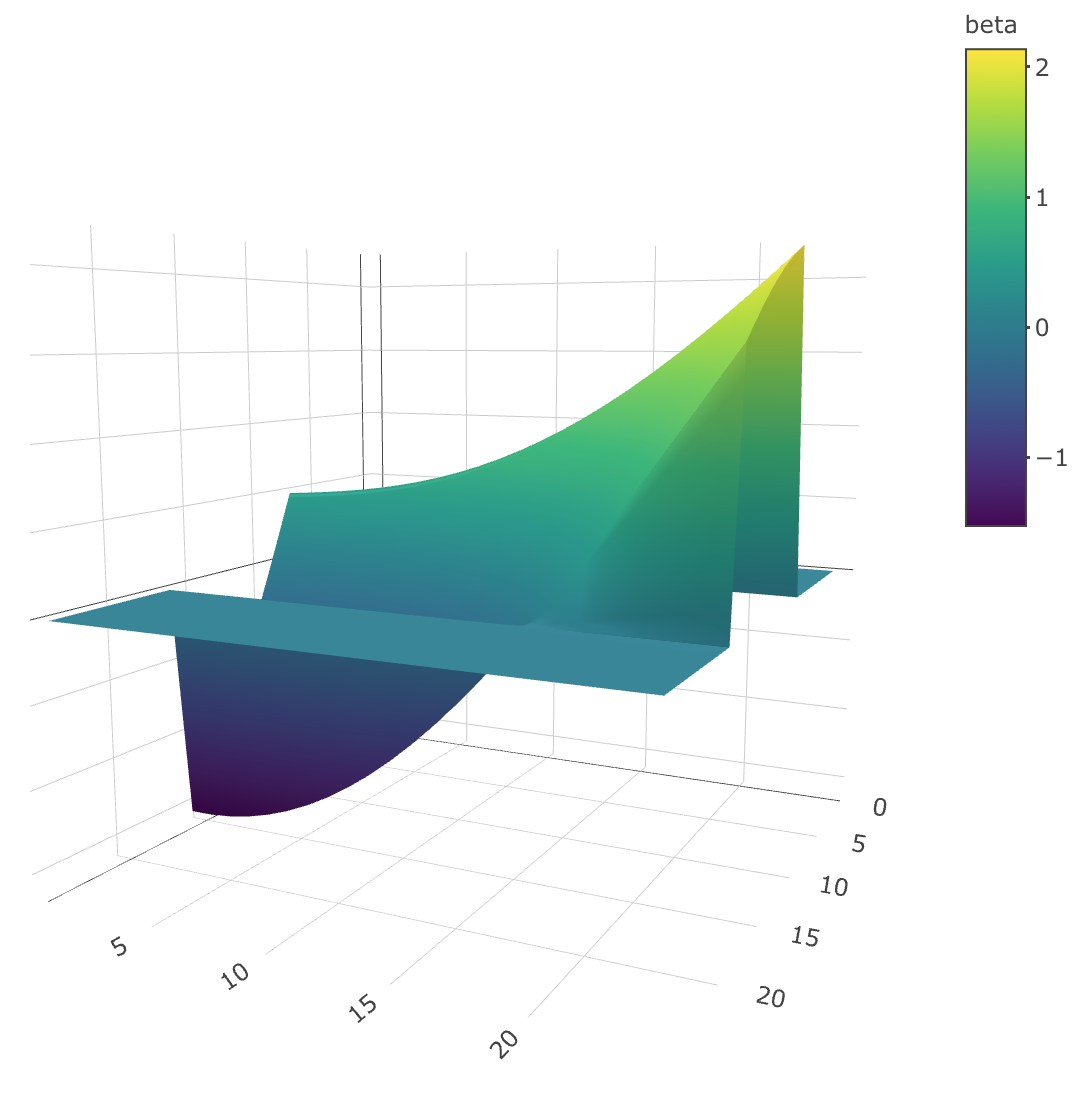}  
  \caption{2d grid graph with 25 rows and 25 columns}
\end{subfigure}
\caption{Simultaneously sparse and general smooth regression coefficients constructed in Scenario 4 of Section~\ref{simu1section} and Section~\ref{simu2section}. }
\label{scenario4truebeta}
\end{figure}

\begin{table}
\caption {Averages (standard errors) of $\ell_{2}$ estimation error in Simulation 1. The minimal averages are in bold. } 
\label{simu1results}  
\centering
\begin{tabular}{llllllllll}
\hline \hline 
   & \multicolumn{2}{c}{$(N, n)=(100, 250)$} &  & \multicolumn{2}{c}{$(N, n)=(150, 250)$} &   & \multicolumn{2}{c}{$(N, n)=(200, 250)$} \\ 
 &   \multicolumn{2}{c}{$\|\hat{\beta}-\beta^*\|_{2}$}   &            &       \multicolumn{2}{c}{$\|\hat{\beta}-\beta^*\|_{2}$}       &     &    \multicolumn{2}{c}{$\|\hat{\beta}-\beta^*\|_{2}$}  \\    
 \hline
     &    &    &   \text{Scenario 1}  &   &   &    \\ \hline 
 \text{Our approach}   &  \multicolumn{2}{c}{{\bf 0.767} (0.115)}       &        &  \multicolumn{2}{c}{{\bf 0.377} (0.008)}     &   & \multicolumn{2}{c}{{\bf 0.328} (0.004)}                         \\
 \text{Lasso}   &  \multicolumn{2}{c}{8.257 (0.129)}      &        &  \multicolumn{2}{c}{1.296 (0.083)}    &   & \multicolumn{2}{c}{0.551 (0.010)}                         \\
  \text{Smooth-Lasso}   & \multicolumn{2}{c}{4.671 (0.146)}          &        &  \multicolumn{2}{c}{1.289 (0.053)}    &    & \multicolumn{2}{c}{0.575 (0.010)}                      \\
   \text{Spline-Lasso}   & \multicolumn{2}{c}{3.498 (0.038)}   &   & \multicolumn{2}{c}{2.624 (0.024)}     &       &  \multicolumn{2}{c}{2.376 (0.017)}             \\
      \hline
      &    &    &   \text{Scenario 2}  &   &   &    \\ \hline 
 \text{Our approach}   &  \multicolumn{2}{c}{{\bf 0.896} (0.039)}       &        &  \multicolumn{2}{c}{{\bf 0.420} (0.008)}     &   & \multicolumn{2}{c}{{\bf 0.338} (0.005)}                         \\
 \text{Lasso}   &  \multicolumn{2}{c}{3.081 (0.056)}      &        &  \multicolumn{2}{c}{1.310 (0.038)}    &   & \multicolumn{2}{c}{0.474 (0.009)}                         \\
  \text{Smooth-Lasso}   & \multicolumn{2}{c}{2.007 (0.050)}          &        &  \multicolumn{2}{c}{0.795 (0.023)}    &    & \multicolumn{2}{c}{0.469 (0.009)}                      \\
   \text{Spline-Lasso}   & \multicolumn{2}{c}{1.922 (0.019)}   &   & \multicolumn{2}{c}{1.639 (0.013)}     &       &  \multicolumn{2}{c}{1.409 (0.013)}             \\
      \hline
   &    &    &   \text{Scenario 3}  &   &   &    \\ \hline 
 \text{Our approach}   &  \multicolumn{2}{c}{2.102 (0.135)}       &        &  \multicolumn{2}{c}{{\bf 0.574} (0.012)}     &   & \multicolumn{2}{c}{{\bf 0.374} (0.005)}                         \\
 \text{Lasso}   &  \multicolumn{2}{c}{4.761 (0.055)}      &        &  \multicolumn{2}{c}{1.607 (0.093)}    &   & \multicolumn{2}{c}{0.536 (0.012)}                         \\
  \text{Smooth-Lasso}   & \multicolumn{2}{c}{1.645 (0.064)}          &        &  \multicolumn{2}{c}{0.642 (0.013)}    &    & \multicolumn{2}{c}{0.439 (0.007)}                      \\
   \text{Spline-Lasso}   & \multicolumn{2}{c}{{\bf 0.900} (0.020)}   &   & \multicolumn{2}{c}{0.664 (0.007)}     &       &  \multicolumn{2}{c}{0.587 (0.005)}             \\
      \hline
 &    &    &   \text{Scenario 4}  &   &   &    \\ \hline 
 \text{Our approach}   &  \multicolumn{2}{c}{{\bf 1.097} (0.059)}       &        &  \multicolumn{2}{c}{{\bf 0.469} (0.008)}     &   & \multicolumn{2}{c}{{\bf 0.358} (0.006)}                         \\
 \text{Lasso}   &  \multicolumn{2}{c}{4.692 (0.124)}      &        &  \multicolumn{2}{c}{1.086 (0.049)}    &   & \multicolumn{2}{c}{0.583 (0.014)}                         \\
  \text{Smooth-Lasso}   & \multicolumn{2}{c}{2.331 (0.072)}          &        &  \multicolumn{2}{c}{0.840 (0.026)}    &    & \multicolumn{2}{c}{0.505 (0.010)}                      \\
   \text{Spline-Lasso}   & \multicolumn{2}{c}{1.863 (0.024)}   &   & \multicolumn{2}{c}{1.544 (0.011)}     &       &  \multicolumn{2}{c}{1.381 (0.016)}             \\ 
      \hline \hline
\end{tabular}
\end{table}

\subsection{Simulation 2}
\label{simu2section}
The main goal of our second simulation study was similar to the one in the first simulation study, but we considered the situation where the underlying graph was a 2d grid graph with 25 rows and 25 columns. Therefore, the Smooth-Lasso and the Spline-Lasso were replaced by their corresponding variants in this simulation. We first generated following four different scenarios of $B^{*}\in \mathbb{R}^{25\times 25}$ and then obtained $\beta^*\in \mathbb{R}^{625}$ via stacking the columns of $B^{*}$ on top of one another:
\begin{enumerate}
\item[(a)] Scenario 1: for $1\le i\le 25$ and $1\le j\le 25$, 
\begin{equation*}
B^{*}_{ij}=  \begin{cases}
       0.5 & (i, j)\in [9, 13]\times [13, 17]\\
       -1 & (i, j) \in [9, 13]\times [9, 12] \\
       +1 & (i, j) \in [14, 17]\times [9, 12] \\
       -0.5  &  (i, j) \in [14, 17]\times [13, 17] \\ 
       0  &  \text{otherwise}.  
     \end{cases}
\end{equation*}
\item[(b)] Scenario 2: for $1\le i\le 25$ and $1\le j\le 25$, 
\begin{equation*}
B^{*}_{ij}=  \begin{cases}
       0.1(i+j)-2.6  & (i, j)\in [9, 13]\times [13, 17]\\
       2.6-0.1(i+j) & (i, j) \in [9, 13]\times [9, 12] \\
       0.1(j-i) & (i, j) \in [14, 17]\times [9, 17] \\
       0  &  \text{otherwise}.  
     \end{cases}
\end{equation*}

\item[(c)] Scenario 3: for $1\le i\le 25$ and $1\le j\le 25$, 
\begin{equation*}
B^{*}_{ij}=  \begin{cases}
         0.7(0.1j-0.7)^2 & (i, j)\in [9, 13]\times [1, 12]\\
         0.7(0.1j-1.9)^2 & (i, j) \in [9, 13]\times [13, 25] \\
         -0.7(0.1j-0.7)^2  & (i, j) \in [14, 17]\times [1, 12] \\
           -0.7(0.1j-1.9)^2  & (i, j) \in [14, 17]\times [13, 25] \\         
       0  &  \text{otherwise}.  
     \end{cases}
\end{equation*}
\item[(d)] Scenario 4: for $1\le i\le 25$ and $1\le j\le 25$, 
\begin{equation*}
B^{*}_{ij}=  \begin{cases}
        \sin{\left(\frac{0.1j-1.3}{8}\right)}-\cos{\left(\frac{0.1i-1.3}{10}\right)}+2\sin{\left(\frac{0.1j-1.3}{2}-(0.1i-1.3)\right)} \\~~~~~~~~~~~~~~~~~~ -\cos{\left(0.1(i+j)-2.6\right)}+2 & (i, j)\in [9, 17]\times [1, 25]\\      
       0  &  \text{otherwise}.  
     \end{cases}
\end{equation*}
\end{enumerate}
Scenarios 1, 2, and 3 correspond to subfigures (a), (b) and (c) in Figure~\ref{piecewisepoly2dgraph}. Scenario 4 corresponds to the right panel of Figure~\ref{scenario4truebeta}. In the remaining steps, we followed the same procedure in Section~\ref{simu1section} except that the sample sizes of training data were replaced by $N=250, 375$, and $500$. The results of our second simulation are summarized in Table~\ref{simu2results}. Overall, our approach had much better performance compared to the other three methods.  

\begin{table}
\caption {Averages (standard errors) of $\ell_{2}$ estimation error in Simulation 2. The minimal averages are in bold. } 
\label{simu2results}  
\centering
\begin{tabular}{llllllllll}
\hline \hline 
   & \multicolumn{2}{c}{$(N, n)=(250, 625)$} &  & \multicolumn{2}{c}{$(N, n)=(375, 625)$} &   & \multicolumn{2}{c}{$(N, n)=(500, 625)$} \\ 
 &   \multicolumn{2}{c}{$\|\hat{\beta}-\beta^*\|_{2}$}   &            &       \multicolumn{2}{c}{$\|\hat{\beta}-\beta^*\|_{2}$}       &     &    \multicolumn{2}{c}{$\|\hat{\beta}-\beta^*\|_{2}$}  \\    
 \hline
     &    &    &   \text{Scenario 1}  &   &   &    \\ \hline 
 \text{Our approach}   &  \multicolumn{2}{c}{{\bf 0.433} (0.007)}       &        &  \multicolumn{2}{c}{{\bf 0.345} (0.003)}     &   & \multicolumn{2}{c}{{\bf 0.364} (0.002)}                         \\
 \text{Lasso}   &  \multicolumn{2}{c}{3.145 (0.077)}      &        &  \multicolumn{2}{c}{0.538 (0.012)}    &   & \multicolumn{2}{c}{0.381 (0.003)}                         \\
  \text{Graph-Smooth-Lasso}   & \multicolumn{2}{c}{2.288 (0.063)}          &        &  \multicolumn{2}{c}{0.618 (0.016)}    &    & \multicolumn{2}{c}{0.384 (0.004)}                      \\
   \text{Graph-Spline-Lasso}   & \multicolumn{2}{c}{3.439 (0.017)}   &   & \multicolumn{2}{c}{3.191 (0.018)}     &       &  \multicolumn{2}{c}{2.990 (0.011)}             \\
      \hline
      &    &    &   \text{Scenario 2}  &   &   &    \\ \hline 
 \text{Our approach}   &  \multicolumn{2}{c}{{\bf 0.406} (0.007)}       &        &  \multicolumn{2}{c}{{\bf 0.319} (0.003)}     &   & \multicolumn{2}{c}{{\bf 0.290} (0.003)}                         \\
 \text{Lasso}   &  \multicolumn{2}{c}{0.907 (0.023)}      &        &  \multicolumn{2}{c}{0.445 (0.005)}    &   & \multicolumn{2}{c}{0.336 (0.005)}                         \\
  \text{Graph-Smooth-Lasso}   & \multicolumn{2}{c}{0.860 (0.020)}          &        &  \multicolumn{2}{c}{0.447 (0.005)}    &    & \multicolumn{2}{c}{0.339 (0.004)}                      \\
   \text{Graph-Spline-Lasso}   & \multicolumn{2}{c}{1.488 (0.010)}   &   & \multicolumn{2}{c}{1.365 (0.005)}     &       &  \multicolumn{2}{c}{1.311 (0.004)}             \\
      \hline
   &    &    &   \text{Scenario 3}  &   &   &    \\ \hline 
 \text{Our approach}   &  \multicolumn{2}{c}{ {\bf 0.735} (0.010)}       &        &  \multicolumn{2}{c}{{\bf 0.491} (0.005)}     &   & \multicolumn{2}{c}{0.455 (0.004)}                         \\
 \text{Lasso}   &  \multicolumn{2}{c}{1.449 (0.010)}      &        &  \multicolumn{2}{c}{0.749 (0.009)}    &   & \multicolumn{2}{c}{0.503 (0.005)}                         \\
  \text{Graph-Smooth-Lasso}   & \multicolumn{2}{c}{0.955 (0.011)}          &        &  \multicolumn{2}{c}{0.598 (0.006)}    &    & \multicolumn{2}{c}{{\bf 0.440} (0.004)}                      \\
   \text{Graph-Spline-Lasso}   & \multicolumn{2}{c}{0.775 (0.007)}   &   & \multicolumn{2}{c}{0.658 (0.004)}     &       &  \multicolumn{2}{c}{0.609 (0.002)}             \\
      \hline
 &    &    &   \text{Scenario 4}  &   &   &    \\ \hline 
 \text{Our approach}   &  \multicolumn{2}{c}{{\bf 3.603} (0.120)}       &        &  \multicolumn{2}{c}{{\bf 1.012} (0.022)}     &   & \multicolumn{2}{c}{{\bf 0.612} (0.006)}                         \\
 \text{Lasso}   &  \multicolumn{2}{c}{11.865 (0.059)}      &        &  \multicolumn{2}{c}{6.687 (0.091)}    &   & \multicolumn{2}{c}{1.718 (0.041)}                         \\
  \text{Graph-Smooth-Lasso}   & \multicolumn{2}{c}{5.960 (0.093)}          &        &  \multicolumn{2}{c}{2.516 (0.047)}    &    & \multicolumn{2}{c}{1.070 (0.021)}                      \\
   \text{Graph-Spline-Lasso}   & \multicolumn{2}{c}{3.622 (0.019)}   &   & \multicolumn{2}{c}{3.175 (0.013)}     &       &  \multicolumn{2}{c}{3.015 (0.012)}             \\ 
      \hline \hline
\end{tabular}
\end{table}

\subsection{Simulation 3}
We now shift our focus to the problem of statistical inference in the third simulation study. Our first task was to verify the theoretical results in Corollary~\ref{CI} and construct confidence intervals for $\beta_{1}^*$. We considered $\beta^*$ described in Scenario 1 of Section~\ref{simu1section} and $N=200$. The steps of the experiment are summarized below:  
\begin{enumerate}
\item We generated each row of $X$ from $N(0, I_{n\times n})$ and solved the optimization problem $(\ref{CLIME})$ with $\mu=0.05\sqrt{\frac{\log n}{N}}$.  
\item We generated $\varepsilon_{i}$ from $N(0, 0.1)$, then generated the response $y$ via the linear model $y=X\beta^*+\varepsilon$. 
\item We solved the optimization problem $(\ref{tf-sl})$ with $\lambda$ and $\lambda_{g}$ used in Simulation 1.
\item We took the first component as an example and calculated one realization of $\frac{\sqrt{N}(\tilde{\beta}_{1}-\beta^*_{1})}{\sigma_{\varepsilon}\sqrt{e_{1}^T\widehat{\Theta}\Sigma_{N}\widehat{\Theta}^Te_{1}}}$, where $\tilde{\beta}_{1}$ was computed via $(\ref{onestep3})$. We also constructed a $95\%$ confidence interval for $\beta^*_{1}$ by $(\ref{confidenceinterval1})$.  
\item We repeated the second, third, and fourth steps 200 times. 
\end{enumerate}
Panel (a) in Figure~\ref{qqplots} shows the Q-Q plot of $\frac{\sqrt{N}(\tilde{\beta}_{1}-\beta^*_{1})}{\sigma_{\varepsilon}\sqrt{e_{1}^T\widehat{\Theta}\Sigma_{N}\widehat{\Theta}^Te_{1}}}$. The scatter points are close to the 45-degree line, which confirms the normal sampling distribution in Corollary~\ref{CI}. Panel (b) of Figure~\ref{qqplots} shows the confidence interval coverage based on 200 trials. We also conducted a similar experiment to verify the results in Corollary~\ref{CI2}. In the new experiment, we chose $\mu=0.08\sqrt{\frac{\log n}{N}}$ in the first step, and calculated $\frac{\sqrt{N}(\tilde{\beta}_{1}-\beta^*_{1})}{\hat{\sigma}_{\varepsilon}\sqrt{e_{1}^T\widehat{\Theta}\Sigma_{N}\widehat{\Theta}^Te_{1}}}$ in the 4th step, where $\hat{\sigma}_{\varepsilon}$ is given in $(\ref{sigmahat})$. We also constructed a $95\%$ confidence interval by $(\ref{confidenceinterval2})$ in the 4th step. The corresponding Q-Q plot and the confidence intervals are displayed in Panel (c) and (d) of Figure~\ref{qqplots}, respectively. 

\begin{figure}[!t]
\begin{subfigure}{.5\textwidth}
  \centering
  \includegraphics[width=.8\linewidth]{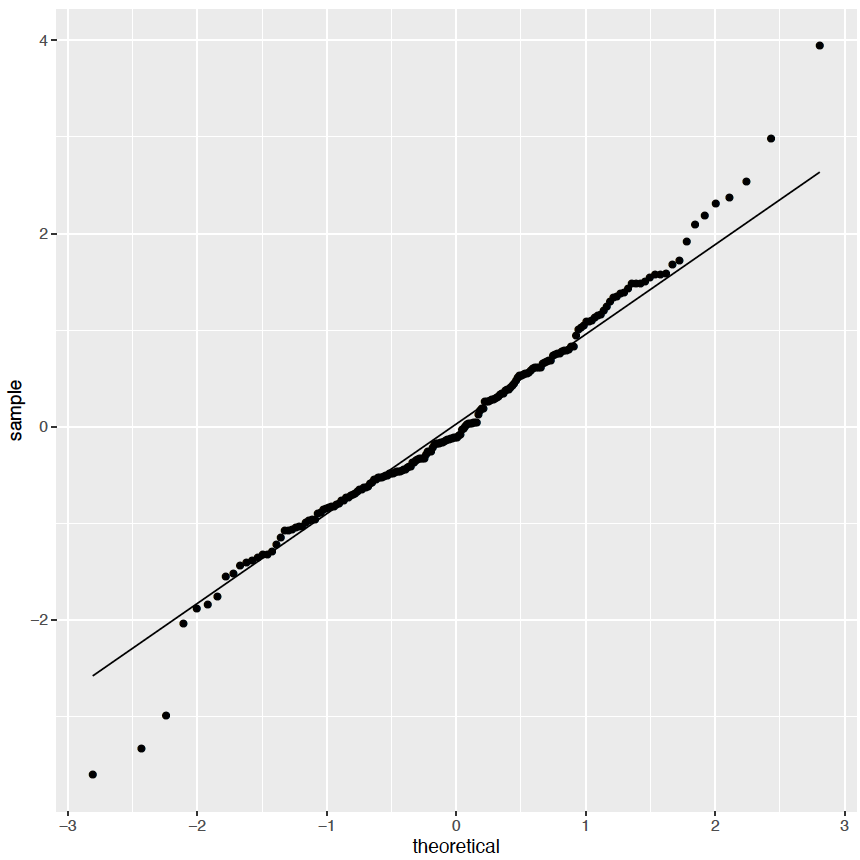}  
  \caption{}
\end{subfigure}
\begin{subfigure}{.5\textwidth}
 \centering
  \includegraphics[width=.8\linewidth]{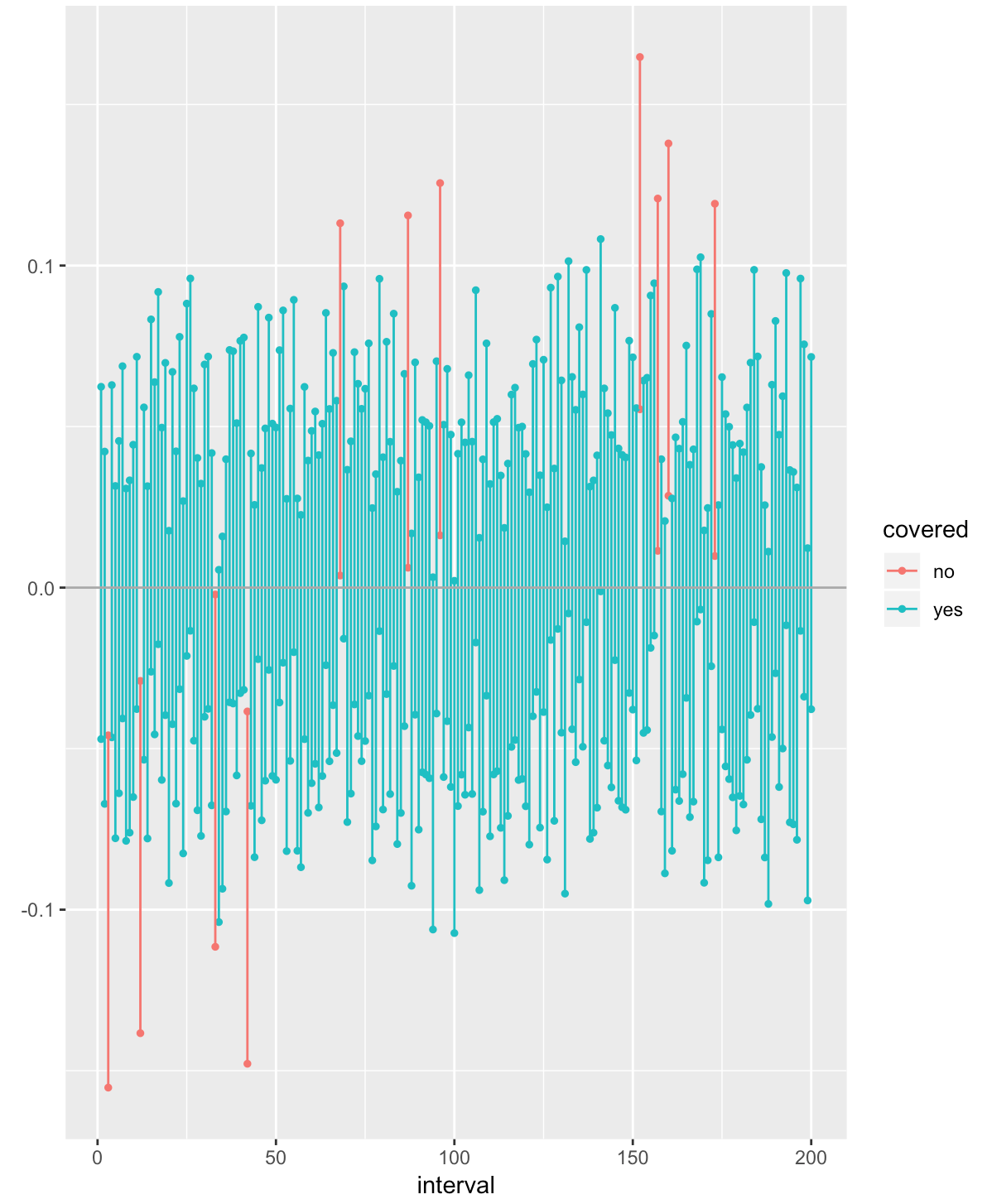}  
 \caption{}
\end{subfigure}
\begin{subfigure}{.5\textwidth}
  \centering
  \includegraphics[width=.8\linewidth]{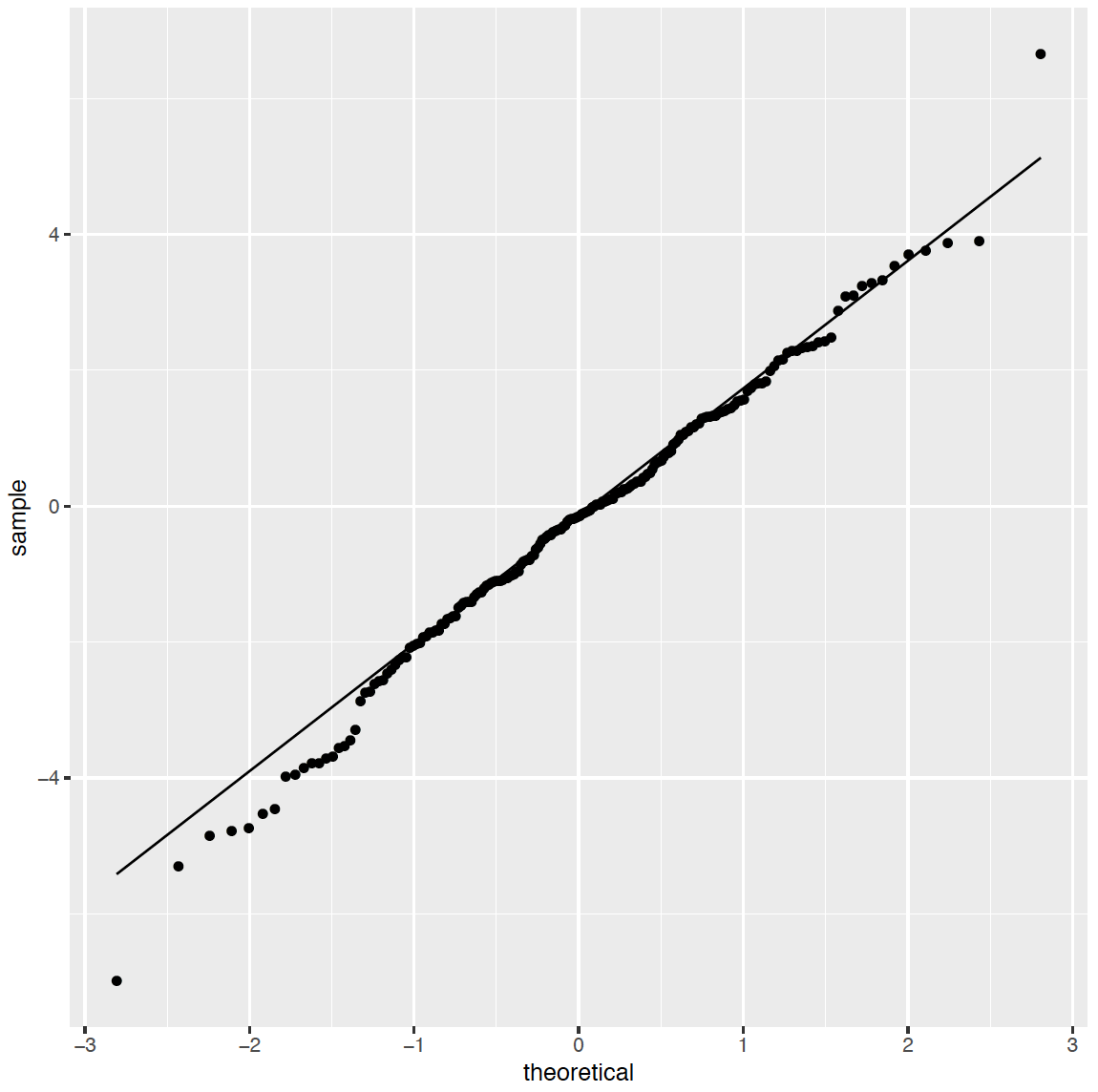}  
  \caption{}
\end{subfigure}
\begin{subfigure}{.5\textwidth}
 \centering
  \includegraphics[width=.8\linewidth]{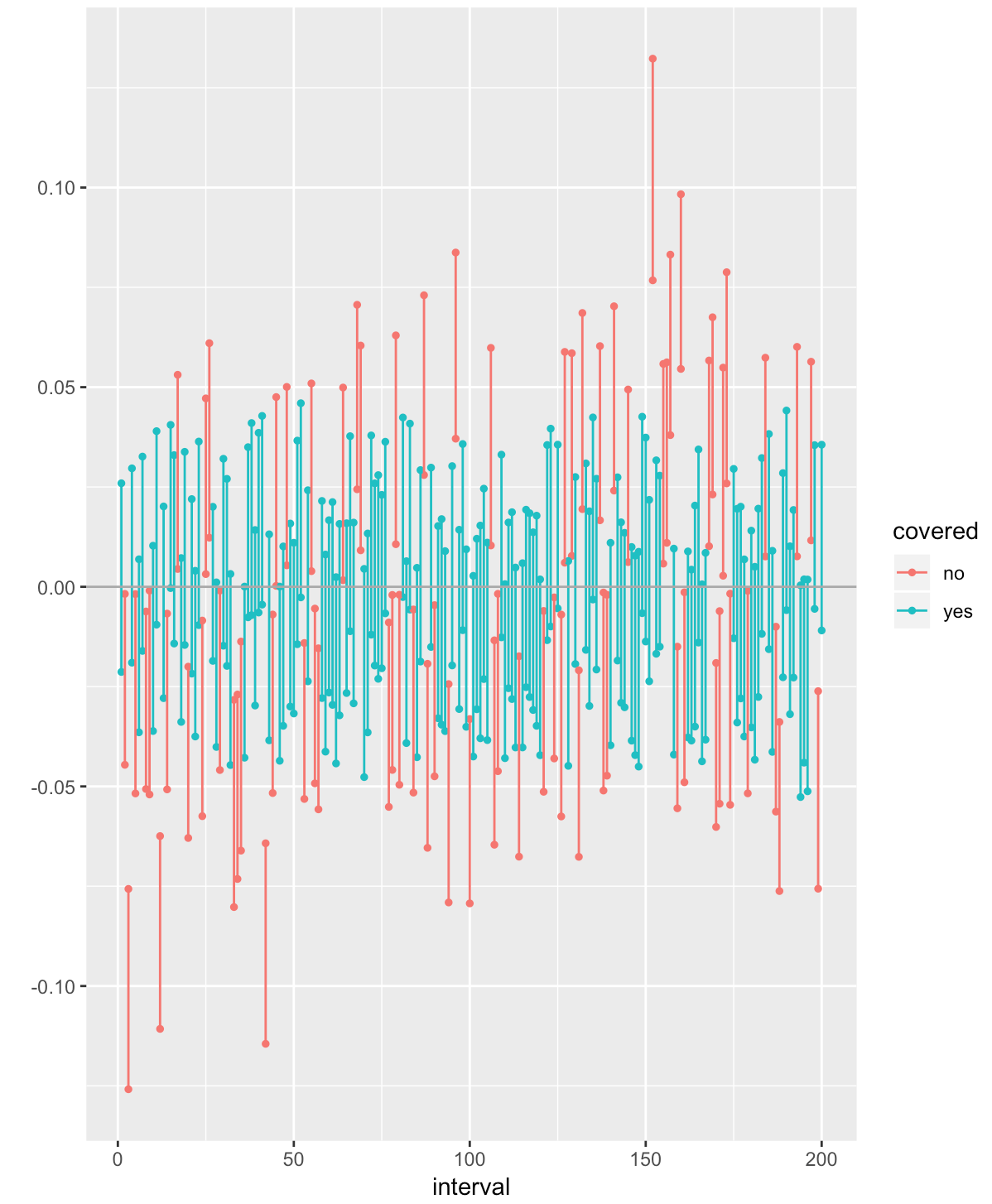}  
 \caption{}
\end{subfigure}
\caption{Q-Q plots and confidence intervals based on 200 trials. Panel (a) and Panel (c) are Q-Q plots of $\frac{\sqrt{N}(\tilde{\beta}_{1}-\beta^*_{1})}{\sigma_{\varepsilon}\sqrt{e_{1}^T\widehat{\Theta}\Sigma_{N}\widehat{\Theta}^Te_{1}}}$ and $\frac{\sqrt{N}(\tilde{\beta}_{1}-\beta^*_{1})}{\hat{\sigma}_{\varepsilon}\sqrt{e_{1}^T\widehat{\Theta}\Sigma_{N}\widehat{\Theta}^Te_{1}}}$, respectively. Panel (b) and Panel (d) show $95\%$ confidence intervals for $\beta^*_{1}$ constructed by $(\ref{confidenceinterval1})$ and $(\ref{confidenceinterval2})$, respectively. The empirical coverage in Panel (b) was $94.5\%$ and the empirical coverage in Panel (d) was $62.5\%$. }
\label{qqplots}
\end{figure}
 
Finally, we focused on the hypothesis testing problem. We considered one instance of $(\ref{test2})$: $H_{0}: \beta_{1}^*=\beta_{2}^*$ vs.\ $H_{A}: \beta_{1}^*\ne \beta_{2}^*$. Our goal was to check the validity of the Type I error of our proposed method. We took the setting where $\sigma_{\varepsilon}$ is known as an example. The first three steps of the procedure were same as those in the first experiment. In the 4th step, we calculated the test statistic $\frac{\sqrt{N}(\tilde{\beta}_{2}-\tilde{\beta}_{1})}{\sigma_{\varepsilon}\sqrt{F_{j}^T\widehat{\Theta}\Sigma_{N}\widehat{\Theta}^TF_{j}}}$, where $F_{j}=(-1, +1, 0,..., 0)$, and decided whether to reject $H_{0}$ at a $5\%$ significance level. The number of simulations was 200. The empirical Type I error was 0.04, which was close to the significance level.


\section{Application to an \emph{Arabidopsis thaliana} microarray dataset}
\label{realdatasection}

One motivation of our proposed method comes from the analysis of gene expression data, where genes within a same cluster have similar patterns. In this section, we report the performance of our approach to analyze a microarray dataset which was related to the isoprenoid biosynthesis in \emph{Arabidopsis thaliana}. In the application, we focused on identifying genes which are associated with the isoprenoid gene called GGPPS11 among hundreds of candidates from 58 metabolic pathways. In order to use our approach, the Smooth-Lasso, and the Spline-Lasso efficiently, we constructed the underlying graph as a path graph. More specifically, we ordered the candidate genes from the same pathway into a path subgraph, then each subgraph was concatenated by the alphabetical order of names of pathways. Therefore, each row of our design matrix recorded the expression levels measured from these ordered genes and the corresponding response variable was the expression level of GGPPS11. All variables in our analysis were log-transformed, centered and standardized to the unit variance. Finally, the dataset we used after the data preprocessing step consisted of $118$ samples and $777$ candidate genes. A more detailed description of the real data experiment can be found in \cite{wille2004} and \cite{chakraborty2019}.    

First, we compared the prediction accuracy for the four mentioned methods. All tuning parameters were selected via the 5-fold cross-validation procedure introduced in Section~\ref{simu1section}. For our Graph-Piecewise-Polynomial-Lasso, we chose the order $k$ to be 0 after performing a similar cross-validation among the set $\{0, 1, 2, 3\}$. We randomly split the whole dataset into the training and testing sets, which included 92 and 26 samples, respectively. We used the training set to estimate the regression coefficients and then calculated the mean squared prediction error (MSE) for the testing set. For robustness, we repeated the above dataset partition, estimation, and prediction process 50 times. The results are presented in Table~\ref{realdataMSE} and Figure~\ref{mseboxplot}. Overall, our approach achieved smaller MSE than all other methods.        

 \begin{figure}[t]
 \begin{minipage}[b]{0.5\textwidth}
    \centering 
    \begin{tabular}{cccc}\hline\hline  
      Method & $Q_{1}$ & Median & $Q_{3}$ \\ \hline
      Our approach & {\bf 0.25} &{\bf 0.30}  &  {\bf 0.42} \\ \hline
      Lasso & 0.33 &0.38 & 0.52 \\ \hline
      Smooth-lasso & 0.31 &0.36 & 0.44 \\ \hline
       Spline-Lasso &  0.31 &0.36 & 0.43 \\ \hline\hline
      \end{tabular}
     \captionof{table}{The first quartile, the median and the third quartile of MSEs. The minimal ones are in bold. }
      \label{realdataMSE}
    \end{minipage}
\hfill 
  \begin{minipage}[b]{0.5\textwidth}
    \centering
    \includegraphics[width=1\linewidth]{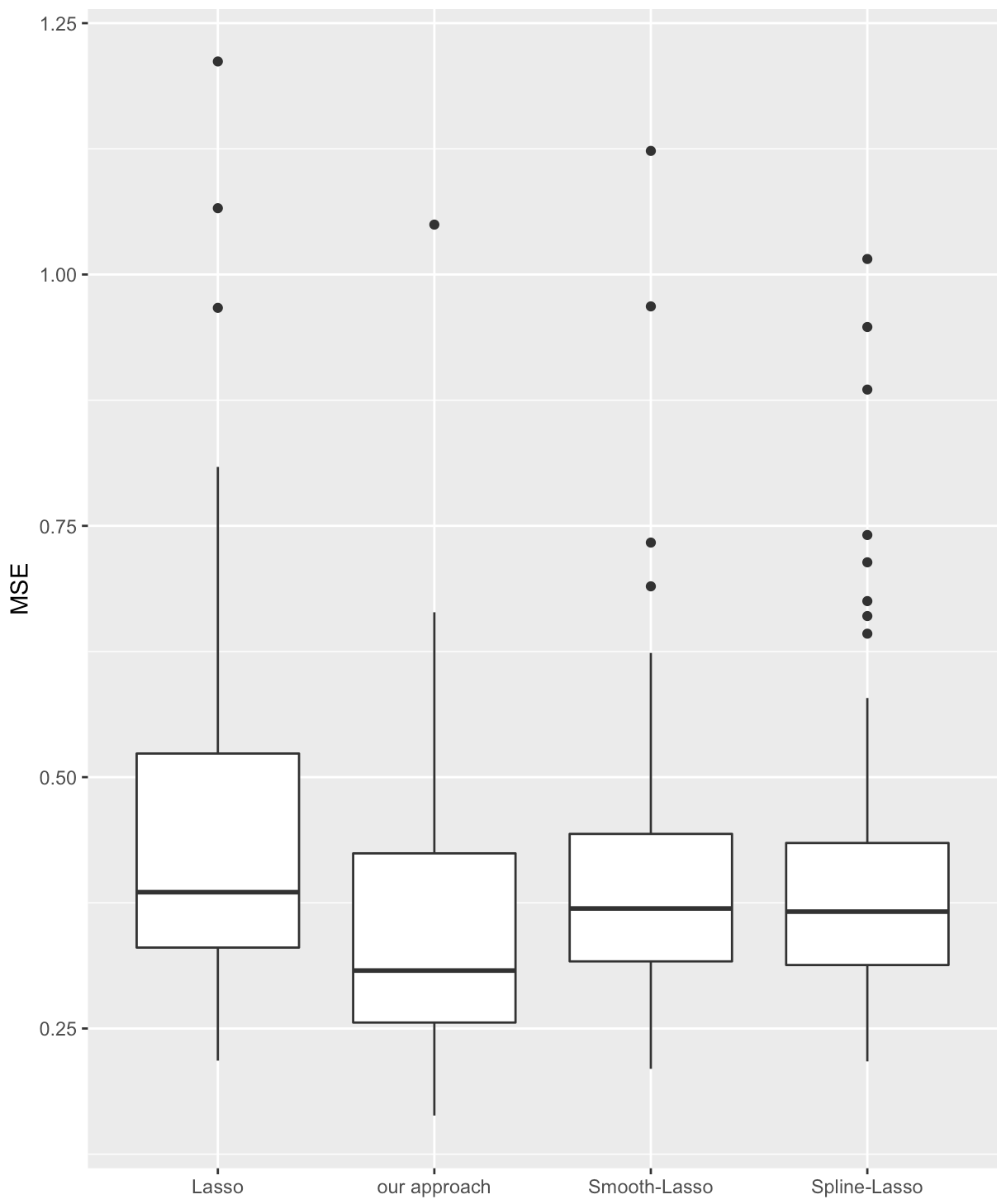}  
    \captionof{figure}{Boxplot of MSEs. }
    \label{mseboxplot}
  \end{minipage}
\end{figure}

We also applied our approach to the full dataset with the optimal tuning parameters chosen at the previous stage and analyzed the selected genes. Panel (a) of Figure~\ref{realdatacoef} in Appendix~\ref{realdataappendix} shows the estimated regression coefficients of 777 candidate genes across 58 pathways. Furthermore, we took the Purinemetabolism pathway as an example and plotted the corresponding coefficients in Panel (b) of Figure~\ref{realdatacoef}. Note that the estimated regression coefficients were piecewise constant between and within pathways, which could be very useful for other biological tasks such as the cluster analysis of genes. On the other hand, our proposed method selected 107 candidate genes which belong to 27 different pathways. In most cases, only a subset of genes within a given pathway was selected. Pathways which had top 5 percentages of selected genes included Morphinemetabolism, Tocopherolbiosynthesis, Chorismatemetabolism, Histidinemetabolism, and Flavonoidmetabolism. These findings were consistent with those reported in \cite{wille2004}. See Table~\ref{realdatapathwaytable} in Appendix~\ref{realdataappendix} for a complete summary of selected genes. 


\section{Discussion}
\label{discussion}

We have developed a flexible approach to estimate and infer graph-based regression coefficients in high-dimensional linear models. In the paper, we assume the order $k$ of the Graph-Piecewise-Polynomial-Lasso is known for ease of presentation, but in practice, we could select the best $k$ through the cross-validation procedure as we did in the simulation study and the real data analysis. From a practical point of view, this is one significant benefit of our approach in the sense that we are able to estimate regression coefficients with any complex structure by tuning $k$. In contrast, other existing methods such as the fused Lasso are designed for only one particular structure.           

We have established rigorous upper bounds on the estimation error and the prediction error for our approach. We mention one open question that is not addressed by the theory in the current paper. Recall that $S_{1}$ and $S_{2}$ are the support sets of $\Delta^{(k+1)}\beta^*$ and $\beta^*$, respectively. Furthermore, given an optimal solution $\hat{\beta}$ from $(\ref{tf-sl})$, we also denote the support sets of $\Delta^{(k+1)}\hat{\beta}$ and $\hat{\beta}$ by $\widehat{S}_{1}$ and $\widehat{S}_{2}$. Then in terms of the Graph-Piecewise-Polynomial-Lasso, it is interesting to ask the following question in our context: when are the support sets $\widehat{S}_{1}$ and $\widehat{S}_{2}$ exactly equal to the true support sets $S_{1}$ and $S_{2}$? We refer to this property as \emph{variable selection and change-point detection consistency}. We have attempted to explore this property via a routine application of the primal-dual witness type arguments \cite{wainwright2009sharp}, but have had no success. We suspect that this is because of the potential interactions between specifying the support of $\beta^*$ and the support of $\Delta^{(k+1)}\beta^*$. 

Finally, our paper suggests several directions for future research. Our current work considers the piecewise polynomial structure over the unweighted graph.  
Similar piecewise polynomial structure over a weighted graph could be defined using a weighted version of the oriented incidence matrix in Definition~\ref{diffoperator} and the same recursion in Definition~\ref{diffoperator2}. It would also be helpful to generalize the linear model to more general settings, such as generalized linear models.

%
%

\nocite{*}
\bibliographystyle{plain}
\bibliography{refs}

%
%

\appendix

\counterwithin{figure}{section}
\counterwithin{table}{section}


\section{Proofs of theorems}
\label{proofsection}
\setcounter{equation}{0}
\renewcommand{\theequation}{A.\arabic{equation}}

In this section, we provide proofs of Theorem~\ref{fixedconthm}, Theorem~\ref{probconthm}, Theorem~\ref{weaklypiecewisepolycorollary}, Theorem~\ref{weaklypiecewisepolyrandomthm}, Theorem~\ref{onestepfixthm} and Theorem~\ref{onesteprandomthm} established in the paper.  

\subsection{Proof of Theorem~\ref{fixedconthm}}
\label{fixedconthmprf}

We start with a supporting lemma which concerns the geometry of $D(\hat{\beta}-\beta^*)$. 

\begin{lemma}
\label{conelemma}
If the tuning parameter $\lambda$ satisfies the condition that 
\begin{equation*}
\lambda\ge \frac{2}{N}\|\varepsilon^TXD^{+} \|_{\infty},
\end{equation*}
then $D(\hat{\beta}-\beta^*)$ is in the cone $\mathbb{C}$ defined in Condition~\ref{lowerRE}. 
\end{lemma}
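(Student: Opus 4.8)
The plan is to run the standard ``basic inequality'' argument, adapted to the generalized penalty $\|D\beta\|_1$. First I would use optimality of $\hat\beta$ for the program $(\ref{tf-sl})$, rewritten with the matrix $D$ of $(\ref{Dmatrix})$: since $\hat\beta$ minimizes $\frac{1}{2N}\|y-X\beta\|_2^2 + \lambda\|D\beta\|_1$, comparing its value against that at $\beta^*$ gives
$$\frac{1}{2N}\|y-X\hat\beta\|_2^2 + \lambda\|D\hat\beta\|_1 \le \frac{1}{2N}\|y-X\beta^*\|_2^2 + \lambda\|D\beta^*\|_1.$$
Substituting $y = X\beta^* + \varepsilon$, writing $\hat\delta = \hat\beta - \beta^*$, and discarding the nonnegative term $\frac{1}{2N}\|X\hat\delta\|_2^2$ reduces this to
$$\lambda\|D\hat\beta\|_1 - \lambda\|D\beta^*\|_1 \le \frac{1}{N}\varepsilon^T X\hat\delta.$$

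Two structural observations then carry the argument. Because $D$ has full column rank, $D^+ D = I_n$, so $\hat\delta = D^+(D\hat\delta)$ and hence $X\hat\delta = XD^+ w$ with $w := D(\hat\beta-\beta^*)$; Hölder's inequality together with the hypothesis $\lambda \ge \frac{2}{N}\|\varepsilon^T X D^+\|_\infty$ then bounds the right-hand side by $\frac{\lambda}{2}\|w\|_1$. For the left-hand side I would use that $D\beta^* = (\tfrac{\lambda_g}{\lambda}\Delta^{(k+1)}\beta^*,\, \beta^*)^T$ is supported exactly on $S = S_1 \cup \{i+m : i\in S_2\}$, by the definitions of $S_1$ and $S_2$ as the supports of $\Delta^{(k+1)}\beta^*$ and $\beta^*$. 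Splitting the $\ell_1$ norms over $S$ and $S^c$, noting $(D\beta^*)_{S^c}=0$, and applying the reverse triangle inequality on $S$ yields $\|D\hat\beta\|_1 - \|D\beta^*\|_1 \ge \|w_{S^c}\|_1 - \|w_S\|_1$.

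Combining the two bounds and cancelling $\lambda$ gives $\|w_{S^c}\|_1 - \|w_S\|_1 \le \tfrac{1}{2}(\|w_S\|_1 + \|w_{S^c}\|_1)$, which rearranges to $\|w_{S^c}\|_1 \le 3\|w_S\|_1$, i.e.\ $w = D(\hat\beta-\beta^*) \in \mathbb{C}$. The hard part is conceptual rather than computational: recognizing that the composite support $S$ is defined precisely so that $D\beta^*$ vanishes on $S^c$, and that full column rank of $D$ lets one funnel the noise term $\varepsilon^T X\hat\delta$ through $XD^+$ so that it is governed by exactly the $\ell_\infty$ quantity appearing in the assumed lower bound on $\lambda$. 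Once these are in hand, the remainder is the familiar cone computation.
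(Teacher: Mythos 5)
Your proof is correct and follows essentially the same route as the paper's: the basic inequality from optimality, the reverse-triangle-inequality split of $\|D\hat\beta\|_1-\|D\beta^*\|_1$ over $S$ and $S^c$ (using that $D\beta^*$ vanishes on $S^c$), and H\"older applied to $\varepsilon^T X\hat\delta = \varepsilon^T XD^+ (D\hat\delta)$ via $D^+D=I_n$. The only cosmetic difference is that you discard the nonnegative term $\frac{1}{2N}\|X\hat\delta\|_2^2$ at the outset, whereas the paper carries it along and invokes its nonnegativity at the end.
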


The proof of Lemma~\ref{conelemma} is deferred to Appendix~\ref{conelemmaproof}. We now prove Theorem~\ref{fixedconthm} in the following. 

\begin{proof}
We first show Part (a). Let $\Delta=\hat{\beta}-\beta^*$. Using a similar argument with Lemma~\ref{conelemma}, we can show that if $\lambda\ge\frac{2}{N}\|\varepsilon^TXD^{+}\|_{\infty}$, then 
\begin{equation}
\label{eq1thm2}
\begin{split}
\frac{1}{N}\|X\Delta\|_{2}^{2}\le 3\lambda\|(D\Delta)_{S}\|_{1}=3\lambda_{g}\|\Delta^{(k+1)}_{S_{1}}\Delta\|_{1}+3\lambda\|\Delta_{S_{2}}  \|_{1} \\
\le 3\lambda_{g}\sqrt{|S_{1}|}\|\Delta^{(k+1)}\Delta\|_{2}+3\lambda\sqrt{|S_{2}|}\|\Delta\|_{2}\le \left( 3\lambda_{g}\sqrt{(2d)^{k+1}|S_{1}|}+3\lambda\sqrt{|S_{2}|}  \right)\|\Delta\|_{2},
\end{split}
\end{equation}
where $S$, $S_{1}$ and $S_{2}$ are defined in Condition~\ref{lowerRE}, and the last inequality follows from Lemma~\ref{eigenvaluelem}. Furthermore, by Condition~\ref{lowerRE}, Lemma~\ref{conelemma} and Lemma~\ref{eigenvaluelem}, we have 
\begin{equation}
\label{eq2thm2}
\frac{1}{N}\|X\Delta\|^{2}_{2}=\frac{1}{N}\|XD^{+}D\Delta\|_{2}^{2}\ge \eta_{\gamma}\| D\Delta \|_{2}^{2}\ge \eta_{\gamma}\|\Delta\|_{2}^{2}. 
\end{equation}
Therefore, combining $(\text{\ref{eq1thm2}})$ and $(\text{\ref{eq2thm2}})$, we have 
\begin{equation*}
\|\Delta\|_{2}\le \frac{3\lambda_{g}\sqrt{(2d)^{k+1}s_{1}}+3\lambda\sqrt{s_{2}}}{\eta_{\gamma}},
\end{equation*}
where $s_{1}=|S_{1}|$ and $s_{2}=|S_{2}|$. Furthermore, we have 
\begin{equation*}
\begin{split}
\frac{1}{N}\|X\Delta\|_{2}^{2}\le \frac{\left( 3\lambda_{g}\sqrt{(2d)^{k+1}s_{1}}+3\lambda\sqrt{s_{2}}  \right)^{2}}{\eta_{\gamma}}. 
\end{split}
\end{equation*}
Hence we obtain the result in Part (a). 

Next, we show Part (b). We have 
\begin{equation*}
D^{+}=(D^TD)^{-1}D^T=\begin{bmatrix}\frac{\lambda_{g}}{\lambda}\left(\frac{\lambda_{g}^2}{\lambda^2}L^{k+1}+I_{n}\right)^{-1}(\Delta^{(k+1)})^T & \left(\frac{\lambda_{g}^2}{\lambda^2}L^{k+1}+I_{n}\right)^{-1} \end{bmatrix}. 
\end{equation*}
So by the definition of matrix $\ell_{1}$ norm, we have 
\begin{equation*}
\begin{split}
\opnorm{D^{+}}_{1}=\max\left\{   \opnorm{\frac{\lambda_{g}}{\lambda}\left(\frac{\lambda_{g}^2}{\lambda^2}L^{k+1}+I_{n}\right)^{-1}(\Delta^{(k+1)})^T}_{1}, \opnorm{\left(\frac{\lambda_{g}^2}{\lambda^2}L^{k+1}+I_{n}\right)^{-1}}_{1} \right\} \\
\le \max\left\{   \opnorm{\left(\frac{\lambda_{g}^2}{\lambda^2}L^{k+1}+I_{n}\right)^{-1}}_{1}\opnorm{\frac{\lambda_{g}}{\lambda}(\Delta^{(k+1)})^T}_{1}, \opnorm{\left(\frac{\lambda_{g}^2}{\lambda^2}L^{k+1}+I_{n}\right)^{-1}}_{1} \right\}.
\end{split}
\end{equation*}
Therefore, for odd $k$, we have 
\begin{equation*}
\opnorm{\frac{\lambda_{g}}{\lambda}(\Delta^{(k+1)})^T}_{1}\le \frac{\lambda_{g}}{\lambda}\opnorm{L}^{\frac{k+1}{2}}_{1}\le \frac{\lambda_{g}}{\lambda}\left(\opnorm{M}_{1}+\opnorm{A}_{1} \right)^{\frac{k+1}{2}}\le \frac{\lambda_{g}}{\lambda}(2d)^{\frac{k+1}{2}}, 
\end{equation*}
where $M$ and $A$ are degree matrix and adjacency matrix of the underlying graph, respectively. For even $k$, we have
\begin{equation*}
\opnorm{\frac{\lambda_{g}}{\lambda}(\Delta^{(k+1)})^T}_{1}\le \frac{\lambda_{g}}{\lambda}\opnorm{L}^{\frac{k}{2}}_{1}\opnorm{F^T}_{1}\le \frac{2\lambda_{g}}{\lambda}(2d)^{\frac{k}{2}}, 
\end{equation*}
where $F$ is the oriented incidence matrix of the underlying graph. Furthermore, by Lemma~\ref{l1matrixnorm}, when $\lambda_{g}^2/\lambda^2<1/(2d)^{k+1}$, we have
\begin{equation*}
\opnorm{\left(\frac{\lambda_{g}^2}{\lambda^2}L^{k+1}+I_{n}\right)^{-1}}_{1} \le \frac{1}{1-\frac{\lambda_{g}^2}{\lambda^2}\opnorm{L}^{k+1}_{1}}\le \frac{1}{1-\frac{\lambda_{g}^2}{\lambda^2}\left(2d \right)^{k+1}}. 
\end{equation*}
Combining above analysis, we have 
\begin{equation*}
\opnorm{D^{+}}_{1}\le \frac{1}{1-\frac{\lambda_{g}^2}{\lambda^2}(2d)^{k+1}}.  
\end{equation*}
Therefore, 
\begin{equation*}
\begin{split}
\|\Delta\|_{1}=\|D^{+}D\Delta\|_{1}\le \opnorm{D^{+}}_{1}\|D\Delta\|_{1}\le \frac{4\| (D\Delta)_{S} \|_{1}}{1-\frac{\lambda_{g}^2}{\lambda^2}(2d)^{k+1}}\\
\le \frac{4}{1-\frac{\lambda_{g}^2}{\lambda^2}(2d)^{k+1}}\left(\frac{\lambda_{g}}{\lambda}\sqrt{(2d)^{k+1}s_{1}}+\sqrt{s_{2}}   \right)\|\Delta\|_{2}\\ \le \frac{12}{1-\frac{\lambda_{g}^2}{\lambda^2}(2d)^{k+1}}\frac{\left(  
\lambda_{g}\sqrt{(2d)^{k+1}s_{1}}+\lambda\sqrt{s_{2}}\right)^{2}}{\lambda\eta_{\gamma}}, 
\end{split}
\end{equation*}
which yields the result in Part (b). Therefore, the proof is complete.

\end{proof}


\subsection{Proof of Theorem~\ref{probconthm}}
\label{probconthmprf}

\begin{proof}

If we have
\begin{equation*}
\eta_{\gamma}=\frac{\lambda_{1}(\Sigma_{x})}{2(\nu+1)},~\lambda\asymp \sigma_{\varepsilon}\sqrt{\frac{\log n}{N}},~\lambda_{g}=\lambda\sqrt{\frac{\nu}{(2d)^{k+1}}} 
\end{equation*}
and $s_{2}/s_{1}\ge \nu$ for a constant $0\le \nu<1$, then
\begin{equation*}
\frac{3\lambda_{g}\sqrt{(2d)^{k+1}s_{1}}+3\lambda\sqrt{s_{2}}}{\eta_{\gamma}}\asymp \sigma_{\varepsilon}\sqrt{\frac{s_{2}\log n}{N}},
\end{equation*}
\begin{equation*}
\frac{\left(3\lambda_{g}\sqrt{(2d)^{k+1}s_{1}}+3\lambda\sqrt{s_{2}}\right)^2}{\eta_{\gamma}}\asymp \sigma_{\varepsilon}^2\frac{s_{2}\log n}{N}, 
\end{equation*}
and 
\begin{equation*}
\frac{12\lambda\left(\gamma\sqrt{(2d)^{k+1}s_{1}}+\sqrt{s_{2}}\right)^2}{\eta_{\gamma}\left(1-\gamma^2(2d)^{k+1}\right)}\asymp \sigma_{\varepsilon}s_{2}\sqrt{\frac{\log n}{N}}. 
\end{equation*}
Therefore, applying Theorem~\ref{fixedconthm}, Lemma~\ref{eta1rand} and Lemma~\ref{tuninglem}, we obtain the desired results in the theorem.

%

\end{proof}


\subsection{Proof of Theorem~\ref{weaklypiecewisepolycorollary}}
\label{weaklypiecewisepolycorollary-proof}

\begin{proof}
We first show Part (a). For $\eta_{1}>0$ and $\eta_{2}>0$, let 
\begin{equation*}
S_{\eta_{1}}=\left\{ i\in [m], \left|(\Delta_{i}^{(k+1)})^T\beta^*\right|>\eta_{1}\right\},\quad S_{\eta_{2}}=\left\{ i\in [n], \left|\beta^*_{i}\right|>\eta_{2}\right\}.  
\end{equation*}
Then we have $|S_{\eta_{1}}|\le R_{1}\eta_{1}^{-q_{1}}$ and $|S_{\eta_{2}}|\le R_{2}\eta_{2}^{-q_{2}}$. Furthermore, we have 
\begin{equation*}
\left\|\Delta_{S_{\eta_{1}}^c}^{(k+1)}\beta^*\right\|_{1}\le \left(\sum_{i\in S_{\eta_{1}}^c}|(\Delta_{i}^{(k+1)})^T\beta^* |^{q_{1}} \right)\eta_{1}^{1-q_{1}}\le R_{1}\eta_{1}^{1-q_{1}}, 
\end{equation*}
and 
\begin{equation*}
\left\|\beta^*_{S_{\eta_{2}}^{c}}\right\|_{1}\le \left(\sum_{i\in S_{\eta_{2}}^c}|\beta_{i}^* |^{q_{2}} \right)\eta_{2}^{1-q_{2}}\le R_{2}\eta_{2}^{1-q_{2}}. 
\end{equation*}
Therefore, letting $S=S_{\eta_{1}}\cup\left\{ m+i, i\in S_{\eta_{2}} \right\}$, we have
\begin{equation*}
|S|=|S_{\eta_{1}}|+|S_{\eta_{2}}|\le R_{1}\eta_{1}^{-q_{1}}+R_{2}\eta_{2}^{-q_{2}},
\end{equation*}
and 
\begin{equation*}
\|D_{S^c}\beta^*\|_{1}=\frac{\lambda_{g}}{\lambda}\left\|\Delta^{(k+1)}_{S_{\eta_{1}}^{c}}\beta^*\right\|_{1}+\left\|\beta^*_{S_{\eta_{2}}^{c}}\right\|_{1}\le \frac{\lambda_{g}}{\lambda}R_{1}\eta_{1}^{1-q_{1}}
+R_{2}\eta_{2}^{1-q_{2}}. 
\end{equation*}
Therefore, by Part (a) of Lemma~\ref{modelmisspecified-fixed}, we have 
\begin{equation*}
\|\hat{\beta}-\beta^*\|_{2}^{2}\le \frac{144\lambda^2\left(R_{1}\eta_{1}^{-q_{1}}+R_{2}\eta_{2}^{-q_{2}}\right)}{(\eta^{\prime}_{\gamma})^2}+\frac{32}{\eta^{\prime}_{\gamma}}\left[\lambda_{g}R_{1}\eta_{1}^{1-q_{1}}+\lambda R_{2}\eta_{2}^{1-q_{2}}+4\tau(N, n)\left(\frac{\lambda_{g}}{\lambda}R_{1}\eta_{1}^{1-q_{1}}
+R_{2}\eta_{2}^{1-q_{2}} \right)^2\right], 
\end{equation*}
and 
\begin{equation*}
\begin{split}
\frac{1}{N}\|X(\hat{\beta}-\beta^*)\|_{2}^{2}\le 4\lambda\left( \frac{\lambda_{g}}{\lambda}R_{1}\eta_{1}^{1-q_{1}}+R_{2}\eta_{2}^{1-q_{2}}\right)+3\lambda\sqrt{R_{1}\eta_{1}^{-q_{1}}+R_{2}\eta_{2}^{-q_{2}}}\times \\
\sqrt{\frac{144\lambda^2\left(R_{1}\eta_{1}^{-q_{1}}+R_{2}\eta_{2}^{-q_{2}}\right)}{(\eta^{\prime}_{\gamma})^2}+\frac{32}{\eta^{\prime}_{\gamma}}
\left[\lambda_{g}R_{1}\eta_{1}^{1-q_{1}}+\lambda R_{2}\eta_{2}^{1-q_{2}}+4\tau(N, n)\left(\frac{\lambda_{g}}{\lambda}R_{1}\eta_{1}^{1-q_{1}}
+R_{2}\eta_{2}^{1-q_{2}} \right)^2\right]}. 
\end{split}
\end{equation*}
Hence we obtain the results in Part (a). 

Next, we show Part (b). Applying Part (b) of Lemma~\ref{modelmisspecified-fixed}, we have 
\begin{equation*}
\begin{split}
\|\hat{\beta}-\beta^*\|^{2}_{1}\le 128\left(\frac{\lambda_{g}}{\lambda}R_{1}\eta_{1}^{1-q_{1}}+R_{2}\eta_{2}^{1-q_{2}}\right)^2+128\left(R_{1}\eta_{1}^{-q_{1}}+R_{2}\eta_{2}^{-q_{2}}\right) \times \\
\left\{\frac{144\lambda^2\left(R_{1}\eta_{1}^{-q_{1}}+R_{2}\eta_{2}^{-q_{2}}\right)}{(\eta^{\prime}_{\gamma})^2}+\frac{32}{\eta^{\prime}_{\gamma}}\left[\lambda_{g}R_{1}\eta_{1}^{1-q_{1}}+\lambda R_{2}\eta_{2}^{1-q_{2}}+4\tau(N, n)\left(\frac{\lambda_{g}}{\lambda}R_{1}\eta_{1}^{1-q_{1}}
+R_{2}\eta_{2}^{1-q_{2}} \right)^2\right]\right\}.
\end{split}
\end{equation*}
Therefore, the proof is complete. 

\end{proof}


\subsection{Proof of Theorem~\ref{weaklypiecewisepolyrandomthm}}
\label{weaklypiecewisepolyrandomthmprf}

\begin{proof}
We first show Part (a). Let $\eta_{1}=\eta_{2}=\sqrt{\frac{\log n}{N}}$. For $\lambda\asymp \sigma_{\varepsilon}\sqrt{\frac{\log n}{N}}$, $\lambda_{g}=2^{-(1+k/2)}d^{-(k+1)/2}\lambda$, $\tau(N, n)\asymp\log n/N$ and $\eta^{\prime}_{\gamma}=\frac{1}{3}\lambda_{1}(\Sigma_{x})$, we have 
\begin{equation*}
\RNum{1}=\frac{144\lambda^2\left(R_{1}\eta_{1}^{-q_{1}}+R_{2}\eta_{2}^{-q_{2}}\right)}{(\eta^{\prime}_{\gamma})^2}\asymp \sigma_{\varepsilon}^{2}\left( R_{1}\left(\frac{\log n}{N}  \right)^{1-\frac{q_1}{2}}+R_{2}\left( \frac{\log n}{N} \right)^{1-\frac{q_{2}}{2}}  \right).
\end{equation*}
Furthermore, if 
\begin{equation*}
R_{1}\left( \frac{\log n}{N} \right)^{1-\frac{q_1}{2}}+R_{2}\left( \frac{\log n}{N} \right)^{1-\frac{q_2}{2}}\lesssim 1,
\end{equation*}
then we have  
\begin{equation*}
\begin{split}
\RNum{2}=\frac{32}{\eta^{\prime}_{\gamma}}\left[\lambda_{g}R_{1}\eta_{1}^{1-q_{1}}+\lambda R_{2}\eta_{2}^{1-q_{2}}+4\tau(N, n)\left(\frac{\lambda_{g}}{\lambda}R_{1}\eta_{1}^{1-q_{1}}+R_{2}\eta_{2}^{1-q_{2}} \right)^2\right] \\
\asymp R_{1}\left( \frac{\log n}{N} \right)^{1-\frac{q_{1}}{2}}+R_{2}\left( \frac{\log n}{N}  \right)^{1-\frac{q_{2}}{2}}.
\end{split}
\end{equation*}
Therefore, by Part (a) of Theorem~\ref{weaklypiecewisepolycorollary}, we have 
\begin{equation*}
\|\hat{\beta}-\beta^*\|_{2}^{2}\le \mathcal{O}_{\mathbb{P}}\left( \sigma_{\varepsilon}^{2}\left( R_{1}\left(\frac{\log n}{N}  \right)^{1-\frac{q_1}{2}}+R_{2}\left( \frac{\log n}{N} \right)^{1-\frac{q_{2}}{2}}  \right)  \right),
\end{equation*}
and 
\begin{equation*}
\frac{1}{N}\|X(\hat{\beta}-\beta^*)\|_{2}^{2}\le  \mathcal{O}_{\mathbb{P}}\left( \sigma_{\varepsilon}^{2}\left( R_{1}\left(\frac{\log n}{N}  \right)^{1-\frac{q_1}{2}}+R_{2}\left( \frac{\log n}{N} \right)^{1-\frac{q_{2}}{2}}  \right)  \right). 
\end{equation*}
Hence we obtain the results in Part (a). 

A similar argument leads to the result in Part (b). Therefore, the proof is complete.

\end{proof}


\subsection{Proof of Theorem~\ref{onestepfixthm}}
\label{onestepfixthmprf}

\begin{proof}
By the definition of the one-step estimator and the linear model $(\ref{linear})$, we have
\begin{equation*}
\begin{split}
\sqrt{N}(\tilde{\beta}-\beta^*)=\sqrt{N}\left[\hat{\beta}-\beta^*+\frac{1}{N}\widehat{\Theta}X^TX(\beta^*-\hat{\beta})+\frac{1}{N}\widehat{\Theta}X^T\varepsilon\right]
=\Psi-e,
\end{split}
\end{equation*}
where
\begin{equation*}
\Psi=\frac{1}{\sqrt{N}}\widehat{\Theta}X^T\varepsilon\sim N(0, \sigma^{2}_{\varepsilon}\widehat{\Theta}\Sigma_{N}(\widehat{\Theta})^T),\quad e=\sqrt{N}(\widehat{\Theta}\Sigma_{N}-I_{n})(\hat{\beta}-\beta^*).
\end{equation*}
Furthermore, we have
\begin{equation*}
\begin{split}
\|e\|_{\infty}\le \sqrt{N}\|\widehat{\Theta}\Sigma_{N}-I_{n}\|_{\infty}\|\hat{\beta}-\beta^*\|_{1}\le \sqrt{N}\mu\|\hat{\beta}-\beta^*\|_{1}.
\end{split}
\end{equation*}
Therefore, the proof is complete. 

%


\end{proof}


\subsection{Proof of Theorem~\ref{onesteprandomthm}}
\label{onesteprandomthmprf}

We start with a supporting lemma which concerns the consistency of $\widehat{\Theta}$ obtained from the CLIME method. 

\begin{lemma}
\label{consistencyTheta}
If $\|\Theta_{x}\Sigma_{N}-I_{n}\|_{\infty}\le \mu$, then we have $\|\widehat{\Theta}-\Theta_{x}\|_{\infty}\le 2M_{n}\mu$. \footnote{The upper bound in Lemma~\ref{consistencyTheta} is sharper than the one in Theorem 4 of \cite{clime} if $M_{n}$ is allowed to increase as $n$ increases. } 

\end{lemma}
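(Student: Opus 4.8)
The plan is to exploit the row-wise separability of the CLIME program in $(\ref{row-wise-opt})$ and reduce the claim to a bound on each row of $\widehat{\Theta}-\Theta_x$. Writing $\widehat{\Theta}=(\hat\theta_1,\dots,\hat\theta_n)^T$ and letting $\theta_i^*=\Theta_x e_i$ denote the $i$-th column (equivalently, by symmetry of $\Theta_x$, the $i$-th row) of $\Theta_x$, it suffices to show $\|\hat\theta_i-\theta_i^*\|_\infty\le 2M_n\mu$ for every $i\in[n]$, since the element-wise infinity norm of a matrix equals the largest of the row-wise $\|\cdot\|_\infty$ values. First I would record two feasibility facts. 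By definition $\hat\theta_i$ satisfies $\|\Sigma_N\hat\theta_i-e_i\|_\infty\le\mu$. Moreover, since $\Theta_x$ and $\Sigma_N$ are symmetric, the hypothesis $\|\Theta_x\Sigma_N-I_n\|_\infty\le\mu$ transposes to $\|\Sigma_N\Theta_x-I_n\|_\infty\le\mu$, whose $i$-th column reads $\|\Sigma_N\theta_i^*-e_i\|_\infty\le\mu$; hence $\theta_i^*$ is feasible for $(\ref{row-wise-opt})$, and optimality of $\hat\theta_i$ gives $\|\hat\theta_i\|_1\le\|\theta_i^*\|_1$.

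The crux is an exact identity obtained by computing $\Theta_x\Sigma_N\hat\theta_i$ in two ways. Setting $E=\Theta_x\Sigma_N-I_n$ and $r=\Sigma_N\hat\theta_i-e_i$, on one hand $\Theta_x\Sigma_N\hat\theta_i=\hat\theta_i+E\hat\theta_i$, and on the other $\Theta_x\Sigma_N\hat\theta_i=\Theta_x(e_i+r)=\theta_i^*+\Theta_x r$. Equating the two yields
\begin{equation*}
\hat\theta_i-\theta_i^*=\Theta_x r-E\hat\theta_i .
\end{equation*}
I would then bound the two pieces separately using the standard inequalities $\|Av\|_\infty\le\opnorm{A}_\infty\|v\|_\infty$ and $\|Mv\|_\infty\le\|M\|_\infty\|v\|_1$. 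For the first piece, $\|\Theta_x r\|_\infty\le\opnorm{\Theta_x}_\infty\|r\|_\infty\le M_n\mu$, using Assumption~\ref{assumption2} and feasibility of $\hat\theta_i$. For the second, $\|E\hat\theta_i\|_\infty\le\|E\|_\infty\|\hat\theta_i\|_1\le\mu\|\theta_i^*\|_1$ by the hypothesis on $E$ and optimality. Finally $\|\theta_i^*\|_1$ is the $\ell_1$-norm of a column of $\Theta_x$, hence at most $\opnorm{\Theta_x}_1=\opnorm{\Theta_x}_\infty\le M_n$, the equality holding because $\Theta_x$ is symmetric. Adding the two bounds gives $\|\hat\theta_i-\theta_i^*\|_\infty\le M_n\mu+\mu M_n=2M_n\mu$, and taking the maximum over $i$ completes the argument.

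The main obstacle is securing the sharp constant $2$ rather than the $4$ that a naive argument produces. If one symmetrizes the residual by writing $E\hat\theta_i=E(\hat\theta_i-\theta_i^*)+E\theta_i^*$ and bounds $\|\hat\theta_i-\theta_i^*\|_1\le 2\|\theta_i^*\|_1$, the factor doubles. The improvement comes precisely from keeping $E\hat\theta_i$ intact and invoking $\|\hat\theta_i\|_1\le\|\theta_i^*\|_1$ directly, so that $\ell_1$-optimality is used only once; this is what makes the bound sharper than Theorem~4 of \cite{clime} when $M_n$ is allowed to grow. A secondary point requiring care is the bookkeeping of left- versus right-multiplication by $\Sigma_N$: because $\widehat{\Theta}$ need not be symmetric, one must use the symmetry of $\Theta_x$ and $\Sigma_N$ (not of $\widehat{\Theta}$) both to transfer the feasibility hypothesis into the form $\|\Sigma_N\theta_i^*-e_i\|_\infty\le\mu$ and to identify $\opnorm{\Theta_x}_1$ with $\opnorm{\Theta_x}_\infty$.
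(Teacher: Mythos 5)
Your argument is correct and, once the row-wise identity $\hat\theta_i-\theta_i^*=\Theta_x r-E\hat\theta_i$ is stacked over $i$, it is exactly the paper's matrix decomposition $\widehat{\Theta}-\Theta_{x}=(\widehat{\Theta}\Sigma_{N}-I_{n})\Theta_{x}+\widehat{\Theta}(I_{n}-\Sigma_{N}\Theta_{x})$, with the same two bounds (feasibility of $\widehat{\Theta}$ paired with $\opnorm{\Theta_x}_1\le M_n$, and $\ell_1$-optimality paired with $\|E\|_\infty\le\mu$). This matches the paper's proof in both structure and constants.
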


The proof of Lemma~\ref{consistencyTheta} is deferred to Appendix~\ref{consistencyThetaprf}. We now prove Theorem~\ref{onesteprandomthm} in the following.  

\begin{proof}
If 
\begin{equation*}
\lambda\asymp\sigma_{\varepsilon}\sqrt{\frac{\log n}{N}},\quad\mu\asymp\sqrt{\frac{\log n}{N}},\quad\gamma=\frac{\lambda_{g}}{\lambda}=\sqrt{\frac{\nu}{(2d)^{k+1}}},\quad\eta_{\gamma}=\frac{\lambda_{1}(\Sigma_{x})}{2(\nu+1)},  
\end{equation*}
and $s_{2}/s_{1}\ge \nu$ where $0\le \nu<1$, then by Theorem~\ref{probconthm} and Theorem~\ref{onestepfixthm}, we have
\begin{equation*}
\|e\|_{\infty}\le \mathcal{O}_{\mathbb{P}}\left(\frac{\sigma_{\varepsilon}s_{2}\log n}{\sqrt{N}}\right).  
\end{equation*}

Next, we bound $\|\widehat{\Theta}\Sigma_{N}(\widehat{\Theta})^T-\Theta_{x}\|_{\infty}$. We have
\begin{equation*}
\widehat{\Theta}\Sigma_{N}(\widehat{\Theta})^T-\Theta_{x}=(\widehat{\Theta}\Sigma_{N}-I_{n})(\widehat{\Theta})^T+(\widehat{\Theta})^T-\Theta_{x}. 
\end{equation*}
Therefore, 
\begin{equation*}
\begin{split}
\|\widehat{\Theta}\Sigma_{N}(\widehat{\Theta})^T-\Theta_{x}\|_{\infty}\le \|\widehat{\Theta}\Sigma_{N}-I_{n}\|_{\infty}\opnorm{\widehat{\Theta}}_{\infty}+\|\widehat{\Theta}-\Theta_{x}\|_{\infty}. 
\end{split}
\end{equation*}
We define the event $\mathcal{E}=\{ X: \|\Theta_{x}\Sigma_{N}-I_{n}\|_{\infty}\le \mu\}$. Then by Lemma~\ref{consistencyTheta}, on the event $\mathcal{E}$, we have 
\begin{equation*}
\| \widehat{\Theta}\Sigma_{N}(\widehat{\Theta})^T-\Theta_{x}\|_{\infty}\le 3M_{n}\mu. 
\end{equation*}
Finally, if $\mu\asymp \sqrt{\frac{\log n}{N}}$, then by Lemma~\ref{tuningmulemma}, we have
\begin{equation*}
\left\| \widehat{\Theta}\Sigma_{N}(\widehat{\Theta})^T-\Theta_{x}\right\|_{\infty}\le \mathcal{O}_{\mathbb{P}}\left(M_{n}\sqrt{\frac{\log n}{N}}\right). 
\end{equation*}
Hence the proof is complete.

\end{proof}


\section{Proofs of propositions}
\setcounter{equation}{0}
\renewcommand{\theequation}{B.\arabic{equation}}

In this section, we provide proofs of Proposition~\ref{structure} and Proposition~\ref{bayes} established in the paper. 

\subsection{Proof of Proposition~\ref{structure}}
\label{structureproof}

\begin{proof}
We first consider even $k$. Note we have  
\begin{equation*}
\Delta^{(k+1)}_{-\widehat{S}_{1}}=F_{-\widehat{S}_{1}}L^{\frac{k}{2}}. 
\end{equation*}
By Lemma~\ref{graph-oriented}, we have $\text{rank}\left(L^{k/2}\right)=n-1$, so $\text{null}\left(L^{k/2}\right)=\text{span}\left(\mathbbm{1}_{n}\right)$. Therefore, we have 
\begin{equation*}
\text{span}({\mathbbm{1}_{n}})\subset \text{null}\left(\Delta^{(k+1)}_{-\widehat{S}_{1}}\right). 
\end{equation*}
Thus 
\begin{equation*}
\text{null}\left(\Delta^{(k+1)}_{-\widehat{S}_{1}}\right)=\text{span}(\mathbbm{1}_{n})+\text{span}(\mathbbm{1}_{n})^{\perp}\cap \text{null}\left(\Delta^{(k+1)}_{-\widehat{S}_{1}}\right).
\end{equation*}
Furthermore, since $L^{k/2}+\mathbbm{1}_{n}\mathbbm{1}_{n}^T$ is positive definite, so 
\begin{equation*}
\left\{ (u, v)\in \mathbb{R}^{n}\times \mathbb{R}^{n}; \mathbbm{1}_{n}^Tu=0, v=L^{\frac{k}{2}}u \right\}=\left\{ (u, v)\in \mathbb{R}^{n}\times \mathbb{R}^{n};  \mathbbm{1}_{n}^Tu=0, u=\left(L^{\frac{k}{2}}+\mathbbm{1}_{n}\mathbbm{1}_{n}^T\right)^{-1}v \right\}. 
\end{equation*}
On the other hand, $\text{null}\left(F_{-\widehat{S}_{1}}\right)=\text{span}\left(\mathbbm{1}_{C_{1}}, ..., \mathbbm{1}_{C_{j}}\right)$. Therefore,  
\begin{equation*}
\text{span}(\mathbbm{1}_{n})^{\perp}\cap \text{null}\left(\Delta^{(k+1)}_{-\widehat{S}_{1}}\right)= \text{span}(\mathbbm{1}_{n})^{\perp}\cap\left(L^{\frac{k}{2}}+\mathbbm{1}_{n}\mathbbm{1}_{n}^T\right)^{-1}\text{span}\left(\mathbbm{1}_{C_{1}},...,\mathbbm{1}_{C_{j}} \right). 
\end{equation*}
Thus we have
\begin{equation*}
\text{null}\left(\Delta^{(k+1)}_{-\widehat{S}_{1}}\right)=\text{span}(\mathbbm{1}_{n})+ \text{span}(\mathbbm{1}_{n})^{\perp}\cap\left(L^{\frac{k}{2}}+\mathbbm{1}_{n}\mathbbm{1}_{n}^T\right)^{-1}\text{span}(\mathbbm{1}_{C_{1}},...,\mathbbm{1}_{C_{j}} ). 
\end{equation*}

Next, we consider the odd $k$. Using a similar argument as the even case, we have
\begin{equation*}
\begin{split}
\text{null}\left(\Delta^{(k+1)}_{-\widehat{S}_{1}}\right)=\text{span}(\mathbbm{1}_{n})+\text{span}(\mathbbm{1}_{n})^{\perp}\cap \text{null}\left(\Delta^{(k+1)}_{-\widehat{S}_{1}}\right) \\
= \text{span}(\mathbbm{1}_{n})+\text{span}(\mathbbm{1}_{n})^{\perp}\cap \left\{ u\in \mathbb{R}^{n}; u=\left(L^{\frac{k+1}{2}}+\mathbbm{1}_{n}\mathbbm{1}_{n}^T\right)^{-1}v, v_{-\widehat{S}_{1}}=0  \right\}. 
\end{split}
\end{equation*}
Thus we prove the proposition. 
\end{proof}


\subsection{Proof of Proposition~\ref{bayes}}
\label{bayesproof}

\begin{proof}
We have the basic identity 
\begin{equation*}
\frac{a}{2}\exp{(-a|z|)}=\int_{0}^{\infty}\frac{1}{\sqrt{2\pi t}}\exp{\left(-\frac{z^2}{2t}\right)}\frac{a^2}{2}\exp{\left(-\frac{a^2t}{2}\right)}dt, 
\end{equation*}
where $a>0$. Therefore, for $j\in [n]$ and $i\in [m]$, we have 
\begin{equation*}
\frac{\lambda_{1}}{2\sigma_{\varepsilon}}\exp{\left(-\frac{\lambda_{1}}{\sigma_{\varepsilon}}\left|\beta_{j}\right|  \right)}=\int_{0}^{\infty}\frac{1}{\sqrt{2\pi \sigma_{\varepsilon}^{2}\tau_{j}^2}}\exp{\left(-\frac{\beta_{j}^2}{2\sigma^2_{\varepsilon}\tau_{j}^2}\right)}\frac{\lambda_{1}^2}{2}\exp{\left(-\frac{\lambda_{1}^2\tau_{j}^2}{2}\right)}d\tau_{j}^{2},  
\end{equation*}
and 
\begin{equation*}
\frac{\lambda_{2}}{2\sigma_{\varepsilon}}\exp{\left(-\frac{\lambda_{2}}{\sigma_{\varepsilon}}\left|\left(\Delta_{i}^{(k+1)}\right)^T\beta\right|  \right)}=\int_{0}^{\infty}\frac{1}{\sqrt{2\pi \sigma_{\varepsilon}^2\omega_{i}^2}}\exp{\left(-\frac{((\Delta_{i}^{(k+1)})^T\beta)^2}{2\sigma_{\varepsilon}^2\omega_{i}^2}\right)}\frac{\lambda_{2}^2}{2}\exp{\left(-\frac{\lambda_{2}^2\omega_{i}^2}{2}\right)}d\omega_{i}^{2}. 
\end{equation*}
Hence, we have 
\begin{equation*}
\begin{split}
\pi(\beta)=\int_{0}^{\infty}\dots\int_{0}^{\infty} f\left(\beta|\tau_{1}^2,..., \tau_{n}^2, \omega_{1}^2,..., \omega_{m}^2\right)\pi\left(\tau_{1}^2,..., \tau_{n}^2, \omega_{1}^2,..., \omega_{m}^2\right)d\tau_{1}^2\dots d\omega_{m}^2 \\ 
\propto\exp{\left(-\frac{\lambda_{1}}{\sigma_{\varepsilon}}\|\beta\|_{1}-\frac{\lambda_{2}}{\sigma_{\varepsilon}}\|\Delta^{(k+1)}\beta\|_{1}\right)}.
\end{split}
\end{equation*}
Therefore, we have 
\begin{equation*}
\begin{split}
f(\beta|X, y)\propto f(y|X, \beta)\pi(\beta)\propto \exp{\left(-\frac{1}{2\sigma_{\varepsilon}^2}\|y-X\beta\|_{2}^{2}-\frac{\lambda_{1}}{\sigma_{\varepsilon}}\|\beta\|_{1}-\frac{\lambda_{2}}{\sigma_{\varepsilon}}\|\Delta^{(k+1)}\beta\|_{1}\right)} \\
=\exp{\left(-\frac{N}{\sigma_{\varepsilon}^2}\left(\frac{1}{2N}\|y-X\beta\|_{2}^{2}+\lambda\|\beta\|_{1}+\lambda_{g}\|\Delta^{(k+1)}\beta\|_{1}\right)\right)}.  
\end{split}
\end{equation*}
Hence we obtain the desired result in the proposition.

\end{proof}


\section{Proofs of corollaries}
\setcounter{equation}{0}
\renewcommand{\theequation}{C.\arabic{equation}}

In this section, we provide proofs of Corollary~\ref{deltaknormal}, Corollary~\ref{CI}, Corollary~\ref{CI2} and Corollary~\ref{CI3} established in the paper.


\subsection{Proof of Corollary~\ref{deltaknormal}}
\label{deltaknormalproof}

\begin{proof}
By Theorem~\ref{onestepfixthm}, we have 
\begin{equation*}
\sqrt{N}(\Delta^{(k+1)}\tilde{\beta}-\Delta^{(k+1)}\beta^*)=\Psi^{(k+1)}-e^{(k+1)},
\end{equation*}
where 
\begin{equation*}
\Psi^{(k+1)}=\frac{1}{\sqrt{N}}\Delta^{(k+1)}\widehat{\Theta}X^T\varepsilon,\quad e^{(k+1)}=\sqrt{N}\Delta^{(k+1)}(\widehat{\Theta}\Sigma_{N}-I_{n})(\hat{\beta}-\beta^*). 
\end{equation*}
Therefore, we have
\begin{equation*}
\Psi^{(k+1)}|X\sim N(0, \sigma^2_{\varepsilon}\Delta^{(k+1)}\widehat{\Theta}\Sigma_{N}(\widehat{\Theta})^T(\Delta^{(k+1)})^T),
\end{equation*}
and 
\begin{equation*}
\|e^{(k+1)}\|_{\infty}\le \sqrt{N}\opnorm{\Delta^{(k+1)}}_{\infty}\left\|(\widehat{\Theta}\Sigma_{N}-I)(\hat{\beta}-\beta^*)\right\|_{\infty}\le\mathcal{O}_{\mathbb{P}}\left( \sigma_{\varepsilon}(2d)^{\frac{k+1}{2}}\frac{s_{2}\log n}{\sqrt{N}}\right).
\end{equation*}
Furthermore, we have   
\begin{equation*}
\begin{split}
\| \Delta^{(k+1)}\widehat{\Theta}\Sigma_{N}(\widehat{\Theta})^T(\Delta^{(k+1)})^T-\Delta^{(k+1)}\Theta_{x}(\Delta^{(k+1)})^T\|_{\infty} \\
\le \opnorm{\Delta^{(k+1)}}_{\infty}^{2}\|\widehat{\Theta}\Sigma_{N}(\widehat{\Theta})^T-\Theta_{x}\|_{\infty} \le \mathcal{O}_{\mathbb{P}}\left( (2d)^{k+1}M_{n}\sqrt{\frac{\log n}{N}}  \right).  
\end{split}
\end{equation*}
Therefore, the proof is complete. 
\end{proof}


\subsection{Proof of Corollary~\ref{CI}}
\label{CIprf}

\begin{proof}
By Theorem~\ref{onesteprandomthm}, for $j\in [n]$, we have
\begin{equation*}
\frac{\sqrt{N}(\tilde{\beta}_{j}-\beta^*_{j})}{\sigma_{\varepsilon}\sqrt{e_{j}^T\widehat{\Theta}\Sigma_{N}\widehat{\Theta}^Te_{j}}}=\frac{\frac{1}{\sqrt{N}}e_{j}^T\widehat{\Theta}X^T\varepsilon}{\sigma_{\varepsilon}\sqrt{e_{j}^T\widehat{\Theta}\Sigma_{N}\widehat{\Theta}^Te_{j}}}+\frac{\sqrt{N}e_{j}^T(\widehat{\Theta}\Sigma_{N}-I_{n})(\beta^*-\hat{\beta})}{\sigma_{\varepsilon}\sqrt{e_{j}^T\widehat{\Theta}\Sigma_{N}\widehat{\Theta}^Te_{j}}}. 
\end{equation*}

First, we claim that $Z=\frac{\frac{1}{\sqrt{N}}e_{j}^T\widehat{\Theta}X^T\varepsilon}{\sigma_{\varepsilon}\sqrt{e_{j}^T\widehat{\Theta}\Sigma_{N}\widehat{\Theta}^Te_{j}}}\sim N(0, 1)$.  
To see this, the characteristic function of $Z$ is 
\begin{equation*}
\begin{split}
\mathbb{E}\left(e^{itZ}\right)=\mathbb{E}\left[\exp{\left(it\frac{\frac{1}{\sqrt{N}}e_{j}^T\widehat{\Theta}X^T\varepsilon}{\sigma_{\varepsilon}\sqrt{e_{j}^T\widehat{\Theta}\Sigma_{N}\widehat{\Theta}^Te_{j}}}\right)}\right]=\mathbb{E}_{X}\left[\mathbb{E}_{\varepsilon}\left[ \exp{\left(it\frac{\frac{1}{\sqrt{N}}e_{j}^T\widehat{\Theta}X^T\varepsilon}{\sigma_{\varepsilon}\sqrt{e_{j}^T\widehat{\Theta}\Sigma_{N}\widehat{\Theta}^Te_{j}}}\right)}\Bigg|X  \right]\right] \\
=\mathbb{E}_{X}\left[\exp{\left(-\frac{t^2}{2}\right)}\Bigg| X  \right]=\exp{\left( -\frac{t^2}{2} \right)}. 
\end{split}
\end{equation*}
Hence we have $Z\sim N(0, 1)$. 

Next, for $x\in \mathbb{R}$ and $\delta=\frac{s_{2}\log n}{\sqrt{N}}$, we have 
\begin{equation*}
\begin{split}
\mathbb{P}\left(\frac{\sqrt{N}(\tilde{\beta}_{j}-\beta^*_{j})}{\sigma_{\varepsilon}\sqrt{e_{j}^T\widehat{\Theta}\Sigma_{N}\widehat{\Theta}^Te_{j}}}\le x  \right)= \mathbb{P}\left(Z+\frac{\sqrt{N}e_{j}^T(\widehat{\Theta}\Sigma_{N}-I_{n})(\beta^*-\hat{\beta})}{\sigma_{\varepsilon}\sqrt{e_{j}^T\widehat{\Theta}\Sigma_{N}\widehat{\Theta}^Te_{j}}}\le x   \right) \\
\le \mathbb{P}\left(Z\le x+\delta\right)+\mathbb{P}\left( \frac{\sqrt{N}|e_{j}^T(\widehat{\Theta}\Sigma_{N}-I_{n})(\beta^*-\hat{\beta})|}{\sigma_{\varepsilon}\sqrt{e_{j}^T\widehat{\Theta}\Sigma_{N}\widehat{\Theta}^Te_{j}}}\ge \delta  \right) \\
\le \Phi(x+\delta)+\mathbb{P}\left(\sqrt{N}|e_{j}^T(\widehat{\Theta}\Sigma_{N}-I_{n})(\beta^*-\hat{\beta})|\ge \sigma_{\varepsilon}\sqrt{\frac{1}{2}\lambda_{1}(\Theta_{x})}\delta  \right)+\mathbb{P}\left(e_{j}^T\widehat{\Theta}\Sigma_{N}\widehat{\Theta}^Te_{j}\le \frac{1}{2}\lambda_{1}(\Theta_{x})  \right),
\end{split}
\end{equation*}
where $\Phi(x)$ is the cumulative distribution function of $N(0, 1)$. Similarly, we have 
\begin{equation*}
\begin{split}
\mathbb{P}\left(\frac{\sqrt{N}(\tilde{\beta}_{j}-\beta^*_{j})}{\sigma_{\varepsilon}\sqrt{e_{j}^T\widehat{\Theta}\Sigma_{N}\widehat{\Theta}^Te_{j}}}\le x  \right)= \mathbb{P}\left(Z+\frac{\sqrt{N}e_{j}^T(\widehat{\Theta}\Sigma_{N}-I_{n})(\beta^*-\hat{\beta})}{\sigma_{\varepsilon}\sqrt{e_{j}^T\widehat{\Theta}\Sigma_{N}\widehat{\Theta}^Te_{j}}}\le x   \right) \\
\ge \mathbb{P}\left(Z\le x-\delta\right)-\mathbb{P}\left( \frac{\sqrt{N}|e_{j}^T(\widehat{\Theta}\Sigma_{N}-I_{n})(\beta^*-\hat{\beta})|}{\sigma_{\varepsilon}\sqrt{e_{j}^T\widehat{\Theta}\Sigma_{N}\widehat{\Theta}^Te_{j}}}\ge \delta  \right) \\
\ge \Phi(x-\delta)-\mathbb{P}\left(\sqrt{N}|e_{j}^T(\widehat{\Theta}\Sigma_{N}-I_{n})(\beta^*-\hat{\beta})|\ge \sigma_{\varepsilon}\sqrt{\frac{1}{2}\lambda_{1}(\Theta_{x})}\delta  \right)-\mathbb{P}\left(e_{j}^T\widehat{\Theta}\Sigma_{N}\widehat{\Theta}^Te_{j}\le \frac{1}{2}\lambda_{1}(\Theta_{x})  \right). 
\end{split}
\end{equation*}
Furthermore, using similar arguments as the proof of Theorem~\ref{onesteprandomthm}, we have 
\begin{equation*}
\mathbb{P}\left(\sqrt{N}|e_{j}^T(\widehat{\Theta}\Sigma_{N}-I_{n})(\beta^*-\hat{\beta})|\ge \sigma_{\varepsilon}\sqrt{\frac{1}{2}\lambda_{1}(\Theta_{x})}\delta  \right)\le c\exp{(c's_{2}\log n-c''N)+2\exp{(-\log n)}},  
\end{equation*}
where $c>0$, $c'>0$ and $c''>0$ are constants, and 
\begin{equation*}
\mathbb{P}\left( e_{j}^T\widehat{\Theta}\Sigma_{N}\widehat{\Theta}^Te_{j} \le \frac{1}{2}\lambda_{1}(\Theta_{x})\right)\le 2\exp{(-\log n)}, 
\end{equation*}
provided that $M_{n}\sqrt{\frac{\log n}{N}}\rightarrow 0$. Therefore, we have 
\begin{equation*}
\begin{split}
\mathbb{P}\left(\frac{\sqrt{N}(\tilde{\beta}_{j}-\beta^*_{j})}{\sigma_{\varepsilon}\sqrt{e_{j}^T\widehat{\Theta}\Sigma_{N}\widehat{\Theta}^Te_{j}}}\le x  \right)
\le \Phi(x+\delta)+c\exp{(c' s_{2}\log n-c''N)}+4\exp{(-\log n)}, 
\end{split}
\end{equation*}
and 
\begin{equation*}
\begin{split}
\mathbb{P}\left(\frac{\sqrt{N}(\tilde{\beta}_{j}-\beta^*_{j})}{\sigma_{\varepsilon}\sqrt{e_{j}^T\widehat{\Theta}\Sigma_{N}\widehat{\Theta}^Te_{j}}}\le x  \right)
\ge \Phi(x+\delta)-c\exp{(c's_{2}\log n-c''N)}-4\exp{(-\log n)}. 
\end{split}
\end{equation*}
Hence, combining the above analysis, if $M_{n}\sqrt{\frac{\log n}{N}}\rightarrow 0$ and $\delta\rightarrow 0$, then we have 
\begin{equation*}
\mathbb{P}\left(\frac{\sqrt{N}(\tilde{\beta}_{j}-\beta^*_{j})}{\sigma_{\varepsilon}\sqrt{e_{j}^T\widehat{\Theta}\Sigma_{N}\widehat{\Theta}^Te_{j}}}\le x  \right)\rightarrow \Phi(x), 
\end{equation*}
implying the result in the corollary. 

\end{proof}


\subsection{Proof of Corollary~\ref{CI2}}
\label{CI2prf}
We start with a supporting lemma which concerns the consistency of $\hat{\sigma}_{\varepsilon}$ defined in $(\ref{sigmahat})$. 

\begin{lemma}
\label{sigmaconsis}
Under the same conditions with Theorem~\ref{probconthm}, we have 
\begin{equation*}
\left| \frac{\hat{\sigma}_{\varepsilon}}{\sigma_{\varepsilon}}-1  \right|\le \mathcal{O}_{\mathbb{P}}\left(\frac{s_{2}\log n}{N}+\sqrt{\frac{\log N}{N}}\right). 
\end{equation*}


\end{lemma}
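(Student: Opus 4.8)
The plan is to expand the residual $y_i - X_i^T\hat\beta = \varepsilon_i - X_i^T(\hat\beta-\beta^*)$ and work with its square. Writing $\Delta = \hat\beta - \beta^*$, the definition in $(\ref{sigmahat})$ gives
\[
\hat\sigma_\varepsilon^2 = \frac{1}{N}\|\varepsilon - X\Delta\|_2^2 = \frac{1}{N}\|\varepsilon\|_2^2 - \frac{2}{N}\varepsilon^T X\Delta + \frac{1}{N}\|X\Delta\|_2^2.
\]
First I would control $\left|\hat\sigma_\varepsilon^2/\sigma_\varepsilon^2 - 1\right|$ by bounding the fluctuation of each of the three terms separately, and then transfer the resulting rate from the squared quantity to $\hat\sigma_\varepsilon/\sigma_\varepsilon$ itself.

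For the first term, since the errors are Gaussian in this section, $\|\varepsilon\|_2^2/\sigma_\varepsilon^2 \sim \chi_N^2$, so a standard chi-squared (sub-exponential) concentration bound yields $\left|\frac{1}{N}\|\varepsilon\|_2^2 - \sigma_\varepsilon^2\right| \le \mathcal{O}_{\mathbb{P}}\left(\sigma_\varepsilon^2 \sqrt{\log N/N}\right)$, which is the source of the $\sqrt{\log N/N}$ term. For the cross term I would use H\"older's inequality, $\frac{2}{N}|\varepsilon^T X\Delta| \le \frac{2}{N}\|X^T\varepsilon\|_\infty \|\Delta\|_1$, and then combine the maximal inequality $\frac{1}{N}\|X^T\varepsilon\|_\infty = \mathcal{O}_{\mathbb{P}}\left(\sigma_\varepsilon\sqrt{\log n/N}\right)$ (a sub-exponential tail bound over the $n$ coordinates, exactly as in the proof of Lemma~\ref{tuninglem}) with the $\ell_1$-error bound $\|\Delta\|_1 = \mathcal{O}_{\mathbb{P}}\left(\sigma_\varepsilon s_2\sqrt{\log n/N}\right)$ from Theorem~\ref{probconthm}; the product is $\mathcal{O}_{\mathbb{P}}\left(\sigma_\varepsilon^2 s_2\log n/N\right)$. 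The third term is the mean-squared prediction error, which Theorem~\ref{probconthm} already controls by $\mathcal{O}_{\mathbb{P}}\left(\sigma_\varepsilon^2 s_2\log n/N\right)$. Adding the three contributions gives
\[
\left|\frac{\hat\sigma_\varepsilon^2}{\sigma_\varepsilon^2} - 1\right| \le \mathcal{O}_{\mathbb{P}}\left(\frac{s_2\log n}{N} + \sqrt{\frac{\log N}{N}}\right).
\]

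To finish, I would pass from the squared ratio to the ratio using $a = \hat\sigma_\varepsilon/\sigma_\varepsilon \ge 0$: since $|a - 1| = |a^2 - 1|/(a+1) \le |a^2 - 1|$, the same rate holds for $|\hat\sigma_\varepsilon/\sigma_\varepsilon - 1|$. The main obstacle I anticipate is resisting the naive Cauchy--Schwarz bound $\frac{1}{N}|\varepsilon^T X\Delta| \le \sqrt{\frac{1}{N}\|\varepsilon\|_2^2}\,\sqrt{\frac{1}{N}\|X\Delta\|_2^2}$, which only gives the weaker $\mathcal{O}_{\mathbb{P}}\left(\sigma_\varepsilon^2\sqrt{s_2\log n/N}\right)$ and would dominate the stated rate; the H\"older split into $\|X^T\varepsilon\|_\infty$ and $\|\Delta\|_1$ is what produces the sharper $s_2\log n/N$ contribution. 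The remaining care is in the random design, where each entry of $X^T\varepsilon$ is a sum of independent products of sub-Gaussians, hence sub-exponential, so the maximal inequality for $\frac{1}{N}\|X^T\varepsilon\|_\infty$ must be handled by a Bernstein-type argument rather than a pure sub-Gaussian one.
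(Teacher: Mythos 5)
Your proposal is correct and follows essentially the same route as the paper: both decompose the residual sum of squares into the empirical noise variance, the cross term, and the quadratic term, bound the cross term via the H\"older split $\|X^T\varepsilon/N\|_{\infty}\|\hat\beta-\beta^*\|_1$ combined with the $\ell_1$-error rate from Theorem~\ref{probconthm}, and pass from $|\hat\sigma_\varepsilon^2/\sigma_\varepsilon^2-1|$ to $|\hat\sigma_\varepsilon/\sigma_\varepsilon-1|$ at the end. The only (harmless) difference is that you bound the quadratic term directly by the prediction-error bound $\frac{1}{N}\|X\Delta\|_2^2=\mathcal{O}_{\mathbb{P}}(\sigma_\varepsilon^2 s_2\log n/N)$, whereas the paper reaches the same conclusion by splitting $\Delta^T\Sigma_N\Delta$ into $\Delta^T(\Sigma_N-\Sigma_x)\Delta+\Delta^T\Sigma_x\Delta$; your version is marginally more direct.
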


The proof of Lemma~\ref{sigmaconsis} is deferred to Appendix~\ref{sigmaconsisprf}. We now prove Corollary~\ref{CI2} in the following. 

\begin{proof}
Using a similar argument to the proof of Corollary~\ref{CI}, for $x\in \mathbb{R}$, $0<\delta$ and $0<\zeta<1$, we have 
\begin{equation*}
\begin{split}
\mathbb{P}\left(\frac{\sqrt{N}(\tilde{\beta}_{j}-\beta^*_{j})}{\hat{\sigma}_{\varepsilon}\sqrt{e_{j}^T\widehat{\Theta}\Sigma_{N}\widehat{\Theta}^Te_{j}}}\le \frac{x}{1+\zeta}  \right)= \mathbb{P}\left(\frac{\sigma_{\varepsilon}}{\hat{\sigma}_{\varepsilon}}Z+\frac{\sqrt{N}e_{j}^T(\widehat{\Theta}\Sigma_{N}-I_{n})(\beta^*-\hat{\beta})}{\hat{\sigma}_{\varepsilon}\sqrt{e_{j}^T\widehat{\Theta}\Sigma_{N}\widehat{\Theta}^Te_{j}}}\le \frac{x}{1+\zeta}   \right) \\
\le \mathbb{P}\left(\frac{\sigma_{\varepsilon}}{\hat{\sigma}_{\varepsilon}}Z\le \frac{x+\delta}{1+\zeta}\right)+\mathbb{P}\left( \frac{\sqrt{N}|e_{j}^T(\widehat{\Theta}\Sigma_{N}-I_{n})(\beta^*-\hat{\beta})|}{\hat{\sigma}_{\varepsilon}\sqrt{e_{j}^T\widehat{\Theta}\Sigma_{N}\widehat{\Theta}^Te_{j}}}\ge \frac{\delta}{1+\zeta}  \right) \\
\le \mathbb{P}\left(Z\le x+\delta\right)+\mathbb{P}\left(\left|\frac{\hat{\sigma}_{\varepsilon}}{\sigma_{\varepsilon}}-1\right|\ge \zeta   \right)
+\mathbb{P}\left(\sqrt{N}|e_{j}^T(\widehat{\Theta}\Sigma_{N}-I_{n})(\beta^*-\hat{\beta})|\ge \frac{\sigma_{\varepsilon}}{2}\sqrt{\frac{1}{2}\lambda_{1}(\Theta_{x})}\frac{\delta}{1+\zeta}  \right) \\
+\mathbb{P}\left( \frac{\hat{\sigma}_{\varepsilon}}{\sigma_{\varepsilon}}\le\frac{1}{2}\right)+\mathbb{P}\left(e_{j}^T\widehat{\Theta}\Sigma_{N}\widehat{\Theta}^Te_{j}\le \frac{1}{2}\lambda_{1}(\Theta_{x})  \right),
\end{split}
\end{equation*}
and 
\begin{equation*}
\begin{split}
\mathbb{P}\left(\frac{\sqrt{N}(\tilde{\beta}_{j}-\beta^*_{j})}{\hat{\sigma}_{\varepsilon}\sqrt{e_{j}^T\widehat{\Theta}\Sigma_{N}\widehat{\Theta}^Te_{j}}}\le \frac{x}{1+\zeta}  \right)= \mathbb{P}\left(\frac{\sigma_{\varepsilon}}{\hat{\sigma}_{\varepsilon}}Z+\frac{\sqrt{N}e_{j}^T(\widehat{\Theta}\Sigma_{N}-I_{n})(\beta^*-\hat{\beta})}{\hat{\sigma}_{\varepsilon}\sqrt{e_{j}^T\widehat{\Theta}\Sigma_{N}\widehat{\Theta}^Te_{j}}}\le \frac{x}{1+\zeta}   \right) \\
\ge \mathbb{P}\left(\frac{\sigma_{\varepsilon}}{\hat{\sigma}_{\varepsilon}}Z\le \frac{x-\delta}{1+\zeta}\right)-\mathbb{P}\left( \frac{\sqrt{N}|e_{j}^T(\widehat{\Theta}\Sigma_{N}-I_{n})(\beta^*-\hat{\beta})|}{\hat{\sigma}_{\varepsilon}\sqrt{e_{j}^T\widehat{\Theta}\Sigma_{N}\widehat{\Theta}^Te_{j}}}\ge \frac{\delta}{1+\zeta}  \right) \\
\ge \mathbb{P}\left(Z\le \frac{(x-\delta)(1-\zeta)}{1+\zeta} \right)-\mathbb{P}\left( \left| \frac{\hat{\sigma}_{\varepsilon}}{\sigma_{\varepsilon}}-1  \right|\ge \zeta  \right)-\mathbb{P}\left(\sqrt{N}|e_{j}^T(\widehat{\Theta}\Sigma_{N}-I_{n})(\beta^*-\hat{\beta})|\ge \frac{\sigma_{\varepsilon}}{2}\sqrt{\frac{1}{2}\lambda_{1}(\Theta_{x})}\frac{\delta}{1+\zeta}  \right) \\
-\mathbb{P}\left(\frac{\hat{\sigma}_{\varepsilon}}{\sigma_{\varepsilon}}\le \frac{1}{2}\right)-\mathbb{P}\left(e_{j}^T\widehat{\Theta}\Sigma_{N}\widehat{\Theta}^Te_{j}\le \frac{1}{2}\lambda_{1}(\Theta_{x})  \right). 
\end{split}
\end{equation*}
Letting $\delta=\frac{s_{2}\log n}{N}$ and $\gamma=\frac{s_{2}\log n}{N}+\sqrt{\frac{\log N}{N}}$, and applying Lemma~\ref{sigmaconsis} and Theorem~\ref{onesteprandomthm}, under conditions given in the corollary, we have 
\begin{equation*}
\mathbb{P}\left(\frac{\sqrt{N}(\tilde{\beta}_{j}-\beta^*_{j})}{\hat{\sigma}_{\varepsilon}\sqrt{e_{j}^T\widehat{\Theta}\Sigma_{N}\widehat{\Theta}^Te_{j}}}\le x  \right)\rightarrow \Phi(x). 
\end{equation*}
Therefore, we prove the result in the corollary. 

\end{proof}


\subsection{Proof of Corollary~\ref{CI3}}
\label{CI3proof}

\begin{proof}
Using similar arguments as the proof of Corollary~\ref{CI} and Corollary~\ref{CI2} and applying Corollary~\ref{deltaknormal}, if $dM_{n}\sqrt{\frac{\log n}{N}}\rightarrow 0$ and $s_{2}\log n\sqrt{\frac{d}{N}}\rightarrow 0$, then for $x\in \mathbb{R}$, we have 
\begin{equation*}
\mathbb{P}\left(\frac{\sqrt{N}(F_{j}^T\tilde{\beta}-F_{j}^T\beta^*)}{\sigma_{\varepsilon}\sqrt{F_{j}^T\widehat{\Theta}\Sigma_{N}\widehat{\Theta}^TF_{j}}}\le x  \right)\rightarrow \Phi(x),
\end{equation*}
and
\begin{equation*}
\mathbb{P}\left(\frac{\sqrt{N}(F_{j}^T\tilde{\beta}-F_{j}^T\beta^*)}{\hat{\sigma}_{\varepsilon}\sqrt{F_{j}^T\widehat{\Theta}\Sigma_{N}\widehat{\Theta}^TF_{j}}}\le x  \right)\rightarrow \Phi(x),
\end{equation*}
implying the results in the corollary.


\end{proof}


\section{Proofs of lemmas}
\setcounter{equation}{0}
\renewcommand{\theequation}{D.\arabic{equation}}

In this section, we provide proofs of various lemmas established in the paper. 

\subsection{Proof of Lemma~\ref{eta1rand}}
\label{eta1randprf}

\begin{proof}
Let $\mathbb{L}_{0}(s)=\{v\in \mathbb{R}^{m+n}: v\in\mathbb{B}_{0}(s)\cap \mathbb{B}_{2}(1)\}$ and 
$\mathbb{L}_{1}(s)=\{v\in \mathbb{R}^{m+n}: \|v\|_{1}\le 4\sqrt{s}\|v\|_{2} \}$. For $v\in \mathbb{R}^{m+n}$, we have
\begin{equation*}
v^T(D^{+})^T\Sigma_{x}D^{+}v\ge \lambda_{1}(\Sigma_{x})\sigma^{2}_{\min}(D^{+})\|v\|_{2}^{2}=\frac{\lambda_{1}(\Sigma_{x})}{\sigma^{2}_{\max}(D)}\|v\|_{2}^{2}\ge \frac{\lambda_{1}(\Sigma_{x})}{\nu+1}\|v\|_{2}^{2},  
\end{equation*}
where the last inequality follows from Lemma~\ref{eigenvaluelem} and $\gamma^2=\left(\lambda_{g}/\lambda\right)^2=\nu(2d)^{-(k+1)}$. Let $\delta=\lambda_{1}(\Sigma_{x})(\nu+1)^{-1}$ and $s=s_{1}+s_{2}$. Then it boils down to proving 
\begin{equation}
\label{lowerREeq}
|v^T(D^{+})^T(\frac{X^TX}{N}-\Sigma_{x})D^{+}v|\le \frac{\delta}{150},\quad\forall~v\in \mathbb{L}_{0}(2s)
\end{equation}
with high probability. Indeed, if $\text{(\ref{lowerREeq})}$ holds, then by Part (a) of Lemma~\ref{REgeneral}, we have
\begin{equation*}
\frac{1}{N}\|XD^{+}v\|_{2}^{2} \ge \frac{\delta}{2}\|v\|_{2}^2,\quad\forall~v\in \mathbb{L}_{1}(s). 
\end{equation*}
Therefore, letting $\eta_{\gamma}=\frac{1}{2}\lambda_{1}(\Sigma_{x})\left(\nu+1\right)^{-1}$ yields the desired restricted eigenvalue condition. 

Next, we use a discretization argument to show $\text{(\ref{lowerREeq})}$. For an index subset $I\subset [m+n]$ with $|I|\le 2s$, we define 
\begin{equation*}
S_{I}=\left\{q\in \real^{m+n}; \|v\|_{2}\le 1, \text{support}(v)\subset I\right\}, 
\end{equation*}
then $\mathbb{L}_{0}(2s)=\cup_{|I|\le 2s}S_{I}$. For a fixed $S_{I}$, let $\mathcal{A}_{I}$ be a $\frac{1}{3}$-cover of $S_{I}$ where $|\mathcal{A}_{I}|\le 9^{2s}$. Then for $v\in S_{I}$,  there exists $a_{v}\in \mathcal{A}_{I}$, such that $\|\Delta_{v}\|_{2}=\|v-a_{v}\|_{2}\le \frac{1}{3}$. We also write 
\begin{equation*}
M=(D^{+})^T(\frac{X^TX}{N}-\Sigma_{x})D^{+}
\end{equation*}
and $\Psi(v)=v^TMv$. For $v\in S_{I}$, we have
\begin{equation*}
\begin{split}
    |\Psi(v)|=|(\Delta_{v}+a_{v})^{T}M(\Delta_{v}+a_{v})|\le |\Delta_{v}^{T}M\Delta_{v}|+2|a_{v}^{T}M\Delta_{v}|+|a_{v}^{T}Ma_{v}| \\
    \le \frac{1}{9}\sup_{v\in S_{I}}|\Psi(v)|+\frac{2}{3} \sup_{v\in S_{I}}|\Psi(v)|+\sup_{v\in \mathcal{A}_I}|\Psi(v)|= \frac{7}{9}\sup_{v\in S_{I}}|\Psi(v)|+\sup_{v\in \mathcal{A}_I}|\Psi(v)|. 
\end{split}
\end{equation*}
Therefore, we have 
\begin{equation*}
    \sup_{v\in S_{I}}|\Psi(v)|\le \frac{9}{2}\sup_{v\in\mathcal{A}_I}|\Psi(v)|.
\end{equation*}
Applying Lemma~\ref{LemBern}, taking union bounds and letting $t=\frac{\delta}{675}$ , we have
\begin{equation*}
\begin{split}
\mathbb{P}\left(\sup_{v\in \mathbb{L}_{0}(2s)}\left|\Psi(v)\right|\ge \frac{\delta}{150} \right)\le 2\exp{\left(cs\log n-c^{\prime}N\right)},
\end{split}
\end{equation*}
which yields the result in the lemma. Hence the proof is complete.

\end{proof}


\subsection{Proof of Lemma~\ref{tuninglem}}
\label{tuninglemprf}

\begin{proof}




For $i\in [m+n]$, we define the event $\mathcal{E}_{i}=\left\{\|XD^{+}e_{i}\|_{2}^2\le 2\lambda_{n}(\Sigma_{x})N\right\}$. Then by Lemma~\ref{LemBern}, we have 
\begin{equation*}
\mathbb{P}\left(\mathcal{E}_{i}\right)\ge 1-\exp{\left(-cN\right)}, 
\end{equation*}
where $c>0$ is a constant. 
Conditioning on $\mathcal{E}_{i}$, for $t>0$, we have 
\begin{equation*}
    \mathbb{P}\left(|\varepsilon^TXD^{+}e_{i}|>t \mid   \mathcal{E}_{i}\right)\le 2\exp{\left[-\frac{t^{2}}{4N\sigma_{\varepsilon}^{2}\lambda_{n}(\Sigma_{x}) } \right] }.
\end{equation*}
Therefore, 
\begin{equation*}
    \mathbb{P}\left(|\varepsilon^TXD^{+}e_{i}|>t  \right)\le 2\exp{\left[-\frac{t^{2}}{4N\sigma_{\varepsilon}^{2}\lambda_{n}(\Sigma_{x}) } \right] }+\exp{(-cN)}. 
\end{equation*}
Finally, applying a union bound and letting $t=2\sqrt{2}\sigma_{\varepsilon}\sqrt{\lambda_{n}(\Sigma_{x})}\sqrt{N\log (m+n)}$ yield
\begin{equation*}
    \mathbb{P}\left(\|\varepsilon^{T}XD^{+}\|_{\infty}>t\right)\le \frac{2}{m+n}+\exp{\left[\log (m+n)-cN\right]}.
\end{equation*}
Therefore, letting $\lambda\asymp\sigma_{\varepsilon}\sqrt{\frac{\log n}{N}}$, we have
\begin{equation*}
\mathbb{P}\left(\lambda\ge \frac{2}{N}\|\varepsilon^TXD^{+}\|_{\infty}\right)\ge 1-2\exp{(-\log n)}-\exp{\left(c\log n-c^{\prime}N\right)}. 
\end{equation*}
Hence the proof is complete.



\end{proof}


\subsection{Proof of Lemma~\ref{modelmisspecified-fixed}}
\label{modelmisspecified-fixed-proof}

\begin{proof}
We first show Part (a). For a general vector $\beta^*\in \mathbb{R}^{n}$ and any subset $S\subset [m+n]$ with $|S|\le \frac{\eta_{\gamma}^{\prime}}{64\tau(N, n)}$, applying similar arguments as the proof of Theorem~\ref{fixedconthm}, when $\lambda\ge \frac{2}{N}\|\varepsilon^TXD^{+}\|_{\infty}$, we have 
\begin{equation}
\label{thm7-eq1}
0\le \frac{1}{2N}\|X\Delta\|_{2}^{2}\le \frac{\lambda}{2}\left(4\|D_{S^c}\beta^{*}\|_{1}+3\|D_{S}\Delta\|_{1}-\|D_{S^c}\Delta\|_{1}\right).
\end{equation}
Hence,  
\begin{equation*}
\|D_{S^c}\Delta\|_{1}\le 4\|D_{S^c}\beta^*\|_{1}+3\|D_{S}\Delta\|_{1}, 
\end{equation*}
implying that 
\begin{equation}
\label{thm7-eq2}
\begin{split}
\|D\Delta\|_{1}^{2}\le (4\|D_{S^c}\beta^*\|_{1}+4\|D_{S}\Delta\|_{1})^2  \\ \le (4\| D_{S^c}\beta^* \|_{1}+4\sqrt{|S|}\| D\Delta \|_{2})^2
\le 32\|D_{S^c}\beta^*\|_{1}^{2}+32|S|\|D\Delta\|_{2}^{2}. 
\end{split}
\end{equation}
Combining Condition~\ref{lowerRE2} and $\text{(\ref{thm7-eq1})}$, we have
\begin{equation*}
\eta^{\prime}_{\gamma}\| D\Delta \|_{2}^{2}-\tau(N, n)\| D\Delta \|_{1}^{2}\le \frac{1}{N}\| X\Delta \|_{2}^{2}\le \lambda\left(4\|D_{S^c}\beta^*\|_{1}+3\sqrt{|S|}\|D\Delta\|_{2}\right). 
\end{equation*}
Therefore, by $\text{(\ref{thm7-eq2})}$, we obtain 
\begin{equation*}
\left(\eta^{\prime}_{\gamma}-32\tau(N, n)|S|\right)\|D\Delta\|_{2}^{2}-32\tau(N, n)\| D_{S^c}\beta^*\|_{1}^{2}\le \lambda\left(4\|D_{S^c}\beta^*\|_{1}+3\sqrt{|S|}\|D\Delta\|_{2}\right). 
\end{equation*}
Hence,  
\begin{equation}
\label{thm7-eq3}
\frac{\eta^{\prime}_{\gamma}}{2}\|D\Delta\|_{2}^{2}-32\tau(N, n)\| D_{S^c}\beta^*\|_{1}^{2}\le \lambda\left(4\|D_{S^c}\beta^*\|_{1}+3\sqrt{|S|}\|D\Delta\|_{2}\right). 
\end{equation}
Then we split the remainder of the analysis into two cases. In the first case, we suppose 
\begin{equation*}
\frac{\eta^{\prime}_{\gamma}}{4}\|D\Delta\|_{2}^{2}\ge 32\tau(N, n)\|D_{S^c}\beta^*\|_{1}^{2}.  
\end{equation*}
Then by $\text{(\ref{thm7-eq3})}$, we have 
\begin{equation*}
\frac{\eta^{\prime}_{\gamma}}{4}\|D\Delta\|_{2}^{2}\le \lambda\left(4\|D_{S^c}\beta^*\|_{1}+3\sqrt{|S|}\|D\Delta\|_{2}\right). 
\end{equation*}
Using Young's inequality, we have 
\begin{equation}
\label{thm7-eq4}
\|D\Delta\|_{2}^{2}\le \frac{144\lambda^2|S|}{(\eta^{\prime}_{\gamma})^2}+\frac{32\lambda}{\eta^{\prime}_{\gamma}}\|D_{S^c}\beta^*\|_{1}. 
\end{equation}
In the second case, we have 
\begin{equation*}
\frac{\eta^{\prime}_{\gamma}}{4}\|D\Delta\|_{2}^{2}<32\tau(N, n)\| D_{S^c}\beta^{*}\|_{1}^{2}, 
\end{equation*}
implying that 
\begin{equation}
\label{thm7-eq5}
\|D\Delta\|_{2}^{2}\le \frac{128\tau(N, n)}{\eta^{\prime}_{\gamma}}\|D_{S^c}\beta^*\|_{1}^{2}. 
\end{equation}
Taking into account both cases, we combine $\text{(\ref{thm7-eq5})}$ with the earlier inequality $\text{(\ref{thm7-eq4})}$, then obtain
\begin{equation*}
\|D\Delta\|_{2}^{2}\le \frac{144\lambda^2|S|}{(\eta^{\prime}_{\gamma})^2}+\frac{32\lambda}{\eta^{\prime}_{\gamma}}\|D_{S^c}\beta^*\|_{1}+\frac{128\tau(N, n)}{\eta^{\prime}_{\gamma}}\|D_{S^c}\beta^*\|_{1}^{2}.
\end{equation*}
Therefore, we have 
\begin{equation*}
\|\Delta\|_{2}^{2}\le \frac{144\lambda^2|S|}{(\eta^{\prime}_{\gamma})^2}+\frac{32\lambda}{\eta^{\prime}_{\gamma}}\|D_{S^c}\beta^*\|_{1}+\frac{128\tau(N, n)}{\eta^{\prime}_{\gamma}}\|D_{S^c}\beta^*\|_{1}^{2}, 
\end{equation*}
and 
\begin{equation*}
\begin{split}
\frac{1}{N}\|X\Delta\|_{2}^{2}
\le 4\lambda\|D_{S^c}\beta^*\|_{1}+3\lambda\sqrt{|S| \left(\frac{144\lambda^2|S|}{(\eta^{\prime}_{\gamma})^2}+\frac{32\lambda}{\eta^{\prime}}\|D_{S^c}\beta^*\|_{1}+\frac{128\tau(N, n)}{\eta^{\prime}_{\gamma}}\|D_{S^c}\beta^*\|_{1}^{2}\right)}. 
\end{split}
\end{equation*}
Therefore, we obtain the results in Part (a). 

Next, we show Part (b). If $\lambda_{g}=2^{-(1+k/2)}d^{-(k+1)/2}\lambda$, then we have 
\begin{equation*}
\|\Delta\|^{2}_{1}\le 4\|D\Delta\|_{1}^{2}\le 128\|D_{S^c}\beta^*\|_{1}^{2}+128|S|\left(\frac{144\lambda^2|S|}{(\eta^{\prime}_{\gamma})^2}+\frac{32\lambda}{\eta^{\prime}_{\gamma}}\|D_{S^c}\beta^*\|_{1}+\frac{128\tau(N, n)}{\eta^{\prime}_{\gamma}}\|D_{S^c}\beta^*\|_{1}^{2}\right),
\end{equation*}
which yields the result in Part (b). Therefore, the proof is complete. 

\end{proof}


\subsection{Proof of Lemma~\ref{lowerRE2random}}
\label{lowerRE2random-proof}

\begin{proof}
Using Part (b) of Lemma~\ref{REgeneral} and a similar argument as the proof of Lemma~\ref{eta1rand}, for $s\ge 1$, we have
\begin{equation*}
\frac{1}{N}\|XD^{+}v\|_{2}^{2}\ge \frac{1}{3}\lambda_{1}(\Sigma_{x})\|v\|_{2}^{2}-\frac{\lambda_{1}(\Sigma_{x})}{48s}\|v\|_{1}^{2}\quad\forall~v\in \mathbb{R}^{m+n}
\end{equation*}
with probability at least $1-2\exp{(cs\log n-c^{\prime}N)}$ where $c>0$ and $c^{\prime}>0$ are constants. 

Finally, let $s\asymp N/\log n$, then we have 
\begin{equation*}
\frac{1}{N}\|XD^{+}v\|_{2}^{2}\ge \frac{1}{3}\lambda_{1}(\Sigma_{x})\|v\|_{2}^{2}-\frac{c\log n}{N}\|v\|_{1}^{2}\quad\forall~v\in \mathbb{R}^{m+n}
\end{equation*}
with probability at least $1-2\exp{(-c^{\prime}N)}$. Therefore the proof is complete.


\end{proof}


\subsection{Proof of Lemma~\ref{tuningmulemma}}
\label{tuningmulemmaprf}

\begin{proof}
Let $\Gamma=\Theta_{x}\Sigma_{N}-I_{n}$. Then the $(j, k)$-th entry of $\Gamma$ is $\Gamma^{jk}=\frac{1}{N}\sum_{i=1}^{N}\Gamma_{i}^{jk}$
where $\Gamma_{i}^{jk}=e_{j}^T\Theta_{x}X_{i}X^{T}_{i}e_{k}-e_{j}^Te_{k}$. Furthermore, we have $\mathbb{E}(\Gamma_{i}^{jk})=0$, 
and 
\begin{equation*}
\begin{split}
\|\Gamma_{i}^{jk}\|_{\psi_{1}}\le 2\| e_{j}^T\Theta_{x}X_{i}X^T_{i}e_{k} \|_{\psi_{1}}\le 4\| e^T_{j}\Theta_{x}X_{i}  \|_{\psi_{2}}\| X_{i}^Te_{k} \|_{\psi_{2}}\le 4c\lambda_{n}(\Theta_{x})\sigma_{x}^2,
\end{split}
\end{equation*}
where $c>0$ is a constant. Then applying Lemma~\ref{bernstein-type} and letting $K=4c\lambda_{n}(\Theta_{x})\sigma_{x}^2$, for $t>0$, we have 
\begin{equation*}
\mathbb{P}\left(|\Gamma^{jk}|\ge t\right)\le 2\exp{\left[-C_{b}N\min\left(\frac{t^2}{K^2}, \frac{t}{K}\right)\right]}. 
\end{equation*}
Taking a union bound, we have
\begin{equation*}
\mathbb{P}\left(\|\Gamma\|_{\infty}\ge t\right)\le 2n^2\exp{\left[-C_{b}N\min\left(\frac{t^2}{K^2}, \frac{t}{K}\right)\right]}. 
\end{equation*}
Finally, letting $t=c\sqrt{\frac{\log n}{N}}$ and $N\gtrsim \log n$, we have
\begin{equation*}
\mathbb{P}\left(\|\Gamma\|_{\infty}\ge c\sqrt{\frac{\log n}{N}}\right)\le 2\exp{(-c^{\prime}\log n)},
\end{equation*}
implying the desired result in the lemma.


\end{proof}


\subsection{Proof of Lemma~\ref{conelemma}}
\label{conelemmaproof}

\begin{proof}
By the optimality of $\hat{\beta}$, we have
\begin{equation*}
    \frac{1}{2N}\|y-X\hat{\beta}\|_{2}^{2}+\lambda \|D\hat{\beta}\|_{1}\le \frac{1}{2N}\|y-X\beta^{*}\|_{2}^{2}+\lambda \|D\beta^{*}\|_{1}.
\end{equation*}
Let $\Delta=\hat{\beta}-\beta^*$. Then rearranging terms, we have
\begin{equation*}
\begin{split}
\frac{1}{2N}||X\Delta||_{2}^{2}\le \frac{1}{N}\varepsilon^{T}X\Delta+\lambda\left(\|D\beta^{*}\|_{1}-\|D\hat{\beta}\|_{1} \right) \\
= \frac{1}{N}\varepsilon^{T}X\Delta+\lambda\left(\|(D\beta^{*})_{S}\|_{1}-\|(D\hat{\beta})_{S}\|_{1}-\|(D\hat{\beta})_{S^c}\|_{1}  \right) \\
    \le \frac{1}{N}\varepsilon^{T}X\Delta+\lambda\left(\|D_{S}\Delta\|_{1}-\|D_{S^c}\Delta\|_{1}   \right),
\end{split}
\end{equation*}
where the last inequality follows from the triangle inequality and the definition of $S$. Furthermore, by Holder's inequality and the fact that $D^{+}D=I_{n}$, we have
\begin{equation*}
  \varepsilon^{T}X\Delta\le \|\varepsilon^T XD^{+}\|_{\infty}\|D\Delta\|_{1}.
\end{equation*}
Therefore, 
\begin{equation*}
  0\le\frac{1}{2N}\|X\Delta\|_{2}^{2} \le \frac{1}{N}\|\varepsilon^T XD^{+}\|_{\infty}\|D\Delta\|_{1}+\lambda\left(\|D_{S}\Delta\|_{1}-\|D_{S^c}\Delta\|_{1}  \right). 
\end{equation*}
So when $\lambda$ satisfies the condition in the lemma, we have
\begin{equation*}
    0\le \frac{\lambda}{2}\left(\|(D\Delta)_{S}\|_{1}+\|(D\Delta)_{S^c}\|_{1}\right)+\lambda\left(\|(D\Delta)_{S}\|_{1}-\|(D\Delta)_{S^c}\|_{1}   \right).
\end{equation*}
Thus, we have $\|(D\Delta)_{S^c}\|_{1}\le 3\|(D\Delta)_{S}\|_{1}$. Therefore we prove the result. 
\end{proof}


\subsection{Proof of Lemma~\ref{consistencyTheta}}
\label{consistencyThetaprf}

\begin{proof}
 We have
\begin{equation*}
\begin{split}
\|\widehat{\Theta}-\Theta_{x}\|_{\infty}=\|\widehat{\Theta}\left(I_{n}-\Sigma_{N}\Theta_{x}\right)+(\widehat{\Theta}\Sigma_{N}-I_{n})\Theta_{x}\|_{\infty} \\
\le \|\widehat{\Theta}\left(I_{n}-\Sigma_{N}\Theta_{x}\right)\|_{\infty}+\|(\widehat{\Theta}\Sigma_{N}-I_{n})\Theta_{x}\|_{\infty} \\
\le \opnorm{\widehat{\Theta}}_{\infty}\|I_{n}-\Sigma_{N}\Theta_{x}\|_{\infty}+\|\widehat{\Theta}\Sigma_{N}-I_{n}\|_{\infty}\opnorm{\Theta_{x}}_{1} \\
\le \opnorm{\widehat{\Theta}}_{\infty}\|I_{n}-\Sigma_{N}\Theta_{x}\|_{\infty}+M_{n}\mu.
\end{split}
\end{equation*}
If $\|\Theta_{x}\Sigma_{N}-I_{n}\|_{\infty}\le \mu$, then by the optimality of $\widehat{\Theta}$ and the feasibility of $\Theta_{x}$, we have $\|\widehat{\Theta}_{j}\|_{1}\le \|(\Theta_{x})_{j}\|_{1}$ for $1\le j\le n$ where $\widehat{\Theta}^{T}_{j}$ and $(\Theta_{x})^{T}_{j}$ are $j$-th row of $\widehat{\Theta}$ and $\Theta_{x}$ respectively. Therefore, we have
$\opnorm{\widehat{\Theta}}_{\infty}\le \opnorm{\Theta_{x}}_{\infty}\le M_{n}$. Hence,  we obtain $\|\widehat{\Theta}-\Theta_{x}\|_{\infty}\le 2M_{n}\mu$, implying the result in the lemma. 
\end{proof}


\subsection{Proof of Lemma~\ref{sigmaconsis}}
\label{sigmaconsisprf}

\begin{proof}
We begin by writing 
\begin{equation*}
\begin{split}
\left| \frac{\hat{\sigma}_{\varepsilon}}{\sigma_{\varepsilon}}-1\right|<\left| \frac{\hat{\sigma}^{2}_{\varepsilon}}{\sigma^2_{\varepsilon}}-1\right|=\frac{1}{\sigma^{2}_{\varepsilon}}\left|\frac{1}{N} \sum_{i=1}^{N}\left(y_{i}-X_{i}^{T} \hat{\beta}\right)^{2}-\sigma^{2}_{\varepsilon}\right| \\
\le \frac{1}{\sigma^{2}_{\varepsilon}}\left[  \left|\frac{1}{N} \sum_{i=1}^{N}\left(y_{i}-X_{i}^{T} \hat{\beta}\right)^{2}-\frac{1}{N} \sum_{i=1}^{N}\left(y_{i}-X_{i}^{T} \beta^{*}\right)^{2}\right|+\left|\frac{1}{N} \sum_{i=1}^{N}\left(y_{i}-X_{i}^{T} \beta^{*}\right)^{2}-\mathbb{E}(\varepsilon_{i}^{2})\right| \right]. 
\end{split}
\end{equation*}
Therefore, it suffices to bound
\begin{equation*}
\left|\frac{1}{N} \sum_{i=1}^{N}\left(y_{i}-X_{i}^{T} \hat{\beta}\right)^{2}-\frac{1}{N} \sum_{i=1}^{N}\left(y_{i}-X_{i}^{T} \beta^{*}\right)^{2}\right|
\end{equation*}
 and 
\begin{equation*}
\left|\frac{1}{N} \sum_{i=1}^{N}\left(y_{i}-X_{i}^{T} \beta^{*}\right)^{2}-\mathbb{E}(\varepsilon_{i}^{2})\right|
\end{equation*}

First, by Lemma~\ref{bernstein-type}, we have 
\begin{equation*}
\left|\frac{1}{N}\sum_{i=1}^{N}(y_{i}-X_{i}^T\beta^*)^2-\mathbb{E}(\varepsilon^2_{i})\right| \le \mathcal{O}_{\mathbb{P}}\left(\sqrt{\frac{\log N}{N}}\right). 
\end{equation*}

Next, we bound the first term, we have 
\begin{equation*}
\begin{split}
\left|\frac{1}{N} \sum_{i=1}^{N}\left(y_{i}-X_{i}^{T} \hat{\beta}\right)^{2}-\frac{1}{N} \sum_{i=1}^{N}\left(y_{i}-X_{i}^{T} \beta^{*}\right)^{2}\right| 
=\left|\frac{1}{N} \sum_{i=1}^{N}\left(\left(X_{i}^{T}\left(\beta^{*}-\hat{\beta}\right)+\varepsilon_{i}\right)^{2}-\varepsilon_{i}^{2}\right)\right| \\ 
\le \frac{1}{N}\left|\sum_{i=1}^{N}\left(X_{i}^{T}\left(\hat{\beta}-\beta^{*}\right)\right)^{2}\right|+\frac{2}{N}\left|\sum_{i=1}^{N}\left(X_{i}^{T}\left(\beta^*-\hat{\beta}\right)\right) \varepsilon_{i}\right| 
\le\left|\left(\hat{\beta}-\beta^{*}\right)^{T}\Sigma_{N}\left(\hat{\beta}-\beta^{*}\right)\right|+2\left\|\frac{X^{T} \varepsilon}{N}\right\|_{\infty}\left\|\hat{\beta}-\beta^{*}\right\|_{1} \\
\le \left|\left(\hat{\beta}-\beta^{*}\right)^{T}\left(\Sigma_{N}-\Sigma_{x}\right)\left(\hat{\beta}-\beta^{*}\right)\right|+\left|(\hat{\beta}-\beta^*)^T\Sigma_{x}(\hat{\beta}-\beta^*)\right|+2\left\|\frac{X^{T} \varepsilon}{N}\right\|_{\infty}\left\|\hat{\beta}-\beta^{*}\right\|_{1}. 
\end{split}
\end{equation*}
By Theorem~\ref{probconthm}, we have 
\begin{equation}
\label{l8eq1}
\left|(\hat{\beta}-\beta^*)^T\Sigma_{x}(\hat{\beta}-\beta^*)\right|\le \mathcal{O}_{\mathbb{P}}\left( \frac{s_{2}\log n}{N}\right). 
\end{equation}
Using similar arguments as the proof of Lemma~\ref{eta1rand}, Theorem~\ref{fixedconthm} and Theorem~\ref{probconthm}, we have   
\begin{equation}
\label{l8eq2}
\left|(\hat{\beta}-\beta^*)^T(\Sigma_{N}-\Sigma)(\hat{\beta}-\beta^*)\right|\le \mathcal{O}_{\mathbb{P}}\left( \frac{s_{2}\log n}{N} \right). 
\end{equation}
 Furthermore, using similar arguments as the proof of Lemma~\ref{tuninglem} and Theorem~\ref{probconthm}, we have 
\begin{equation}
\label{l8eq3}
\left\|\frac{X^T\varepsilon}{N}\right\|_{\infty}\left\|\hat{\beta}-\beta^*\right\|_{1}\le \mathcal{O}_{\mathbb{P}}\left( \frac{s_{2}\log n}{N}  \right).
\end{equation}
Therefore, combining $(\text{\ref{l8eq1}})$, $(\text{\ref{l8eq2}})$ and $(\text{\ref{l8eq3}})$, we have 
\begin{equation*}
\begin{split}
\left|\frac{1}{N} \sum_{i=1}^{N}\left(y_{i}-X_{i}^{T} \hat{\beta}\right)^{2}-\frac{1}{N} \sum_{i=1}^{N}\left(y_{i}-X_{i}^{T} \beta^{*}\right)^{2}\right|\le \mathcal{O}_{\mathbb{P}}\left(\frac{s_{2}\log n}{N}\right). 
\end{split}
\end{equation*}

Altogether, we conclude that 
\begin{equation*}
\left| \frac{\hat{\sigma}_{\varepsilon}}{\sigma_{\varepsilon}}-1  \right|\le \mathcal{O}_{\mathbb{P}}\left(\frac{s_{2}\log n}{N}+\sqrt{\frac{\log N}{N}}\right). 
\end{equation*}
Hence the proof is complete.

\end{proof}


\section{Supplementary lemmas}
\label{supplementary-appendix}
\setcounter{equation}{0}
\renewcommand{\theequation}{E.\arabic{equation}}

In this section, we collect several useful results which are frequently used in our proofs. 

\subsection{Restricted eigenvalue condition}
\label{RE-appendix}

We start with a geometric lemma which shows how to bound the intersection of the $\ell_{1}$-ball with $\ell_{2}$-ball in terms of a simpler set. This result is a generalization of Lemma 11 in \cite{LohWai12}. 
\begin{lemma}
\label{LemBall}
For any integer $s \ge 1$ and any constant $c>0$, we have
\begin{equation*}
\Ball_1(c\sqrt{s}) \cap \Ball_2(1) \subseteq (1+c) \mathrm{cl}\left\{\mathrm{conv}\{\Ball_0(s) \cap \Ball_2(1)\}\right\}.
\end{equation*}
where ``cl" denotes the closure of a set, ``conv" denotes the convex hull. All these balls are in $\mathbb{R}^{n}$.
\end{lemma}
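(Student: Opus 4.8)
The plan is to take an arbitrary $v \in \Ball_1(c\sqrt{s})\cap \Ball_2(1)$ and exhibit $v$ as an element of $(1+c)\,\mathrm{conv}\{\Ball_0(s)\cap\Ball_2(1)\}$. Since $\Ball_0(s)\cap\Ball_2(1)$ is compact in $\mathbb{R}^n$, its convex hull is already closed (by Carath\'eodory), so the closure appearing in the statement is harmless and it suffices to produce a finite convex combination. First I would relabel the coordinates so that $|v_1|\ge |v_2|\ge\cdots\ge |v_n|$ and partition $[n]$ into consecutive blocks $T_1,T_2,\dots,T_\ell$ each of size $s$ (the last block possibly smaller), where $T_1$ collects the $s$ largest-magnitude coordinates, $T_2$ the next $s$, and so on. Writing $v=\sum_{i=1}^\ell v_{T_i}$, each summand $v_{T_i}$ is $s$-sparse by construction.

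The key estimate is the block-norm bound $\|v_{T_{i+1}}\|_2\le s^{-1/2}\|v_{T_i}\|_1$. This holds because every coordinate in $T_{i+1}$ is bounded in absolute value by the smallest magnitude among the coordinates of $T_i$, which is at most the average $s^{-1}\|v_{T_i}\|_1$; squaring, summing over the at most $s$ coordinates of $T_{i+1}$, and taking square roots gives the claim. Summing over the blocks then yields
\[
\sum_{i=2}^{\ell}\|v_{T_i}\|_2 \le \frac{1}{\sqrt{s}}\sum_{i=1}^{\ell-1}\|v_{T_i}\|_1 \le \frac{1}{\sqrt{s}}\|v\|_1 \le \frac{c\sqrt{s}}{\sqrt{s}} = c,
\]
where the last inequality uses $v\in\Ball_1(c\sqrt{s})$. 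Combining this with $\|v_{T_1}\|_2\le \|v\|_2\le 1$, the total mass $M:=\sum_{i=1}^{\ell}\|v_{T_i}\|_2$ satisfies $M\le 1+c$.

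Next I would set, for each nonzero block, $u_i:=v_{T_i}/\|v_{T_i}\|_2$, noting $u_i\in \Ball_0(s)\cap\Ball_2(1)$ since $u_i$ is $s$-sparse with unit $\ell_2$-norm. Then $v=\sum_i\|v_{T_i}\|_2\,u_i = M\sum_i(\|v_{T_i}\|_2/M)\,u_i$, which exhibits $v/M$ as a convex combination of points of $\Ball_0(s)\cap\Ball_2(1)$, so $v/M\in \mathrm{conv}\{\Ball_0(s)\cap\Ball_2(1)\}$. Finally, since $0\in \Ball_0(s)\cap\Ball_2(1)$ and $M/(1+c)\le 1$, convexity gives $\frac{M}{1+c}(v/M)+(1-\frac{M}{1+c})\cdot 0 = v/(1+c)\in \mathrm{conv}\{\Ball_0(s)\cap\Ball_2(1)\}$, whence $v\in (1+c)\,\mathrm{conv}\{\Ball_0(s)\cap\Ball_2(1)\}$, as required.

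I do not anticipate a serious obstacle; the only delicate points are the block-norm inequality $\|v_{T_{i+1}}\|_2\le s^{-1/2}\|v_{T_i}\|_1$ and the bookkeeping of the telescoping $\ell_1$-mass across blocks so that the bound comes out to exactly $c$ rather than something larger. The absorption of the factor $M\le 1+c$ into the dilation relies crucially on the origin belonging to the set, so I would flag $0\in\Ball_0(s)\cap\Ball_2(1)$ explicitly. The generalization from the fixed constant in \cite{LohWai12} to an arbitrary $c>0$ introduces no new idea: writing the $\ell_1$-radius as $c\sqrt{s}$ is precisely what makes the $\sqrt{s}$ factors cancel in the displayed estimate.
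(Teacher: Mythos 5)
Your argument is correct, but it proves the inclusion by a genuinely different (primal, constructive) route than the paper. The paper's proof is dual: it identifies both sides as closed convex sets, invokes the equivalence $A\subseteq B \iff \phi_A\le\phi_B$ for support functions, and verifies $\phi_A(x)\le (1+c)\|x_S\|_2=\phi_B(x)$ where $S$ indexes the top $s$ coordinates of $x$ in magnitude, using $\|x_{S^c}\|_\infty\le s^{-1/2}\|x_S\|_2$. You instead take an arbitrary $v\in\Ball_1(c\sqrt{s})\cap\Ball_2(1)$ and exhibit it explicitly via the standard shelling decomposition into $s$-sparse blocks, with the block estimate $\|v_{T_{i+1}}\|_2\le s^{-1/2}\|v_{T_i}\|_1$ giving $\sum_{i\ge 2}\|v_{T_i}\|_2\le c$ and hence total mass $M\le 1+c$; the two delicate points you flag --- that $0\in\Ball_0(s)\cap\Ball_2(1)$ is needed to absorb the slack $M\le 1+c$ into the dilation, and that compactness of $\Ball_0(s)\cap\Ball_2(1)$ makes the closure harmless --- are exactly the right ones, and your verification of each is sound. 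The trade-off: your approach is more elementary (no support-function machinery) and produces an explicit convex representation of $v/(1+c)$, which is occasionally useful downstream; the paper's duality argument is shorter and sidesteps the block bookkeeping entirely, at the cost of requiring the reader to accept the support-function characterization of inclusion for closed convex sets. Both establish the lemma in full generality for arbitrary $c>0$.
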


\begin{proof}
Without loss of generality, we assume $1\le s\le n$. The key idea is using a fact that $\phi_{A}(x)\le \phi_{B}(x)$ if and only if $A\subset B$ where $A, B\in \mathbb{R}^n$ are closed convex sets, and $\phi_{A}(x)=\sup_{\theta\in A}\theta^Tx$ and $\phi_{B}(x)=\sup_{\theta\in B}\theta^Tx$.

Let $A=\Ball_1(c\sqrt{s}) \cap \Ball_2(1)$, $B=(1+c) \mathrm{cl}\left\{\mathrm{conv}\{\Ball_0(s) \cap \Ball_2(1)\}\right\}$ and $x\in \mathbb{R}^{n}$. Denote the subset that indexes the top $s$ elements of $x$ in absolute value by S. Then we have $\|x_{S^c}\|_{\infty}\le |x_{j}|$ for all $j\in S$, and
\begin{equation*}
\|x_{S^c}\|_{\infty}\le \frac{1}{s}\|x_{S}\|_{1}\le \frac{1}{\sqrt{s}}\|x_{S}\|_{2}.
\end{equation*}

Furthermore, we have 
\begin{equation*}
\phi_{A}(x)=\sup_{\theta\in A}\theta^Tx=\sup_{\theta\in A}(\theta_{S}^Tx_{S}+\theta_{S^c}^Tx_{S^c})\le \sup_{\|\theta_{S}\|_{2}\le 1}\theta_{S}^Tx_{S}+\sup_{\|\theta_{S^c}\|_{1}\le c\sqrt{s}}\theta_{S^c}^Tx_{S^c} \le (1+c)\|x_{S}\|_{2}, 
\end{equation*}
and
\begin{equation*}
\phi_{B}(x)=\sup_{\theta\in B}\theta^Tx=(1+c)\max_{|U|=s}\sup_{\|\theta_{U}\|_{2}\le 1}\theta_{U}^Tx_{U}=(1+c)\|x_{S}\|_{2}
\end{equation*}
from which the lemma holds.
\end{proof}

Our next result builds on the above geometric lemma. 

\begin{lemma}
\label{REprelim}
Let $\mathbb{L}_{0}(s)=\mathbb{B}_{0}(s)\cap\mathbb{B}_{2}(1)$ and $\mathbb{L}_{1}(s)=\{v: \|v\|_{1}\le 4\sqrt{s}\|v\|_{2}\}$. For a symmetric matrix $\Gamma\in \mathbb{R}^{n\times n}$, parameters $s\ge 1$ and $\delta>0$, suppose we have the deviation condition that $|v^T\Gamma v|\le \delta$ for all $v\in \mathbb{L}_{0}(2s)$. Then,
\begin{equation}
\label{lemma18-eq1}
    |v^T\Gamma v|\le 75\delta \|v\|_{2}^{2}\quad\forall~v\in \mathbb{L}_{1}(s).
\end{equation}
Furthermore, we have
\begin{equation}
\label{lemma18-eq2}
|v^T\Gamma v|\le 75\delta\|v\|_{2}^{2}+\frac{75\delta}{16s}\|v\|_{1}^{2}\quad\forall~v\in \mathbb{R}^{n}. 
\end{equation}

\end{lemma}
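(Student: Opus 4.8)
The plan is to prove both inequalities through a single scaling argument built on the geometric decomposition in Lemma~\ref{LemBall}, the polarization identity, and the hypothesized deviation bound on $\mathbb{L}_{0}(2s)$. Since $v^T\Gamma v$, $\|v\|_2^2$, and $\|v\|_1^2$ are all homogeneous of degree two, I would first reduce to the case $\|v\|_2 = 1$ (the case $v=0$ being trivial) and assume without loss of generality that $1 \le s \le n$. Writing $r = \|v\|_1$, so that $v \in \Ball_1(r) \cap \Ball_2(1) = \Ball_1\!\big((r/\sqrt{s})\sqrt{s}\big) \cap \Ball_2(1)$, I would invoke Lemma~\ref{LemBall} with the choice $c = r/\sqrt{s}$ to write $v = (1 + r/\sqrt{s})\,w$ for some $w$ in the closure of the convex hull of $\mathbb{L}_{0}(s) = \Ball_0(s)\cap\Ball_2(1)$.

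The key estimate is then a uniform bound on $|w^T\Gamma w|$ over $w \in \mathrm{cl}(\mathrm{conv}(\mathbb{L}_{0}(s)))$. I would approximate such a $w$ by finite convex combinations $\sum_i \alpha_i z_i$ with $z_i \in \mathbb{L}_{0}(s)$, $\alpha_i \ge 0$, and $\sum_i \alpha_i = 1$; by continuity of $w \mapsto w^T\Gamma w$ it suffices to bound the combinations. For each pair I would apply the polarization identity $z_i^T\Gamma z_j = \tfrac14\big[(z_i+z_j)^T\Gamma(z_i+z_j) - (z_i-z_j)^T\Gamma(z_i-z_j)\big]$. Because $z_i \pm z_j$ is supported on at most $2s$ coordinates and has $\ell_2$-norm at most $2$, the vectors $(z_i\pm z_j)/2$ lie in $\mathbb{L}_{0}(2s)$, so the deviation hypothesis gives $|(z_i\pm z_j)^T\Gamma(z_i\pm z_j)| \le 4\delta$ and hence $|z_i^T\Gamma z_j| \le 2\delta$. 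Summing against the convex weights yields $|w^T\Gamma w| \le 2\delta$, a bound preserved in passing to the closure.

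Combining the two pieces gives $|v^T\Gamma v| = (1 + r/\sqrt{s})^2\,|w^T\Gamma w| \le 2\delta(1 + r/\sqrt{s})^2$. For inequality \eqref{lemma18-eq1}, the restriction $v \in \mathbb{L}_{1}(s)$ means $r \le 4\sqrt{s}$, so $(1 + r/\sqrt{s})^2 \le 25$ and, after restoring the $\|v\|_2^2$ factor by homogeneity, $|v^T\Gamma v| \le 50\delta\,\|v\|_2^2 \le 75\delta\,\|v\|_2^2$. For inequality \eqref{lemma18-eq2}, the elementary bound $(1+x)^2 \le 2(1+x^2)$ with $x = r/\sqrt{s}$ gives $|v^T\Gamma v| \le 4\delta(1 + r^2/s)$, which upon rehomogenizing becomes $|v^T\Gamma v| \le 4\delta\big(\|v\|_2^2 + \|v\|_1^2/s\big)$, comfortably dominated by the claimed $75\delta\,\|v\|_2^2 + \tfrac{75\delta}{16s}\|v\|_1^2$ since $4 \le 75$ and $4 \le 75/16$.

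The step I expect to be the main obstacle is the uniform cross-term estimate over the closed convex hull: one must ensure the polarization argument is applied only to genuinely $2s$-sparse vectors of $\ell_2$-norm at most one, and that the approximation and continuity passage to the closure never invokes the deviation condition outside its assumed domain $\mathbb{L}_{0}(2s)$. Everything else is bookkeeping of constants, where the slack between the derived $50\delta$ and $4\delta/s$ and the stated $75\delta$ and $75\delta/(16s)$ leaves ample room.
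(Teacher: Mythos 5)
Your proof is correct and follows essentially the same route as the paper: Lemma~\ref{LemBall} to embed the relevant $\ell_1$--$\ell_2$ ball into a dilate of $\mathrm{cl}\,\mathrm{conv}(\mathbb{L}_{0}(s))$, followed by a cross-term bound on convex combinations using the deviation condition on $2s$-sparse vectors. The only differences are refinements of the same argument: you choose the dilation factor $c=\|v\|_1/(\sqrt{s}\,\|v\|_2)$ adaptively so that both inequalities drop out of one computation, and your symmetric polarization identity yields the sharper constants $50\delta$ and $4\delta/s$ in place of $75\delta$ and $75\delta/(16s)$, whereas the paper fixes $c=4$ for the first claim and handles the second by a separate normalization of $v\notin\mathbb{L}_{1}(s)$.
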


\begin{proof}
We first show $\text{(\ref{lemma18-eq1})}$. It suffices to prove $|v^T\Gamma v|\le 75\delta$ for all $v\in \mathbb{L}_{1}(s)\cap \mathbb{B}_{2}(1)$. By Lemma~\ref{LemBall} and continuity, we could reduce the problem to proving $|v^T\Gamma v|\le 75\delta$ for all $v\in 5\mathrm{conv}\{\mathbb{L}_{0}(s)\}=\mathrm{conv}\{ \mathbb{B}_{0}(s)\cap \mathbb{B}_{2}(5)\}$. Consider the convex combination $v=\sum_{i}\alpha_{i}v_{i}$ where $\alpha_{i}\ge 0$ and $\sum_{i}\alpha_{i}=1$, and $\|v_{i}\|_{0}\le s$ and $\|v_{i}\|_{2}\le 5$ for each $i$. Then we have
\begin{equation*}
    \left|v^T\Gamma v\right|=\left|( \sum_{i}\alpha_{i}v_{i})^T\Gamma (\sum_{j}\alpha_{j}v_{j})\right|=\left|\sum_{i,j}\alpha_{i}\alpha_{j}(v_{i}^T\Gamma v_{j} )\right|.
\end{equation*}
Furthermore, $\frac{1}{5}v_{i}\in \mathbb{L}_{0}(s)\subset \mathbb{L}_{0}(2s), \frac{1}{10}(v_{i}+v_{j})\in \mathbb{L}_{0}(2s)$, so we have
\begin{equation*}
    \left|v_{i}^T\Gamma v_{j}\right|=\frac{1}{2}\left|(v_{i}+v_{j})^T\Gamma (v_{i}+v_{j})-v_{i}^T\Gamma v_{i}-v_{j}^T\Gamma v_{j}\right|\le \frac{1}{2}(100\delta+25\delta+25\delta)=75\delta
\end{equation*}
for all $i$ and $j$. Therefore, $|v^T\Gamma v|\le 75\delta$.

Next, we show $\text{(\ref{lemma18-eq2})}$. For $v\notin \mathbb{L}_{1}(s)$, let $u=4\sqrt{s}\frac{v}{\|v\|_{1}}$. Then we have $\|u\|_{2}<1$ and $\|u\|_{1}=4\sqrt{s}$, which 
implies that $u\in \mathbb{B}_{1}(4\sqrt{s})\cap\mathbb{B}_{2}(1)$. Hence for $v\notin\mathbb{L}_{1}(s)$, we have
\begin{equation*}
\frac{|v^T\Gamma v|}{\|v\|_{1}^{2}}\le \frac{1}{16s}\sup_{u\in \mathbb{B}_{1}(4\sqrt{s})\cap \mathbb{B}_{2}(1)}u^T\Gamma u. 
\end{equation*}
Using a similar argument with the previous one, we have 
\begin{equation}
\label{lemma18-eq3}
\frac{|v^T\Gamma v|}{\|v\|_{1}^{2}}\le \frac{75\delta}{16s}\quad\forall~v\notin \mathbb{L}_{1}(s).
\end{equation}
Therefore, combining $\text{(\ref{lemma18-eq1})}$ and $\text{(\ref{lemma18-eq3})}$, we have 
\begin{equation*}
|v^T\Gamma v|\le 75\delta\|v\|_{2}^{2}+\frac{75\delta}{16s}\|v\|_{1}^{2}\quad\forall~v\in \mathbb{R}^{n},
\end{equation*}
implying that $\text{(\ref{lemma18-eq2})}$ holds. 

\end{proof}

Then we have the following general result on restricted eigenvalue condition, which is a direct application of Lemma~\ref{REprelim}. 

\begin{lemma}
\label{REgeneral}
Let $\mathbb{L}_{0}(s)$ and $\mathbb{L}_{1}(s)$ be two sets defined in Lemma~\ref{REprelim}. Let $\widehat{\Gamma}\in \mathbb{R}^{n\times n}$ and $\Gamma\in \mathbb{R}^{n\times n}$ be two symmetric matrices. 
\begin{enumerate}
\item[(a)] If there exists a $\delta>0$, such that $v^T\Gamma v\ge \delta\|v\|_{2}^2$ for all $v\in \mathbb{L}_{1}(s)$ and 
\begin{equation*}
\left|v^T(\widehat{\Gamma}-\Gamma)v\right|\le \frac{\delta}{150}\quad\forall~v\in \mathbb{L}_{0}(2s),
\end{equation*}
then we have
\begin{equation}
\label{lemma19-eq1}
v^T\widehat{\Gamma}v\ge \frac{\delta}{2}\|v\|_{2}^{2}\quad\forall~v\in \mathbb{L}_{1}(s). 
\end{equation}

\item[(b)] If there exists a $\delta>0$, such that $v^T\Gamma v\ge \delta\|v\|_{2}^2$ for all $v\in \mathbb{R}^{n}$ and 
\begin{equation*}
\left|v^T(\widehat{\Gamma}-\Gamma)v\right|\le \frac{\delta}{150}\quad\forall~v\in \mathbb{L}_{0}(2s),
\end{equation*}
then we have
\begin{equation}
\label{lemma19-eq2}
v^T\widehat{\Gamma}v\ge \frac{\delta}{2}\|v\|_{2}^{2}-\frac{\delta}{32s}\|v\|_{1}^{2}\quad\forall~v\in \mathbb{R}^{n}.  
\end{equation}

\end{enumerate}
\end{lemma}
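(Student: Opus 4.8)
The plan is to observe, as the lemma's introductory sentence already hints, that both parts follow by applying Lemma~\ref{REprelim} to the \emph{difference} matrix $\widehat{\Gamma}-\Gamma$ and then adding back the lower bound assumed for $\Gamma$. The only bookkeeping is to track the constant $\tfrac{\delta}{150}$ through the $75\delta$ amplification factor in Lemma~\ref{REprelim}: since $75\cdot\tfrac{\delta}{150}=\tfrac{\delta}{2}$, the deviation hypothesis is calibrated precisely so that the quadratic-form error is controlled by $\tfrac{\delta}{2}\|v\|_2^2$, leaving the stated $\tfrac{\delta}{2}$ curvature.

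For Part (a), I would first invoke Lemma~\ref{REprelim} with the symmetric matrix $\widehat{\Gamma}-\Gamma$ and with its parameter ``$\delta$'' taken to be $\tfrac{\delta}{150}$. The hypothesis $|v^T(\widehat{\Gamma}-\Gamma)v|\le\tfrac{\delta}{150}$ for all $v\in\mathbb{L}_{0}(2s)$ is exactly the deviation condition required, so inequality $(\ref{lemma18-eq1})$ yields
\begin{equation*}
\left|v^T(\widehat{\Gamma}-\Gamma)v\right|\le 75\cdot\frac{\delta}{150}\|v\|_{2}^{2}=\frac{\delta}{2}\|v\|_{2}^{2}\quad\forall~v\in\mathbb{L}_{1}(s).
\end{equation*}
Then for any $v\in\mathbb{L}_{1}(s)$, the decomposition $v^T\widehat{\Gamma}v=v^T\Gamma v+v^T(\widehat{\Gamma}-\Gamma)v$ together with the assumed bound $v^T\Gamma v\ge\delta\|v\|_{2}^{2}$ gives $v^T\widehat{\Gamma}v\ge\delta\|v\|_{2}^{2}-\tfrac{\delta}{2}\|v\|_{2}^{2}=\tfrac{\delta}{2}\|v\|_{2}^{2}$, which is $(\ref{lemma19-eq1})$.

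Part (b) runs identically except that I would use inequality $(\ref{lemma18-eq2})$ instead of $(\ref{lemma18-eq1})$, which holds for all $v\in\mathbb{R}^{n}$; with the same substitution the bound becomes $\tfrac{\delta}{2}\|v\|_{2}^{2}+\tfrac{\delta}{32s}\|v\|_{1}^{2}$, since $\tfrac{75}{16}\cdot\tfrac{\delta}{150}=\tfrac{\delta}{32}$. Combining with the now-global curvature assumption $v^T\Gamma v\ge\delta\|v\|_{2}^{2}$ produces $v^T\widehat{\Gamma}v\ge\tfrac{\delta}{2}\|v\|_{2}^{2}-\tfrac{\delta}{32s}\|v\|_{1}^{2}$ for all $v$, which is $(\ref{lemma19-eq2})$. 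There is essentially no genuine obstacle here; the content of the argument was discharged in Lemma~\ref{REprelim}, and the only thing to verify is the arithmetic of the constants and the fact that the deviation condition is posed on $\mathbb{L}_{0}(2s)$, matching the hypothesis of Lemma~\ref{REprelim} exactly. If anything, the mild care needed is to confirm that the curvature hypothesis on $\Gamma$ is used on the correct domain ($\mathbb{L}_{1}(s)$ in Part (a), all of $\mathbb{R}^{n}$ in Part (b)), which aligns with where each conclusion is asserted.
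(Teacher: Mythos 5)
Your proposal is correct and matches the paper's own argument exactly: both apply Lemma~\ref{REprelim} to the difference matrix $\widehat{\Gamma}-\Gamma$ with parameter $\tfrac{\delta}{150}$, use $(\ref{lemma18-eq1})$ for Part (a) and $(\ref{lemma18-eq2})$ for Part (b), and then add back the assumed curvature of $\Gamma$. The constant tracking ($75\cdot\tfrac{\delta}{150}=\tfrac{\delta}{2}$ and $\tfrac{75}{16s}\cdot\tfrac{\delta}{150}=\tfrac{\delta}{32s}$) is also exactly as in the paper.
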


\begin{proof}
We first show $\text{(\ref{lemma19-eq1})}$. Since $\left|v^T(\widehat{\Gamma}-\Gamma)v\right|\le \frac{\delta}{150}$ for all $v\in \mathbb{L}_{0}(2s)$, so by Lemma~\ref{REprelim}, we have $\left|v^T(\widehat{\Gamma}-\Gamma)v\right|\le \frac{\delta}{2}\|v\|_{2}^2$ for all $v\in \mathbb{L}_{1}(s)$. Therefore, we have
\begin{equation*}
v^T\widehat{\Gamma}v\ge -\frac{\delta}{2}\|v\|_{2}^2+v^T\Gamma v\ge -\frac{\delta}{2}\|v\|_{2}^2+\delta\|v\|_{2}^2=\frac{\delta}{2}\|v\|_{2}^2.
\end{equation*}
Similarly, applying $\text{(\ref{lemma18-eq2})}$ in Lemma~\ref{REprelim} yields $\text{(\ref{lemma19-eq2})}$. 


\end{proof}

\subsection{Deviation bounds}
\label{mat-appendix}

We start with the following definitions on sub-exponential norm and sub-Gaussian norm.  
\begin{definition}
For a random variable $X$, the sub-exponential norm is defined as 
\begin{equation*}
\|X\|_{\psi_{1}}=\sup_{p\ge 1}p^{-1}(\mathbb{E}|X|^p)^{1/p},  
\end{equation*}
and the sub-Gaussian norm is defined as 
\begin{equation*}
\|X\|_{\psi_{2}}=\sup_{p\ge 1}p^{-1/2}(\mathbb{E}|X|^p)^{1/p}. 
\end{equation*}
\end{definition}

It is straightforward to show that for a $\sigma$-sub-Gaussian random variable $X$ defined in Definition~\ref{subgaussianrv}, we have 
$\|X\|_{\psi_{2}}\le \sigma\max{(e^{1/e}, \sqrt{2\pi})}$. Furthermore, for two sub-Gaussian random variables $X$ and $Y$, we have 
$\|XY\|_{\psi_{1}}\le 2\|X\|_{\psi_{2}}\|Y\|_{\psi_{2}}$. Next, we have a general result for sum of independent sub-exponential random variables cited from Proposition 5.16 in \cite{Vershynin11}. 

\begin{lemma}[Bernstein-type inequality]
\label{bernstein-type}
Let $X_{1},...,X_{N}$ be independent centered sub-exponential random variable, and $K=\max_{i}\|X_{i}\|_{\psi_{1}}$. Then for every
$a=(a_{1},...,a_{N})\in \mathbb{R}^{N}$ and for every $t\ge 0$, we have
\begin{equation*}
  \mathbb{P}\left(\sum_{i=1}^{N}a_{i}X_{i}\ge t\right)\le \exp{\left[-C_{b}\min{\left(\frac{t}{K\|a\|_{\infty}}, \frac{t^2}{K^2\|a\|_{2}^2}\right)}\right]},
\end{equation*}
and 
\begin{equation*}
  \mathbb{P}\left(\sum_{i=1}^{N}a_{i}X_{i}\le -t\right)\le \exp{\left[-C_{b}\min{\left(\frac{t}{K\|a\|_{\infty}}, \frac{t^2}{K^2\|a\|_{2}^2}\right)}\right]},
\end{equation*}
where $C_{b}>0$ is a universal constant. 
\end{lemma}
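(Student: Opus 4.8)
The plan is to use the standard Chernoff (exponential moment) method, reducing each tail bound to a pointwise estimate on the moment generating function (MGF) of a single centered sub-exponential summand, followed by a two-regime optimization over the free Chernoff parameter. I would first record the key single-variable MGF estimate: the hypothesis $\|X_i\|_{\psi_1}\le K$ controls all moments via $\mathbb{E}[|X_i|^p]\le (pK)^p$, and summing the Taylor series of $\exp(\lambda X_i)$ term by term (the linear term dropping out because $X_i$ is centered) yields universal constants $c_1,c_2>0$ with
\begin{equation*}
\mathbb{E}\left[\exp(\lambda X_i)\right] \le \exp\left(c_1 \lambda^2 K^2\right) \quad \text{whenever} \quad |\lambda| \le \frac{c_2}{K}.
\end{equation*}
This is the only place the $\psi_1$ hypothesis enters, and it is the source of the restricted domain $|\lambda|\le c_2/K$ that ultimately produces the two regimes.

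Next I would apply the Chernoff bound. Fixing $\lambda>0$ and using independence of the $X_i$,
\begin{equation*}
\mathbb{P}\left(\sum_{i=1}^N a_i X_i \ge t\right) \le e^{-\lambda t} \prod_{i=1}^N \mathbb{E}\left[\exp(\lambda a_i X_i)\right].
\end{equation*}
Provided $|\lambda a_i|\le c_2/K$ for every $i$, equivalently $\lambda \le c_2/(K\|a\|_\infty)$, the single-variable estimate applied with $\lambda a_i$ in place of $\lambda$ gives
\begin{equation*}
\mathbb{P}\left(\sum_{i=1}^N a_i X_i \ge t\right) \le \exp\left(-\lambda t + c_1 \lambda^2 K^2 \|a\|_2^2\right).
\end{equation*}

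The remaining step is to optimize over $\lambda\in(0,\,c_2/(K\|a\|_\infty)]$. The unconstrained minimizer of the exponent is $\lambda^\star = t/(2 c_1 K^2 \|a\|_2^2)$. If $\lambda^\star$ is feasible I substitute it and obtain the sub-Gaussian exponent of order $-t^2/(K^2\|a\|_2^2)$; otherwise the exponent is still decreasing on the feasible interval, so I take $\lambda = c_2/(K\|a\|_\infty)$ at the boundary, obtaining the linear exponent of order $-t/(K\|a\|_\infty)$. Collapsing the two cases into a single $\min$ and absorbing constants into a universal $C_b$ yields the stated upper-tail inequality. The lower-tail bound follows by applying the upper-tail result to the variables $-X_i$, which are again centered with identical $\psi_1$ norm.

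The main obstacle is the first step: making the passage from $\psi_1$-norm moment control to the quadratic MGF estimate fully rigorous, in particular justifying the term-by-term summation of the exponential series and pinning down the validity range $|\lambda|\le c_2/K$ together with the universal constants. Everything downstream — and in particular the characteristic two-regime $\min$ structure of the conclusion — is a direct consequence of this restricted domain, so the care invested there determines the sharpness of the final inequality.
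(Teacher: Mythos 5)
Your proof is correct and is exactly the standard argument: the restricted-domain MGF bound $\mathbb{E}[\exp(\lambda X_i)]\le \exp(c_1\lambda^2K^2)$ for $|\lambda|\le c_2/K$ (obtained from $\mathbb{E}|X_i|^p\le (pK)^p$ and geometric-series domination of the Taylor series), followed by Chernoff and the two-regime optimization over $\lambda$. The paper does not prove this lemma at all — it cites Proposition 5.16 of Vershynin's lecture notes — and the proof of that cited proposition is precisely the one you outline, so your argument matches the (implicit) source rather than diverging from it.
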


We now derive the following lemma for sub-Gaussian random matrix based on Lemma~\ref{bernstein-type}.

\begin{lemma}
\label{LemBern}
Assume $X\in \mathbb{R}^{N\times n}$ is a row-wise $(\sigma_{x}, \Sigma_{x})$-sub-Gaussian random matrix defined in Definition~\ref{randmat}. 
\begin{enumerate}
\item[(a)] For any fixed unit vector $v\in \real^{n}$ and $t>0$, we have
\begin{equation*}
    \mathbb{P}\left(v^T\frac{X^TX}{N}v-v^T\Sigma_{x}v\ge t\right)\le \exp{\left[-NC_{b}\min{\left(\frac{t^{2}}{16c_{b}^2\sigma_{x}^{4}}, \frac{t}{4c_{b}\sigma_{x}^2}\right)}\right]},
\end{equation*}
and
\begin{equation*}
    \mathbb{P}\left(v^T\frac{X^TX}{N}v-v^T\Sigma_{x}v\le -t\right)\le  \exp{\left[-NC_{b}\min{\left(\frac{t^{2}}{16c_{b}^2\sigma_{x}^{4}}, \frac{t}{4c_{b}\sigma_{x}^2}\right)}\right]},
\end{equation*}
where $c_{b}>0$ and $C_{b}>0$ are constants. 
\item[(b)] For $t>0$, we have
\begin{equation*}
\mathbb{P}\left( \opnorm{\frac{1}{N}X^TX-\Sigma_{x}}_{op}\ge 2t\right)\le 9^{n}\times 2\exp{\left[-NC_{b}\min{\left(\frac{t^2}{16c_{b}^2\sigma_{x}^4}, \frac{t}{4c_{b}\sigma_{x}^2}\right)}\right]}.
\end{equation*}

\item[(c)] We have
\begin{equation*}
\frac{1}{2}\lambda_{1}(\Sigma_{x})\le \lambda_{1}\left(\frac{X^TX}{N}\right)\le \lambda_{n}\left(\frac{X^TX}{N}\right)\le \frac{3}{2}\lambda_{n}(\Sigma_{x})
\end{equation*}
with probability at least 
\begin{equation*}
1-2\exp{\left[n\log 9-NC_{b}\min{\left(\frac{\lambda^2_{1}(\Sigma_{x})}{256c_{b}^2\sigma_{x}^4}, \frac{\lambda_{1}(\Sigma_{x})}{16c_{b}\sigma_{x}^2}\right)}\right]}.
\end{equation*}
\end{enumerate}
\end{lemma}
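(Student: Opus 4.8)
The plan is to establish the three parts in order, with Part (a) as the probabilistic core, Part (b) a covering-net extension, and Part (c) an application of Weyl's inequality.

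For Part (a) I would first reduce the quadratic form to a scalar average. Setting $Z_i=X_i^Tv$ and using $\E[X_iX_i^T]=\Sigma_x$, one has $v^T\tfrac{X^TX}{N}v-v^T\Sigma_xv=\tfrac1N\sum_{i=1}^N(Z_i^2-\E[Z_i^2])$. Because $v\in\mathbb{S}^{n-1}$ and each row $X_i$ is $\sigma_x$-sub-Gaussian (Definition~\ref{randmat}), the $Z_i$ are i.i.d.\ $\sigma_x$-sub-Gaussian scalars, and by the remark after the sub-exponential/sub-Gaussian norm definitions $\|Z_i\|_{\psi_2}^2\le c_b\sigma_x^2$ for a universal constant $c_b$. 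The centered variables $W_i=Z_i^2-\E[Z_i^2]$ are then sub-exponential: combining the centering bound $\|W_i\|_{\psi_1}\le 2\|Z_i^2\|_{\psi_1}$ with the elementary inequality $\|Z_i^2\|_{\psi_1}\le 2\|Z_i\|_{\psi_2}^2$ yields $K:=\max_i\|W_i\|_{\psi_1}\le 4c_b\sigma_x^2$. Applying Lemma~\ref{bernstein-type} to $\sum_iW_i\ge Nt$ with $a=(1,\dots,1)$, so that $\|a\|_\infty=1$ and $\|a\|_2^2=N$, produces $\exp[-C_bN\min(t/K,\,t^2/K^2)]$, and substituting $K=4c_b\sigma_x^2$ gives exactly the two claimed exponents; the lower tail follows identically from the second inequality of Lemma~\ref{bernstein-type}.

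For Part (b) I would use a discretization argument. Since $A:=\tfrac1N X^TX-\Sigma_x$ is symmetric, $\opnorm{A}_{op}=\sup_{v\in\mathbb{S}^{n-1}}|v^TAv|$. Fix a $\tfrac14$-net $\mathcal{N}$ of the sphere with $|\mathcal{N}|\le 9^n$ (the standard volumetric bound $(1+2/\epsilon)^n$ at $\epsilon=\tfrac14$). A short estimate, valid for symmetric $A$, shows $\opnorm{A}_{op}\le 2\max_{v\in\mathcal{N}}|v^TAv|$, so that $\{\opnorm{A}_{op}\ge 2t\}\subseteq\{\max_{v\in\mathcal{N}}|v^TAv|\ge t\}$. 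A union bound over $\mathcal{N}$, invoking both tails of Part (a) at each net point, gives the factor $9^n\times 2$ multiplying the exponential, which is the claim.

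For Part (c) I would apply Part (b) at $t=\tfrac14\lambda_1(\Sigma_x)$, so $2t=\tfrac12\lambda_1(\Sigma_x)$. On the event $\opnorm{A}_{op}\le\tfrac12\lambda_1(\Sigma_x)$, Weyl's inequality gives $\lambda_1(\tfrac{X^TX}{N})\ge\lambda_1(\Sigma_x)-\tfrac12\lambda_1(\Sigma_x)=\tfrac12\lambda_1(\Sigma_x)$ and $\lambda_n(\tfrac{X^TX}{N})\le\lambda_n(\Sigma_x)+\tfrac12\lambda_1(\Sigma_x)\le\tfrac32\lambda_n(\Sigma_x)$, using $\lambda_1(\Sigma_x)\le\lambda_n(\Sigma_x)$. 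Inserting $t=\tfrac14\lambda_1(\Sigma_x)$ into the exponent of Part (b) reproduces the stated constants $\tfrac{\lambda_1^2(\Sigma_x)}{256c_b^2\sigma_x^4}$ and $\tfrac{\lambda_1(\Sigma_x)}{16c_b\sigma_x^2}$, while $9^n=e^{n\log 9}$ supplies the leading term. The only real bookkeeping obstacle is the constant tracking in Part (a)—choosing $c_b$ so that $\|W_i\|_{\psi_1}\le 4c_b\sigma_x^2$ makes the factor of $4$ propagate correctly through $K$ and $K^2$—together with the short symmetric-matrix net reduction in Part (b); neither step is conceptually difficult.
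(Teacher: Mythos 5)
Your proposal is correct and follows essentially the same route as the paper's proof: the same $\psi_2\to\psi_1$ norm bookkeeping giving $K=4c_b\sigma_x^2$ and Lemma~\ref{bernstein-type} for Part (a), the same $\tfrac14$-net of cardinality $9^n$ with the factor-of-2 reduction for Part (b), and the same choice $t=\tfrac14\lambda_1(\Sigma_x)$ for Part (c) (the paper packages your Weyl-inequality step as its Lemma~\ref{randmatdev1}, proved via quadratic forms, but the content is identical). No gaps.
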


\begin{proof}
First, we show Part (a). Let $X_{i}^T$ be the i-th row of X. Since $X_{i}^Tv$ is $\sigma_{x}$-sub-Gaussian, so $\|X_{i}^Tv\|_{\psi_{2}}^{2}\le c_{b}\sigma_{x}^2$ where $c_{b}>0$ is a constant. Therefore, 
\begin{equation*}
    \|(X_{i}^Tv)^{2}\|_{\psi_{1}}\le 2\|X_{i}^Tv\|_{\psi_{2}}^{2}\le 2c_{b}\sigma_{x}^2.
\end{equation*}
Hence we have
\begin{equation*}
    \|(X_{i}^Tv)^{2}-\mathbb{E}(X_{i}^Tv)^{2}\|_{\psi_{1}}\le  2\|(X_{i}^Tv)^{2}\|_{\psi_{1}}  \le4c_{b}\sigma_{x}^2.
\end{equation*}
Applying Lemma~\ref{bernstein-type} and letting $K=4c_{b}\sigma_{x}^2$, we have
\begin{equation*}
    \mathbb{P}\left[\sum_{i=1}^{N}((X_{i}^Tv)^{2}-\mathbb{E}(X_{i}^Tv)^{2})\ge Nt\right]\le \exp{\left[-NC_{b}\min{\left(\frac{t^{2}}{16c_{b}^2\sigma_{x}^{4}}, \frac{t}{4c_{b}\sigma_{x}^2}\right)}\right]}, 
\end{equation*}
and
\begin{equation*}
    \mathbb{P}\left[\sum_{i=1}^{N}\left((X_{i}^Tv)^{2}-\mathbb{E}(X_{i}^Tv)^{2}\right)\le -Nt\right]\le \exp{\left[-NC_{b}\min{\left(\frac{t^{2}}{16c_{b}^2\sigma_{x}^4}, \frac{t}{4c_{b}\sigma_{x}^2}\right)}\right]},
\end{equation*}
implying the result in Part (a). 

Next, we show Part (b). It suffices to evaluate the operator norm on a $\frac{1}{4}$-net $\mathcal{N}$ of $\mathbb{S}^{n-1}$ since we have
  \begin{equation*}
  \opnorm{\frac{1}{N}X^TX-\Sigma_{x}}_{op}\le 2\max_{v\in \mathcal{N}}\left|v^T\left(\frac{1}{N}X^TX-\Sigma_{x}\right)v\right|.
  \end{equation*}
For any fixed $v\in \mathcal{N}\subset \mathbb{S}^{n-1}$, by Part (a), we have
  \begin{equation*}
  \mathbb{P}\left(\left| v^T\frac{1}{N}X^TXv-v^T\Sigma_{x}v  \right|\ge t\right)\le 2\exp{\left[-NC_{b}\min{\left(\frac{t^2}{16c_{b}^2\sigma_{x}^4}, \frac{t}{4c_{b}\sigma_{x}^2}\right)}\right]}.
  \end{equation*}
Therefore, taking a union bound over $\mathcal{N}$, we have
\begin{equation*}
\mathbb{P}\left( \opnorm{\frac{1}{N}X^TX-\Sigma_{x}}_{op}\ge 2t\right)\le 9^{n}\times 2\exp{\left[-NC_{b}\min{\left(\frac{t^2}{16c_{b}^2\sigma_{x}^4}, \frac{t}{4c_{b}\sigma_{x}^2}\right)}\right]}.
\end{equation*}
Hence we prove Part (b). 

Finally, we show Part (c). Let $t=\frac{1}{4}\lambda_{1}(\Sigma_{x})$. Then by Part (b), we have 
\begin{equation*}
\opnorm{\frac{1}{N}X^TX-\Sigma_{x}}_{op}\le \frac{\lambda_{1}(\Sigma_{x})}{2}
\end{equation*}
with probability at least 
\begin{equation*}
1-2\exp{\left[n\log 9-NC_{b}\min{\left(\frac{\lambda^2_{1}(\Sigma_{x})}{256c_{b}^2\sigma_{x}^4}, \frac{\lambda_{1}(\Sigma_{x})}{16c_{b}\sigma_{x}^2}\right)}\right]}. 
\end{equation*}
By Lemma~\ref{randmatdev1}, we have 
\begin{equation*}
\frac{1}{2}\lambda_{1}(\Sigma_{x})\le \lambda_{1}\left(\frac{X^TX}{N}\right)\le \lambda_{n}\left(\frac{X^TX}{N}\right)\le \frac{3}{2}\lambda_{n}(\Sigma_{x})
\end{equation*}
with at least the same probability. Hence the proof is complete. 


\end{proof}


\subsection{Other supporting lemmas}
\label{supporting-appendix}

\begin{lemma}
\label{graph-oriented}
If the undirected graph $\mathcal{G}=(\mathcal{V},\mathcal{E})$ has $r$ connected components and $|\mathcal{V}|=n$, then the rank of $F$ and the rank of $L$ are equal to  
$n-r$, where $F$ is the oriented incidence matrix and $L$ is the Laplacian matrix. Furthermore, for $k\ge 2$, the rank of $\Delta^{(k+1)}$ is also equal to $n-r$, where $\Delta^{(k+1)}$ is the graph difference operator of order $k+1$. 
\end{lemma}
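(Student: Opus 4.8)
The plan is to prove the three rank claims in sequence, exploiting the recursive definition of $\Delta^{(k+1)}$ and the spectral relationship between $F$ and $L=F^TF$. First I would establish that $\operatorname{rank}(F)=n-r$. The standard fact is that for an undirected graph with $r$ connected components, the null space of $F$ is exactly $\operatorname{span}(\mathbbm{1}_{C_1},\dots,\mathbbm{1}_{C_r})$, the span of indicator vectors of the connected components. This follows because $Fx=0$ means $x_i=x_j$ for every edge $(i,j)$, so $x$ is constant on each connected component, and conversely any such $x$ lies in the null space. Since the $r$ indicator vectors are linearly independent, $\dim\operatorname{null}(F)=r$, giving $\operatorname{rank}(F)=n-r$ by rank-nullity.

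Next I would handle $\operatorname{rank}(L)$. Since $L=F^TF$, the two matrices share the same null space: $Lx=0 \iff x^TF^TFx=0 \iff \|Fx\|_2^2=0 \iff Fx=0$. Hence $\operatorname{null}(L)=\operatorname{null}(F)$ and $\operatorname{rank}(L)=\operatorname{rank}(F)=n-r$. Equivalently, $L$ is symmetric positive semidefinite with the same kernel as $F$.

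For the final claim about $\Delta^{(k+1)}$ with $k\ge 2$, I would split into the odd and even cases according to Definition~\ref{diffoperator2}. For odd $k$, $\Delta^{(k+1)}=L^{(k+1)/2}$, a power of the symmetric PSD matrix $L$. Since $L$ is PSD and symmetric, it is diagonalizable with nonnegative eigenvalues, so $L^{(k+1)/2}$ has exactly the same eigenvectors and its nonzero eigenvalues are the $(k+1)/2$-th powers of those of $L$; thus $\operatorname{null}(L^{(k+1)/2})=\operatorname{null}(L)$ and the rank is preserved at $n-r$. For even $k$, $\Delta^{(k+1)}=FL^{k/2}$. Here I would argue that $\operatorname{null}(FL^{k/2})=\operatorname{null}(L^{k/2})=\operatorname{null}(L)$: the inclusion $\operatorname{null}(L^{k/2})\subseteq\operatorname{null}(FL^{k/2})$ is immediate, and for the reverse, if $FL^{k/2}x=0$ then $L^{k/2}x\in\operatorname{null}(F)=\operatorname{null}(L)$, so $L\cdot L^{k/2}x=0$, i.e. $L^{k/2+1}x=0$, which by the eigenvalue argument forces $L^{k/2}x=0$. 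Either way $\operatorname{rank}(\Delta^{(k+1)})=n-r$.

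The main obstacle, and the only step requiring genuine care, is the even case: one must rule out the possibility that $F$ annihilates some nonzero vector in the range of $L^{k/2}$ without that vector being in $\operatorname{null}(L^{k/2})$. The clean way is the eigen-decomposition argument above, using that $\operatorname{null}(F)=\operatorname{null}(L)$ together with the fact that the range of $L^{k/2}$ equals the range of $L$ (both equal the orthogonal complement of $\operatorname{null}(L)$, since $L$ is symmetric). Indeed, $L^{k/2}x\in\operatorname{range}(L^{k/2})=\operatorname{range}(L)$, and the only vector lying in both $\operatorname{range}(L)$ and $\operatorname{null}(L)$ is $0$; combined with $L^{k/2}x\in\operatorname{null}(F)=\operatorname{null}(L)$ this forces $L^{k/2}x=0$, closing the argument. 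I would present this orthogonal-decomposition version rather than the iterated-kernel version, as it is more transparent and avoids induction on the power.
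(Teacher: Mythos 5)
Your proposal is correct and follows essentially the same route as the paper: characterize $\operatorname{null}(F)$ as the vectors constant on connected components, apply rank--nullity and $L=F^TF$, and then use the spectral structure of $F$ and $L$ to handle the higher-order operators. The paper compresses your careful odd/even case analysis (including the key point that $\operatorname{range}(L^{k/2})\cap\operatorname{null}(F)=\{0\}$ in the even case) into the single remark that ``applying the singular value decomposition of $F$ obtains the desired result,'' so your write-up simply supplies the details the paper leaves implicit.
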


\begin{proof}
Let $z$ be a vector such that $Fz=0$. Then for every $(i, j)\in \mathcal{E}$, we have $z_{i}=z_{j}$, which implies $z$ takes the same value on vertices of the same connected component. Therefore the dimension of the null space of $F$ is $r$. By rank-nullity theorem, we have the rank of $F$ is $n-r$. Furthermore, since $L=F^TF$, so the rank of $L$ is equal to $n-r$. For $k\ge 2$, applying the singular value decomposition of $F$ obtains the desired result for $\Delta^{(k+1)}$. Thus we prove the lemma. 
\end{proof}

\begin{lemma}
\label{eigenvaluelem}

The largest eigenvalue of Laplacian matrix $L$ satisfies $\lambda_{n}(L)\le 2d$ where $d$ is the maximum degree. Furthermore, for $D$ defined in $(\ref{Dmatrix})$, we have 
\begin{equation}
\label{singularvalueD}
1=\sigma_{\min}(D)\le \sigma_{\max}(D)\le \sqrt{\left(\frac{\lambda_{g}}{\lambda}  \right)^2(2d)^{k+1}+1}. 
\end{equation}
\end{lemma}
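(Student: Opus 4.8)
The plan is to treat the two claims in sequence, since the bound on $\lambda_n(L)$ is exactly what drives the singular-value bound for $D$.

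First I would establish $\lambda_n(L) \le 2d$ via the Rayleigh-quotient characterization together with the factorization $L = F^T F$. Writing $\lambda_n(L) = \max_{x \neq 0} \frac{x^T L x}{\|x\|_2^2}$ and observing that $x^T L x = \|Fx\|_2^2 = \sum_{(i,j) \in \mathcal{E}} (x_i - x_j)^2$, I would apply the elementary bound $(x_i - x_j)^2 \le 2(x_i^2 + x_j^2)$ edge by edge. Summing over $\mathcal{E}$ collapses the right-hand side to $2\sum_i \deg(i)\,x_i^2 \le 2d\,\|x\|_2^2$, where $d$ is the maximum degree, which yields $\lambda_n(L) \le 2d$. (Alternatively, a Gershgorin-disk argument applied to $L = M - A$ gives the same bound, since each diagonal entry is $\deg(i)$ and the off-diagonal absolute row sum is also $\deg(i)$.)

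The crux of the second claim is to compute $D^T D$ explicitly. Since $D = [\tfrac{\lambda_g}{\lambda}\Delta^{(k+1)};\, I_n]$, we have $D^T D = \tfrac{\lambda_g^2}{\lambda^2}(\Delta^{(k+1)})^T \Delta^{(k+1)} + I_n$. The key step, and the one I expect to require the most care, is showing $(\Delta^{(k+1)})^T \Delta^{(k+1)} = L^{k+1}$ for both parities of $k$. For odd $k$, Definition~\ref{diffoperator2} gives $\Delta^{(k+1)} = L^{(k+1)/2}$, and symmetry of $L$ makes this immediate. For even $k$, $\Delta^{(k+1)} = F L^{k/2}$, so $(\Delta^{(k+1)})^T \Delta^{(k+1)} = L^{k/2} F^T F L^{k/2} = L^{k/2} L L^{k/2} = L^{k+1}$, using $F^T F = L$. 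Hence in all cases $D^T D = \tfrac{\lambda_g^2}{\lambda^2} L^{k+1} + I_n$.

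Finally I would diagonalize. The eigenvalues of $L^{k+1}$ are $\lambda_i(L)^{k+1}$, so those of $D^T D$ are $\tfrac{\lambda_g^2}{\lambda^2}\lambda_i(L)^{k+1} + 1$. Because $L$ is positive semidefinite with $\lambda_1(L) = 0$ (indeed $L\mathbbm{1}_n = F^T(F\mathbbm{1}_n) = 0$, as each row of $F$ sums to zero) and $\lambda_n(L) \le 2d$ from the first part, these eigenvalues lie in $[1,\; \tfrac{\lambda_g^2}{\lambda^2}(2d)^{k+1} + 1]$, with the lower endpoint $1$ attained. Since the singular values of $D$ are the square roots of the eigenvalues of $D^T D$, taking square roots gives $\sigma_{\min}(D) = 1$ and $\sigma_{\max}(D) \le \sqrt{\tfrac{\lambda_g^2}{\lambda^2}(2d)^{k+1} + 1}$, as claimed. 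The only subtlety worth flagging is confirming $\lambda_1(L) = 0$ exactly, so that $\sigma_{\min}(D)$ equals $1$ rather than merely being bounded below by it.
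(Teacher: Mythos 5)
Your proof is correct and follows essentially the same route as the paper's: both reduce the singular-value claim to the identity $D^TD = (\lambda_g/\lambda)^2 L^{k+1} + I_n$ (whose verification for both parities of $k$ you spell out and the paper leaves implicit) together with the preliminary bound $\lambda_n(L)\le 2d$. The only real divergence is in that preliminary bound, where the paper writes $L=M-A$ and controls the adjacency matrix's spectral radius via a maximal-coordinate eigenvector argument, whereas you bound the quadratic form $\|Fx\|_2^2=\sum_{(i,j)\in\mathcal{E}}(x_i-x_j)^2\le 2\sum_i \deg(i)\,x_i^2\le 2d\|x\|_2^2$ directly; both arguments are standard and yours is, if anything, slightly cleaner.
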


\begin{proof}
Since $L=M-A$ where $M\in \mathbb{R}^{n\times n}$ is the degree matrix and $A\in \mathbb{R}^{n\times n}$ is the adjacency matrix, so we have
$\lambda_{n}(L)\le \lambda_{n}(M)+\lambda_{n}(A)=d+\lambda_{n}(A)$ where $d$ is the maximum degree. Next, we bound $\lambda_{n}(A)$. Let $v$ be the eigenvector of $\lambda_{n}(A)$ and let $i$ be the node on which $v$ takes its maximum value. Without loss of generality, we assume $v_{i}>0$. Then 
\begin{equation*}
\lambda_{n}(A)v_{i}=A_{i}^Tv=\sum_{(i, j)\in \mathcal{E}}v_{j}\le dv_{i},
\end{equation*}
where $A_{i}^T$ is the $i$-th row of $A$.  Hence we have $\lambda_{n}(A)\le d$, which yields the result for $\lambda_{n}(L)$.  

Next, we show $(\text{\ref{singularvalueD}})$. For $v\in \mathbb{R}^{n}$ and $\|v\|_{2}=1$, we have 
\begin{equation*}
v^TD^TDv=\left( \frac{\lambda_{g}}{\lambda}\right)^{2}v^TL^{k+1}v+1. 
\end{equation*}
Therefore, we have $\sigma_{\min}(D)=1$. Furthermore, by the first result in this lemma, we have 
\begin{equation*}
v^TD^TDv\le \left(\frac{\lambda_{g}}{\lambda}  \right)^2(2d)^{k+1}+1, 
\end{equation*}
implying the desired result. 
\end{proof}


\begin{lemma}
\label{l1matrixnorm}
Let $A\in \mathbb{R}^{n\times n}$ be a positive semidefinite matrix and $\opnorm{A}_{1}<1$. Then we have
\begin{equation*}
\opnorm{(I_{n}+A)^{-1}}_{1}\le \frac{1}{1-\opnorm{A}_{1}}. 
\end{equation*}
\end{lemma}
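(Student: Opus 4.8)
The plan is to represent $(I_{n}+A)^{-1}$ by its Neumann series and bound it termwise using the submultiplicativity of the $\ell_{1}$-operator norm. First I would recall that $\opnorm{\cdot}_{1}$ is the matrix norm induced by the vector $\ell_{1}$-norm, i.e.\ $\opnorm{M}_{1}=\sup_{\|x\|_{1}=1}\|Mx\|_{1}$, whose closed form is the maximum absolute column sum recorded in the Notation section. As an induced norm it is submultiplicative, $\opnorm{MN}_{1}\le\opnorm{M}_{1}\opnorm{N}_{1}$, and in particular $\opnorm{A^{k}}_{1}\le\opnorm{A}_{1}^{k}$ for every $k\ge 0$.

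Next I would use the hypothesis $\opnorm{A}_{1}<1$ to establish that the partial sums $S_{K}=\sum_{k=0}^{K}(-A)^{k}$ converge. The tail bound $\sum_{k=0}^{\infty}\opnorm{(-A)^{k}}_{1}\le\sum_{k=0}^{\infty}\opnorm{A}_{1}^{k}=(1-\opnorm{A}_{1})^{-1}<\infty$ shows that $(S_{K})$ is Cauchy in the complete normed space of $n\times n$ matrices, hence converges to some limit $S$. A telescoping computation gives $(I_{n}+A)S_{K}=S_{K}(I_{n}+A)=I_{n}-(-A)^{K+1}$, and since $\opnorm{(-A)^{K+1}}_{1}\le\opnorm{A}_{1}^{K+1}\to 0$, passing to the limit yields $(I_{n}+A)S=S(I_{n}+A)=I_{n}$. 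Thus $I_{n}+A$ is invertible with $(I_{n}+A)^{-1}=S$, after which the triangle inequality and submultiplicativity give
\[
\opnorm{(I_{n}+A)^{-1}}_{1}=\opnorm{S}_{1}\le\sum_{k=0}^{\infty}\opnorm{A^{k}}_{1}\le\sum_{k=0}^{\infty}\opnorm{A}_{1}^{k}=\frac{1}{1-\opnorm{A}_{1}},
\]
which is exactly the asserted bound.

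There is no genuine obstacle here; the only care required is in justifying submultiplicativity of $\opnorm{\cdot}_{1}$ and the norm-convergence of the Neumann series, both of which are routine. I would also note that the positive-semidefiniteness of $A$ is not actually needed for the argument (only $\opnorm{A}_{1}<1$ is used); it is presumably assumed because the matrix to which the lemma is applied in the proof of Theorem~\ref{fixedconthm}, namely $A=(\lambda_{g}^{2}/\lambda^{2})L^{k+1}$, is positive semidefinite.
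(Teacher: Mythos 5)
Your proof is correct, but it takes a different route from the paper's. The paper does not expand a Neumann series at all: it uses positive semidefiniteness to assert that $(I_{n}+A)^{-1}$ exists, writes the one-step resolvent identity $(I_{n}+A)^{-1}=I_{n}-A(I_{n}+A)^{-1}$, applies the triangle inequality and submultiplicativity to get $\opnorm{(I_{n}+A)^{-1}}_{1}\le 1+\opnorm{A}_{1}\opnorm{(I_{n}+A)^{-1}}_{1}$, and rearranges using $\opnorm{A}_{1}<1$ and the finiteness of $\opnorm{(I_{n}+A)^{-1}}_{1}$. The two arguments are close cousins (the resolvent identity is the generating relation for your series), but they divide the labor differently: the paper's version is shorter and uses the hypothesis that $A$ is positive semidefinite precisely to secure invertibility before doing any norm estimates, whereas your version derives invertibility from $\opnorm{A}_{1}<1$ alone and so, as you correctly observe, renders the positive-semidefiniteness hypothesis superfluous. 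Your remark about why that hypothesis is nevertheless harmless in context --- the matrix actually plugged in, $(\lambda_{g}^{2}/\lambda^{2})L^{k+1}$, is positive semidefinite --- matches how the lemma is used in the proof of Theorem~\ref{fixedconthm}.
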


\begin{proof}
Since $A$ is positive semidefinite, so $(I_{n}+A)^{-1}$ is well-defined. We have 
\begin{equation*}
(I_{n}+A)^{-1}=I_{n}-A(I_{n}+A)^{-1}, 
\end{equation*}
which implies 
\begin{equation*}
\opnorm{(I_{n}+A)^{-1}}_{1}\le 1+\opnorm{A}_{1}\opnorm{(I_{n}+A)^{-1}}_{1}.
\end{equation*}
Therefore, we have 
\begin{equation*}
\opnorm{(I_{n}+A)^{-1}}_{1}\le \frac{1}{1-\opnorm{A}_{1}}. 
\end{equation*}

\end{proof}

\begin{lemma}
\label{randmatdev1}
Let $X$ be a $N\times n$ matrix and $\Sigma_{x}\in \mathbb{R}^{n\times n}$ be a positive definite matrix. If
\begin{equation*}
\opnorm{\frac{1}{N}X^TX-\Sigma_{x}}_{op}\le \frac{1}{2}\lambda_{1}(\Sigma_{x}),
\end{equation*}
then we have
\begin{equation*}
\frac{1}{2}\lambda_{1}(\Sigma_{x})\le\lambda_{1}\left(\frac{X^TX}{N}\right)\le \lambda_{n}\left(\frac{X^TX}{N}\right)\le \frac{3}{2}\lambda_{n}(\Sigma_{x}).
\end{equation*}
\end{lemma}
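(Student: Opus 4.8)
The plan is to reduce everything to the variational (Courant--Fischer) characterization of the extreme eigenvalues, combined with the hypothesis, which controls the quadratic form $v^{T}\bigl(\frac{1}{N}X^{T}X-\Sigma_{x}\bigr)v$ uniformly over unit vectors. Write $A=\frac{1}{N}X^{T}X$ and $E=A-\Sigma_{x}$; both are symmetric, and by hypothesis $\opnorm{E}_{op}\le \frac{1}{2}\lambda_{1}(\Sigma_{x})$. Since for a symmetric matrix the operator norm equals the largest eigenvalue in absolute value, we have $|v^{T}Ev|\le \opnorm{E}_{op}\le \frac{1}{2}\lambda_{1}(\Sigma_{x})$ for every unit vector $v\in \mathbb{S}^{n-1}$. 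Everything else follows by specializing this bound.

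First I would establish the lower bound on $\lambda_{1}(A)$. For any unit vector $v$, the bound on $|v^{T}Ev|$ gives $v^{T}Av\ge v^{T}\Sigma_{x}v-\frac{1}{2}\lambda_{1}(\Sigma_{x})$. Because $v^{T}\Sigma_{x}v\ge \lambda_{1}(\Sigma_{x})$ for every unit $v$, the right-hand side is at least $\frac{1}{2}\lambda_{1}(\Sigma_{x})$; minimizing the left-hand side over $v$ then yields $\lambda_{1}(A)\ge \frac{1}{2}\lambda_{1}(\Sigma_{x})$.

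Next I would establish the upper bound on $\lambda_{n}(A)$ in the same manner: for any unit $v$, $v^{T}Av\le v^{T}\Sigma_{x}v+\frac{1}{2}\lambda_{1}(\Sigma_{x})\le \lambda_{n}(\Sigma_{x})+\frac{1}{2}\lambda_{n}(\Sigma_{x})=\frac{3}{2}\lambda_{n}(\Sigma_{x})$, where the second inequality uses both $v^{T}\Sigma_{x}v\le \lambda_{n}(\Sigma_{x})$ and the elementary fact $\lambda_{1}(\Sigma_{x})\le \lambda_{n}(\Sigma_{x})$. Maximizing over $v$ gives $\lambda_{n}(A)\le \frac{3}{2}\lambda_{n}(\Sigma_{x})$, and the middle inequality $\lambda_{1}(A)\le \lambda_{n}(A)$ is immediate. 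Equivalently, one could invoke Weyl's inequality $|\lambda_{i}(A)-\lambda_{i}(\Sigma_{x})|\le \opnorm{E}_{op}$ for each index $i$ and specialize to $i=1$ and $i=n$.

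There is essentially no serious obstacle here; the only point requiring care is the paper's eigenvalue ordering convention, in which $\lambda_{1}$ denotes the smallest and $\lambda_{n}$ the largest eigenvalue. In particular, obtaining the constant $\frac{3}{2}$ in the upper bound relies on enlarging the slack term $\frac{1}{2}\lambda_{1}(\Sigma_{x})$ to $\frac{1}{2}\lambda_{n}(\Sigma_{x})$, which is valid precisely because $\lambda_{1}(\Sigma_{x})\le \lambda_{n}(\Sigma_{x})$.
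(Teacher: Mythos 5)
Your proposal is correct and follows essentially the same route as the paper: both bound the quadratic form $v^{T}\bigl(\frac{1}{N}X^{T}X-\Sigma_{x}\bigr)v$ uniformly over unit vectors using the operator-norm hypothesis, sandwich $v^{T}\frac{1}{N}X^{T}Xv$ between $v^{T}\Sigma_{x}v\pm\frac{1}{2}\lambda_{1}(\Sigma_{x})$, and conclude via $\lambda_{1}(\Sigma_{x})\le\lambda_{n}(\Sigma_{x})$ to absorb the slack into $\frac{3}{2}\lambda_{n}(\Sigma_{x})$. No gaps.
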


\begin{proof}
For $v\in \mathbb{S}^{n-1}$, we have
\begin{equation*}
\left|v^T(\frac{1}{N}X^TX-\Sigma_{x})v\right|\le \frac{1}{2}\lambda_{1}(\Sigma_{x}).
\end{equation*}
Therefore,
\begin{equation*}
\lambda_{1}(\Sigma_{x})-\frac{1}{2}\lambda_{1}(\Sigma_{x})\le v^T\Sigma_{x}v-\frac{1}{2}\lambda_{1}(\Sigma_{x})\le v^T\frac{1}{N}X^TXv\le \frac{1}{2}\lambda_{1}(\Sigma_{x})+v^T\Sigma_{x}v\le \frac{1}{2}\lambda_{1}(\Sigma_{x})+\lambda_{n}(\Sigma_{x}),
\end{equation*}
which implies that our lemma holds. 
\end{proof}

\section{Supplementary simulation and real data analysis results}

In this section, we provide more results on simulation studies and the real data analysis conducted in the main paper.

\subsection{Simulations on a 2d grid graph}
\label{simuappendix}

We performed simulations to compare the performance of our approach with Lasso, Graph-Smooth-Lasso (\ref{gsmoothlasso}), and Graph-Spline-Lasso (\ref{gsplinelasso}) for structure recovery over a 2d grid graph. We set $N=250$, and followed the same procedure conducted in Section~\ref{simu2section} of the main paper to estimate three scenarios of $\beta^*$ plotted in Figure~\ref{piecewisepoly2dgraph} with the mentioned approaches, respectively. Figure~\ref{2dconstantestimation}, Figure~\ref{2dlinearestimation}, and Figure~\ref{2dquadraticestimation} present the corresponding results. In Figure~\ref{2dconstantestimation} and Figure~\ref{2dlinearestimation}, our approach visibly outperformed the other three methods. In Figure~\ref{2dquadraticestimation}, our approach had a similar performance with the Graph-Spline-Lasso.          

\begin{figure}[h]
\begin{subfigure}{.5\textwidth}
  \centering
  \includegraphics[width=.8\linewidth]{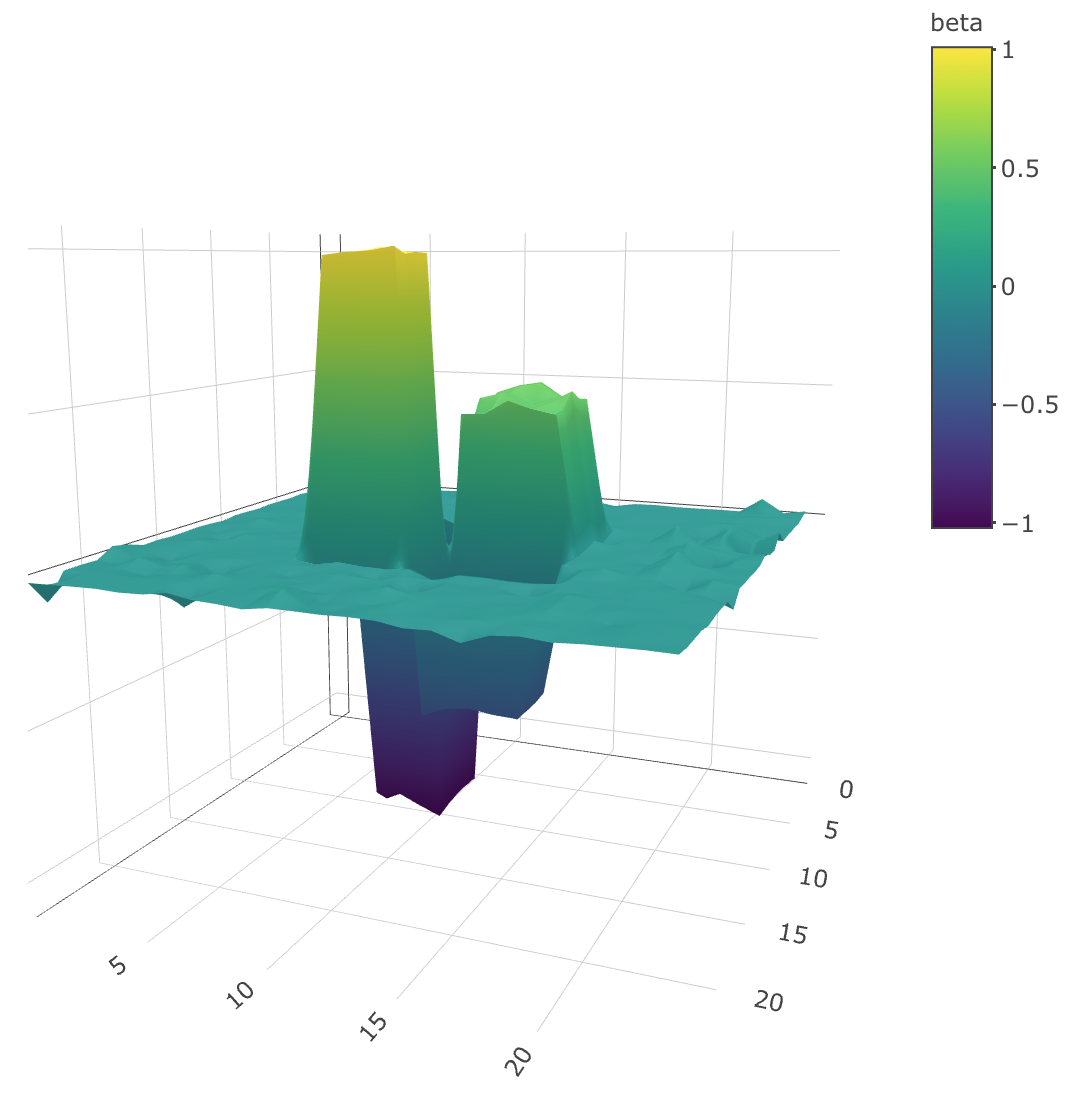}  
  \caption{Our approach}
\end{subfigure}
\begin{subfigure}{.5\textwidth}
 \centering
  \includegraphics[width=.8\linewidth]{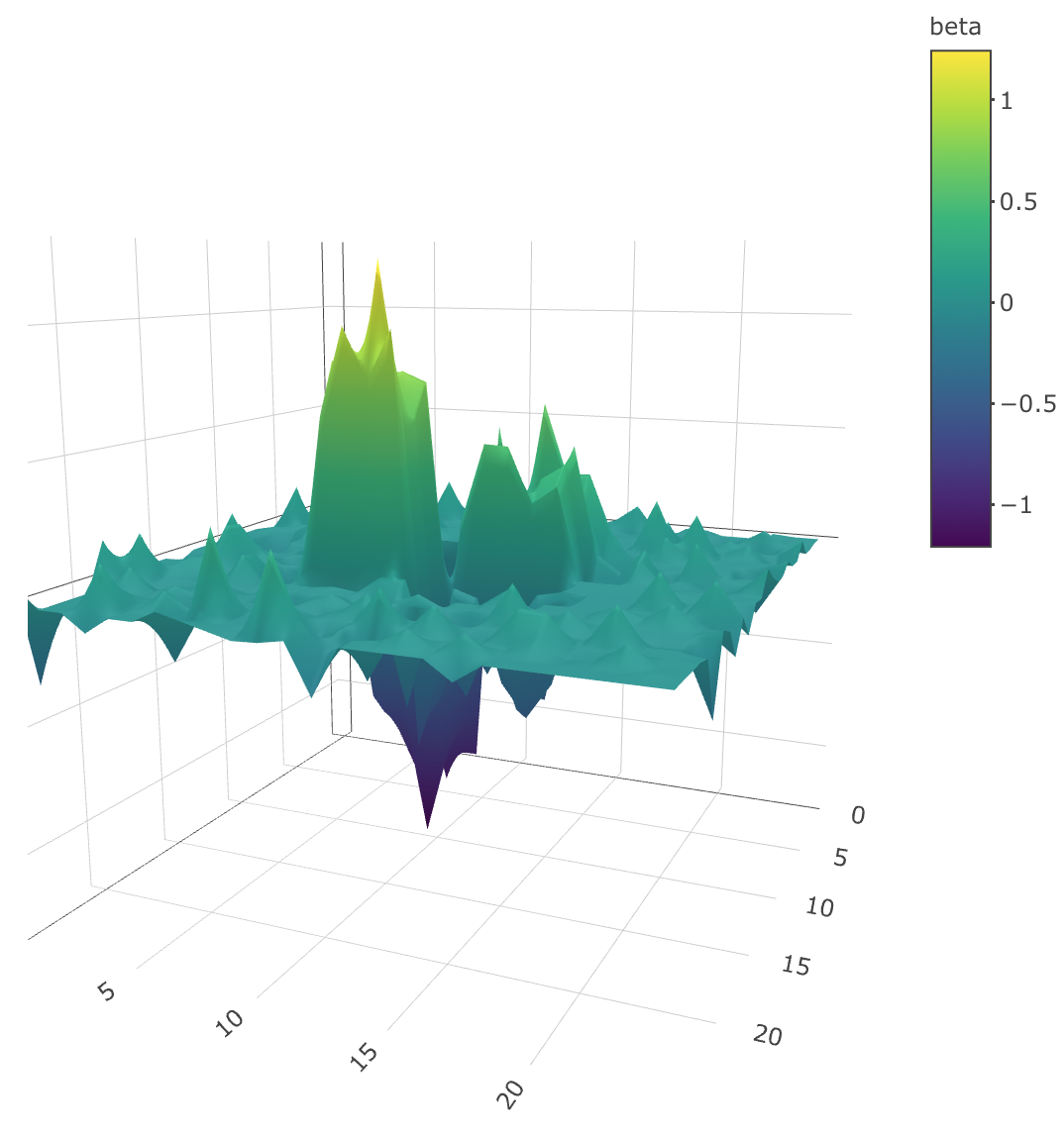}  
  \caption{Lasso}
\end{subfigure}
\begin{subfigure}{.5\textwidth}
  \centering
  \includegraphics[width=.8\linewidth]{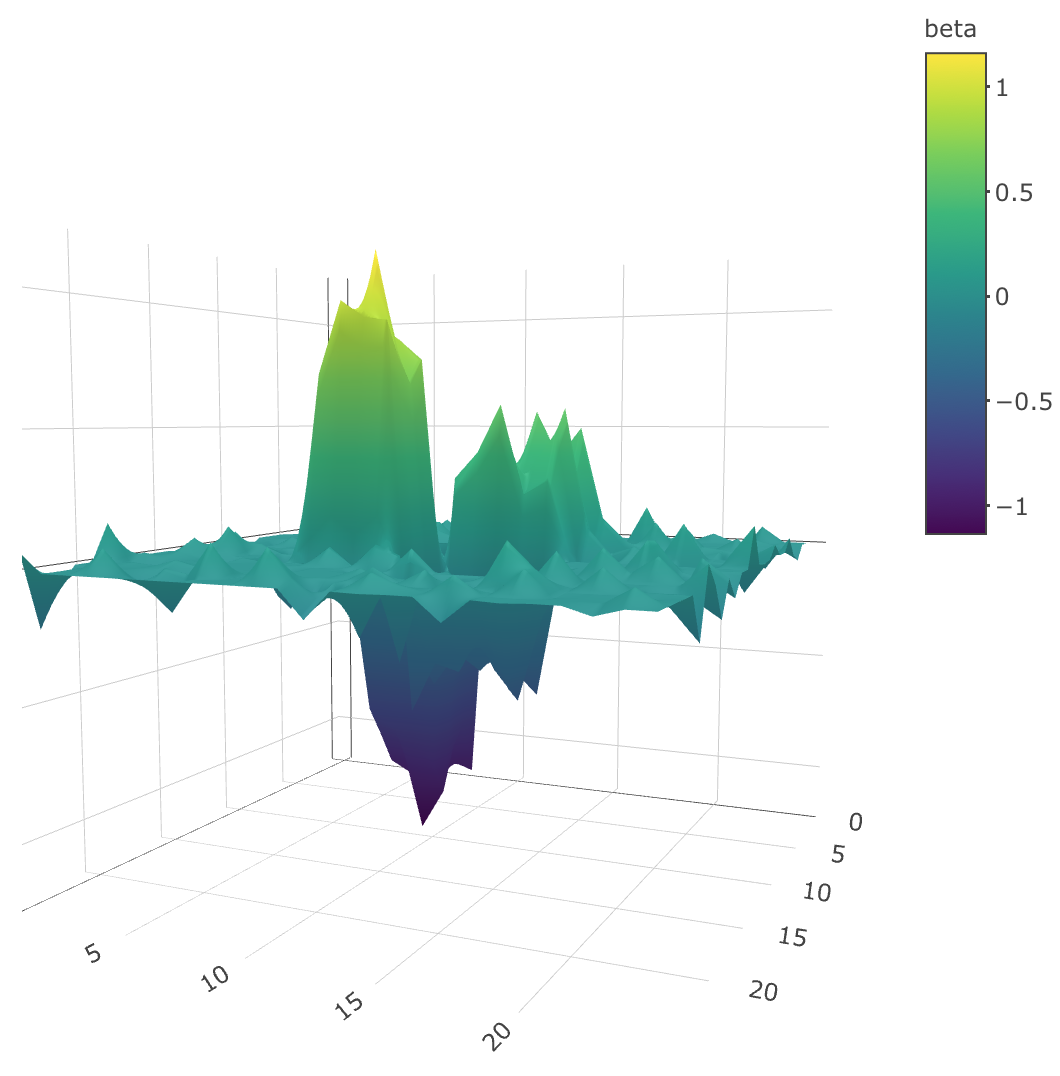}  
  \caption{Graph-Smooth-Lasso}
\end{subfigure}
\begin{subfigure}{.5\textwidth}
 \centering
  \includegraphics[width=.8\linewidth]{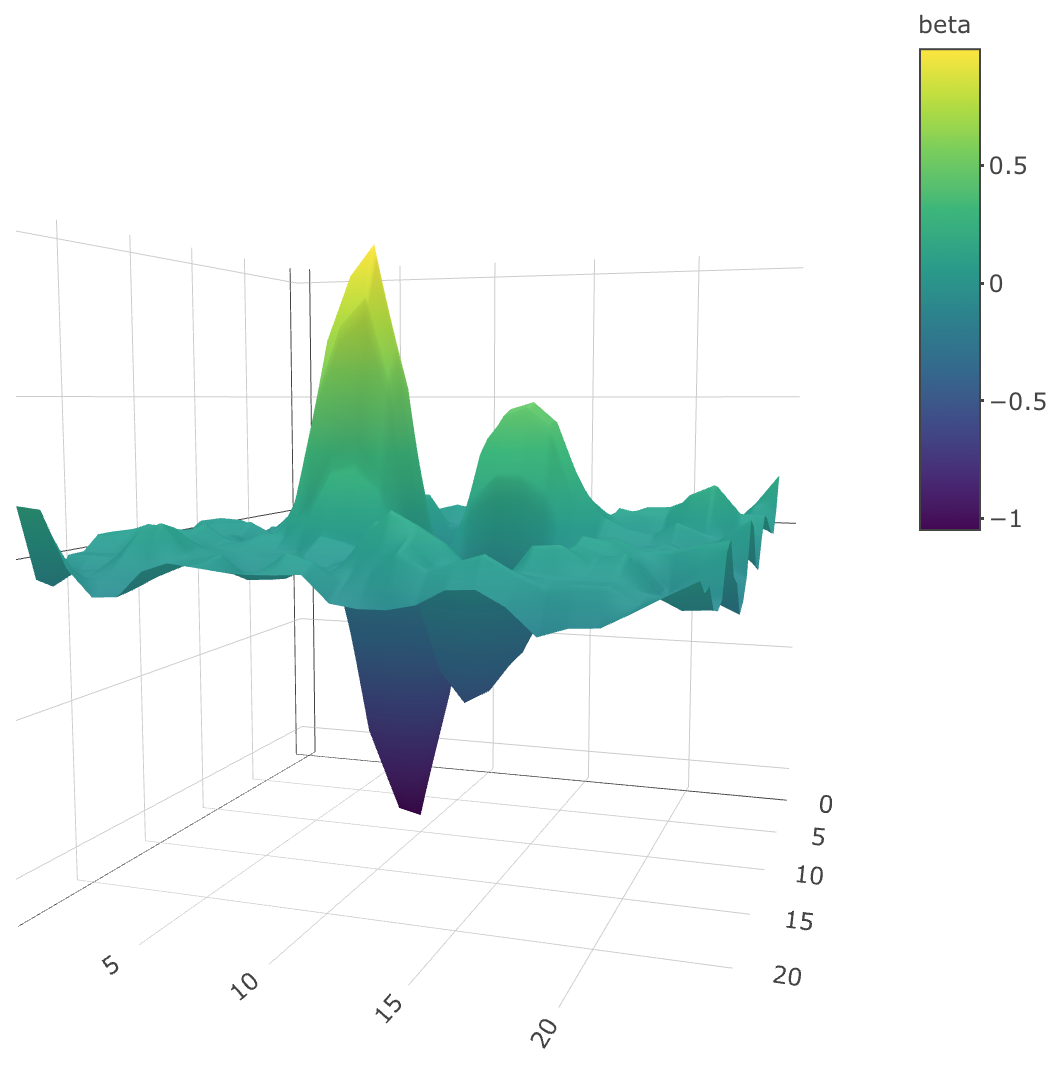}  
  \caption{Graph-Spline-Lasso}
\end{subfigure}
\caption{Estimation of $\beta^*$ plotted in (a) of Figure~\ref{piecewisepoly2dgraph} of the main paper.}
\label{2dconstantestimation}
\end{figure}

\begin{figure}[h]
\begin{subfigure}{.5\textwidth}
  \centering
  \includegraphics[width=.8\linewidth]{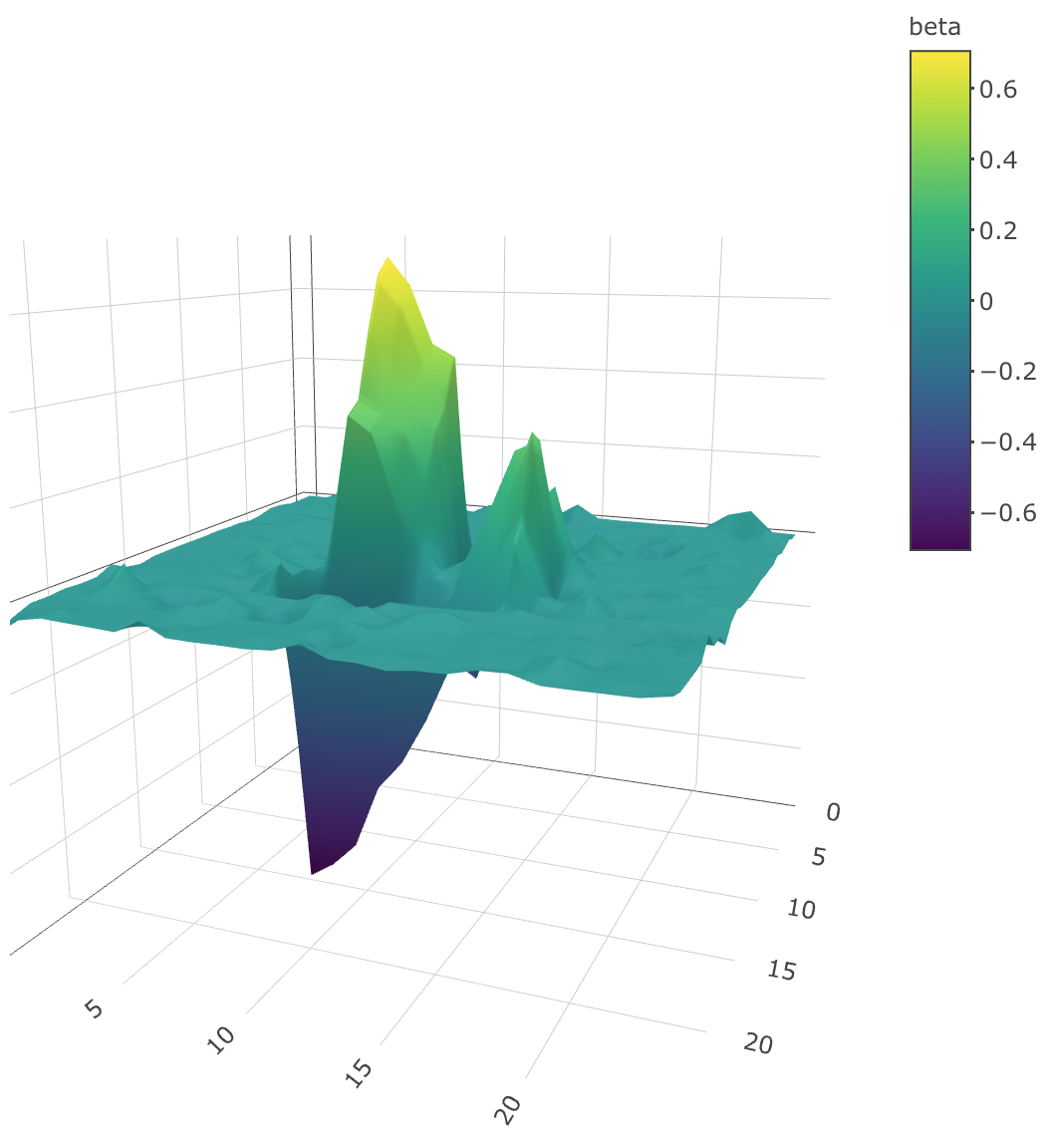}  
  \caption{Our approach}
\end{subfigure}
\begin{subfigure}{.5\textwidth}
 \centering
  \includegraphics[width=.8\linewidth]{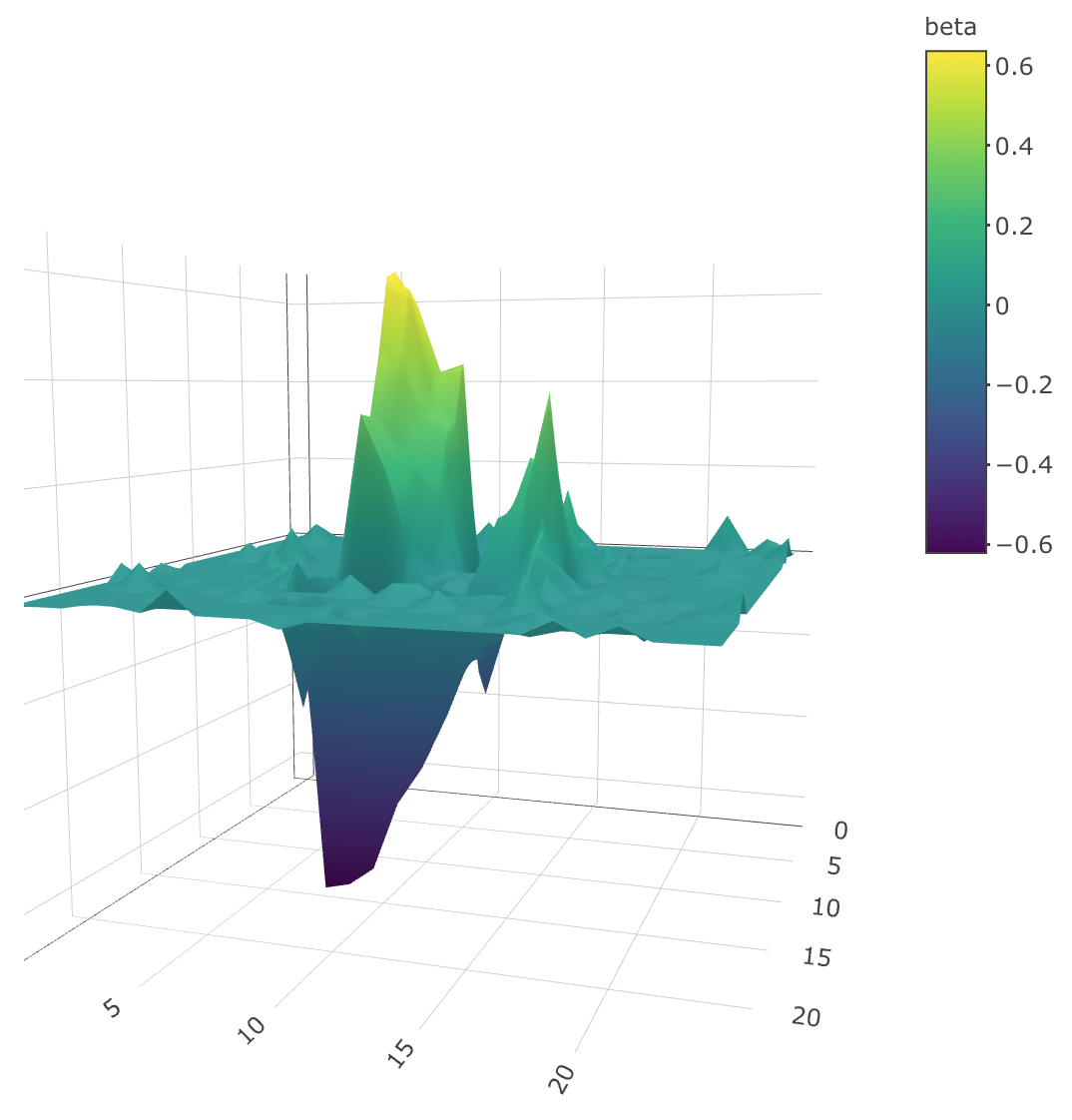}  
  \caption{Lasso}
\end{subfigure}
\begin{subfigure}{.5\textwidth}
  \centering
  \includegraphics[width=.8\linewidth]{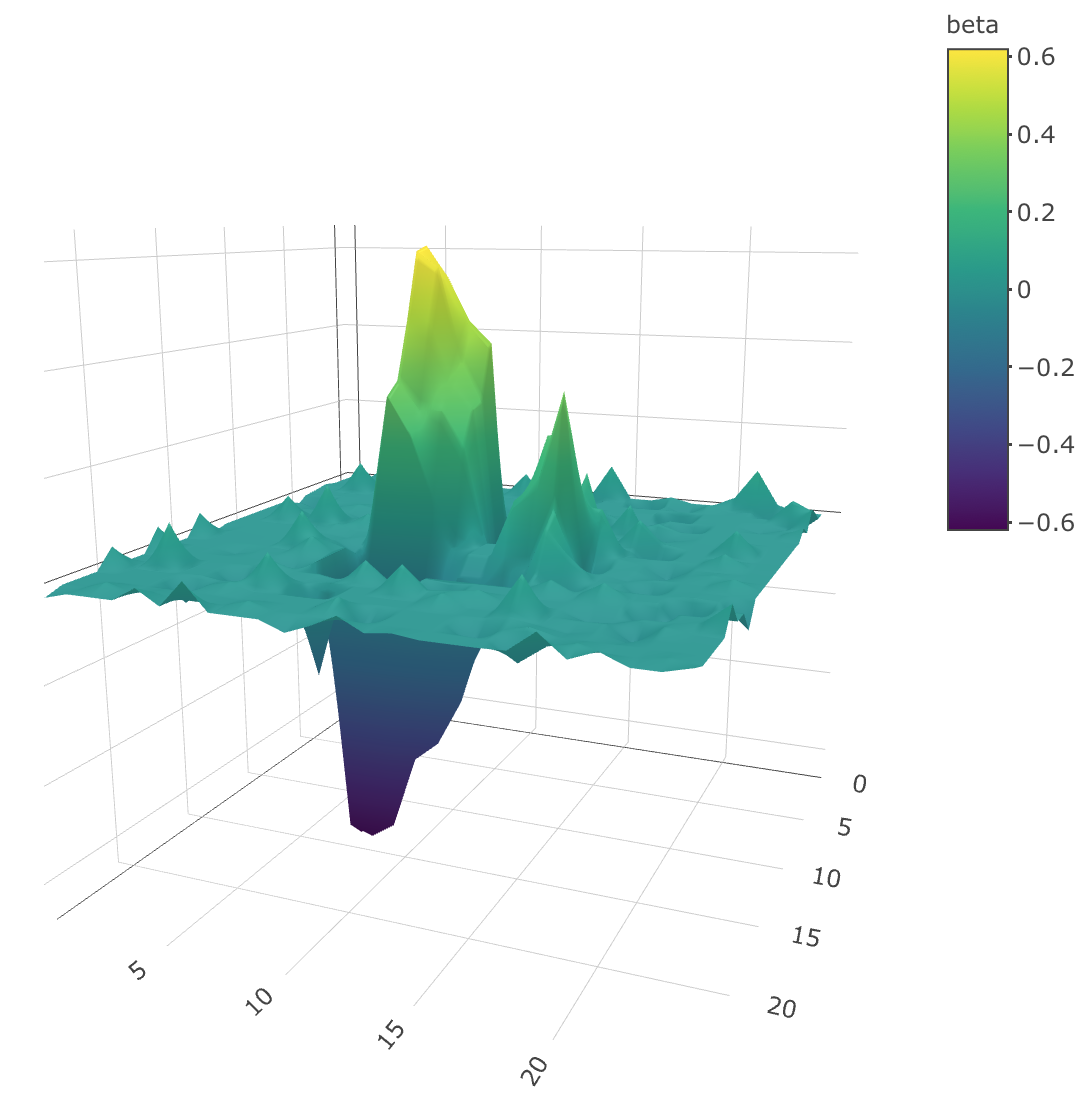}  
  \caption{Graph-Smooth-Lasso}
\end{subfigure}
\begin{subfigure}{.5\textwidth}
 \centering
  \includegraphics[width=.8\linewidth]{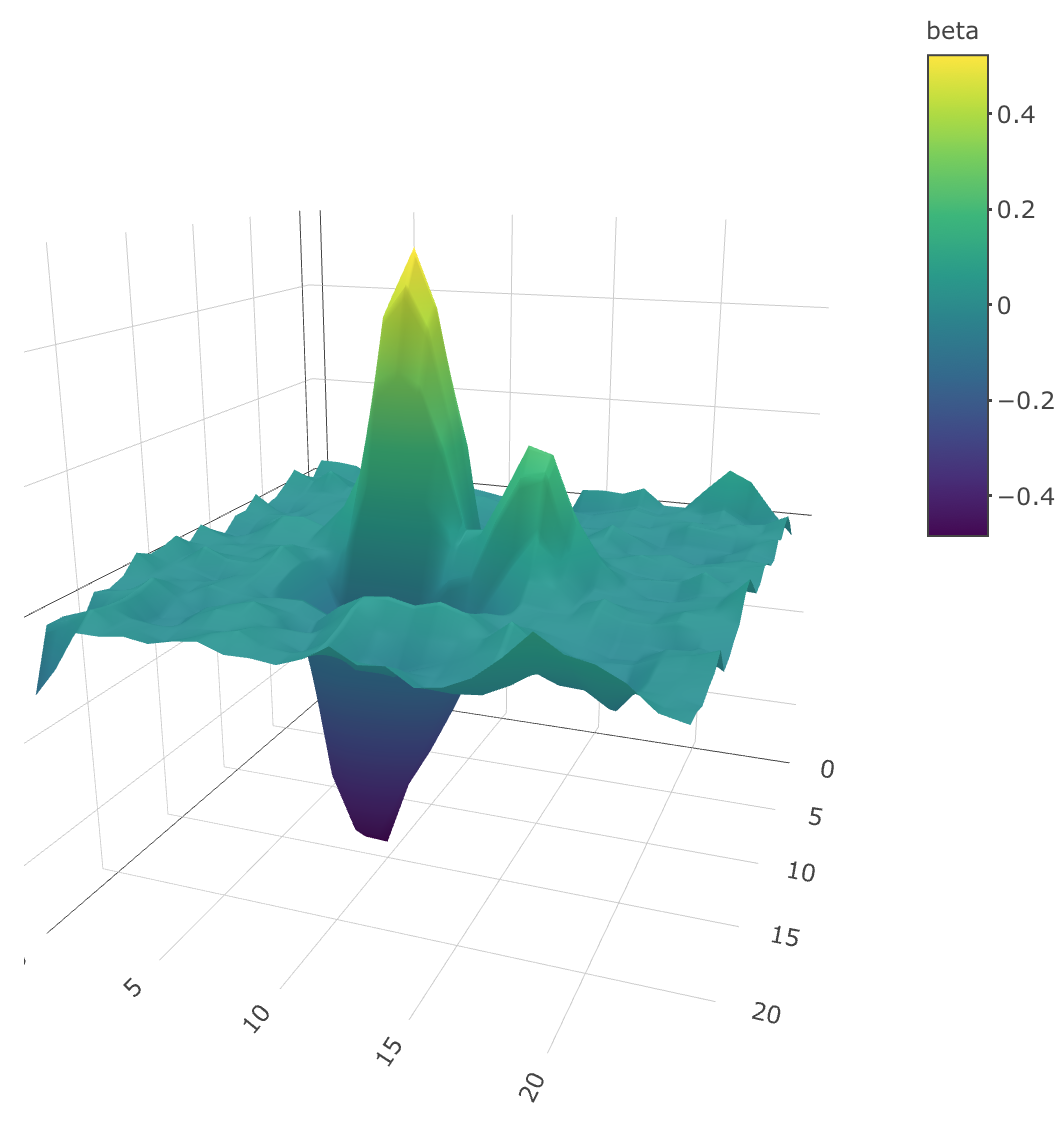}  
  \caption{Graph-Spline-Lasso}
\end{subfigure}
\caption{Estimation of $\beta^*$ plotted in (b) of Figure~\ref{piecewisepoly2dgraph} of the main paper. }
\label{2dlinearestimation}
\end{figure}

\begin{figure}[h]
\begin{subfigure}{.5\textwidth}
  \centering
  \includegraphics[width=.8\linewidth]{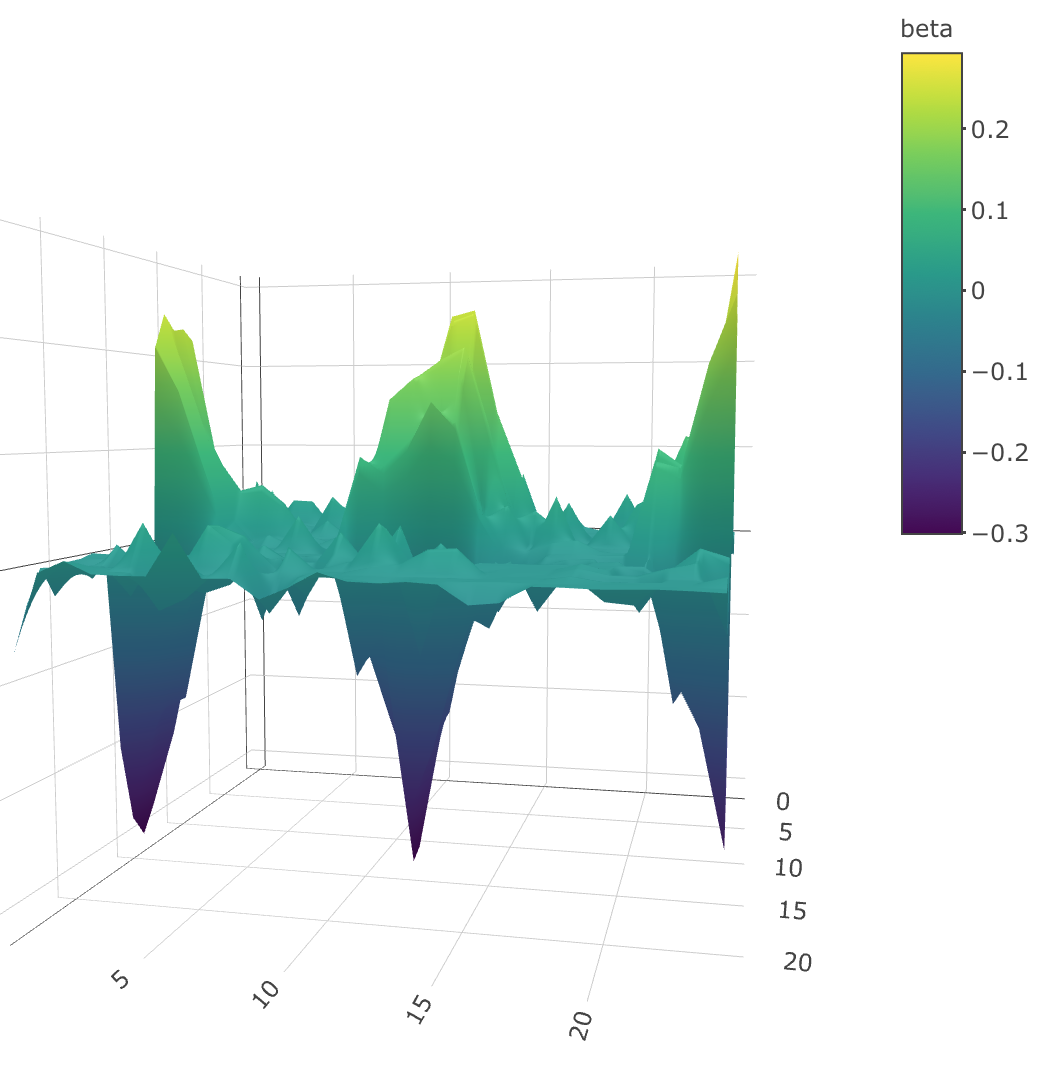}  
  \caption{Our approach}
\end{subfigure}
\begin{subfigure}{.5\textwidth}
 \centering
  \includegraphics[width=.8\linewidth]{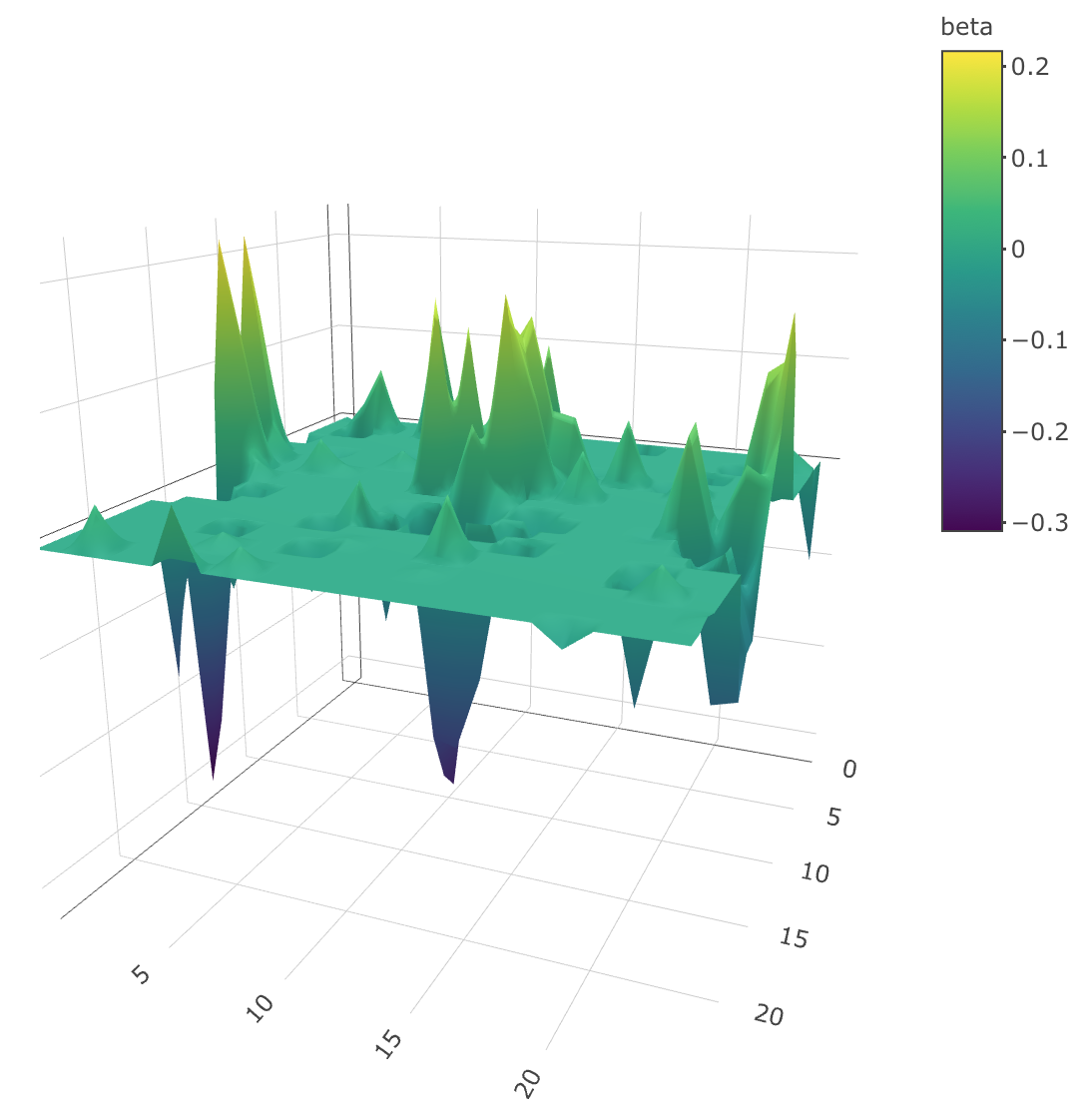}  
  \caption{Lasso}
\end{subfigure}
\begin{subfigure}{.5\textwidth}
  \centering
  \includegraphics[width=.8\linewidth]{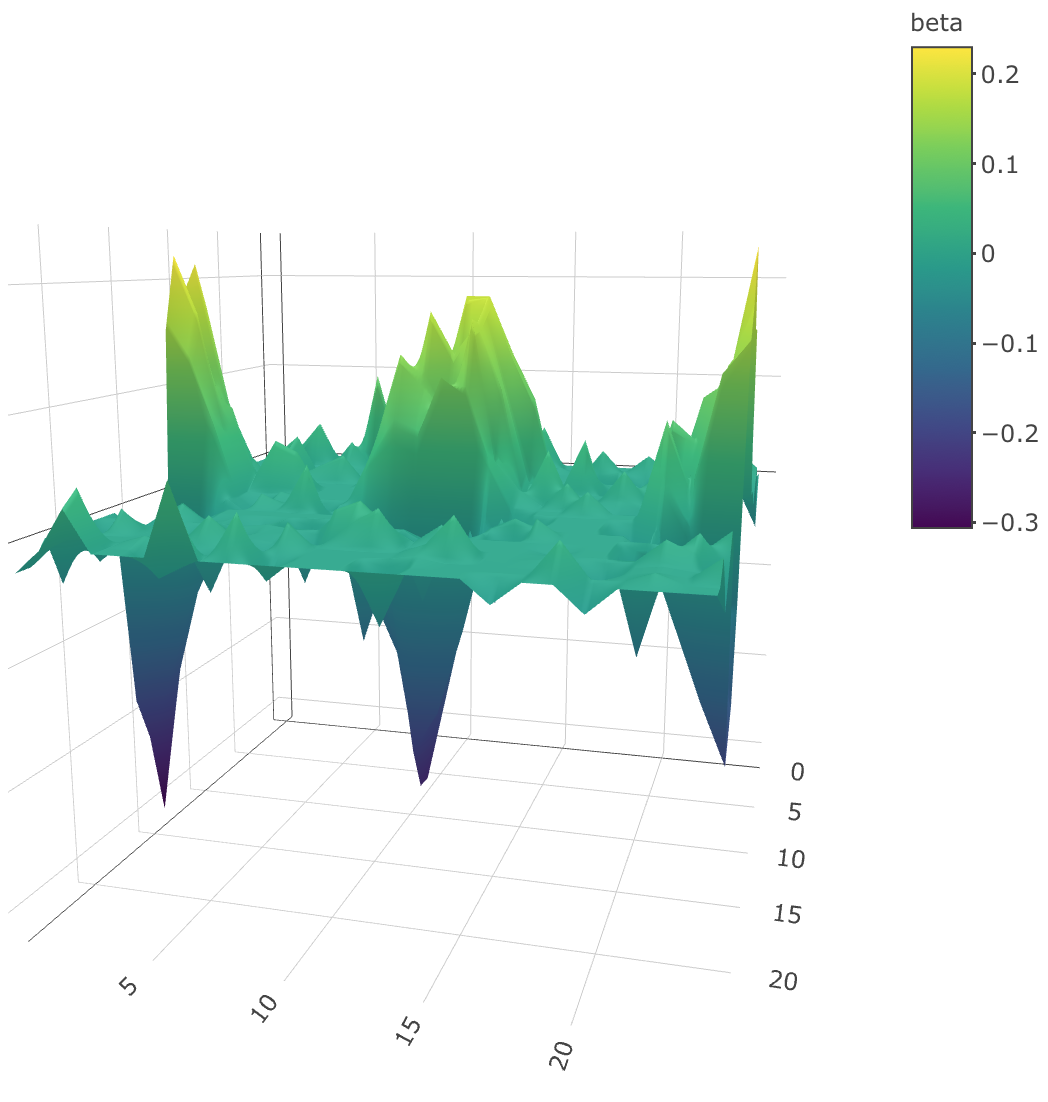}  
  \caption{Graph-Smooth-Lasso}
\end{subfigure}
\begin{subfigure}{.5\textwidth}
 \centering
  \includegraphics[width=.8\linewidth]{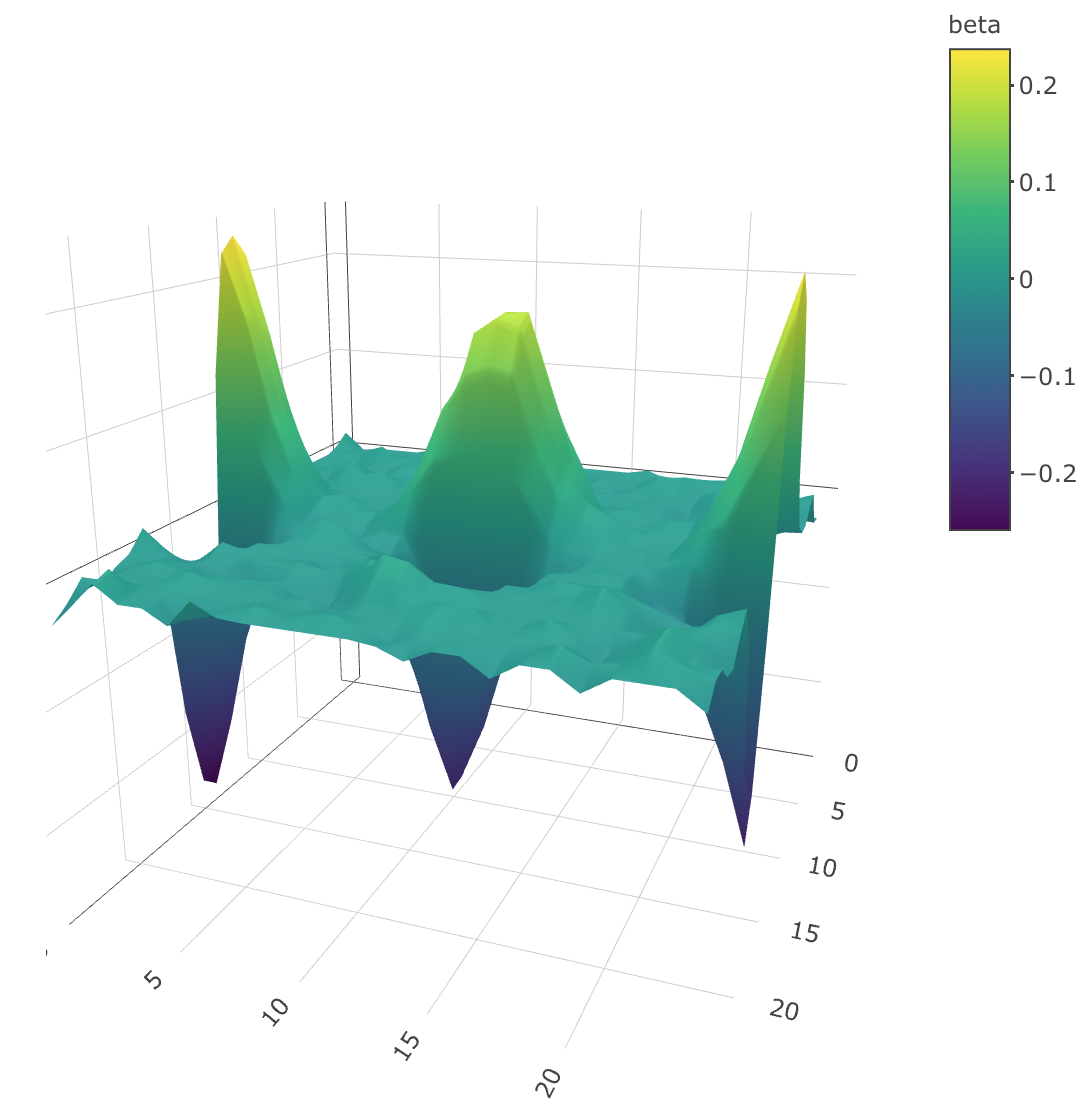}  
  \caption{Graph-Spline-Lasso}
\end{subfigure}
\caption{Estimation of $\beta^*$ plotted in (c) of Figure~\ref{piecewisepoly2dgraph} of the main paper. }
\label{2dquadraticestimation}
\end{figure}


\subsection{Supplementary results in Section~\ref{realdatasection}}
\label{realdataappendix}

Figure~\ref{realdatacoef} in this section illustrates the estimated regression coefficients of candidate genes. Table~\ref{realdatapathwaytable} provides details of genes selected within 58 pathways.   

\begin{figure}[h]
\centering 
\begin{subfigure}{.5\textwidth}
  \centering
  \includegraphics[width=.8\linewidth]{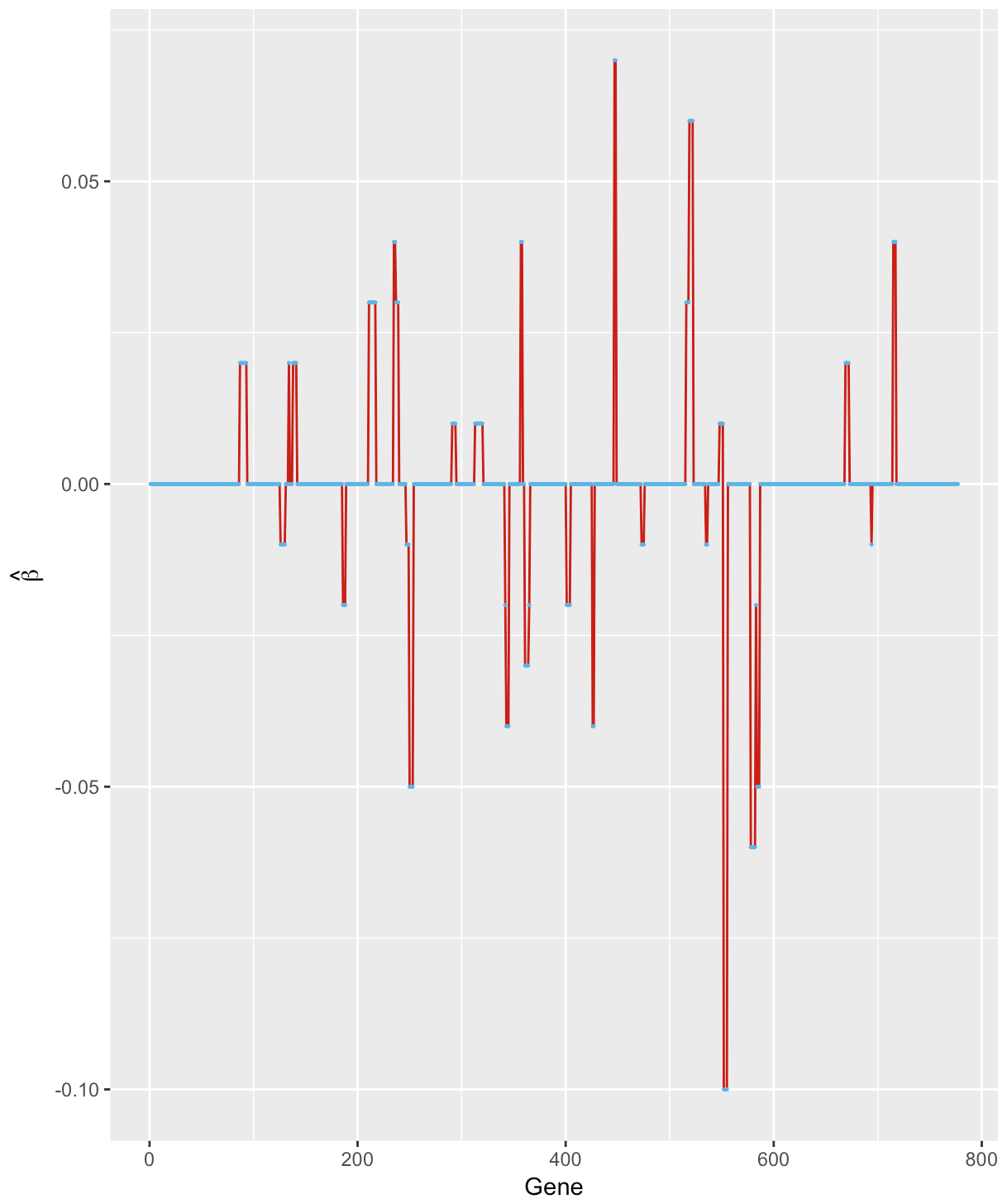}  
  \caption{}
\end{subfigure}

\begin{subfigure}{.5\textwidth}
 \centering
  \includegraphics[width=.8\linewidth]{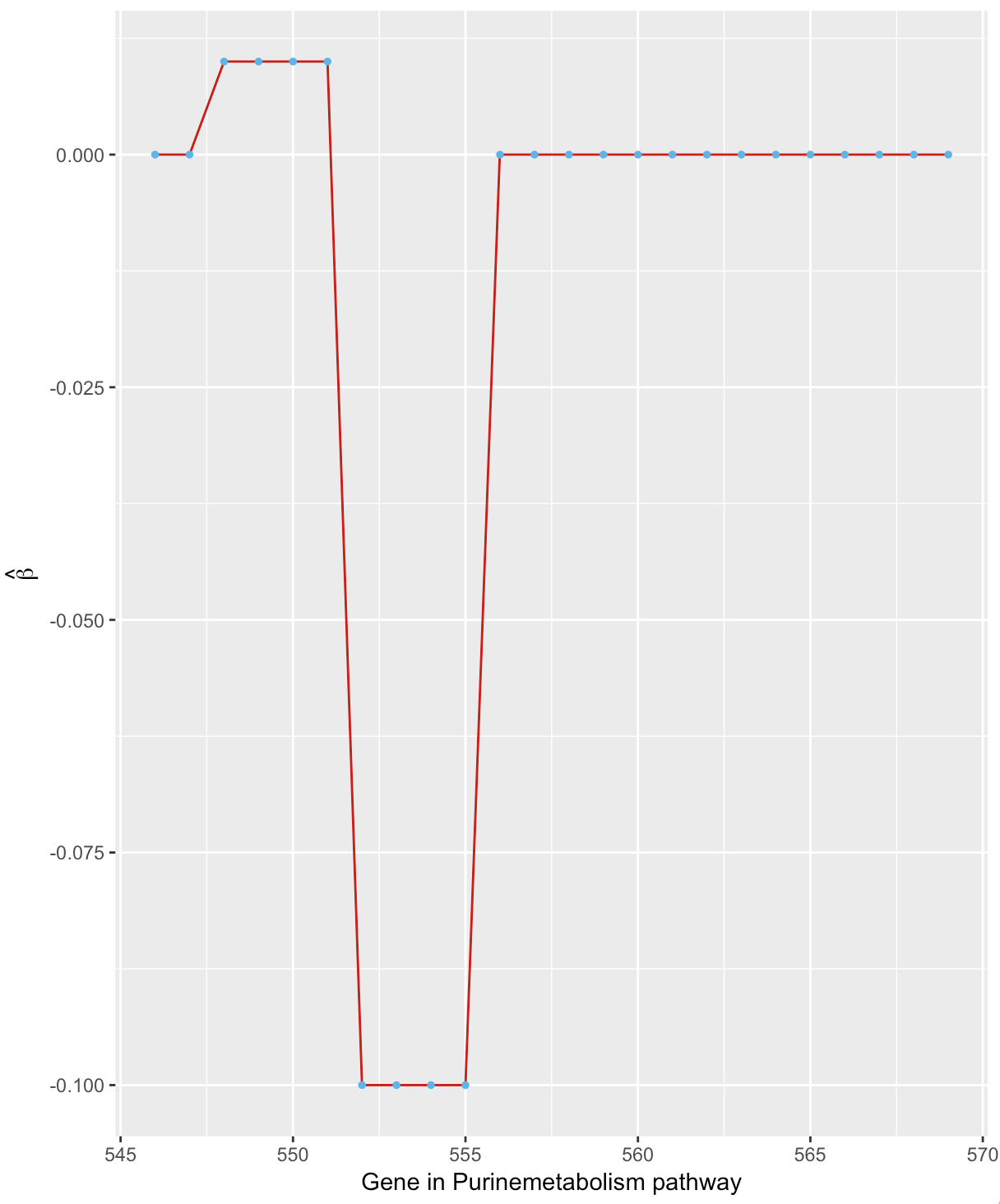}  
  \caption{}
\end{subfigure}
\caption{(a) Estimated regression coefficients of 777 candidate genes. (b) Estimated regression coefficients of genes in the Purinemetabolism pathway. Numbers in the x-axis are the gene codes. }
\label{realdatacoef}
\end{figure}

\begin{center}
\begin{longtable}{lllll}
\caption{Analysis of genes selected within each pathway.} 
\label{realdatapathwaytable} \\
\hline\hline \multicolumn{1}{c}{} & \multicolumn{1}{c}{Pathway} & \multicolumn{1}{c}{\makecell{Number of \\ genes}} & \multicolumn{1}{c}{\makecell{Number of \\ selected genes}}  & \multicolumn{1}{c}{\makecell{Percentage of \\ selected genes}} \\ \hline 
\endfirsthead
\multicolumn{5}{c}
{ \tablename\ \thetable{}: (continued)} \\
\hline \hline \multicolumn{1}{c}{} &\multicolumn{1}{c}{Pathway} & \multicolumn{1}{c}{\makecell{Number of \\ genes}} & \multicolumn{1}{c}{\makecell{Number of \\ selected genes}}  & \multicolumn{1}{c}{\makecell{Percentage of \\ selected genes}} \\ \hline 
\endhead
\hline\hline
\endfoot
\hline \hline
\endlastfoot
1 &Abscisicacidbiosynthesis  & 9  & 0 & 0      \\            
2& Arginine          & 2   & 0          &  0              \\
3&ArylpyronesStyrylpyronesStilbenesmetabolism & 3  &  0  &  0  \\
4&Asparaginemetabolism           & 4      &   0    &  0    \\
5&Auxinbiosynthesis              & 7  &   0  &  0     \\
6&Berberinemetabolism            & 12   &   0 &  0    \\    
7&Biotinmetabolism               &  3   &  0 &   0 \\
8&Brassinosteroidbiosynthesis    & 3  &  0 &  0    \\     
9&Calvincycle            & 31 &  0   &   0    \\
10&Carotenoidbiosynthesis      & 11       &  0       &  0  \\
11&Chorismatemetabolism      &     10     &  7 & $70\%$    \\
12&Citratecycle(TCAcycle)     & 36     &   5    &  $13.9\%$        \\
13&Co-enzymemetabolism    & 7    &  2       &   $28.6\%$           \\
14&Cytokininbiosynthesis      & 8       &  3   &    $37.5\%$     \\
15&Ethylenebiosynthesis         &  11      &  0   &  0     \\
16&Fattyacidbiosynthesis         & 34      &  3    &   $8.8\%$  \\  
17&Fattyacidoxidation         &  12      &  0   &  0   \\
18&Flavonoidmetabolism     &  15     &  7   &  $46.7\%$     \\      
19&Folatemetabolism        &  10    &  0   &  0     \\
20&Gibberellinbiosynthesis       &  19    &  6   &  $31.6\%$  \\       
21&GlutamateGlutaminemetabolism   & 17 & 6  &   $35.3\%$ \\        
22&Glutathionemetabolism        & 6     & 0   &  0     \\ 
23&Glycerolipidmetabolism        & 24 & 4    &  $16.7\%$   \\
24&GlycolysisGluconeogenesis     & 43   & 8  &   $18.6\%$ \\         
25&Glycoproteinbiosynthesis     & 17  &  4  &   $23.5\%$    \\
26&Histidinemetabolism          & 4    &  2   &  $50\%$    \\   
27&Inositolphosphatemetabolism  & 33  & 5  & $15.2\%$    \\
28&IsoleucineValineLeucinemetabolism  & 7 & 0   &  0  \\          
29&Jasmonicacidbiosynthesis        &  11   & 4   &  $36.4\%$   \\
30&Lysinemetabolism                 &  8      & 0  &  0  \\
31&Methioninemetabolism      &  4    & 0 &  0     \\
32&Mevalonatepathway     &       21  & 0  &   0              \\
33&Monoterpenemetabolism      &   4    & 0   &   0  \\        
34&Morphinemetabolism        & 2  & 2  &  $100\%$  \\ 
35&Non-Mevalonatepathway  & 17 & 0 &  0  \\
36&Onecarbonpool                  & 2  &  0  &  0 \\       
37&Pentosephosphatecycle     & 10 & 0 &  0 \\
38&PhenylalanineTyrosinemetabolism    & 8 & 1 & $12.5\%$ \\       
39&Phenylprpanoidmetabolism      & 17    & 1  &  $5.8\%$      \\
40&Phospholipiddegradation        &    9  & 1  &  $11.1\%$        \\ 
41&Phytosterolbiosynthesis      & 25   & 2   &   $8\%$          \\
42&Plastoquinonebiosynthesis       & 2    & 0    &  0   \\
43&Polyaminebiosynthesis            & 11     & 0     &  0    \\
44&PorphyrinChlorophyllmetabolism         & 24   & 8  & $33.3\%$ \\
45&Prolinemetabolism               & 3 &1        &  $33.3\%$         \\
46&Proteinprenylation                & 7     & 0    &  0        \\
47&Purinemetabolism        &  24      & 8     &  $33.3\%$       \\
48&Pyrimidinemetabolism      & 13      &  5  &  $38.5\%$         \\ 
49&Riboflavinmetabolism             &  22    & 4   &  $18.2\%$   \\
50&SerineGlycineCysteinemetabolism      & 17 & 0  &  0\\     
51&Sesquiterpenemetabolism            &  5 & 0  &  0  \\
52&Sphingophospholipidmetabolism     & 2 & 0 &  0\\         
53&Starchandsucrosemetabolism      & 70 & 5  &  $7.1\%$ \\
54&SynthesisofUDP-sugars             & 6    & 0  &  0  \\ 
55&Threoninemetabolism     &  10 & 0  &  0\\
56&Tocopherolbiosynthesis     & 2     & 2      &   $100\%$      \\
57&Tryptophanmetabolism   & 19 & 1  &  $5.3\%$\\
58&Ubiquinonebiosynthesis       & 4     & 0    &  0    
\end{longtable}
\end{center}

\end{document}